\numberwithin{equation}{section}
\newtheorem{theorem}{Theorem}[section]
\newtheorem{lemma}[theorem]{Lemma}
\newtheorem{corollary}[theorem]{Corollary}
\newtheorem{proposition}[theorem]{Proposition}
\theoremstyle{definition}
\newtheorem{definition}[theorem]{Definition}
\theoremstyle{remark}
\newtheorem{remark}[theorem]{Remark}
\theoremstyle{example}
\numberwithin{equation}{section}
\newcommand{\RNum}[1]{\uppercase\expandafter{\romannumeral #1\relax}}
\newcommand{\specificthanks}[1]{\@fnsymbol{#1}}
\begin{document}

	\title{One-arm exponent of critical level-set for metric graph Gaussian free field in high dimensions}
	
	\author{Zhenhao Cai$^1$}
	\address[Zhenhao Cai]{School of Mathematical Sciences, Peking University}
	\email{caizhenhao@pku.edu.cn}
	\thanks{$^1$School of Mathematical Sciences, Peking University}

	\author{Jian Ding$^1$}
	\address[Jian Ding]{School of Mathematical Sciences, Peking University}
	\email{dingjian@math.pku.edu.cn}
	\maketitle

	\tableofcontents
	
	\begin{abstract}
		In this paper, we study the critical level-set of Gaussian free field (GFF) on the metric graph $\widetilde{\mathbb{Z}}^d,d>6$. We prove that the one-arm probability (i.e. the probability of the event that the origin is connected to the boundary of the box $B(N)$) is proportional to $N^{-2}$, where $B(N)$ is centered at the origin and has side length $2\lfloor N \rfloor$. Our proof is hugely inspired by Kozma and Nachmias \cite{kozma2011arm} which proves the analogous result of the critical bond percolation for $d\geq 11$, and by Werner \cite{werner2021clusters} which conjectures the similarity between the GFF level-set and the bond percolation in general and proves this connection for various geometric aspects. 
	\end{abstract}

	\section{Introduction}
	
	In this paper, we study the Gaussian free field (GFF) on the metric graph $\widetilde{\mathbb{Z}}^d$. To define it precisely, we first review the definition of discrete Gaussian free field (DGFF) on the lattice $\mathbb{Z}^d$, where we assume $d\geq 3$ in this paper. For any $x\in \mathbb{Z}^d$, let $\mathbb{P}_x$ be the law of a continuous-time simple random walk $\{S_t\}_{t\ge 0}$ on $\mathbb{Z}^d$ with starting point $x$ and transition rate $\frac{1}{2d}$ in each direction. We denote the corresponding expectation of $\mathbb{P}_x$ by $\mathbb{E}_x$. The Green's function is defined as
	\begin{equation}
			G(x,y):= \mathbb{E}_x\left( \int_{0}^{\infty} \mathbbm{1}_{S_t=y}dt  \right) ,\ \  \forall x,y\in \mathbb{Z}^d.
	\end{equation}
	The DGFF $\{\phi_x\}_{x\in \mathbb{Z}^d}$ is a mean-zero Gaussian field, whose covariance is given by \begin{equation}
		\mathbb{E}\left( \phi_{x_1}\phi_{x_2} \right) =G(x_1,x_2),\ \ \forall x_1,x_2\in \mathbb{Z}^d. 
	\end{equation}

	We denote the $\ell^1$ and $\ell^\infty$ norms by $|\cdot|_1$ and $|\cdot|$ respectively.

The level-set $E^{\ge h}:=\{x\in \mathbb{Z}^d: \phi_x\ge h\}$ ($h\in \mathbb{R}$) of DGFF has been extensively studied. It was proved that $E^{\ge h}$ exhibits a non-trivial phase transition as the level $h$ varies, and that the critical level $h_*(d)$ is positive for all $d\ge 3$ (see Bricmont, Lebowitz and Maes \cite{bricmont1987percolation}, Rodriguez and Sznitman \cite{rodriguez2013phase}, Drewitz, Pr{\'e}vost and Rodriguez \cite{drewitz2018sign}). Drewitz and Rodriguez \cite{drewitz2015high} further proved that $h_*(d)$ is asymptotic to $\sqrt{2\log(d)}$ as $d\to \infty$. In their celebrated work \cite{duminil2023equality}, Duminil-Copin, Goswami, Rodriguez and Severo established that $h_*(d)$ also serves as the critical threshold between the strongly non-percolative regime and the strongly percolative regime:

\begin{itemize}
	\item For any $h>h_*$, the probability of the existence of a cluster (i.e. connected component) of $E^{\ge h}$ that crosses the annulus $B(2N)\setminus B(N)$ (where $B(M):=\{x\in \mathbb{Z}^d:|x|\le M\}$) converges to $0$ as $N\to \infty$;

	\item For any $h<h_*$, with at least $1-e^{-cR^{c'}}$ probability there exists a cluster of $E^{\ge h}\cap  B(N)$ with diameter at least $\tfrac{N}{5}$, and moreover, any two clusters of $E^{\ge h}\cap  B(N)$ with diameter at least $\tfrac{N}{10}$ are connected by $E^{\ge h}\cap  B(2N)$.

\end{itemize}
In addition, much has been understood regarding to the percolative properties for the level-set in both the supercritical and subcritical regimes (see e.g. Drewitz, Ráth and Sapozhnikov \cite{drewitz2014chemical}, Popov and R{\'a}th \cite{popov2015decoupling}, Popov and Teixeira \cite{popov2015soft}, Goswami, Rodriguez and Severo \cite{goswami2022radius}). Despite of extensive works, our understanding in the critical regime remains limited. In this work, we focus on the critical behavior of the level-set for the GFF on the \textit{metric graph}, which is much more tractable.

 Let $\mathbb{L}^d:=\{\{x,y\}:x,y\in \mathbb{Z}^d,|x-y|_1=1\}$ be the edge set of $\mathbb{Z}^d$. For each $e=\{x,y\}\in\mathbb{L}^d $, we consider $I_e$ as a compact interval of length $d$ with two endpoints identical to $x$ and $y$ respectively. The metric graph generated by $\mathbb{Z}^d$ is defined as $\widetilde{\mathbb{Z}}^d:= \cup_{e\in \mathbb{L}^d}I_e$.

	The GFF $\{\widetilde{\phi}_v\}_{v\in \widetilde{\mathbb{Z}}^d}$ on the metric graph, as an 
	extension of the DGFF, is defined as follows. Given a DGFF $\{\phi_x\}_{x\in \mathbb{Z}^d}$, set $\widetilde{\phi}_v=\phi_v$ for all lattice points $v\in \mathbb{Z}^d$. For each interval $I_e$ with $e=\{x_1,x_2\}$, $\{\widetilde{\phi}_v\}_{v\in I_e}$ is given by an independent Brownian bridge of length $d$ with variance $2$ at time $1$, conditioned on $\widetilde{\phi}_{x_1}=\phi_{x_1}$ and $\widetilde{\phi}_{x_2}=\phi_{x_2}$. Readers may refer to \cite[Section 2]{lupu2016loop} for more details of the construction of $\{\widetilde{\phi}_v\}_{v\in \widetilde{\mathbb{Z}}^d}$. For any $h\in \mathbb{R}$, we denote the level-set of $\widetilde{\phi}$ above $h$ by 
	\begin{equation}
	\widetilde{E}^{\ge h}:=\{v\in \widetilde{\mathbb{Z}}^d:\widetilde{\phi}_v\ge h \}. 
	\end{equation}


	Lupu \cite{lupu2016loop} proved that the critical level $\widetilde{h}_*$ of $\widetilde{E}^{\ge h}$ exactly equals to $0$. More precisely, for any $h<0$, the level-set $\widetilde{E}^{\ge h}$ almost surely percolates (i.e.  contains an infinite connected component). Moreover, at the critical level $h=\widetilde{h}_*=0$, $\widetilde{E}^{\ge 0}$ does not percolate. As a corollary, the so-called \textit{one-arm probability} $\mathbb{P}[\bm{0} \xleftrightarrow[]{\widetilde{E}^{\ge 0}}\partial B(N)]$ converges to $0$ as $N\to \infty$, where $\bm{0}$ is the origin of $\mathbb{Z}^d$, $\partial A:=\{x\in A:\exists y\in \mathbb{Z}^d\setminus A\ \text{such that}\ \{x,y\}\in \mathbb{L}^d\}$, and ``$A_1\xleftrightarrow[]{\widetilde{E}^{\ge 0}} A_2$'' denotes the event that there exists a path on $\widetilde{\mathbb{Z}}^d$ in $\widetilde{E}^{\ge 0}$ joining $A_1$ and $A_2$ (see Section \ref{section_notations} for the precise definition). As for quantitative estimates, Ding and Wirth \cite{ding2020percolation} employed a martingale argument and proved various polynomial bounds for the one-arm probability in various dimensions. Precisely, they proved that for any $d\ge 3$, there exist constants $C(d), C'(d)>0$ such that for all $N>1$, 
	\begin{itemize}
		\item when $d=3$,  
		\begin{equation}\label{3d-GFF}
			\frac{C(3)}{\sqrt{N}}	\le 	\mathbb{P}\left[\bm{0} \xleftrightarrow[]{\widetilde{E}^{\ge 0}}\partial B(N)\right] \le C'(3)\sqrt{\frac{\log(N)}{N}}; 
		\end{equation}
		
		\item when $d>3$, 
		\begin{equation}\label{ineq_onearm}
			\frac{C(d)}{N^{\frac{d}{2}-1}}	\le 	\mathbb{P}\left[ \bm{0} \xleftrightarrow[]{\widetilde{E}^{\ge 0}}\partial B(N)\right] \le \frac{C'(d)}{\sqrt{N}}. 
		\end{equation}
		
	\end{itemize}
	After that, with a different approach, Drewitz, Prévost and Rodriguez \cite{drewitz2022critical} substantially improved \cite{ding2020percolation} by extending the estimates to a large class of transient graphs (note their bounds for lattices are the same as in \cite{ding2020percolation}).

	Generally, physicists and mathematicians may conjecture that there exists a constant $\rho(d)>0$ called \textit{critical exponent} such that $\mathbb{P}[ \bm{0} \xleftrightarrow[]{\widetilde{E}^{\ge 0}}\partial B(N)]= N^{-\frac{1}{\rho}+o(1)}$. As mentioned in (\ref{3d-GFF}), the $3$-dimensional critical exponent has been proved to be $2$ while the parallel problems in other dimensions remain open. In this paper, we prove that the critical exponents for $d>6$ equal to $\frac{1}{2}$. 
	\begin{theorem}\label{theorem1}
		For $d>6$, there exist constants $C_1(d),C_2(d)>0$ such that for all $N>1$, 
		\begin{equation}
			C_1N^{-2}\le 	\mathbb{P}\left[ \bm{0} \xleftrightarrow[]{\widetilde{E}^{\ge 0}}\partial B(N)\right]  \le C_2N^{-2}.
		\end{equation}
	\end{theorem}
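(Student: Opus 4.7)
The plan is to adapt to the metric-graph GFF setting the high-dimensional strategy of Kozma--Nachmias \cite{kozma2011arm}, with the percolation-specific combinatorial inequalities replaced by GFF analogues obtained via Lupu's isomorphism \cite{lupu2016loop} and the loop-soup / random-walk representation. Writing $\tau(x,y) := \mathbb{P}[x \xleftrightarrow[]{\widetilde{E}^{\ge 0}} y]$, the identity $\tau(x,y) = \tfrac{2}{\pi} \arcsin\bigl(G(x,y)/\sqrt{G(x,x) G(y,y)}\bigr)$ combined with $G(x,y) \asymp |x-y|^{2-d}$ yields $\tau(x,y) \asymp |x-y|^{2-d}$ when $|x-y| \ge 1$; the triangle diagram $\sum_{u,v} \tau(\bm{0},u) \tau(u,v) \tau(v,\bm{0})$ is then finite precisely for $d > 6$. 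A further structural ingredient is a tree-graph / BK-type bound
\[ \mathbb{P}\bigl[\bm{0} \xleftrightarrow[]{\widetilde{E}^{\ge 0}} x,\; \bm{0} \xleftrightarrow[]{\widetilde{E}^{\ge 0}} y\bigr] \le C \sum_{u \in \mathbb{Z}^d} \tau(\bm{0}, u)\, \tau(u, x)\, \tau(u, y), \]
which I would seek to derive by exploring the cluster via Lupu's isomorphism, where the two connections to $x$ and $y$ must bifurcate at a common ancestor vertex $u$.

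For the lower bound, I would apply Paley--Zygmund to $Z_N := \bigl|\{x \in \partial B(N) : \bm{0} \xleftrightarrow[]{\widetilde{E}^{\ge 0}} x\}\bigr|$. The two-point asymptotic gives $\mathbb{E}[Z_N] \asymp N^{d-1} \cdot N^{2-d} = N$. The BK-type bound gives
\[ \mathbb{E}[Z_N^2] \le C \sum_{u \in \mathbb{Z}^d} \tau(\bm{0}, u) \Bigl(\sum_{x \in \partial B(N)} \tau(u, x) \Bigr)^2, \]
and splitting at $|u|=2N$, using $\sum_{x \in \partial B(N)} \tau(u,x) \lesssim N$ for $|u| \le 2N$ and $\lesssim N^{d-1} |u|^{2-d}$ for $|u| > 2N$, yields $\mathbb{E}[Z_N^2] \le C N^4$. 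Paley--Zygmund then delivers $\mathbb{P}[\bm{0} \xleftrightarrow[]{\widetilde{E}^{\ge 0}} \partial B(N)] = \mathbb{P}[Z_N > 0] \ge c N^{-2}$.

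For the upper bound, let $\mathcal{C}$ denote the set of lattice points in $B(N)$ connected to $\bm{0}$ inside $\widetilde{E}^{\ge 0}$. The two-point asymptotic gives $\mathbb{E}|\mathcal{C}| = \sum_{x \in B(N)} \tau(\bm{0},x) \asymp N^2$. The crux is to complement this with a matching conditional lower bound
\[ \mathbb{E}\bigl[\, |\mathcal{C}| \,\bigm|\, \bm{0} \xleftrightarrow[]{\widetilde{E}^{\ge 0}} \partial B(N) \bigr] \ge c N^4; \]
combined with $\mathbb{E}\bigl[|\mathcal{C}| \,;\, \bm{0} \xleftrightarrow[]{\widetilde{E}^{\ge 0}} \partial B(N)\bigr] \le \mathbb{E}|\mathcal{C}| \le C N^2$, this yields $\mathbb{P}[\bm{0} \xleftrightarrow[]{\widetilde{E}^{\ge 0}} \partial B(N)] \le C N^{-2}$, matching the lower bound.

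The main obstacle is precisely this conditional volume lower bound, which in Kozma--Nachmias takes the form of a delicate ``no bottleneck'' / regularity argument showing that a cluster reaching distance $N$ must contain order $N^4$ vertices in $B(N)$, in agreement with the high-dimensional IIC scaling. Porting this to the metric-graph GFF calls for substitutes for the independence-based random-cluster manipulations of bond percolation; the natural route is to explore the cluster via the random-walk / loop-soup representation and to track the Gaussian conditional fields produced by successive conditionings, exploiting the Markov property of $\widetilde{\phi}$ along cutpoints to control the decoupling errors. Establishing these decoupling substitutes, and using them to produce the $N^4$ lower bound on conditional cluster volume, is what I expect to be the technical heart of the argument.
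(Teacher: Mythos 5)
Your lower bound is essentially the paper's argument: second moment of $Z_N$, controlled by a tree-graph bound obtained from a tree expansion plus a BKR inequality, and then the same diagrammatic sums (the only caveat is that in the loop-soup setting the branching object is an extended glued loop, so the correct tree-graph bound carries a triangle factor $|x_3-x_4|^{2-d}|x_4-x_5|^{2-d}|x_5-x_3|^{2-d}$ over three points of that loop rather than a single branch vertex $u$; for $d>6$ the extra two sums are harmless, and the paper's Lemma 4.1 does exactly this).

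The upper bound, however, has a genuine gap. You reduce everything to the conditional first-moment bound $\mathbb{E}\bigl[|\mathcal{C}|\bigm|\bm{0}\leftrightarrow\partial B(N)\bigr]\ge cN^4$, but this is not what Kozma--Nachmias prove, and it is not a stepping stone one can reach before the theorem itself: it is essentially equivalent to the conclusion. What the paper (following \cite{kozma2011arm}) actually establishes is much weaker and is consumed inside a multi-scale induction: a recursion $\theta((1+\lambda)N)\le C\epsilon^{-1/2}N^{-2}+C\epsilon^{3/5}N^2\theta(\lambda N/2)\theta(N)+(1-c)\theta(N)+\dots$ (Proposition \ref{prop_recursion}), obtained by splitting the arm event according to whether (i) the cluster volume exceeds $\epsilon N^4$, (ii) the cluster has a thin cross-section $\psi_n^*\le L^2$ at some intermediate scale, or (iii) at every intermediate scale the cross-section is thick but the volume increment is small (Theorem \ref{theorem_regularity}). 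Your sketch omits two ingredients without which this scheme cannot be closed. First, the a priori volume tail $\mathbb{P}(|\mathbf{C}(\bm{0})|\ge M)\le CM^{-1/2}$ (Proposition \ref{prop_volumn}), which handles alternative (i); this is an independent and substantial piece of the paper, proved in Section \ref{section_volumn} by a ghost-field differential inequality in the style of Aizenman--Barsky, not by anything in your outline. Second, the mechanism replacing percolation independence: your proposal to ``track the Gaussian conditional fields produced by successive conditionings, exploiting the Markov property along cutpoints'' is precisely the kind of decoupling the paper explains cannot be used at criticality (it forces a shift of the level $h$). The paper instead truncates to loops of diameter at most $N^b$ (Proposition \ref{prop_Nb}), works with the spatial Markov property of the loop soup, and builds a bespoke exploration process with ``unused loops'' and a regularity/qualified-point hierarchy (Sections \ref{fina_section7}--\ref{section8}) to get conditional independence of the continuation beyond the explored cluster. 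As written, your upper bound defers the entire content of the theorem to an unproved conditional expectation bound and points toward a decoupling strategy that does not work at $h=0$.
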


	\begin{remark}
		In light of Lupu's coupling (\cite[Proposition 2.1]{lupu2016loop}) between the GFF and the critical loop soup (see (\ref{eq_1.7})), the analogue of Theorem \ref{theorem1} also holds for the critical loop soup cluster on $\widetilde{\mathbb{Z}}^d$ ($d>6$).
	\end{remark}

	The parallel result of Theorem \ref{theorem1} for the critical bond percolation was conjectured to be true for $d>6$ (see e.g.  Grimmett \cite[Chapter 10]{grimmett1999percolation}). Up to now, this conjecture was only proved partially. In \cite{kozma2011arm}, Kozma and Nachmias  proved it under the following assumptions: (i) $d>6$; (ii) when $p=p_c(d)$ (where $p_c$ is the critical percolation probability), the \textit{two-point function} $\mathbb{P}[x\xleftrightarrow[]{}y]$ satisfies $\mathbb{P}[x\xleftrightarrow[]{}y]\asymp |x-y|^{2-d}$ (``$f\asymp g$'' means $\exists$ constants $C'>C>0$ such that $Cg\le f\le C'g$). Extensive efforts have been made on proving the assumption (ii) (see e.g. Fitzner and van der Hofstad \cite{fitzner2017mean}, Hara and Slade \cite{hara1990mean}). Currently, the best result is given by \cite{fitzner2017mean}, which confirms Assumption (ii) for $d\ge 11$. It is worth mentioning that Hara, Slade and van der Hofstad \cite{hara2003critical} verified Assumption (ii) for the percolation on the sufficiently spread-out lattice for $d>6$, where two non-adjacent points within a bounded distance can also form an edge opened with a polynomial decaying probability (with respect to the distance). In summary, for $7\le d\le 10$, the counterpart of Theorem \ref{theorem1} for bond percolation remains open.

Let us turn back to the case of the GFF on the metric graph $\widetilde{\mathbb{Z}}^d$. In \cite[Section 5]{werner2021clusters}, Werner asserted that in high dimensions (i.e. $d>6$), the GFF on $\widetilde{\mathbb{Z}}^d$
becomes asymptotically independent and thus shares similar behavior with
the bond percolation. Our Theorem \ref{theorem1} confirms his conjecture and heuristics from the perspective of the one-arm probability, and in fact our proof of Theorem \ref{theorem1} hugely benefits from some of the many valuable ideas presented in \cite{werner2021clusters} (see e.g. Section \ref{section_proof_WW} for more detailed discussions). However, compared to the bond percolation, GFF has a considerable polynomial correlation between the values of different sites, which causes numerous technical difficulties. How to handle the correlation and build local independence is the core of proving Theorem \ref{theorem1}. In earlier works on the level-set of GFF (see e.g. \cite{drewitz2018sign,drewitz2014chemical,popov2015decoupling,rodriguez2013phase}), one usually employs the idea of decoupling to obtain the desired independence, along which usually one has to slightly change the threshold for the level-set in order to get an inequality in the desired direction. Since we work at the criticality, it is essential that we stick to the critical threshold throughout our analysis. In order to address this challenge, we will follow the Kozma--Nachmias framework as in \cite{kozma2011arm} in the overview level, and in its implementation, we combine in a novel way many ingredients such as tree expansion, exploration processes, multi-scale analysis, and so on.

An interesting direction for future research is to establish the existence of the incipient infinite cluster (IIC) for the GFF on $\widetilde{\mathbb{Z}}^d$ for $d>6$, which can be understood as the critical cluster under the conditioning of growing to infinity. Provided with the construction of the IIC, it would then be natural to study its geometric properties, including the two-point function, the dimension and the random walk on the IIC. We remark that there have been numerous studies on the IIC of the bond percolation ($d\ge 19$) and the percolation on the sufficiently spread-out lattice ($d>6$). Notably, van der Hofstad and J{\'a}rai \cite{van2004incipient} and Heydenreich, van der Hofstad and Hulshof \cite{heydenreich2014high} constructed the IIC through applications of lace expansion, with also a computation of the two-point function. Additionally, van Batenburg \cite{10.1214/ECP.v20-3570} computed the mass dimension and volume growth exponent of the IIC. Furthermore, Kozma and Nachmias \cite{kozma2009alexander} studied the random walk on the IIC, and computed its spectral dimension as well as the diameter and range for the random walk thereon. See also Heydenreich, van der Hofstad and Hulshof \cite{Heydenreich2012RandomWO} for further progress on this.

	\section{Preliminaries}\label{section_notations}

	For the convenience and preciseness of exposition, we record some necessary notations, definitions and well-known results for random walks, Brownian motions, Gaussian free fields and loop soups in this section.

	\subsection{Graph, path and set}\label{fina_section_2.1}

	We denote by $\mathbb{N}$ (resp. $\mathbb{N}^+$) the set of non-negative integers (resp. positive integers). We also denote by $\mathbb{R}^+$ the set of positive real numbers. For any $x, y\in \mathbb{Z}^d$, we write $x\sim y$ if $\{x,y\}\in \mathbb{L}^d$. Recall that $\widetilde{\mathbb{Z}}^d=\cup_{e\in \mathbb{L}^d}I_{e}$, where each $I_e=I_{\{x,y\}}$ is an interval with length $d$ and endpoints $x,y$. Note that $\mathbb{Z}^d$ is a subset of $\widetilde{\mathbb{Z}}^d$. For any $v_1,v_2\in I_e$, we denote the sub-interval of $I_e$ with endpoints $v_1$ and $v_2$ by $I_{[v_1,v_2]}$. For any $t\in [0,1]$ and any $x,y\in \mathbb{Z}^d$ with $x\sim y$, let $x+t\cdot (y-x)$ be the point in $I_{\{x,y\}}$ such that the length of the sub-interval $I_{[x,x+t\cdot (y-x)]}$ equals to $td$.

	For a subset $A\subset \widetilde{\mathbb{Z}}^d$, we write $|A|$ for the number of lattice points included in $A$. The diameter of $A$ is $\mathrm{diam}(A):=\sup_{v_1,v_2\in A\cap \mathbb{Z}^d}|v_1-v_2|$. The boundary of $A$ is defined as 
	$\partial A:= \{x\in A:\exists y\in \mathbb{Z}^d\setminus A\ \text{such that}\ x\sim y  \}$.

	 A (time-parametrized) path on $\mathbb{Z}^d$ is a function $\eta:\left[0,T \right) \to \mathbb{Z}^d$ where $T\in \mathbb{R}^+$ (or $T=\infty$) such that there exist $m\in \mathbb{N}$ (or $m=\infty$), $0=T_0<...<T_{m+1}=T$ and $\{x_i\}_{0\le i<m+1 }\subset \mathbb{Z}^d$ with $x_i\sim x_{i+1}$ for all $0\le i< m$ such that 
		$$
		\eta(t)=x_i,\  \forall 0\le i< m+1,  t\in \left[T_i,T_{i+1} \right).
		$$
	For such a path $\eta$, its length is $\mathrm{len}(\eta)=m$. Note that if $m=0$, $\eta$ is also a path (although it contains only one point) and its length is $0$. The range of $\eta$ is $\mathrm{ran}(\eta):=\{x_0,...,x_m\}$. For $0\le i< m+1$, we say that $T_i(\eta)$ is the $i$-th jumping time, $\eta^{(i)}:=x_i$ is the $i$-th position, and $H_i(\eta):=T_{i+1}(\eta)-T_i(\eta)$ is the $i$-th holding time.


	A path on $\widetilde{\mathbb{Z}}^d$ is a continuous function $\widetilde{\eta}:\left[0,T \right) \to \widetilde{\mathbb{Z}}^d$, $T\in \mathbb{R}^+\cup \{\infty\}$. When $T$ is finite, we may denote $\widetilde{\eta}(T)=\lim_{t\to T}\widetilde{\eta}(t)$. From now, we always use the notation $\eta$ for a path on $\mathbb{Z}^d$ and $\widetilde{\eta}$ for a path on $\widetilde{\mathbb{Z}}^d$. With a slight abuse of notations, let $\mathrm{ran}(\widetilde{\eta}):=\{\widetilde{\eta}(t):0\le t< T\}$ be the range of $\widetilde{\eta}$. For $0\le t_1<t_2\le T$, we define the sub-path $\widetilde{\eta}\left[t_1,t_2 \right) : \left[0,t_2-t_1 \right) \to \widetilde{\mathbb{Z}}^d$ of $\widetilde{\eta}$ as $$
	\widetilde{\eta}\left[t_1,t_2 \right) (s)= \widetilde{\eta}(s+t_1), \ \ \forall s\in \left[0,t_2-t_1 \right) . 
	$$



%
%
%


%
%



	For any subsets $A_1,A_2,F\subset \widetilde{\mathbb{Z}}^d$, we say $A_1$ and $A_2$ are connected by $F$ if either $A_1\cap A_2\neq \emptyset$, or there is a path $\widetilde{\eta}$ contained in $F$ that intersects $A_1$ and $A_2$. We write this connection relation as $A_1\xleftrightarrow[]{F}A_2$. Especially, when $A_i=\{v\}$ for some $i\in \{1,2\}$ and $v\in \widetilde{\mathbb{Z}}^d$, we may omit the braces.


	For any $x\in \mathbb{Z}^d$ and $N>0$, let $B_x(N):= \left\lbrace y\in \mathbb{Z}^d: |x-y|\le N\right\rbrace$ be the box in $\mathbb{Z}^d$ with center $x$ and side length $2\lfloor N \rfloor$. We also define the box in $\widetilde{\mathbb{Z}}^d$ as follows:
	\begin{equation}
		\widetilde{B}_x(N):=\bigcup_{y_1,y_2\in B_x(N):y_1\sim y_2,\{y_1,y_2\}\cap B(N-1)\neq\emptyset}I_{\{y_1,y_2\}}.
	\end{equation}
	Note that any interval $I_{\{y_1,y_2\}}$ with $y_1,y_2\in \partial B(N)$ is not contained in $\widetilde{B}_x(N)$. Especially, when $x$ is exactly the origin, we may omit the subscript and write $B(N):=B_{\bm{0}}(N)$, $\widetilde{B}(N):=\widetilde{B}_{\bm{0}}(N)$.


	%

	\subsection{Statements about constants} We use notations $C,C',c,c',...$ for the local constants with values changing according to the context. The numbered notations $C_1,C_2,c_1,c_2,...$ are used for global constants, which are fixed throughout the paper. We usually use the upper-case letter $C$ (maybe with some superscript or subscript) for large constants and use the lower-case $c$ for small ones. In addition, we may also use some other letters such as $K,\lambda,\delta...$ for constants. When a constant depends on some parameter or variable, we will point it out in brackets. A constant without additional specification can only depend on the dimension $d$.

	\subsection{Stretched exponential and sub-polynomial functions}
	
	We say a function $f(\cdot)$ is stretched exponentially small if there exist constants $C,c,\delta>0$ such that $f(n)\le Ce^{-cn^{\delta}}$ for all $n\ge 1$. If $f(\cdot)$ is stretched exponentially small, we may write ``$f(\cdot)=\text{s.e.}(\cdot)$''. We also say a function is super-polynomially small if there exist constants $C,c,\delta>0$ such that $f(n)\le Ce^{-c\log^{1+\delta}(n)}$ for all $n\ge 1$. Similarly, we may use the notation ``$f(\cdot)=\text{s.p.}(\cdot)$'' for such a function.

	\subsection{Random walk, bridge measure, stopping time and capacity}
	
	Recall that $\mathbb{P}_x$ is the law of the continuous-time simple random walk $\{S_t\}_{t\ge 0}$ starting from $x$. For $i\in \mathbb{N}$, let $T_i$ be the $i$-th jumping time, $S^{(i)}$ be the $i$-th position and $H_i$ be the $i$-th holding time (note that $\{S_t\}_{t\ge 0}$ is a.s. a path on $\mathbb{Z}^d$). Then $\mathbb{P}_x$ satisfies $\mathbb{P}_x(S^{(0)}=x)=1$ and $\mathbb{P}_x(S^{(n+1)}=y_2|S^{(n)}=y_1)=(2d)^{-1}\cdot \mathbbm{1}_{y_1\sim y_2}$. In addition, the holding times $\{H_i\}_{i\in \mathbb{N}}$ are independent exponential random variables with rate $1$. We denote the expectation under $\mathbb{P}_x$ by $\mathbb{E}_x$. If the starting point is exactly the origin, we may omit the subscript.

The transition probability is denoted by $p_t(x,y):= \mathbb{P}_x\left(S_t=y \right)$ for all $x,y\in \mathbb{Z}^d$ and $t\ge 0$. The (normalized) bridge measure $\mathbb{P}_{x,y}^t(\cdot)$ is the conditional distribution of $\{S_{t'}\}_{0\le t'\le t}$ (starting from $x$) given $\{S_t=y\}$.

	For $A\subset \mathbb{Z}^d$, we denote the first time that $\{S_t\}_{t\ge 0}$ intersects $A$ by $\tau_A:=\inf\{t\ge 0:S_t\in A\}$. We also denote the hitting time by $\tau_A^+:=\inf\{t\ge T_1:S_t\in A\}$. For completeness, we set $\inf\emptyset=\infty$. Especially, when $A=\{x\}$ for some $x\in \mathbb{Z}^d$, we may omit the brackets.

	For any non-empty subset $A\subset \mathbb{Z}^d$ and $x\in A$, the escape probability of $x$ with respect to $A$ is $\mathrm{esc}_A(x):=\mathbb{P}_x(\tau_A^+=\infty)$. The capacity of $A$ is defined as $\mathrm{cap}(A):=\sum_{x\in A}\mathrm{esc}_A(x)$. By Lawler and Limic \cite[Proposition 6.5.2]{lawler2010random}, one has 
	\begin{equation}\label{ineq_2.2}
		\mathrm{cap}\left( B(N)\right)    \asymp N^{d-2}. 
	\end{equation}

		\subsection{Brownian motion $\widetilde{S}_t$ on $\widetilde{\mathbb{Z}}^d$}\label{subsection_BM} $\{\widetilde{S}_t\}_{t\ge 0}$ is a continuous-time Markov process on $\widetilde{\mathbb{Z}}^d$. When $\widetilde{S}_t$ is in the interior of some interval $I_e$, it behaves as a one-dimensional standard Brownian motion. Every time when $\widetilde{S}_t$ visits a lattice point $x$, it will uniformly choose a segment from $\{I_{\{x,y\}}\}_{y\in \mathbb{Z}^d:y\sim x}$ and behave as a Brownian excursion from $x$ in this interval. Once there is an excursion hitting some $y$ with $x\sim y$, the next step continues as the same process from the new starting point $y$. The total local time of all Brownian excursions at $x$ in this single step (i.e. the part of $\widetilde{S}_t$ from $x$ to one of its neighbors $y$) is an independent exponential random variable with rate $1$. We denote the law of $\{\widetilde{S}_t\}_{t\ge 0}$ starting from $v\in \widetilde{\mathbb{Z}}^d$ by $\widetilde{\mathbb{P}}_v$. Let $\widetilde{\mathbb{E}}_v$ be the expectation under $\widetilde{\mathbb{P}}_v$. Further details about the construction of $\widetilde{S}_t$ can be found in Folz \cite{a42bc627-9189-3879-b777-a46c2fd3d133}.

		By the aforementioned construction,  given $\{S_t\}_{t\ge 0}\sim \mathbb{P}_x$, the range of the Brownian motion $\{\widetilde{S}_t\}_{t\ge 0}\sim \widetilde{\mathbb{P}}_x$ can be recovered by taking the union of all the edges traversed by $S_t$ as well as additional Brownian excursions at each $S^{(i)}$ (for $i\in \mathbb N$) where the excursions are conditioned on returning to $S^{(i)}$ before hitting one of its neighbors and the total local time at $S^{(i)}$ is $H_i$.

%

		For any $A\subset \widetilde{\mathbb{Z}}^d$, similar to $\tau_A$, we denote the first time that $\widetilde{S}_t$ intersects $A$ by $\widetilde{\tau}_A:=\inf\{t\ge 0:\widetilde{S}_t\in A \}$.


	\subsection{Loop, loop measure and loop soup}\label{section_loops}

	In this part, we introduce some basic definitions and properties about loops on both $\mathbb{Z}^d$ and $\widetilde{\mathbb{Z}}^d$. As we will discuss in Section \ref{section_main_tool}, the \textit{isomorphism theorem} for GFF in \cite{lupu2016loop} is one of the main tools in this paper. Loops are core elements of this tool. We hereby give a partial list of literatures about the isomorphism theorem (see Le Jan \cite{le2011markov}, Marcus and Rosen \cite{marcus2006markov}, Rosen \cite{rosen2014lectures} and Sznitman \cite{sznitman2012topics} for an excellent account on this topic). In its earlier form, isomorphism theorems connect the law of the Gaussian free field and local times for random walks (see e.g. Ray \cite{ray1963sojourn} and Knight \cite{knight1963random}, Dynkin \cite{dynkin1984gaussian}, Marcus and Rosen \cite{marcus1992sample}, Eisenbaum \cite{eisenbaum2006version} and Eisenbaum, Kaspi, Marcus, Rosen and Shi \cite{eisenbaum2000ray}). Extensions of isomorphism theorems were a topic of interest in the past decade, including for random interlacements by Sznitman \cite{sznitman2012isomorphism,sznitman2012topics} and for permanent processes by Fitzsimmons and Rosen \cite{fitzsimmons2014markovian}, Le Jan, Marcus and Rosen \cite{le2015permanental}. Of particular interest to our work is the isomorphism theorem discovered in Lupu \cite{lupu2016loop}, which developed the method in \cite{fitzsimmons2014markovian,le2015permanental} and presented a coupling where the sign clusters of the GFF on the metric graph are the same as the loop soup clusters at criticality (i.e., when the intensity $\alpha$ equals to $1/2$). The coupling of \cite{lupu2016loop} is very powerful and has inspired many subsequent works. It is also worth pointing out that a weaker form of this coupling was questioned in Ding \cite{ding2014asymptotics} and was proved in Zhai \cite{10.1214/18-EJP149} (which was completed independently, after the completion of \cite{lupu2016loop}).

	\subsubsection{Time-parametrized loop on $\mathbb{Z}^d$} A (time-parametrized) rooted loop $\varrho$ on $\mathbb{Z}^d$ is a path on $\mathbb{Z}^d$ whose $0$-th position and $\mathrm{len}(\varrho)$-th position are the same point. We continue to use notations such as $T(\rho)$ and $T_i(\rho)$ for paths as introduced in Section \ref{fina_section_2.1}. Two rooted loops are equivalent if they equal to each other after a time-shift. Each equivalent class $\ell$ of such rooted loops are called a loop on $\mathbb{Z}^d$. As defined in \cite{le2011markov}, the loop measure $\mu$ on the space of rooted loops is 
	\begin{equation}
		\mu(\cdot) = \sum_{x\in \mathbb{Z}^d} \int_{0}^{\infty} t^{-1} \mathbb{P}_{x,x}^{t}(\cdot)p_t(x,x)dt.
	\end{equation}
	Referring to \cite[Section 2.3]{le2011markov}, $\mu$ is invariant under the time-shift. Thus, $\mu$ induces a measure on the space of loops, which is also denoted by $\mu$. Since $\mathrm{len}(\cdot)$ and $\mathrm{ran}(\cdot)$ are invariant under the time-shift, we can define the length and range of $\ell$ as $\mathrm{len}(\ell):=\mathrm{len}(\varrho)$ and $\mathrm{ran}(\ell):=\mathrm{ran}(\varrho)$ for any $\varrho\in \ell$.

	We cite some formulas about $\mu$ in \cite[Sections 2.1 and 2.3]{le2011markov} as follows. For any integer $k\ge 2$, any $0=t_0<t_1<...<t_k<t$ and any sequence of lattice points $x_0,...,x_k$ with $x_0=x_k$ and $x_i\sim x_{i+1}$ for all $0\le i\le k-1$, one has 
	\begin{equation}\label{fina_2.4}
		\begin{split}
			&\mu\left(T(\varrho)\in dt\  \text{and}\ \forall 0\le i\le k=\mathrm{len}(\varrho), \varrho^{(i)}=x_i,T_i(\varrho)\in dt_i    \right)\\
			= &t^{-1}e^{-t} \left(2d\right)^{-k} dt_1...dt_kdt.
		\end{split}
	\end{equation}
    For each aforementioned sequence $x_0,...,x_k$, its multiplicity $J=J(x_0,...,x_k)$ is the maximal integer such that the sub-sequences $(x_{(j-1)kJ^{-1}},x_{(j-1)kJ^{-1}+1},...,x_{jkJ^{-1}})$ for $1\le j\le J$ are identical. Then we have 
    \begin{equation}\label{fina_2.5}
    	\mu\left(\{\ell : \exists \varrho\in \ell\ \text{such that}\ \forall 0\le i\le k=\mathrm{len}(\varrho), \varrho^{(i)}=x_i\} \right)= J^{-1}(2d)^{-k}. 
    \end{equation}
	In addition, for any $x\in \mathbb{Z}^d$ and $t>0$, 
	\begin{equation}\label{fina_point_loop}
		\mu\left( \mathrm{len}(\varrho)=0,\varrho^{(0)}=x, T\in dt\right)=  t^{-1}e^{-t}dt.
	\end{equation}

	For any $\alpha>0$, the \textit{loop soup} $\mathcal{L}_{\alpha}$ is defined as the Poisson point process in the space of loops on $\mathbb{Z}^d$ with intensity measure $\alpha\mu$.

		\subsubsection{Continuous loop on $\widetilde{\mathbb{Z}}^d$}\label{subsection_continuous}

	In this subsection, we review the construction of continuous loop, loop measure and loop soup introduced in \cite{lupu2016loop}. We only focus on the case of $\widetilde{\mathbb{Z}}^d$ here, and we refer to \cite[Section 2]{lupu2016loop} for more details on the construction for general graphs.

	A rooted loop on $\widetilde{\mathbb{Z}}^d$ is a path $\widetilde{\varrho}:\left[0,T \right) \to \widetilde{\mathbb{Z}}^d$ such that $\widetilde{\varrho}(0)=\widetilde{\varrho}(T)$. Similarly, a loop on $\widetilde{\mathbb{Z}}^d$ is an equivalent class of rooted loops such that each of them can be transformed into another by a time-shift. In this paper, we use the notations $\eta$, $\varrho$ and $\ell$ for a path, a rooted loop and a loop on $\mathbb{Z}^d$ respectively. We also use $\widetilde{\eta}$, $\widetilde{\varrho}$ and $\widetilde{\ell}$ for their counterparts on $\widetilde{\mathbb{Z}}^d$. Since $\mathrm{ran}(\widetilde{\varrho})$ is invariant for all $\widetilde{\varrho}\in \widetilde{\ell}$, we denote the range of $\widetilde{\ell}$ by $\mathrm{ran}(\widetilde{\ell}):=\mathrm{ran}(\widetilde{\varrho})$ for some $\widetilde{\varrho}\in \widetilde{\ell}$.


	In fact, the loops on $\mathbb{Z}^d$ and $\widetilde{\mathbb{Z}}^d$ can be divided into the following types: 
	\begin{enumerate}
		
		\item fundamental loop: a loop that visits at least two lattice points;

		\item point loop: a loop that visits exactly one lattice point;

		\item edge loop (only for loops on $\widetilde{\mathbb{Z}}^d$): a loop that is contained by a single interval $I_e$ and visits no lattice point.
		
	\end{enumerate}

By the method in  \cite{fitzsimmons2014markovian}, one can use $\{\widetilde{S}_t\}_{t\ge 0}$ to contruct a measure $\widetilde{\mu}$ on the space of the continuous loops on $\widetilde{\mathbb{Z}}^d$. For each $\alpha>0$, the loop soup on $\widetilde{\mathbb{Z}}^d$ of parameter $\alpha$, denoted by $\widetilde{\mathcal{L}}_\alpha$, is the Poisson point process with intensity measure $\alpha \widetilde{\mu}$. Actually, we always focus on $\widetilde{\mathcal{L}}_{1/2}$ in this paper. Let $\widetilde{\mathcal{L}}_{1/2}^{\mathrm{f}}$ (resp. $\widetilde{\mathcal{L}}_{1/2}^{\mathrm{p}}$, $\widetilde{\mathcal{L}}_{1/2}^{\mathrm{e}}$) be the point measure composed of fundamental loops (resp. point loops, edge loops) in $\widetilde{\mathcal{L}}_{1/2}$. We also denote by $\mathcal{L}_{1/2}^{\mathrm{f}}$ (resp. $\mathcal{L}_{1/2}^{\mathrm{p}}$) the counterpart of $\widetilde{\mathcal{L}}_{1/2}^{\mathrm{f}}$ (resp. $\widetilde{\mathcal{L}}_{1/2}^{\mathrm{p}}$) for $\mathcal{L}_{1/2}$. By the thinning property of Possion point processes, $\widetilde{\mathcal{L}}_{1/2}^{\mathrm{f}}$, $\widetilde{\mathcal{L}}_{1/2}^{\mathrm{p}}$ and $\widetilde{\mathcal{L}}_{1/2}^{\mathrm{e}}$ (resp. $\mathcal{L}_{1/2}^{\mathrm{f}}$ and $\mathcal{L}_{1/2}^{\mathrm{p}}$) are independent.

For the sake of brevity, we do not distinguish a point measure $\mathcal{L}$ from the support of $\mathcal{L}$ in notation. Hence, we may write ``$\widetilde{\ell} \in \mathcal{L}$'' for ``$\widetilde{\ell}$ is in the support of $\mathcal{L}$''.

In what follows, we review a construction of $\widetilde{\mathcal{L}}_{1/2}$ in \cite[Section 2]{lupu2016loop}, by which  $\widetilde{\mathcal{L}}_{1/2}$ can be obtained by adding Brownian excursions to the loops in $\mathcal{L}_{1/2}$.

	\textbf{For $\widetilde{\mathcal{L}}_{1/2}^{\mathrm{f}}$:} There is a coupling of $\mathcal{L}_{1/2}^{\mathrm{f}}$ and $\widetilde{\mathcal{L}}_{1/2}^{\mathrm{f}}$, which is equipped with a one-to-one mapping $\pi$ between their loops (from $\mathcal{L}_{1/2}^{\mathrm{f}}$ to $\widetilde{\mathcal{L}}_{1/2}^{\mathrm{f}}$). Moreover, given $\mathcal{L}_{1/2}^{\mathrm{f}}$, the range of each $\pi(\ell)$ for $\ell \in \mathcal{L}_{1/2}^{\mathrm{f}}$ can be recovered as follows (this is parallel to the discussions at the end of Section \ref{subsection_BM}). Arbitrarily take $\varrho\in \ell$. For each $0\le i\le \mathrm{len}(\varrho)$, let $\mathcal{B}_i$ be the union of Brownian excursions with total local time $H_i(\varrho)$, starting from $\varrho^{(i)}$ and conditioning on hitting $\varrho^{(i)}$ before its lattice neighbors. Note that $\mathcal{B}_i$ has the same distribution as $\cup_{z\in \mathbb{Z}^d:z\sim \varrho^{(i)}}I_{[\varrho^{(i)},\ \varrho^{(i)}+d^{-1}M_{i}^{z}\cdot (z-\varrho^{(i)})]}$, where $M_{i}^{z}$ is the maximum of the square of a Bessel-$0$ process with initial value $H_i(\varrho)$, conditioning on hitting $0$ before time $d$. Then the range of $\pi(\ell)$ is 
	$$
	  \bigcup_{0\le i\le \mathrm{len}(\varrho)-1 } I_{\{\varrho^{(i)},\varrho^{(i+1)}\}} \cup \bigcup_{0\le i\le \mathrm{len}(\varrho) } \mathcal{B}_i. 
	$$
 As a corollary, one has that a.s. 
		$$
	\mathrm{ran}(\ell) \subset \mathrm{ran}(\pi(\ell))\subset \cup_{x\in \mathrm{ran}(\ell) } \widetilde{B}_x(1). 
	$$

	\textbf{For $\widetilde{\mathcal{L}}_{1/2}^{\mathrm{p}}$:} Recall that the distribution of loops in $\mathcal{L}_{1/2}^{\mathrm{p}}$ is given by (\ref{fina_point_loop}). For any $x\in \mathbb{Z}^d$, let $\gamma_x^{\mathrm{p}}$ be the union of ranges of loops in $\widetilde{\mathcal{L}}_{1/2}^{\mathrm{p}}$ including $x$. Given the total holding time $H_x$ of loops in $\mathcal{L}_{1/2}^{\mathrm{p}}$ including $x$, then $\gamma_x^{\mathrm{p}}$ has the same distribution as the union of Brownian excursions with total local time $H_x$, starting from $x$ and conditioning on hitting $x$ before its lattice neighbors.

%
%
%
%
%
%
%
%
%
%
%
%
%
	
	\textbf{For $\widetilde{\mathcal{L}}_{1/2}^{\mathrm{e}}$:} For any $\{x,y\}\in \mathbb{L}^d$, we denote by $\gamma_{\{x,y\}}^{\mathrm{e}}$ the union of ranges of loops in $\widetilde{\mathcal{L}}_{1/2}^{\mathrm{e}}$ whose range is contained in $I_{\{x,y\}}$. Each $\gamma_{\{x,y\}}^{\mathrm{e}}$ has the same distribution as the non-zero points of a standard Brownian bridge in $I_{\{x,y\}}$ of length $d$, from $0$ at $x$ to $0$ at $y$.


	\subsubsection{Decomposition of loops on $\widetilde{\mathbb{Z}}^d$}\label{subsection_decomposition}

	We present an approach to decompose a loop $\widetilde{\ell}$. This decomposition is a continuous analogue of that introduced in Chang and Sapozhnikov \cite[Section 2.3]{chang2016phase} and is closely related to the \textit{spatial Markov property} of (both discrete and continuous) loop soups. Further discussions about this property can be found in Werner \cite{werner2016spatial}.


	 For two disjoint subsets $A_1,A_2\subset \mathbb{Z}^d$, consider a mapping $L(A_1,A_2)$ as follows. For a loop $\widetilde{\ell}$, define $L(A_1,A_2)(\widetilde{\ell})$ as the collection of rooted loops $\widetilde{\varrho}:\left[0,T \right) \to \widetilde{\mathbb{Z}}^d$ in the equivalence class $\widetilde{\ell}$ such that 
	\begin{itemize}
		\item $\widetilde{\varrho}(0)\in A_1$;
		
		\item $\exists t\in (0,T)$ such that $\widetilde{\varrho}(t)\in A_2$ and for all $t'\in (t,T)$, $\widetilde{\varrho}(t')\notin A_1\cup A_2$. 
	\end{itemize}
	Note that $\widetilde{\ell}$ intersects $A_1$ and $A_2$ if and only if $L(A_1,A_2)(\widetilde{\ell})\neq \emptyset$. For each $\widetilde{\varrho} \in L(A_1,A_2)(\widetilde{\ell})$, one can define a sequence of stopping time as follows:
	\begin{enumerate}
		\item $\widetilde{\tau}_0=0$; 
		
		\item $\forall k\ge 0$, $\widetilde{\tau}_{2k+1}:= \inf\{t>\widetilde{\tau}_{2k}: \widetilde{\varrho}(t)\in A_2\}$; 
		
		\item $\forall k\ge 0$, $\widetilde{\tau}_{2k+2}:= \inf\{t>\widetilde{\tau}_{2k+1}: \widetilde{\varrho}(t)\in A_1\}$.
		
	\end{enumerate}
	Let $\kappa(\widetilde{\varrho})=\kappa(\widetilde{\varrho};A_1,A_2)$ be the unique integer such that $\widetilde{\tau}_{2\kappa}=T$. Since $\kappa(\widetilde{\varrho})$ is constant for all $\widetilde{\varrho}\in L(A_1,A_2)(\widetilde{\ell})$, we also denote it by $\kappa(\widetilde{\ell})$. Note that $2\kappa(\widetilde{\ell})$ is the number of excursions in $\widetilde{\ell}$ between $A_1$ and $A_2$. For $1\le i\le \kappa(\widetilde{\varrho})$, we define the $i$-th forward crossing path as the sub-path $\widetilde{\eta}^\mathrm{F}_i:= \widetilde{\varrho}\left[ \widetilde{\tau}_{2i-2},\widetilde{\tau}_{2i-1}\right) $, and define the $i$-th backward crossing path as the sub-path  $\widetilde{\eta}^\mathrm{B}_i:= \widetilde{\varrho}\left[ \widetilde{\tau}_{2i-1},\widetilde{\tau}_{2i}\right) $. In fact, for any $\widetilde{\varrho}_1,\widetilde{\varrho}_2\in L(A_1,A_2)(\widetilde{\ell})$, the sequences of the forward crossing paths (also backward crossing paths) of $\widetilde{\varrho}_1$ and $\widetilde{\varrho}_2$, say $\{\widetilde{\eta}^\mathrm{F}_{1,i}\}_{1\le i\le \kappa(\widetilde{\ell})}$ and $\{\widetilde{\eta}^\mathrm{F}_{2,i}\}_{1\le i\le \kappa(\widetilde{\ell})}$, are identical to each other under an index translation. I.e., there is an integer $a_*\in [1,\kappa(\widetilde{\ell})-1]$ such that $\widetilde{\eta}^\mathrm{F}_{1,i}=\widetilde{\eta}^\mathrm{F}_{2,i_*}$	for all $1\le i\le \kappa(\widetilde{\ell})$, where $i_*\equiv i+a_*\  \text{mod}\ \kappa(\widetilde{\ell})$. Note that only forward crossing paths can intersect $A_1$ and no backward crossing path can. I.e., $\mathrm{ran}(\widetilde{\ell})\cap A_1= \cup_{i=1}^{\kappa(\widetilde{\ell})} \mathrm{ran}(\widetilde{\eta}^\mathrm{F}_i)\cap A_1$. See Figure \ref{fig1} for an illustration for this decomposition.

\begin{figure}[h]
	\centering
	\includegraphics[width=10cm]{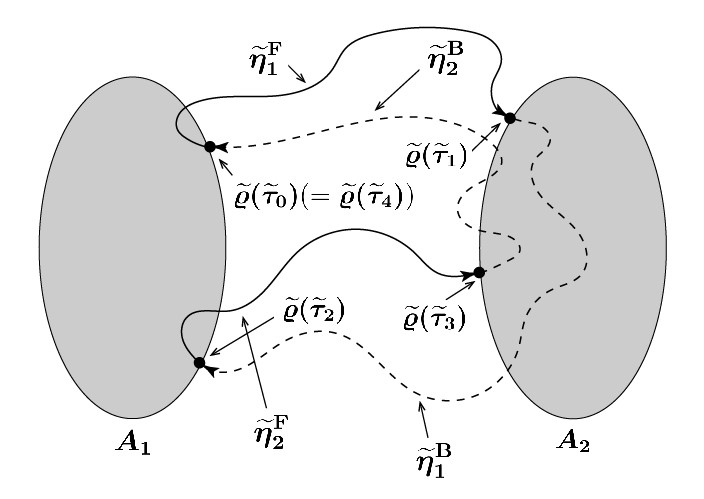}
	\caption{An illustration for the decomposition of a loop $\widetilde{\ell}$ with $\kappa(\widetilde{\ell};A_1,A_2)=2$. $\widetilde{\varrho}$ is a rooted loop in $L(A_1,A_2;\widetilde{\ell})$. }
	\label{fig1}
\end{figure}

	For any loop $\ell$, as the counterpart of $\kappa(\widetilde{\ell})$, we also define $\kappa(\ell)=\kappa(\ell;A_1,A_2)$ as the integer such that $2\kappa(\ell)$ is the number of excursions in $\ell$ between $A_1$ and $A_2$. By the relation between the loops on $\mathbb{Z}^d$ and $\widetilde{\mathbb{Z}}^d$ presented in Section \ref{subsection_continuous}, we have: for any $j\in \mathbb{N}^+$, 
	\begin{equation}\label{fina2.7}
		\widetilde{\mu}\Big(\big\{\widetilde{\ell}:\kappa(\widetilde{\ell};A_1,A_2 )=j\big\} \Big)=\mu\Big(\big\{\ell:\kappa(\ell;A_1,A_2 )=j\big\} \Big). 
	\end{equation}

	 We define a sequence of stopping times for the simple random walk as follows. For $\{S_{t}\}_{t\ge 0} \sim \mathbb{P}_x$ with $x\in \partial A_1$, we set $\hat{\tau}_0:=0$. For $i\in \mathbb{N}^+$, let $\hat{\tau}_{2i-1}:=\inf\{t>\hat{\tau}_{2i-2}:S_t\in A_2\}$ and $\hat{\tau}_{2i}:=\inf\{t>\hat{\tau}_{2i-1}:S_t\in A_1\}$. The following lemma is useful to the subsequent proof. 
\begin{lemma}[{\cite[Proposition 27 and  Exercise 31]{le2011markov}}]\label{fina_lemma2.1}
For any disjoint subsets $A_1,A_2\subset \mathbb{Z}^d$ and $j\in \mathbb{N}^+$,  
\begin{equation}
	\mu\left(\{\ell:\kappa(\ell;A_1,A_2 )=j\} \right) = j^{-1} \sum_{x\in \partial A_1}  \mathbb{P}_x\left(S_{\hat{\tau}_{2k}}=x \right).  
\end{equation}
\end{lemma}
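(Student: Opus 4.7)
The plan is to prove the identity by matching contributions on both sides in terms of the discrete skeletons of loops, using the explicit loop-measure formula (\ref{fina_2.5}) on the left and a path-sum expansion of the random-walk probability on the right. The whole argument boils down to a clean bookkeeping of rootings versus cyclic multiplicities.

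The first step is to enumerate the left-hand side. Since $\kappa(\ell;A_1,A_2)\geq 1$ forces $\ell$ to be a fundamental loop (in particular $k(\ell)\geq 2$), formula (\ref{fina_2.5}) applies directly to each discrete skeleton, and summing over equivalence classes of unrooted cyclic sequences gives
\begin{equation*}
  \mu\big(\{\ell : \kappa(\ell;A_1,A_2) = j\}\big) = \sum_{\ell} J(\ell)^{-1} (2d)^{-k(\ell)},
\end{equation*}
where the sum runs over unrooted discrete cyclic loops with exactly $j$ forward and $j$ backward crossings between $A_1$ and $A_2$, and $J(\ell)$ is the multiplicity of the cyclic sequence as in (\ref{fina_2.5}).

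The second step is to expand the right-hand side as a sum over discrete paths. Since the continuous-time random walk $\{S_t\}$ has jumps chosen uniformly from its $2d$ neighbors independently of the exponential holding times, and since $\hat{\tau}_{2j}$ depends only on the jump skeleton,
\begin{equation*}
  \sum_{x \in \partial A_1} \mathbb{P}_x\big(S_{\hat{\tau}_{2j}} = x\big) = \sum_{(y_0, \ldots, y_k)} (2d)^{-k},
\end{equation*}
where the sum is over rooted discrete nearest-neighbor paths with $y_0 = y_k \in \partial A_1$, consisting of $j$ alternating forward and backward excursions between $A_1$ and $A_2$, and terminating precisely as the path returns to $A_1$ for the $j$-th time. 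Each such rooted path is exactly the discrete skeleton of an element of $L(A_1, A_2)(\ell)$ for a unique unrooted loop $\ell$ with $\kappa(\ell) = j$, rooted at the start of one of its $j$ forward excursions.

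The third step is the rooting count. Given an unrooted $\ell$ with multiplicity $J(\ell)$, any period of its cyclic sequence must permute forward-excursion starts among themselves, forcing $J(\ell) \mid j$; the $j$ candidate forward-excursion rootings then partition into $j/J(\ell)$ orbits of size $J(\ell)$, each orbit yielding a single distinct rooted discrete sequence. Combining the two previous displays,
\begin{equation*}
  \sum_{x \in \partial A_1} \mathbb{P}_x\big(S_{\hat{\tau}_{2j}} = x\big) = \sum_{\ell} \frac{j}{J(\ell)}\, (2d)^{-k(\ell)} = j \cdot \mu\big(\{\ell : \kappa(\ell;A_1,A_2) = j\}\big),
\end{equation*}
which gives the claim upon rearrangement. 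The main delicate point is precisely this multiplicity bookkeeping: one must verify that the cyclic symmetry of the skeleton respects the forward/backward excursion structure, so that $J(\ell)\mid j$ and the orbit count is exactly $j/J(\ell)$; the rest of the argument is just pairing the two enumerations term by term.
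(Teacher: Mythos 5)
Your argument is correct. Note that the paper does not prove this lemma at all --- it is quoted directly from Le Jan's lecture notes --- so there is no in-paper proof to compare against; what you have written is a self-contained combinatorial derivation of the cited result, and it is essentially the standard one (the $j^{-1}$ on the left versus the $J^{-1}$ in (\ref{fina_2.5}) is exactly the ``rooted versus unrooted'' correction that underlies Le Jan's Proposition 27). The one genuinely delicate point, the multiplicity bookkeeping, is handled properly: the $j$ admissible roots are precisely the return-to-$A_1$ positions, an intrinsically defined subset of the cyclic skeleton, so the period-$k/J$ rotation group of order $J$ acts on them, acts freely (a nontrivial rotation fixes no position), and two roots yield the same rooted sequence iff they lie in the same orbit; hence each skeleton contributes exactly $j/J(\ell)$ distinct rooted paths of weight $(2d)^{-k(\ell)}$ to the right-hand side, which is what the identity requires. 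Two cosmetic remarks: the statement's $S_{\hat{\tau}_{2k}}$ is a typo for $S_{\hat{\tau}_{2j}}$, which you correctly read as such; and it is worth saying explicitly (as you implicitly use) that the roots automatically lie in $\partial A_1$ because each is the first point of $A_1$ hit by an excursion arriving from $A_2$, so its predecessor lies outside $A_1$.
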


For any $N\ge 1$, $x\in \partial B(N)$ and $A\subset B(\frac{N}{2})$, by \cite[Proposition 6.5.2]{lawler2010random} and  (\ref{ineq_2.2}),  
	 \begin{equation}\label{fina2.8}
	 		\mathbb{P}_z\left(\tau_A<\infty \right)\asymp \mathrm{cap}(A)N^{2-d}. 
	 \end{equation}
Taking $A_1= A$ and $A_2=\partial B(N)$, by (\ref{fina2.7}), Lemma \ref{fina_lemma2.1} and (\ref{fina2.8}), we get the following corollary, which is frequently-used in this paper. A similar result of this corollary can be found in \cite[Lemma 2.7]{chang2016phase}.

\begin{corollary}\label{corollary_2.2}
	For any $N\ge 1$, $A\subset B(\frac{N}{2})$ and $j\in \mathbb{N}^+$, we have 
			\begin{equation}\label{ineq_2.4}
			\bigg[ 	\widetilde{\mu}\Big(\big\{\widetilde{\ell}:\kappa(\widetilde{\ell};A,\partial B(N) )=j\big\} \Big)\bigg]^{j^{-1}}\asymp   \mathrm{cap}(A) N^{2-d}.
			\end{equation}
			As a direct consequence, one has 
			\begin{equation}\label{ineq_2.5}
				\widetilde{\mu}\left(\{\widetilde{\ell}:\mathrm{ran}(\widetilde{\ell})\cap A\neq\emptyset,\mathrm{ran}(\widetilde{\ell})\cap\partial B(N)\neq\emptyset \}\right)\asymp  \mathrm{cap}(A) N^{2-d}.
			\end{equation}
\end{corollary}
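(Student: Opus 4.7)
The plan is first to convert the statement about the continuous loop measure $\widetilde\mu$ into one about the discrete loop measure $\mu$ via \eqref{fina2.7}, then apply Lemma \ref{fina_lemma2.1} (with $A_1=A$, $A_2=\partial B(N)$) to rewrite the count of loops with $\kappa=j$ as a matrix trace. Explicitly,
\[
\mu(\{\ell:\kappa(\ell;A,\partial B(N))=j\}) \;=\; j^{-1}\,\mathrm{trace}(e^j),
\]
where the ``excursion matrix'' $e$ on $\partial A$ is defined by $e(x,y):=\mathbb{P}_x(S_{\hat\tau_2}=y)$. The whole problem then reduces to a sharp entry-wise estimate on $e$ and the iterates $e^j$.

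Next I would prove the two-sided bound $e(x,y) \asymp c\cdot \pi_A(y)$ for $x,y\in\partial A$, where $c:=\mathrm{cap}(A)\,N^{2-d}$ and $\pi_A(y):=\mathrm{esc}_A(y)/\mathrm{cap}(A)$ is the normalized equilibrium measure. The strategy is to decompose via the strong Markov property at $\hat\tau_1$: for $z:=S_{\hat\tau_1}\in\partial B(N)$ and $y\in A\subset B(\tfrac{N}{2})$ one has $|z-y|\asymp N$, hence $G(z,y)\asymp N^{2-d}$. Combined with a Harnack-type argument (building on the classical identity $\mathbb{P}_z(\tau_A<\infty)=\sum_{y\in A} G(z,y)\mathrm{esc}_A(y)$), this yields the sharp first-hit distribution estimate $\mathbb{P}_z(S_{\tau_A}=y,\tau_A<\infty)\asymp N^{2-d}\mathrm{esc}_A(y)$; summing over $z$ and using that a transient SRW from $x\in B(\tfrac{N}{2})$ a.s.\ hits $\partial B(N)$ gives the target. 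Because $\sum_{y\in\partial A}\pi_A(y)=1$, an immediate induction then yields $e^j(x,y)\asymp c^j\pi_A(y)$ (with $\asymp$ constants being the $j$-th powers of those at the single-step level); summing the diagonal and using $j^{-1/j}\in[e^{-1/e},1]$ produces $[\mu(\kappa=j)]^{1/j}\asymp c$, which is \eqref{ineq_2.4}.

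For \eqref{ineq_2.5}, the event decomposes as $\bigsqcup_{j\ge 1}\{\kappa=j\}$, so
\[
\widetilde{\mu}\bigl(\mathrm{ran}(\widetilde\ell)\cap A\neq\emptyset,\,\mathrm{ran}(\widetilde\ell)\cap\partial B(N)\neq\emptyset\bigr) \;=\; \sum_{j\ge 1} \mu(\kappa=j) \;\asymp\; \sum_{j\ge 1} c^j/j \;=\; -\log(1-c),
\]
and \eqref{ineq_2.2} combined with $A\subset B(\tfrac{N}{2})$ gives $c\le C\cdot 2^{2-d}$, which is bounded strictly below $1$ (with ample room since $d\ge 3$), so $-\log(1-c)\asymp c$. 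The main obstacle I anticipate is the sharp first-hit distribution estimate $\mathbb{P}_z(S_{\tau_A}=y,\tau_A<\infty)\asymp G(z,y)\mathrm{esc}_A(y)$: the upper bound $\mathbb{P}_z(S_{\tau_A}=y,\tau_A<\infty)\le G(z,y)/G(y,y)$ is immediate but too crude, so obtaining the refined factor $\mathrm{esc}_A(y)$ requires a Harnack-type argument showing that the first-hit distribution on $A$ from any $z$ at distance $\asymp N$ from $A$ is uniformly comparable to the equilibrium measure $\pi_A$, and this has to be carried out uniformly over possibly irregular subsets $A\subset B(\tfrac{N}{2})$.
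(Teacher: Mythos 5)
Your proposal takes essentially the same route as the paper's (one-line) proof: reduce to the discrete loop measure via \eqref{fina2.7}, apply Lemma \ref{fina_lemma2.1} to write $\mu(\kappa=j)$ as $j^{-1}\operatorname{trace}(e^j)$ for the excursion matrix $e$, and estimate $e(x,y)\asymp \mathrm{cap}(A)N^{2-d}\,\pi_A(y)$ by combining \eqref{fina2.8} with the standard comparison (valid precisely because $A\subset B(\frac{N}{2})$ and $z\in\partial B(N)$, i.e.\ $|z|\ge 2\,\mathrm{rad}(A)$) of the first-hit distribution on $A$ with the normalized equilibrium measure, which is the content of \cite[Lemma 2.7]{chang2016phase} that the paper cites. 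The only imprecision is in summing over $j$ for \eqref{ineq_2.5}: the claim that $c=\mathrm{cap}(A)N^{2-d}\le C2^{2-d}<1$ is not justified since the constant $C$ is uncontrolled, but the series is still geometrically summable because each excursion from $\partial B(N)$ back to $B(\frac{N}{2})$ has probability bounded away from $1$ uniformly in $N$ (so the row sums of $e$ are at most $1-c'$), and the conclusion stands.
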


	For any disjoint subsets $A_1,A_2\subset \mathbb{Z}^d$ and a sequence of paths $\widetilde{\eta}_i:\left[ 0,T^i\right) \to \widetilde{\mathbb{Z}}^d$ for $1\le i\le k$ such that $\widetilde{\eta}_i(0)\in \partial A_2$, $\mathrm{ran}(\widetilde{\eta}_i)\cap A_1=\emptyset$ and $\widetilde{\eta}_i(T^i)\in A_1$, let $\mathfrak{L}' = \mathfrak{L}'(\widetilde{\eta}_1,... \widetilde{\eta}_k)$ be the collection of loops $\widetilde{\ell}$ with $\kappa(\widetilde{\ell})=k$ such that there exists a rooted loop $\widetilde{\varrho}\in L(A_1,A_2)(\widetilde{\ell})$, whose backward crossing paths are exactly $\widetilde{\eta}_i$ for $1\le i\le k$. Unless otherwise stated, we assume that $\widetilde{\eta}_1,... \widetilde{\eta}_k$ are different from one another to avoid the issue of periodicity. This assumption will not make any essential difference since the loop measure of all loops violating this property is $0$.

	We conclude this section by presenting the following lemma.

%

\begin{lemma}\label{lemma_new_2.3}
	We keep the notations in the last paragraph. Under the measure $\widetilde{\mu}$, conditioning on $\{\widetilde{\ell} \in \mathfrak{L}'\}$, the forward crossing paths, say $\widetilde{\eta}_i^{\mathrm{F}}$ for $1\le i\le k$, are independently distributed. Moreover, for the forward crossing path that starts from $x_{i-1}^{+}:=\widetilde{\eta}_{i-1}(T^{i-1})$ (where $x_{0}^{+}:=\widetilde{\eta}_{k}(T^{k})$) and ends at $x_{i}^{-}:=\widetilde{\eta}_{i}(0)$, its conditional distribution is given by
	$  \widetilde{\mathbb{P}}_{x_{i-1}^{+}}( \ \cdot   \ \big|\widetilde{\tau}_{A_2}=\widetilde{\tau}_{x_{i}^{-}} )$.
\end{lemma}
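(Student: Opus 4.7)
The plan is to unpack the Le Jan representation of $\widetilde{\mu}$ and then invoke the strong Markov property of the Brownian motion $\widetilde{S}_t$ at the $2k-1$ intermediate stopping times that separate forward and backward crossings. First I would observe that any loop contributing to $\mathfrak{L}'$ must visit both $A_1$ and $A_2\subset\mathbb{Z}^d$, hence must be a fundamental loop, so that $\widetilde{\mu}|_{\mathfrak{L}'}$ admits the bridge representation $\sum_{x\in \mathbb{Z}^d}\int_0^\infty t^{-1}\widetilde{p}_t(x,x)\widetilde{\mathbb{P}}_{x,x}^t(\cdot)\,dt$ on rooted loops rooted at lattice points. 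Because the backward paths $\widetilde{\eta}_1,\ldots,\widetilde{\eta}_k$ are pairwise distinct, each unrooted $\widetilde{\ell}\in\mathfrak{L}'$ has a unique rooted representative starting at $\widetilde{\varrho}(0)=x_0^+$ whose $i$-th backward segment is $\widetilde{\eta}_i$; restricting attention to the rooting at $x_0^+$ therefore causes no overcounting and reduces the problem to analyzing the rooted-loop measure.

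Next I would decompose each trajectory at the $2k$ stopping times $\widetilde{\tau}_1<\ldots<\widetilde{\tau}_{2k}=T$. Applying the strong Markov property of $\widetilde{S}_t$ successively at these times, the $2k$ pieces become conditionally independent given their endpoints and durations, with each forward piece distributed as a Brownian motion from $x_{i-1}^+$ killed on first hitting $A_2$ and each backward piece distributed as a Brownian motion from $x_i^-$ killed on first hitting $A_1$. A change of variables $t\mapsto (t_1,\ldots,t_{2k})$ with $t=\sum_j t_j$ transforms the Le Jan density $t^{-1}\widetilde{p}_t(x_0^+,x_0^+)\,dt$ together with the Chapman–Kolmogorov convolution $\widetilde{p}_t(x_0^+,x_0^+)=\int \widetilde{p}_{t_1}(x_0^+,\cdot)\cdots \widetilde{p}_{t_{2k}}(\cdot,x_0^+)$ into a product of transition densities, absorbing the $t^{-1}$ factor against the one-dimensional constraint from $t=\sum_j t_j$. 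Alternatively, one can bypass this Jacobian bookkeeping by transferring the analogous discrete statement for $\mu$ on $\mathbb{Z}^d$ (which follows from the construction in \cite[Section 2.3]{chang2016phase} together with the usual strong Markov property for the discrete walk) through the coupling between $\mathcal{L}_{1/2}^{\mathrm{f}}$ and $\widetilde{\mathcal{L}}_{1/2}^{\mathrm{f}}$ from Section \ref{subsection_continuous}, using the fact that the added independent Brownian excursions at each lattice visit are $\sigma(A_1\cup A_2)$-measurable and distribute independently across forward and backward segments.

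Finally, imposing the conditioning $\widetilde{\ell}\in\mathfrak{L}'$ fixes the $k$ odd-indexed (backward) segments and their durations but leaves the $k$ forward segments mutually independent, each constrained only by its starting point $x_{i-1}^+$ and the identity of its first $A_2$-hitting point $x_i^-$. The law of the $i$-th forward piece is therefore a Brownian motion from $x_{i-1}^+$ killed on hitting $A_2$, conditioned on landing at $x_i^-$, i.e.\ $\widetilde{\mathbb{P}}_{x_{i-1}^+}(\cdot\mid \widetilde{\tau}_{A_2}=\widetilde{\tau}_{x_i^-})$ as claimed. The main technical obstacle is the $\sigma$-finiteness of $\widetilde{\mu}$ and the $t^{-1}$ factor in the bridge representation: one must check carefully that after the stopping-time decomposition, the product of bridge transition densities is precisely the product of unconditional Brownian motion laws (rather than bridges), which is exactly what the Chapman–Kolmogorov/Jacobian computation above or, more cleanly, the transfer from the discrete case accomplishes.
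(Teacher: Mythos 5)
Your proposal is correct and follows essentially the route the paper relies on: the paper omits the proof and defers to the spatial Markov property of \cite[Proposition 3]{werner2016spatial}, which is itself proved by exactly this decomposition at the stopping times $\widetilde{\tau}_1,\ldots,\widetilde{\tau}_{2k}$ combined with the (strong) Markov property, with the distinctness of the $\widetilde{\eta}_i$ guaranteeing a unique rooted representative so that the $t^{-1}$/multiplicity bookkeeping of the unrooted measure cancels. (Only a cosmetic slip: with the paper's indexing the forward segments $\widetilde{\varrho}[\widetilde{\tau}_{2i-2},\widetilde{\tau}_{2i-1})$ are the odd-indexed pieces and the backward ones are even-indexed, not the reverse.)
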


\cite[Proposition 3]{werner2016spatial} proved the following property of $\mathcal{L}_{1/2}$ on $\mathbb{Z}^d$. For any disjoint subsets $A_1,A_2\subset \mathbb{Z}^d$, conditioning on all excursions of loops in $\mathcal{L}_{1/2}$ that start from $A_1$, then intersect $A_2$ and finally return to $A_1$, the missing parts of the loops in $\mathcal{L}_{1/2}$ intersecting $A_1$ and $A_2$ can be sampled in two steps as follows: 
\begin{enumerate}

	\item   Suppose that the returning points and departure points of these excursions are $\{x_i\}_{i=1}^k$ and $\{y_i\}_{i=1}^k$ respectively. Sample a pairing $\{(x_i,y_{\sigma_i})\}_{i=1}^k$ (where $(\sigma_1,...,\sigma_k)$ is a permutation of $(1,...,k)$) with probability proportional to $\prod_{i=1}^kG_{A_2}\left(x_i,y_{\sigma_i}\right)$ (where $G_{A}(x,y):=\mathbb{E}_x\left(\int_{0}^{\tau_{A}}\mathbbm{1}_{S_t=y} dt \right)$ is the Green's function restricted in $\mathbb{Z}^d\setminus A$).

	\item   Given the pairing sampled above, the missing parts (i.e. the paths $\{\eta_i\}_{i=1}^k$ where $\eta_i$ starts from $x_i$, ends at $y_{\sigma_i}$ and does not intersect $A_2$) are independent, and in addition the law of each $\eta_i$ is given by $\mathbb{P}(\cdot  \mid \tau_{y_{\sigma_i}} < \tau_{A_2})$.

\end{enumerate}
In \cite{werner2016spatial}, it is also stated that the analogous proposition holds for $\widetilde{\mathcal{L}}_{1/2}$. In fact, this proposition provides even more information than Lemma \ref{lemma_new_2.3} since it not only ensures the independence between different remaining paths, but also describes the probabilities of various loop structures. Although Lemma \ref{lemma_new_2.3} and \cite[Proposition 3]{werner2016spatial} slightly differ in the definitions of excursions between disjoint subsets, their proofs are highly similar and thus we omit proof details for Lemma \ref{lemma_new_2.3}. More general statements for the spatial Markov property can be found in \cite[Section 3.3]{werner2016spatial}.

	\section{Main tools}\label{section_main_tool}

	\subsection{Isomorphism theorem: a coupling between GFF and loop soup} 
	
	In \cite{lupu2016loop}, Lupu showed a coupling between two continuous random fields on the metric graph: the GFF $\{\widetilde{\phi}_v\}_{v\in \widetilde{\mathbb{Z}}^d}$ and the occupation field $\{\widehat{\mathcal{L}}^v_{1/2}\}_{v\in \widetilde{\mathbb{Z}}^d}$ of the loop soup $\widetilde{\mathcal{L}}_{1/2}$. Precisely, for any $v\in \widetilde{\mathbb{Z}}^d$, $\widehat{\mathcal{L}}^v_{1/2}$ is the sum of local times of all loops in $\widetilde{\mathcal{L}}_{1/2}$ at $v$. In this paper, it is sufficient to note that the collection of $v\in \widetilde{\mathbb{Z}}^d$ with $\widehat{\mathcal{L}}^v_{1/2}>0$ is exactly $\cup \widetilde{\mathcal{L}}_{1/2}$ (for convenience, we denote the union of ranges of loops in a point measure $\mathcal{L}$ (resp. a collection $\mathfrak{L}$) by $\cup \mathcal{L}$ (resp. $\cup \mathfrak{L}$)).
	

	
	\begin{lemma}[{\cite[Proposition 2.1]{lupu2016loop}}]\label{lemma_iso}
		There is a coupling between the loop soup $\widetilde{\mathcal{L}}_{1/2}$ and the GFF $\{\widetilde{\phi}_v\}_{v\in \widetilde{\mathbb{Z}}^d}$ such that 
		\begin{itemize}
			\item for any $v\in \widetilde{\mathbb{Z}}^d$, $\widehat{\mathcal{L}}^v_\alpha= \frac{1}{2}\phi_v^2$; 
			
			\item the clusters composed of loops in $\widetilde{\mathcal{L}}_{1/2}$ are exactly the sign clusters of $\{\widetilde{\phi}_v\}_{v\in \widetilde{\mathbb{Z}}^d}$, where a ``sign cluster'' is a maximal connected subgraph on which $\widetilde{\phi}$ has the same sign (every $v\in \widetilde{\mathbb{Z}}^d$ with $\widetilde{\phi}_v=0$ does not belong to any sign cluster).

		\end{itemize}
	\end{lemma}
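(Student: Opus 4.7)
The plan is to build the coupling in two stages, first matching the squared field with the occupation field and then aligning the sign partition with the loop cluster partition.

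\emph{Stage 1: occupation field equals $\tfrac{1}{2}\widetilde\phi^2$.} I would first invoke Le Jan's isomorphism theorem, applied to the continuous-time walk $\{S_t\}_{t\ge 0}$ at intensity $\alpha=1/2$, which already gives that the restrictions $\{\widehat{\mathcal L}^x_{1/2}\}_{x\in\mathbb{Z}^d}$ and $\{\tfrac{1}{2}\phi_x^2\}_{x\in\mathbb{Z}^d}$ agree in law at the lattice. I would then extend this equality to all of $\widetilde{\mathbb{Z}}^d$ by checking that, conditional on lattice values, both sides reduce on each edge $I_{x,y}$ to the square of a Brownian bridge of length $d$ with the prescribed endpoints. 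On the GFF side this is the defining property of $\widetilde\phi$. On the loop-soup side, the description in Section~\ref{subsection_continuous} does the assembly: the fundamental loops contribute their Brownian-excursion pieces on $I_{x,y}$, the point loops $\widetilde{\mathcal L}^{\mathrm p}_{1/2}$ contribute Brownian excursions pinned at $x$ or $y$ with exponentially distributed total local time, and the edge loops $\widetilde{\mathcal L}^{\mathrm e}_{1/2}$ contribute a Brownian bridge pinned at $0$ at both endpoints. I would confirm the total assembles to the squared Brownian bridge via Laplace transforms of the respective occupation times, which reduces to a standard Feynman--Kac computation.

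\emph{Stage 2: sign clusters equal loop clusters.} Having produced $|\widetilde\phi|=\sqrt{2\widehat{\mathcal L}_{1/2}}$ from Stage 1, the zero sets of $\widetilde\phi$ and of the loop-soup occupation field coincide, so on each connected component of $\{\widetilde\phi\neq 0\}$ the field $\widetilde\phi$ has a constant (random) sign. Lupu's key observation is that on an edge $I_{x,y}$, conditional on endpoint values $\widetilde\phi_x,\widetilde\phi_y\neq 0$, the Brownian bridge of length $d$ between them avoids $0$ with probability $1-e^{-2\widetilde\phi_x\widetilde\phi_y/d}$ when $\widetilde\phi_x\widetilde\phi_y>0$ and with probability $0$ otherwise. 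I would match this exponential factor with the conditional loop-soup probability that the edge $I_{x,y}$ is entirely contained in a single loop-soup cluster — equivalently, that it is not cut by any edge loop in $\widetilde{\mathcal L}^{\mathrm e}_{1/2}$ — using the explicit Poisson intensity of the edge loop soup. Assigning an independent uniform $\pm 1$ sign to each loop-soup cluster then reproduces the conditional law of $\widetilde\phi$ given $|\widetilde\phi|$ (which, by the symmetry of the DGFF and the Brownian-bridge construction, is indeed i.i.d.\ uniform across sign clusters), and the two constructions yield the same joint distribution.

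The principal obstacle is the quantitative matching in Stage 2: one must compute the conditional probability that $\widetilde{\mathcal L}^{\mathrm e}_{1/2}$ produces at least one ``cut'' in $I_{x,y}$, given the occupation times at the endpoints contributed by fundamental and point loops, and verify that it is $1-e^{-2ab/d}$ with $a=|\widetilde\phi_x|$, $b=|\widetilde\phi_y|$. This reduces to an explicit calculation with the loop measure $\widetilde\mu$ restricted to loops contained in $I_{x,y}$, for which the $\alpha=1/2$ normalization is essential because the total mass of loops carrying a given occupation profile is proportional to the square root of a Brownian-bridge transition density (i.e.\ the $\det^{-1/2}$ factor in the Gaussian isomorphism). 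It is this miraculous cancellation at $\alpha=1/2$, rather than at any other intensity, that lets the sign/absolute-value decomposition of $\widetilde\phi$ line up with the cluster/occupation decomposition of $\widetilde{\mathcal L}_{1/2}$.
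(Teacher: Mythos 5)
First, note that the paper does not prove this statement: it is quoted verbatim from Lupu \cite[Proposition 2.1]{lupu2016loop}, so there is no in-paper argument to compare against. Judged on its own, your two-stage architecture (first identify the occupation field with $\tfrac12\widetilde\phi^2$, then assign signs to the connected components) is indeed the right skeleton of Lupu's proof, and your Stage 1 plan (Le Jan's isomorphism on the lattice, then a Feynman--Kac/Ray--Knight computation to see that the edge-interior occupation field, conditionally on the endpoint local times, is a squared Brownian bridge) is sound.

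The genuine gap is in Stage 2, and it is the crux of the whole theorem: you assert parenthetically that, conditionally on $|\widetilde\phi|$, the signs of $\widetilde\phi$ are i.i.d.\ uniform across the connected components of $\{\widetilde\phi\neq 0\}$, ``by the symmetry of the DGFF and the Brownian-bridge construction.'' Sign symmetry of the Gaussian field only gives invariance under a \emph{global} sign flip; it says nothing about independence of signs across components. The actual mechanism is the (strong/domain) Markov property of the metric-graph GFF applied to the \emph{random} open set $\{\widetilde\phi\neq 0\}$: one explores the components one at a time, uses that $\widetilde\phi$ vanishes on their topological boundary so that the field inside a component, given its boundary data, is a zero-boundary GFF (hence sign-symmetric) independent of the outside. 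Making this rigorous for a random domain is precisely why the metric graph is needed at all, and without it the coupling cannot be constructed. Separately, the step you single out as ``the principal obstacle'' is miscast: once Stage 1 gives a \emph{pointwise} coupling $\widehat{\mathcal L}^v_{1/2}=\tfrac12\widetilde\phi_v^2$, the set $\{\widetilde\phi\neq 0\}$ coincides with $\{\widehat{\mathcal L}_{1/2}>0\}$ (up to a null set of isolated excursion tips), so the loop clusters and the sign clusters are the \emph{same} connected components deterministically --- there is no separate ``probability that the edge is cut'' to be matched against the bridge's zero-avoidance probability $1-e^{-ab/d}$; under the coupling these are the same event. An edge-by-edge matching of those two probabilities would only be relevant if you built the coupling edge by edge without first coupling the full occupation fields, in which case you would additionally need to enforce consistency of signs across edges meeting at a vertex, which your sketch does not address.
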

	
	
	Through Lemma \ref{lemma_iso}, a profound link between the GFF and the loop soup is unveiled, enabling a rich interplay between the GFF and the loop soup. Of particular interest to us, we get from the symmetry of GFF and Lemma \ref{lemma_iso} that
	\begin{equation}\label{eq_1.7}
		\begin{split}
			\mathbb{P}\left[ \bm{0} \xleftrightarrow[]{\widetilde{E}^{\ge 0}}\partial B(N)\right] = &\mathbb{P}\left[ \bm{0}  \xleftrightarrow[]{\widetilde{E}^{> 0}}\partial B(N)\right]\\
			=&\frac{1}{2} \mathbb{P}\left[ \bm{0}  \xleftrightarrow[]{\text{the union of all sign clusters of}\ \widetilde{\phi} }\partial B(N)\right]\\
			=& \frac{1}{2} \mathbb{P}\left[ \bm{0} \xleftrightarrow[]{\cup \widetilde{\mathcal{L}}_{1/2} }\partial B(N)\right]. 
		\end{split}
	\end{equation}
	Note that the first line of (\ref{eq_1.7}) follows from the following two facts:
	\begin{itemize}
		\item For any $x\in \mathbb{Z}^d$, $\mathbb{P}[\widetilde{\phi}_x=0]=0$; 
		
		\item For any interval $I_{\{x,y\}}$, arbitrarily given the values of endpoints $\widetilde{\phi}_x,\widetilde{\phi}_y\neq 0$, one has that $\{\widetilde{\phi}_v\}_{v\in I_{\{x,y\}}}$ (which is given by the Brownian bridge) a.s. does not have extremum $0$ in $I_{\{x,y\}}$.
		
	\end{itemize}

	\subsection{Two-point function} Using the isomorphism theorem, Lupu \cite{lupu2016loop} proved an explicit formula of the probability that two points are connected by $\cup \widetilde{\mathcal{L}}_{1/2}$. 
	
	\begin{lemma}[{\cite[Proposition 5.2]{lupu2016loop}}]\label{lemma_two_point}
		For any $x,y\in \mathbb{Z}^d$, 
		\begin{equation}
			\mathbb{P}\bigg(  x\xleftrightarrow[]{\cup \widetilde{\mathcal{L}}_{1/2}} y\bigg)  = \frac{2}{\pi}\arcsin(\frac{G(x,y)}{\sqrt{G(x,x)G(y,y)}}). 
		\end{equation}
	\end{lemma}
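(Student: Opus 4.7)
The plan is to combine the isomorphism theorem (Lemma \ref{lemma_iso}) with a sign-flipping symmetry and the standard bivariate Gaussian orthant formula. By Lemma \ref{lemma_iso}, the connected components of $\cup\widetilde{\mathcal{L}}_{1/2}$ coincide with the sign clusters of the metric-graph GFF $\widetilde{\phi}$, so letting $A_{xy}$ denote the event that $x$ and $y$ lie in a common sign cluster of $\widetilde{\phi}$, I have $\mathbb{P}(x\xleftrightarrow[]{\cup\widetilde{\mathcal{L}}_{1/2}}y)=\mathbb{P}(A_{xy})$. Since distinct sign clusters are separated by points where $\widetilde{\phi}$ vanishes and since $\widetilde{\phi}_x,\widetilde{\phi}_y\neq 0$ almost surely, $A_{xy}$ forces $\widetilde{\phi}_x$ and $\widetilde{\phi}_y$ to have the same sign, and by the symmetry $\widetilde{\phi}\stackrel{d}{=}-\widetilde{\phi}$, I get $\mathbb{P}(A_{xy})=2\,\mathbb{P}(A_{xy},\ \widetilde{\phi}_x>0,\ \widetilde{\phi}_y>0)$.

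Next I would write
\begin{equation*}
\mathbb{P}(A_{xy},\widetilde{\phi}_x>0,\widetilde{\phi}_y>0)=\mathbb{P}(\widetilde{\phi}_x>0,\widetilde{\phi}_y>0)-\mathbb{P}(A_{xy}^c,\widetilde{\phi}_x>0,\widetilde{\phi}_y>0),
\end{equation*}
and the heart of the proof is to identify the second term with $\mathbb{P}(\widetilde{\phi}_x>0,\widetilde{\phi}_y<0)$. This is the standard sign-flipping (reflection) argument on the metric graph: condition on the zero set $\mathcal{Z}:=\{v:\widetilde{\phi}_v=0\}$ together with the absolute values $|\widetilde{\phi}|$. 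The connected components of $\widetilde{\mathbb{Z}}^d\setminus\mathcal{Z}$ are exactly the sign clusters; by the Markov/Gibbs property of the metric-graph GFF and the symmetry $\widetilde{\phi}\mapsto-\widetilde{\phi}$ applied componentwise, the sign assigned to each sign cluster is, conditionally, an independent uniform $\pm1$. On the event $A_{xy}^c$ the points $x$ and $y$ lie in different sign clusters, so their two independent signs are $(+,+)$ and $(+,-)$ with equal conditional probability; averaging gives the desired identity.

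Combining the two previous displays,
\begin{equation*}
\mathbb{P}(A_{xy})=2\bigl[\mathbb{P}(\widetilde{\phi}_x>0,\widetilde{\phi}_y>0)-\mathbb{P}(\widetilde{\phi}_x>0,\widetilde{\phi}_y<0)\bigr].
\end{equation*}
Since $(\widetilde{\phi}_x,\widetilde{\phi}_y)$ is a centered Gaussian vector with correlation $\rho:=G(x,y)/\sqrt{G(x,x)G(y,y)}$, the classical orthant formula gives $\mathbb{P}(\widetilde{\phi}_x>0,\widetilde{\phi}_y>0)=\tfrac{1}{4}+\tfrac{1}{2\pi}\arcsin(\rho)$ and $\mathbb{P}(\widetilde{\phi}_x>0,\widetilde{\phi}_y<0)=\tfrac{1}{4}-\tfrac{1}{2\pi}\arcsin(\rho)$, and substituting yields the claimed $\tfrac{2}{\pi}\arcsin(\rho)$.

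The only genuinely delicate step is the sign-flipping claim: it depends on the fact that the metric-graph GFF is continuous, so that distinct sign clusters are truly separated by $\{\widetilde{\phi}=0\}$, and on a careful use of the Markov property across the (random) zero set to argue that signs on different clusters are conditionally independent and symmetric. Everything else is either the input from Lemma \ref{lemma_iso} or a routine Gaussian computation, so this reflection argument is the main obstacle.
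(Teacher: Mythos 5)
Your proposal is correct. The paper itself gives no proof of this lemma (it is quoted directly from \cite[Proposition~5.2]{lupu2016loop}), and your argument — reducing to $\mathbb{P}(\text{same sign})-\mathbb{P}(\text{different sign})$ via the conditional independence and symmetry of the signs assigned to distinct sign clusters given $|\widetilde{\phi}|$ (equivalently, given the loop soup in Lupu's coupling), and then applying the Gaussian orthant formula — is essentially Lupu's original proof.
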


	It is well-known that the Green's function satisfies $G(x,y)\asymp |x-y|^{2-d}$. Thus, by Lemma \ref{lemma_two_point} we have 
	\begin{equation}\label{eq_two_points}
		\mathbb{P}\bigg(  x\xleftrightarrow[]{\cup \widetilde{\mathcal{L}}_{1/2}} y\bigg)   \asymp |x-y|^{2-d}, \ \ \forall x,y\in \mathbb{Z}^d.
	\end{equation}

	\subsection{BKR inequality}\label{section_BKR}
	
	In this subsection, we introduce another useful tool, the \textit{van den Berg-Kesten-Reimer inequality}. This inequality was conjectured in van den Berg and Kesten \cite{van1985inequalities} and then was proved by van den Berg and Fiebig \cite{van1987combinatorial} and Reimer \cite{reimer2000proof}. Borgs, Chayes and Randall \cite{borgs1999van} provided a nice exposition for this inequality.

%
%
	
	Recall the notations $\gamma_{x}^{\mathrm{p}}$ and $\gamma_{\{x,y\}}^{\mathrm{e}}$ in Section \ref{subsection_continuous}. For any connected $A\subset \mathbb{Z}^d$ with $|A|\ge 2$, let $\gamma_A^{\mathrm{f}}$ be the union of ranges of loops in $\widetilde{\mathcal{L}}_{1/2}^{\mathrm{f}}$ that visit every point in $A$ and do not visit any other lattice point. For each of $\gamma_{x}^{\mathrm{p}}$, $\gamma_{\{x,y\}}^{\mathrm{e}}$ and $\gamma_{A}^{\mathrm{f}}$, we call it a glued loop. Note that each glued loop is a random subset of $\widetilde{\mathbb{Z}}^d$, but not a loop on $\widetilde{\mathbb{Z}}^d$. We say a collection of glued loops certifies an event $\mathsf{A}$ if on the realization of this collection of glued loops, $\mathsf{A}$ happens regardless of the realization of all other glued loops. For two events $\mathsf{A}$ and $\mathsf{B}$, let $\mathsf{A} \circ \mathsf{B}$ be the event that there exist two disjoint collections of glued loops such that one collection certifies $\mathsf{A}$, and the other certifies $\mathsf{B}$. Note that in this context, ``two disjoint collections'' implies that the collections do not contain any glued loops with matching subscripts and superscripts, but it does not necessarily mean that every glued loop in one collection does not intersect any glued loop in the other collection.


	Recall in Section \ref{subsection_continuous} that each glued loop is measurable with respect to several random variables, whose distributions have been written down rigorously. Therefore, this satisfies the requirements of the framework introduced in Arratia, Garibaldi and Hales \cite{arratia2018van} for the BKR inequality on continuous spaces. Thus, we have the following lemma:
	\begin{lemma}[BKR inequality I]\label{lemma_BKR}
		If events $\mathsf{A}$ and $\mathsf{B}$ both depend on finitely many glued loops, then 
		\begin{equation}\label{ineq_BKR}
			\mathbb{P}\left( \mathsf{A} \circ \mathsf{B}\right)  \le \mathbb{P}\left( \mathsf{A}\right)   \cdot \mathbb{P}\left(   \mathsf{B}\right). 
		\end{equation}
	\end{lemma}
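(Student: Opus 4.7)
The plan is to realize the collection of all glued loops as a family of mutually independent random variables indexed by a countable set, and then to place the claim inside the continuous product-space BKR framework of Arratia, Garibaldi and Hales \cite{arratia2018van}. The first step is to establish the independence structure. By the thinning property of Poisson point processes, the three sub-processes $\widetilde{\mathcal{L}}_{1/2}^{\mathrm{f}}$, $\widetilde{\mathcal{L}}_{1/2}^{\mathrm{p}}$ and $\widetilde{\mathcal{L}}_{1/2}^{\mathrm{e}}$ are mutually independent. Within each sub-process, the glued loops are obtained by restricting the Poisson measure to pairwise disjoint measurable pieces of loop space: point loops at distinct vertices, edge loops in distinct intervals, and fundamental loops with distinct lattice ranges. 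A second application of thinning then yields that $\{\gamma_x^{\mathrm{p}}\}_{x\in \mathbb{Z}^d}$, $\{\gamma_{\{x,y\}}^{\mathrm{e}}\}_{\{x,y\}\in\mathbb{L}^d}$ and $\{\gamma_A^{\mathrm{f}}\}_{A}$ are all jointly independent.

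Next I would view the probability space as a countable product $\prod_{i\in \mathcal{I}}\Omega_i$, where $\mathcal{I}$ indexes lattice points, edges of $\mathbb{L}^d$, and finite connected lattice subsets of size at least $2$, and where $\Omega_i$ is the Polish space carrying the corresponding glued loop (for instance, the space of closed subsets of $\widetilde{\mathbb{Z}}^d$ equipped with the Hausdorff topology, or equivalently the concrete parameter space used in Section \ref{subsection_continuous} to sample that glued loop). The hypotheses of \cite{arratia2018van} require independence of coordinates, a Polish structure on each factor, and events that depend on finitely many coordinates, all of which are now in place. Furthermore, the definition of $\mathsf{A}\circ \mathsf{B}$ given in Section \ref{section_BKR} translates directly into the AGH definition of disjoint occurrence: ``two disjoint collections of glued loops'' means two disjoint finite subsets $\mathcal{J}_\mathsf{A},\mathcal{J}_\mathsf{B}\subset \mathcal{I}$ such that fixing $\{\omega_i\}_{i\in\mathcal{J}_\mathsf{A}}$ forces $\mathsf{A}$ and fixing $\{\omega_i\}_{i\in \mathcal{J}_\mathsf{B}}$ forces $\mathsf{B}$. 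Applying the main theorem of \cite{arratia2018van} then gives \eqref{ineq_BKR}.

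The step I expect to require the most care is verifying that ``certification by a collection of glued loops'', as used informally in Section \ref{section_BKR}, is genuinely coordinate-wise in the sense required by AGH; that is, once $\{\omega_i\}_{i\in \mathcal{J}_\mathsf{A}}$ is specified, the indicator of $\mathsf{A}$ equals $1$ regardless of the remaining coordinates. This is harmless because each glued loop is a Borel-measurable function of the underlying Poisson configuration and because the connectivity and level-set events appearing in our applications are monotone in the union $\cup_{i}\gamma_i$. I would also briefly observe that the restriction ``two disjoint collections'' in our definition does not preclude geometric intersections of the glued loops in $\widetilde{\mathbb{Z}}^d$, which is exactly what the product-space formulation allows. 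Once these measurability points are noted, no additional approximation is required since the events depend on only finitely many coordinates, and the inequality drops out of the AGH theorem.
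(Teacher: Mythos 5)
Your proposal is correct and follows essentially the same route as the paper, which likewise justifies the lemma by noting that each glued loop is measurable with respect to independent, rigorously specified random variables and then invoking the continuous-space BKR framework of Arratia, Garibaldi and Hales. Your write-up simply makes explicit the independence-via-thinning and product-space structure that the paper leaves implicit.
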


	However, sometimes the events that we want to study do not satisfy the condition of Lemma \ref{lemma_BKR}. Nevertheless, the BKR inequality can be applied via taking a limit. We present the following corollary as an extension of Lemma \ref{lemma_BKR}, which is adequate for this paper.

	We say an event $\mathsf{A}$ is a connecting event if there exist two finite subsets $A_1,A_2\subset \widetilde{\mathbb{Z}}^d$ such that $\mathsf{A}=\{A_1\xleftrightarrow[]{\cup \widetilde{\mathcal{L}}_{1/2}}A_2\}$. 
	
	\begin{corollary}[BKR inequality II]\label{coro_BKR}
		If events $\mathsf{A}_1,\mathsf{A}_2,...,\mathsf{A}_m$ ($m\ge 2$) are connecting events, then we have 
		\begin{equation}
			\mathbb{P}\left( \mathsf{A}_1\circ \mathsf{A}_2 \circ ... \circ \mathsf{A}_m \right) \le \prod_{i=1}^{m}  	\mathbb{P}\left( \mathsf{A}_i\right).  
		\end{equation}
	\end{corollary}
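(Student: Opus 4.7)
The natural plan is to reduce Corollary \ref{coro_BKR} to Lemma \ref{lemma_BKR} by a truncation-plus-monotone-convergence argument, since each connecting event $\mathsf{A}_i = \{A_1^{(i)} \xleftrightarrow[]{\cup \widetilde{\mathcal{L}}_{1/2}} A_2^{(i)}\}$ is typically certified by glued loops whose ranges are bounded but may not fit into any a priori finite collection.

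For each $M \geq 1$, let $\mathcal{G}_M$ be the (finite) collection of glued loops $\gamma_x^{\mathrm{p}}$, $\gamma_{\{x,y\}}^{\mathrm{e}}$, $\gamma_A^{\mathrm{f}}$ whose indices (respectively $x$, $\{x,y\}$, $A$) are contained in $B(M)$. For each $1 \le i \le m$, define the truncated event
\begin{equation*}
\mathsf{A}_i^M := \Big\{ A_1^{(i)} \xleftrightarrow[]{\cup \mathcal{G}_M} A_2^{(i)} \Big\}.
\end{equation*}
Since $\mathsf{A}_i^M$ depends on only finitely many glued loops, Lemma \ref{lemma_BKR} applies and yields
\begin{equation*}
\mathbb{P}\left( \mathsf{A}_1^M \circ \mathsf{A}_2^M \circ \cdots \circ \mathsf{A}_m^M \right) \le \prod_{i=1}^m \mathbb{P}\left( \mathsf{A}_i^M \right).
\end{equation*}

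It remains to pass to the limit $M \to \infty$ on both sides. Clearly $\mathsf{A}_i^M \subset \mathsf{A}_i^{M+1} \subset \mathsf{A}_i$, so the right-hand side is monotone and converges by monotone convergence. For the reverse inclusion on the limit, observe that on $\mathsf{A}_i$ there exists a continuous path $\widetilde{\eta}$ on $\widetilde{\mathbb{Z}}^d$ joining $A_1^{(i)}$ and $A_2^{(i)}$ whose range lies in $\cup \widetilde{\mathcal{L}}_{1/2}$. The range of $\widetilde{\eta}$ is compact in $\widetilde{\mathbb{Z}}^d$, and only finitely many glued loops meet any bounded region almost surely (each glued loop has a.s. bounded range by the construction recalled in Section \ref{subsection_continuous}). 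Hence $\widetilde{\eta}$ is covered by finitely many glued loops, each with bounded range, so for all sufficiently large (random) $M$ all of them lie in $\mathcal{G}_M$. This shows $\mathsf{A}_i^M \uparrow \mathsf{A}_i$ almost surely, and hence $\mathbb{P}(\mathsf{A}_i^M) \uparrow \mathbb{P}(\mathsf{A}_i)$.

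The same argument gives $\mathsf{A}_1^M \circ \cdots \circ \mathsf{A}_m^M \uparrow \mathsf{A}_1 \circ \cdots \circ \mathsf{A}_m$ almost surely: any realization in the limit admits $m$ pairwise disjoint finite collections of glued loops certifying $\mathsf{A}_1, \ldots, \mathsf{A}_m$, and each such collection sits inside $\mathcal{G}_M$ for $M$ large enough, while the disjointness carries over verbatim. Applying monotone convergence to both sides of the truncated inequality yields the desired bound. The only nontrivial point is this almost sure upward convergence of $\mathsf{A}_1^M \circ \cdots \circ \mathsf{A}_m^M$, which I expect to be the main (if still mild) technical obstacle; it rests on the a.s.\ local finiteness of the collection of glued loops and the a.s.\ boundedness of each individual glued loop's range.
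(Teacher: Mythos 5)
Your proposal is correct and follows the same overall skeleton as the paper's proof (truncate each connecting event to one depending on finitely many glued loops, apply Lemma \ref{lemma_BKR}, then let $M\to\infty$), but the two arguments diverge at the limit-passing step, and the difference is worth noting. The paper truncates by keeping only loops with $\mathrm{ran}(\widetilde{\ell})\subset\widetilde{B}(M)$ and then bounds the discrepancy $\mathbb{P}(\mathsf{A}_i\cap\hat{\mathsf{A}}_i^c)$ \emph{quantitatively}: on that event one of $A_{i,1},A_{i,2}$ is connected to $\partial B(M)$, and the one-arm upper bounds (\ref{3d-GFF}) and (\ref{ineq_onearm}) make this error term vanish as $M\to\infty$. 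You instead prove the \emph{soft} statement that $\mathsf{A}_i^M\uparrow\mathsf{A}_i$ and $\mathsf{A}_1^M\circ\cdots\circ\mathsf{A}_m^M\uparrow\mathsf{A}_1\circ\cdots\circ\mathsf{A}_m$ almost surely, resting on the a.s.\ local finiteness of the nonempty glued loops and the a.s.\ boundedness of each glued loop's range, and then invoke continuity from below. Your route has the advantage of not consuming the nontrivial external input from \cite{ding2020percolation}; its cost is that you must carry out the finite-certificate reduction carefully (any collection certifying a connecting event contains a finite sub-collection forming a connected chain between the two sets, and disjointness of sub-collections is inherited), which you correctly identify as the one point requiring care. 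Both arguments are valid; yours is marginally more self-contained, the paper's is shorter given that the one-arm bounds are already quoted in the preliminaries.
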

	
	\begin{proof}
		Note that we cannot apply Lemma \ref{lemma_BKR} directly since a connecting event does not only depend on finitely many glued loops. Suppose that $\mathsf{A}_i=\{A_{i,1}\xleftrightarrow[]{\cup \widetilde{\mathcal{L}}_{1/2}}A_{i,2}\}$ for $1\le i\le m$. Arbitrarily take $M\in \mathbb{N}^+$. For each $1\le i\le m$, we consider the truncated event 
		$$
		\hat{\mathsf{A}}_i=\hat{\mathsf{A}}_i(M):= \Big\{ A_{i,1} \xleftrightarrow[]{\cup \widetilde{\mathcal{L}}_{1/2}\cdot \mathbbm{1}_{\mathrm{ran}(\widetilde{\ell})\subset \widetilde{B}(M)}} A_{i,2} \Big\}. 
		$$

			If $\mathsf{A}_i\cap \hat{\mathsf{A}}_i^c$ happens, then one of $A_{i,1},A_{i,2}$ is connected to $\partial B(M)$. In addition, each $\hat{\mathsf{A}}_i$ only depends on
			$$
			\big\{\gamma_x^{\mathrm{p}}\big\}_{x\in B(M-1)}\cup \big\{\gamma_{\{x,y\}}^{\mathrm{e}}\big\}_{x,y\in \mathbb{Z}^d:I_{\{x,y\}}\in \widetilde{B}(M)}\cup \big\{\gamma_A^{\mathrm{f}}\big\}_{A\subset B(M-1)}, 
			$$
			and therefore satisfies the requirement of Lemma \ref{lemma_BKR} (since the number of these glued loops is finite). Thus, we have 
		\begin{equation*}\label{ineq_3.6}
			\begin{split}
				&\mathbb{P}\left( \mathsf{A}_1\circ \mathsf{A}_2 \circ ... \circ \mathsf{A}_m \right) \\
				\le & \mathbb{P}\left( \hat{\mathsf{A}}_1\circ \hat{\mathsf{A}}_2 \circ ... \circ \hat{\mathsf{A}}_m \right) + \sum_{i=1}^{m} \mathbb{P} \left(\mathsf{A}_i\cap \hat{\mathsf{A}}_i^c\right)       \\
				\le  &\mathbb{P}\left( \hat{\mathsf{A}}_1\circ \hat{\mathsf{A}}_2 \circ ... \circ \hat{\mathsf{A}}_m \right)+\sum_{i=1}^{m}\sum_{j=1}^{2}\sum_{z\in \mathbb{Z}^d:\widetilde{B}_z(1)\cap A_{i,j}\neq \emptyset}\mathbb{P}\left[ z \xleftrightarrow[]{\cup \widetilde{\mathcal{L}}_{1/2} } \partial B(M) \right]\\  
				\le &\prod_{i=1}^{m}  	\mathbb{P}\left( \hat{\mathsf{A}}_i\right)+\sum_{i=1}^{m}\sum_{j=1}^{2}\sum_{z\in \mathbb{Z}^d:\widetilde{B}_z(1)\cap A_{i,j}\neq \emptyset}\mathbb{P}\left[ z \xleftrightarrow[]{\cup \widetilde{\mathcal{L}}_{1/2} } \partial B(M) \right].
			\end{split}
		\end{equation*}
		The first term on the RHS is upper-bounded by $\prod_{i=1}^{m}  	\mathbb{P}\left( \mathsf{A}_i\right)$ since $\hat{\mathsf{A}}_i\subset \mathsf{A}_i$ for $1\le i\le m$. Moreover, by (\ref{3d-GFF}) and (\ref{ineq_onearm}), the second term can be arbitrarily close to $0$ if we take sufficiently large $M$. Now the proof is complete. \end{proof}

	\subsection{Tree expansion}
	
	We now review a combinatorial approach called \textit{tree expansion} introduced in Aizenman and Newman \cite{aizenman1984tree}. This approach, usually applied together with the BKR inequality, has proved to be a powerful tool in the study of percolation models (see e.g. Barsky and Aizenman \cite{barsky1991percolation} for its application in bond percolation). In this paper we only review the version mentioned in the proof of \cite[Proposition 3]{werner2021clusters}.

	\begin{lemma}[Tree expansion]\label{lemma_tree_expansion}
		For any $x\in \mathbb{Z}^d$ and $A_1,A_2\subset \widetilde{\mathbb{Z}}^d$, where $\{x\},A_1$ and $A_2$ are disjoint to one another, if $x$ is connected to both $A_1,A_2$ by $\cup \widetilde{\mathcal{L}}_{1/2}$, then there exists a glued loop $\gamma_*$ such that $\{\gamma_* \xleftrightarrow[]{\cup \widetilde{\mathcal{L}}_{1/2}} x\}\circ \{\gamma_* \xleftrightarrow[]{\cup \widetilde{\mathcal{L}}_{1/2}} A_1\} \circ \{\gamma_* \xleftrightarrow[]{\cup \widetilde{\mathcal{L}}_{1/2}} A_2\}$ happens. 
	\end{lemma}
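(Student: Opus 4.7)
The plan is to read off the required disjoint-occurrence decomposition from the combinatorial structure of the cluster of $x$ in $\cup\widetilde{\mathcal{L}}_{1/2}$, which under the hypothesis meets both $A_1$ and $A_2$. Since every point of $\cup\widetilde{\mathcal{L}}_{1/2}$ belongs to some glued loop among the families $\gamma_y^{\mathrm{p}}$, $\gamma_{\{y,z\}}^{\mathrm{e}}$ and $\gamma_A^{\mathrm{f}}$, the cluster of $x$ decomposes into a countable collection $\mathcal{G}$ of glued loops, and every connection inside $\cup\widetilde{\mathcal{L}}_{1/2}$ is witnessed by a finite chain of glued loops with successive ranges intersecting. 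I would form the auxiliary graph $H$ on vertex set $\mathcal{G}$ with edges given by range-intersection; then $H$ is connected, and it contains vertices $v_x$, $v_1$, $v_2$ chosen so that their ranges contain $x$, meet $A_1$, and meet $A_2$, respectively.

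The key step is to pick a spanning tree $T$ of $H$ (taking, for definiteness, the lexicographically minimal one with respect to a fixed enumeration of glued loops, so as to avoid any measurability concern when the cluster is infinite) and let $\gamma_*$ be the \emph{median} of $v_x$, $v_1$, $v_2$ in $T$, i.e.\ the unique vertex that lies on each of the three tree-paths $P(v_x,v_1)$, $P(v_x,v_2)$, $P(v_1,v_2)$. A standard property of trees then gives that the three sub-paths of $T$ going from $\gamma_*$ to $v_x$, to $v_1$, and to $v_2$ are pairwise internally vertex-disjoint. I would let $C_x$, $C_1$, $C_2$ denote the sets of glued loops on these three arms, with $\gamma_*$ excluded and the terminal vertex included. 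By construction these are three pairwise disjoint finite sub-collections of $\mathcal{G}\setminus\{\gamma_*\}$, which is exactly the disjointness required by the $\circ$ operation of Section~\ref{section_BKR}.

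It then remains to check that each $C_i$ certifies the corresponding connection event. For $C_x$ the first loop listed intersects $\gamma_*$ by definition of the arm, consecutive loops intersect by the edge-relation of $H$, and the terminal loop $v_x$ has range containing $x$, so the union of the loops in $C_x$ deterministically links $\gamma_*$ to $x$ inside $\cup\widetilde{\mathcal{L}}_{1/2}$ regardless of the realisation of all other glued loops; the identical argument applies to $C_1$ and $C_2$. The degenerate cases $\gamma_*\in\{v_x,v_1,v_2\}$, where one of the three arms is a single vertex, are handled by letting the corresponding $C_i$ be empty, since the connection in question is then directly witnessed by $\gamma_*$ itself. I expect the main obstacle to be not any single step but the careful book-keeping demanded by the formal definition of $\circ$: one must be sure that $\gamma_*$ is never double-counted inside any $C_i$ and that the glued-loop graph $H$ is really the right combinatorial object to support the median argument. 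Both are automatic once the median construction is in place, so the proof ultimately reduces to this one tree-theoretic observation.
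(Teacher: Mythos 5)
Your argument is correct, and it reaches the same structural conclusion as the paper — a single glued loop from which three pairwise disjoint arms of glued loops emanate — but by a different mechanism. The paper works directly with the two witnessing chains $\gamma^1_1,\dots,\gamma^1_{m_1}$ (from $x$ to $A_1$) and $\gamma^2_1,\dots,\gamma^2_{m_2}$ (from $x$ to $A_2$): if the chains are disjoint it takes $\gamma_*=\gamma^1_1$, and otherwise it takes $\gamma_*=\gamma^1_{m_*}$ with $m_*$ the \emph{last} index at which chain $1$ meets chain $2$, so that the tail of chain $1$ is automatically disjoint from all of chain $2$, and the two complementary segments of chain $2$ supply the arms to $x$ and to $A_2$. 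Your route instead builds the intersection graph $H$ of all glued loops in the cluster of $x$, extracts a spanning tree, and takes $\gamma_*$ to be the median of $v_x,v_1,v_2$; the pairwise disjointness of the three arms then follows from the standard tree fact rather than from a maximality choice. Both are sound. The paper's version is more elementary — it never needs a global spanning tree of a possibly infinite cluster, only the two finite chains whose existence is the definition of connection by $\cup\widetilde{\mathcal{L}}_{1/2}$ — while yours is symmetric in the three targets and would extend verbatim to a $k$-fold branch point. The only points worth flagging are bookkeeping ones you already noticed: the degenerate cases where $\gamma_*$ coincides with one of $v_x,v_1,v_2$ are handled by allowing an empty certifying collection (the paper does the same implicitly when $x\in\gamma_*$), and the existence of finite witnessing chains inside the cluster is assumed at the same level of rigor in both arguments.
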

	
	\begin{proof}
		For $j\in \{1,2\}$, since $x$ is connected to $A_j$ by $\cup \widetilde{\mathcal{L}}_{1/2}$, there must exist a finite sequence of different glued loops, say $\gamma^j_{1},...,\gamma^j_{m_j}$, such that $x\in \gamma^j_1$, $A_j\cap \gamma^j_{m_j}\neq \emptyset$, and $\gamma_i^j\cap \gamma_{i+1}^j\neq \emptyset$ for all $1\le i\le m_j-1$.

		If $\{\gamma^1_{1},...,\gamma^1_{m_1}\}$ and $\{\gamma^2_{1},...,\gamma^2_{m_2}\}$ are two disjoint collections, then we have $x\in \gamma^1_{1}$, $\gamma^1_{1}\xleftrightarrow[]{\cup_{i=2}^{m_1}\gamma^1_{i}}A_1$ and $\gamma^1_{1}\xleftrightarrow[]{\cup_{i=1}^{m_2}\gamma^2_i}A_2$. Thus, we only need to take $\gamma_*=\gamma^1_{1}$.

		Otherwise, we take $\gamma_*=\gamma^1_{m_*}$, where $m_*$ is the maximal integer in $[1,m_1]$ such that $\gamma^1_{m_*}\in \{\gamma^2_{1},...,\gamma^2_{m_2}\}$. The reasons are as follows. Let $m_{\dagger}$ be an integer in $[1,m_2]$ such that $\gamma^1_{m_*}=\gamma^2_{m_{\dagger}}$. Then we have $\gamma^1_{m_*} \xleftrightarrow[]{\cup_{i=1}^{m_{\dagger}-1}\gamma^2_i} x$, $\gamma^1_{m_*} \xleftrightarrow[]{\cup_{i=m_*+1}^{m_1}\gamma^1_i} A_1$ and $\gamma^1_{m_*}\xleftrightarrow[]{\cup_{i=m_{\dagger}+1}^{m_2}\gamma^2_i} A_2$. By the maximality of $m_*$, one has $\{\gamma^1_{m_*+1},...,\gamma^1_{m_1}\}\cap \{\gamma^2_{1},...,\gamma^2_{m_1}\}=\emptyset$. Thus, the event $\{\gamma^1_{m_*} \xleftrightarrow[]{\cup \widetilde{\mathcal{L}}_{1/2}} x\}\circ \{\gamma^1_{m_*} \xleftrightarrow[]{\cup \widetilde{\mathcal{L}}_{1/2}} A_1\} \circ \{\gamma^1_{m_*} \xleftrightarrow[]{\cup \widetilde{\mathcal{L}}_{1/2}} A_2\}$ occurs.
	\end{proof}

	\section{Proof of the lower bound}
	
	In this section, we show the proof of the lower bound in Theorem \ref{theorem1}. This proof shares the same spirit as \cite[Lemmas 2.1 and 2.2]{kozma2011arm}. Moreover, the main step (i.e. Lemma \ref{lemma_31}) was essentially sketched in \cite[Section 5.2]{werner2021clusters}.

	To simplify the formulation, we abbreviate ``$\xleftrightarrow[]{\cup\widetilde{\mathcal{L}}_{1/2}} $'' as ``$\xleftrightarrow[]{} $''.

	\begin{lemma}\label{lemma_31}
		For $d>6$, there exists $C_3(d)>0$ such that for any $N\ge 1$, 
		\begin{equation}
			\sum_{x_1,x_2\in \partial B(N)}\mathbb{P}\left( \bm{0}\xleftrightarrow[]{ }x_1,\bm{0}\xleftrightarrow[]{ }x_2 \right) \le C_3N^4.
		\end{equation}
	\end{lemma}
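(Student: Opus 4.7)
The plan is a second-moment style reduction to the two-point function, combining three tools: the tree expansion (Lemma \ref{lemma_tree_expansion}), the BKR inequality for connecting events (Corollary \ref{coro_BKR}), and the two-point bound $\mathbb{P}(u \xleftrightarrow[]{} v) \le C(1+|u-v|)^{2-d}$ from (\ref{eq_two_points}). For every $x_1, x_2 \in \partial B(N)$, Lemma \ref{lemma_tree_expansion} applied with $x = \bm{0}$, $A_1 = \{x_1\}$, $A_2 = \{x_2\}$ produces a random witnessing glued loop $\gamma_\ast$ such that $\{\gamma_\ast \xleftrightarrow[]{} \bm{0}\} \circ \{\gamma_\ast \xleftrightarrow[]{} x_1\} \circ \{\gamma_\ast \xleftrightarrow[]{} x_2\}$ holds. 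Every glued loop is contained in $\widetilde{B}_z(1)$ for a canonically associated lattice vertex $z$ (the vertex of $\gamma_z^{\mathrm{p}}$, an endpoint of the edge for $\gamma_{\{z,z'\}}^{\mathrm{e}}$, or any visited vertex for $\gamma_A^{\mathrm{f}}$; cf.\ Section \ref{subsection_continuous}). Summing over this representative vertex and collapsing each connection $\{\gamma_\ast \xleftrightarrow[]{} u\}$ into $\{z \xleftrightarrow[]{} u\}$ through a bounded detour inside $\widetilde{B}_z(1)$, then applying Corollary \ref{coro_BKR} and (\ref{eq_two_points}), yields
\[
\mathbb{P}(\bm{0} \xleftrightarrow[]{} x_1,\, \bm{0} \xleftrightarrow[]{} x_2) \le C \sum_{z \in \mathbb{Z}^d} (1+|z|)^{2-d}(1+|z-x_1|)^{2-d}(1+|z-x_2|)^{2-d}.
\]

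Summing over $x_1, x_2 \in \partial B(N)$ and splitting the $z$-sum at $|z| = 2N$ turns the bound into a purely deterministic lattice computation. The key input is the uniform estimate
\[
\sum_{x \in \partial B(N)} (1+|x-z|)^{2-d} \le CN, \qquad z \in \mathbb{Z}^d,
\]
obtained by a dyadic decomposition of the $(d-1)$-dimensional set $\partial B(N)$ around $z$ together with the covering bound $|\{x \in \partial B(N): |x - z| \le r\}| \le C\min(r^{d-1}, N^{d-1})$. For $z \in B(2N)$ this yields a contribution of at most $CN^2 \sum_{z \in B(2N)}(1+|z|)^{2-d} \le CN^2 \cdot CN^2 = CN^4$. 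For $|z| > 2N$ one uses $|x-z| \asymp |z|$ and $|\partial B(N)| \le CN^{d-1}$ to get $CN^{2(d-1)} \sum_{|z|>2N} |z|^{6-3d} \le CN^{2(d-1)} \cdot CN^{6-2d} = CN^4$; the tail sum is convergent because $d > 3$, which is automatic under the hypothesis $d > 6$.

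The main obstacle is the seemingly cosmetic but technically delicate passage from the sum over continuous glued loops $\gamma_\ast$ to the sum over lattice representatives $z$ with only a uniformly bounded multiplicative loss. Fundamental loops $\gamma_A^{\mathrm{f}}$ may a priori visit many vertices, and edge loops $\gamma_{\{z,z'\}}^{\mathrm{e}}$ contain no lattice vertex at all, so one must verify that this ``collapsing'' step introduces no divergent combinatorial or probabilistic factor. This can be handled using the structure of glued loops from Section \ref{subsection_continuous}—in particular the inclusion $\mathrm{ran}(\pi(\ell)) \subset \cup_{x \in \mathrm{ran}(\ell)} \widetilde{B}_x(1)$ for fundamental loops and the bounded-fluctuation control for edge-loop Brownian bridges—after which the remainder of the proof is the deterministic lattice computation above.
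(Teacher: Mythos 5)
Your overall strategy (tree expansion, then BKR, then the two-point function, then lattice convolution estimates) matches the paper's, and your final lattice computation is arithmetically sound. However, there is a genuine gap at the step you yourself flag as the ``main obstacle'': the collapse of the witnessing glued loop $\gamma_\ast$ to a single lattice vertex $z$ is not valid for fundamental glued loops. The inclusion $\mathrm{ran}(\pi(\ell))\subset \cup_{x\in\mathrm{ran}(\ell)}\widetilde{B}_x(1)$ only says the continuous loop stays within distance $1$ of its discrete skeleton; it does not bound the diameter of the loop, so a fundamental loop $\gamma_A^{\mathrm{f}}$ may have its three attachment points (to the arms toward $\bm{0}$, $x_1$, $x_2$) at three far-apart vertices of $A$. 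If you route each event $\{\gamma_\ast\xleftrightarrow[]{}u\}$ through a single designated vertex $z$, the ``detour'' from $z$ to the actual attachment point must travel along $\gamma_\ast$ itself, and then all three connections use the same glued loop, destroying the disjoint-witness structure that Corollary \ref{coro_BKR} requires. So the claimed one-vertex bound $\mathbb{P}(\bm{0}\xleftrightarrow[]{}x_1,\bm{0}\xleftrightarrow[]{}x_2)\le C\sum_z|z|^{2-d}|z-x_1|^{2-d}|z-x_2|^{2-d}$ is not derived, and the assertion that the collapse costs only ``a uniformly bounded multiplicative loss'' is exactly the unproven content.

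The paper's proof resolves this by keeping three separate vertices $x_3,x_4,x_5$ near the three attachment points and paying the loop measure of a fundamental loop visiting all of $\widetilde{B}_{x_3}(1),\widetilde{B}_{x_4}(1),\widetilde{B}_{x_5}(1)$, which is at most $C|x_3-x_4|^{2-d}|x_4-x_5|^{2-d}|x_5-x_3|^{2-d}$ (via the analogue of Lemma \ref{fina_lemma2.1} for three sets). This yields a five-fold sum $\sum_{x_3,x_4,x_5}\mathcal{T}^{x_1,x_2}_{x_3,x_4,x_5}$ rather than your single sum over $z$, and the real work is then the triangle-diagram convolution estimates (\ref{ineq_4.3}), (\ref{cal_4.5}), (\ref{fin_4.6}), (\ref{fin_4.7}), which is precisely where the hypothesis $d>6$ is used. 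Your point- and edge-loop cases are fine (those glued loops genuinely sit inside a single $\widetilde{B}_z(1)$, which is how the paper also treats them), but to complete the proof you must carry out the three-vertex version for fundamental loops and then verify that the resulting five-fold sum over $x_1,x_2\in\partial B(N)$ and $x_3,x_4,x_5\in\mathbb{Z}^d$ is still $O(N^4)$.
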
	
	With Lemma \ref{lemma_31}, proving the lower bound in Theorem \ref{theorem1} is straightforward. 	
	\begin{proof}[Proof of the lower bound in Theorem \ref{theorem1}]
		Let $X:= \sum_{x\in \partial B(N)} \mathbbm{1}_{\bm{0}\xleftrightarrow[]{} x}$. By $|\partial B(N)|\asymp N^{d-1}$ and the two-point function estimate (\ref{eq_two_points}), we have 
		\begin{equation}\label{ineq4.2}
			\mathbb{E}X\ge cN^{2-d}\cdot N^{d-1}= cN. 
		\end{equation}
		Recall that for any non-negative random variable $Y$, $\mathbb{P}(Y>0)\ge (\mathbb{E}Y)^2/\mathbb{E}(Y^2)$. Thus, by Lemma \ref{lemma_31} and (\ref{ineq4.2}), we have 
		\begin{equation}
			\begin{split}
				\mathbb{P}\left[\bm{0} \xleftrightarrow[]{} \partial B(N) \right]\ge \frac{(\mathbb{E}X)^2}{\mathbb{E}(X^2)} \ge c^2C_3^{-1} N^{-2}. \nonumber	\qedhere
			\end{split}
		\end{equation}
	\end{proof}

	We present some inequalities that will be used for multiple times in the subsequent proof. For convenience, we set $0^{-a}=1$ for $a>0$ in this paper.

	\begin{lemma}\label{lemma_fina_4.2}
		For $d\ge 3$ and $a\in \mathbb{R}$, there exists $C(d,a)>0$ such that the following holds: 
		\begin{itemize}

			\item When $a>d$, for any $M\ge 1$, 
			\begin{equation}\label{fina_new_4.4}
				\sum_{x\in \mathbb{Z}^d\setminus B(M)} |x|^{-a} \le CM^{d-a}; 
			\end{equation}

			\item When $a\neq d-1$, for any $M\ge 1$,
				\begin{equation}\label{fina4.4}
					\max_{y\in \mathbb{Z}^d} \sum_{x\in \partial B(M)}|x-y|^{-a}\le CM^{(d-1-a)\vee 0}.
				\end{equation}

				\item When $a\neq d$, for any $M\ge 1$, 
			\begin{equation}\label{fina4.3}
				\max_{y\in \mathbb{Z}^d} \sum_{x\in B(M)} |x-y|^{-a} \le CM^{(d-a)\vee 0}; 
			\end{equation}
			
		\end{itemize}
	\end{lemma}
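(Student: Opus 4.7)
The plan is to prove all three estimates by the classical device of partitioning the summation set into concentric lattice shells and invoking the volume counts $|B(k)| \asymp k^{d}$ and $|\partial B(k)| \asymp k^{d-1}$ for $k \ge 1$.

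For (\ref{fina_new_4.4}), I would peel $\mathbb{Z}^d \setminus B(M)$ into the spheres $\partial B(k)$, $k > M$, and estimate
\[
\sum_{x \in \mathbb{Z}^d \setminus B(M)} |x|^{-a} \;=\; \sum_{k > M} \sum_{x \in \partial B(k)} |x|^{-a} \;\asymp\; \sum_{k > M} k^{\,d-1-a}.
\]
Since $a > d$ forces $d-1-a < -1$, the tail series evaluates to $O(M^{d-a})$. The same shell decomposition yields (\ref{fina4.3}): after recentering at $y$ and using that the relevant regime has $B(M)$ contained in a translate $B_y(cM)$ (with the far-away case $|y| \gg M$ handled separately by the crude bound $|x-y| \asymp |y|$), one writes
\[
\sum_{x \in B_y(cM)} |x-y|^{-a} \;\le\; 1 + \sum_{k=1}^{cM} k^{-a}\,|\partial B_y(k)| \;\asymp\; 1 + \sum_{k=1}^{cM} k^{\,d-1-a},
\]
where the additive $1$ covers the convention $0^{-a} = 1$ at $x = y$. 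This partial sum is $O(1)$ when $a > d$ and $O(M^{d-a})$ when $a < d$; the excluded borderline $a = d$ is precisely the logarithmic case.

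For (\ref{fina4.4}), the extra subtlety is that $\partial B(M)$ is a codimension-one lattice set, and the bound must be uniform in the position of $y$, including when $y$ lies on $\partial B(M)$ itself. I would dyadically decompose around $y$: for each $j \ge 0$, any $\ell^\infty$-shell $\{x : 2^j \le |x-y| < 2^{j+1}\}$ meets the discrete surface $\partial B(M)$ in at most $C\, 2^{j(d-1)}$ points, since such a shell has volume $\asymp 2^{jd}$ but its intersection with a codimension-one face set loses one factor of $2^{j}$. Summing $|x-y|^{-a}$ against this count gives
\[
\sum_{x \in \partial B(M)} |x-y|^{-a} \;\lesssim\; \sum_{\,0 \le j \le \log_2(CM)} 2^{\,j(d-1-a)},
\]
which evaluates to $O(M^{d-1-a})$ when $a < d-1$ and $O(1)$ when $a > d-1$; again the excluded $a = d-1$ is the logarithmic case.

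The only substantive obstacle is justifying the shell-intersection count $|\{x \in \partial B(M) : 2^j \le |x-y| < 2^{j+1}\}| \lesssim 2^{j(d-1)}$ uniformly over $y$, because a naive ``flat'' picture when $y$ is close to $\partial B(M)$ might suggest the even smaller count $2^{j(d-2)}$, while $y$ deep inside $B(M)$ yields no intersection at all for small $j$. The uniform upper bound $2^{j(d-1)}$ nonetheless holds and follows by observing that $\partial B(M)$ is contained in the thin annulus $\{M-1 \le |x|_\infty \le M\}$, so a width-$2^j$ shell can accommodate at most $O(2^{j(d-1)})$ lattice points on this surface. Once this count is in hand, each of (\ref{fina_new_4.4})--(\ref{fina4.3}) reduces to summing a geometric progression and the remaining verification is routine.
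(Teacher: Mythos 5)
Your proposal is correct and follows essentially the same route as the paper: a decomposition into concentric shells plus the volume counts $|B(k)|\asymp k^{d}$ and $|\partial B(k)|\asymp k^{d-1}$, with a far/near dichotomy in the position of $y$ for the last two bounds. The only (immaterial) divergence is in (\ref{fina4.4}), where you use dyadic annuli around $y$ together with the crude codimension-one count $O(2^{j(d-1)})$ --- which you justify correctly by containing the annulus in $B_y(2^{j+1})$ and intersecting with the thin surface $\partial B(M)$ --- whereas the paper uses unit shells $\partial B_y(k)$ with the finer intersection count $Ck^{d-2}$; both bookkeepings produce the same geometric series and the same threshold $a=d-1$, so your argument is complete.
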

	\begin{proof}
		
		For (\ref{fina_new_4.4}), since $|\partial B(k)|\le Ck^{d-1}$, we have 
		\begin{equation*}
			\sum_{x\in \mathbb{Z}^d\setminus B(M)} |x|^{-a}= \sum_{k=M+1}^{\infty}\sum_{x\in \partial B(k)}|x|^{-a}\le C\sum_{k=M+1}^{\infty}k^{d-1}\cdot k^{-a}\le CM^{d-a}.
		\end{equation*}


	Now we focus on the proof of (\ref{fina4.4}). When $y\in [\mathbb{Z}^d\setminus B(1.5M)]\cup B(0.5M)$, since $|x-y|\ge 0.4M$ for all $x\in \partial B(M)$, we have 
		\begin{equation}\label{fina4.5}
			\sum_{x\in \partial B(M)} |x-y|^{-a}\le CM^{-a}\cdot |\partial B(M)| \le CM^{d-1-a}. 
		\end{equation}
		For the remaining case (i.e. $y\in B(1.5M)\setminus B(0.5M)$), we observe that $|\partial B_y(k) \cap \partial B(M)|\le Ck^{d-2}$ for all $0\le k\le 5M$, and that $\partial B_y(k) \cap \partial B(M)=\emptyset$ for all $k>5M$. Therefore, we have 
		\begin{equation}\label{fina4.6}
				\sum_{x\in \partial B(M)} |x-y|^{-a} \le C\sum_{k=1}^{5M}  k^{d-2}\cdot k^{-a} \le  CM^{(d-1-a)\vee 0}.
 		\end{equation}
		Combining (\ref{fina4.5}) and (\ref{fina4.6}), we obtain (\ref{fina4.4}).

		The proof of (\ref{fina4.3}) can be approached similarly to (\ref{fina4.4}). Specifically, we can estimate the sum on the LHS of (\ref{fina4.3}) by separately considering two cases: when $y\in \mathbb{Z}^d\setminus B(2M)$ and when $y\in B(2M)$. Further details are omitted since the calculations parallel those in (\ref{fina4.5}) and (\ref{fina4.6}).
	\end{proof}

	\begin{lemma}\label{lemma_calculation1}
		For $d>6$, there exists $C(d)>0$ such that for all $x,y\in \mathbb{Z}^d$, 
		\begin{equation}\label{ineq_4.3}
			\sum_{z\in \mathbb{Z}^d} |z-x|^{2-d}|z-y|^{2-d}\le C|x-y|^{4-d}. 
		\end{equation}
	\end{lemma}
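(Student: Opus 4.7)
The plan is to treat this as a standard lattice convolution estimate by partitioning $\mathbb{Z}^d$ into regions based on the relative distance of $z$ from $x$ and $y$, and then applying Lemma \ref{lemma_fina_4.2} on each piece. Write $r = |x-y|$ and assume first that $r \geq 1$. I would split $\mathbb{Z}^d$ into three natural regions: (a) $|z-x| \leq r/2$, (b) $|z-y| \leq r/2$, and (c) $\min(|z-x|,|z-y|) > r/2$.

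On region (a), the triangle inequality gives $|z-y| \geq r/2$, so the second factor contributes $Cr^{2-d}$ uniformly, and applying \eqref{fina4.3} with exponent $a = d-2$ (which is less than $d$) yields $\sum_{|z-x|\le r/2}|z-x|^{2-d} \le Cr^{2}$. The product is $Cr^{4-d}$, matching the target, and region (b) is symmetric. On region (c) I would decompose further into (c1) $r/2 < |z-x| \le 2r$ and (c2) $|z-x| > 2r$. In (c1) both factors are $\le Cr^{2-d}$ and the number of lattice sites is $\le Cr^{d}$, giving again $Cr^{4-d}$. In (c2), the inequality $|z-y| \geq |z-x| - r \geq \tfrac{1}{2}|z-x|$ lets me absorb both factors into $C|z-x|^{4-2d}$, and then \eqref{fina_new_4.4} with exponent $a = 2d-4$ (noting $a > d$ iff $d > 4$, which is guaranteed since $d > 6$) yields $\sum_{|z-x| > 2r} |z-x|^{4-2d} \le Cr^{d - (2d-4)} = Cr^{4-d}$.

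Finally, the degenerate case $r = 0$ (i.e.\ $x = y$) has to be handled separately under the convention $0^{-a} = 1$. Here the single term $z = x$ contributes $1$, and the remaining sum is $\sum_{z \neq x}|z-x|^{4-2d}$, which is bounded by a $d$-dependent constant via \eqref{fina_new_4.4} applied at scale $M = 1$ (again using $2d - 4 > d$). The RHS is $0^{4-d} = 1$ by the same convention, so the desired inequality holds with an absolute constant.

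I do not anticipate any real obstacle: the only place the dimension restriction enters is in region (c2), where the tail sum $\sum |z-x|^{4-2d}$ must converge, requiring $2d - 4 > d$, comfortably satisfied by $d > 6$. All four regional contributions are of the same order $Cr^{4-d}$, and summing them produces the claimed bound $C|x-y|^{4-d}$.
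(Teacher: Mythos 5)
Your proposal is correct and follows essentially the same strategy as the paper's proof: decompose $\mathbb{Z}^d$ into a ball around $x$, a ball around $y$, and the remaining region where both distances are comparable to each other, then apply the summation bounds of Lemma \ref{lemma_fina_4.2} on each piece (the paper treats the outer region by shells in $\min\{|z-x|,|z-y|\}$ and handles all $|x-y|\le 100$ at once by a direct finiteness argument, but these are only cosmetic differences from your annulus split and your separate $x=y$ case). No gaps.
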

	
	\begin{proof}
		
		When $|x-y|\le 100$, one has $|z-y|\le |z-x|+|x-y|\le 2|z-x|$ for all $z\in \mathbb{Z}^d\setminus B_x(200)$. Thus, by (\ref{fina_new_4.4}) we have 
		\begin{equation*}\label{ineq_4.4}
			\begin{split}
				\sum_{z\in \mathbb{Z}^d} |z-x|^{2-d}|z-y|^{2-d}\le &C+\sum_{z\in \mathbb{Z}^d\setminus B_x(200)} 2|z-x|^{2(2-d)}<\infty.   
			\end{split}
		\end{equation*}
	By choosing a sufficiently large constant $C$ in (\ref{ineq_4.3}), we can establish that this lemma holds for all $x,y\in \mathbb{Z}^d$ with $|x-y|\le 100$.


		For the remaining case (i.e. $|x-y|>100$), we denote $n:=\lfloor \frac{1}{2}|x-y| \rfloor$, $A_1:= B_x(n)$, $A_2:=B_y(n)$ and $A_3:=\mathbb{Z}^d\setminus (A_1\cup A_2)$. Since $|z-y|\ge |x-y|-|x-z|\ge n$ for all $z\in A_1$, by (\ref{fina4.3}) we have  
		$$
		\sum_{z\in A_1} |z-x|^{2-d}|z-y|^{2-d}\le Cn^{2-d}\sum_{z\in A_1} |z-x|^{2-d}\le Cn^{4-d}. 
		$$
		For the same reason, the sum over $z\in A_2$ is also upper-bounded by $Cn^{4-d}$. Since $\min\{|z-x|, |z-y|\}\ge n$ for $z\in A_3$, we have 
		\begin{equation*}
			A_3\subset \bigcup_{k\ge n}A_{3,k}:=\bigcup_{k\ge n} \left\lbrace z\in \mathbb{Z}^d:\min\{|z-x|, |z-y|\}=k \right\rbrace.
		\end{equation*}
		Combining this inclusion with $|A_{3,k}|\le |\partial B_x(k)|+|\partial B_y(k)|\le Ck^{d-1}$, we obtain 
		\begin{equation}
			\begin{split}
				\sum_{z\in A_3} |z-x|^{2-d}|z-y|^{2-d}\le& \sum_{k\ge n}\sum_{z\in A_{3,k}} |z-x|^{2-d}|z-y|^{2-d}\\
				\le& C\sum_{k\ge n}k^{d-1}\cdot k^{2(2-d)}
				\le Cn^{4-d}.  \nonumber
			\end{split}
		\end{equation}
		By these estimates for the sums over $A_1$, $A_2$ and $A_3$, we conclude this lemma.
	\end{proof}

	By separating the sum over $\mathbb{Z}^d$ in the same way as above, we also have the following estimates. For the sake of brevity, we will not provide further details of these proofs since they are parallel to that of Lemma \ref{lemma_calculation1}.

	\begin{lemma}\label{lemma_calculation2}
		For $d>6$, there exists $C(d)>0$ such that for all $x,y\in \mathbb{Z}^d$, 
		\begin{equation}\label{cal_4.5}
			\sum_{z\in \mathbb{Z}^d} |z-x|^{2-d}|z-y|^{6-2d}\le C|x-y|^{2-d}, 
		\end{equation} 
		\begin{equation}\label{cal_4.6}
			\sum_{z\in \mathbb{Z}^d} |z-x|^{2-d}|z-y|^{4-d}\le C|x-y|^{6-d}.
		\end{equation}
	\end{lemma}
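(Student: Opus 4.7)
The plan is to mimic the three-region decomposition already used for Lemma~\ref{lemma_calculation1}. First I would dispose of the regime $|x-y|\le 100$ by a direct calculation combined with the tail bound~(\ref{fina_new_4.4}). For $|x-y|>100$, set $n:=\lfloor \tfrac{1}{2}|x-y|\rfloor$ and decompose $\mathbb{Z}^d = A_1 \cup A_2 \cup A_3$ with $A_1 := B_x(n)$, $A_2 := B_y(n)$, and $A_3 := \mathbb{Z}^d\setminus(A_1\cup A_2)$. On $A_1$ and $A_2$ one of the two factors is bounded below by $n$ and can be pulled out in front of the sum, after which the remaining sum is controlled by~(\ref{fina4.3}); on $A_3$ the annular decomposition $|A_{3,k}|\le Ck^{d-1}$ from the proof of Lemma~\ref{lemma_calculation1} is applied in the same way.

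For~(\ref{cal_4.5}), the dominant term comes from $A_2$: extracting $|z-x|^{2-d}\le Cn^{2-d}$ and applying~(\ref{fina4.3}) with exponent $2d-6>d$ (which holds precisely because $d>6$) gives $\sum_{z\in A_2}|z-y|^{6-2d}\le C$, contributing $Cn^{2-d}$. The contributions of $A_1$ and $A_3$ reduce to $Cn^{8-2d}$ and $\sum_{k\ge n} k^{7-2d}\le Cn^{8-2d}$ respectively, both dominated by $Cn^{2-d}$ when $d>6$.

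For~(\ref{cal_4.6}) the argument is structurally identical, only with different exponents. On $A_2$, the factor $|z-y|^{4-d}$ now has exponent $d-4<d$, so~(\ref{fina4.3}) yields $\sum_{z\in A_2}|z-y|^{4-d}\le Cn^{4}$, producing the bound $Cn^{6-d}$; on $A_1$ one extracts $|z-y|^{4-d}\le Cn^{4-d}$ and the remaining sum is $\le Cn^{2}$, giving the same order $Cn^{6-d}$; on $A_3$ the annular sum becomes $\sum_{k\ge n}k^{5-d}\le Cn^{6-d}$.

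No genuine obstacle is anticipated, since the calculation is mechanical once the three-region split is in place. The only subtlety worth flagging is that the hypothesis $d>6$ is used twice: it makes the exponent on $A_2$ in~(\ref{cal_4.5}) strictly larger than $d$ so that $(d-a)\vee 0 = 0$ in~(\ref{fina4.3}) and the $A_2$ piece controls the others; it also ensures that the annular sums on $A_3$ converge with the right power of $n$. Checking that no exponent appearing in~(\ref{fina4.3}) coincides with the forbidden value $d$ is immediate in each of the four cases.
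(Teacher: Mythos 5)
Your proposal is correct and follows exactly the route the paper intends: the paper omits the proof of this lemma precisely because it is "parallel to that of Lemma \ref{lemma_calculation1}", i.e.\ the same three-region split $B_x(n)$, $B_y(n)$, and the complement with the annular count $|A_{3,k}|\le Ck^{d-1}$. All the exponent bookkeeping you report checks out (in particular $2d-6>d$ and the convergence of $\sum_{k\ge n}k^{7-2d}$ and $\sum_{k\ge n}k^{5-d}$ for $d>6$), so there is nothing to add.
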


In the following corollary of Lemmas \ref{lemma_calculation1} and \ref{lemma_calculation2}, we provide several estimates that will be used repeatedly in the subsequent proof.

	\begin{corollary}\label{fina_coro_4.5}
		For $d>6$, there exists $C(d)>0$ such that for all $x,y\in \mathbb{Z}^d$, 
			\begin{equation}\label{fin_4.6}
				\sum_{z_1,z_2\in \mathbb{Z}^d} |x-z_1|^{2-d}|z_1-z_2|^{2-d}|z_2-x|^{2-d}|z_1-y|^{2-d}\le C|x-y|^{2-d},
			\end{equation}
				\begin{equation}\label{fin_4.7}
				\sum_{z_1,z_2,z_3\in \mathbb{Z}^d} |z_1-z_2|^{2-d}|z_2-z_3|^{2-d}|z_3-z_1|^{2-d}|x-z_1|^{2-d}|y-z_2|^{2-d}\le C|x-y|^{4-d}.
			\end{equation}
	\end{corollary}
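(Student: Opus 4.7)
The plan is to prove both estimates by iteratively summing out one variable at a time, each time reducing the diagram via Lemma \ref{lemma_calculation1} or Lemma \ref{lemma_calculation2}. Since every exponent $2-d$ is the Green's function exponent, at each stage we obtain another power of a Euclidean distance, which feeds into the next summation.

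For the first inequality (\ref{fin_4.6}), I would first sum over $z_2$. The inner sum is
\begin{equation*}
\sum_{z_2\in \mathbb{Z}^d} |z_1-z_2|^{2-d}|z_2-x|^{2-d}\le C|z_1-x|^{4-d},
\end{equation*}
by Lemma \ref{lemma_calculation1}. Substituting back, the LHS of (\ref{fin_4.6}) is bounded by
\begin{equation*}
C\sum_{z_1\in \mathbb{Z}^d} |x-z_1|^{2-d}\cdot |x-z_1|^{4-d}\cdot |z_1-y|^{2-d}=C\sum_{z_1\in \mathbb{Z}^d} |x-z_1|^{6-2d}|z_1-y|^{2-d}.
\end{equation*}
The remaining sum is exactly of the form (\ref{cal_4.5}) of Lemma \ref{lemma_calculation2}, which yields the bound $C|x-y|^{2-d}$, as desired.

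For the second inequality (\ref{fin_4.7}), I would first sum out the triangle vertex $z_3$. By Lemma \ref{lemma_calculation1},
\begin{equation*}
\sum_{z_3\in \mathbb{Z}^d} |z_2-z_3|^{2-d}|z_3-z_1|^{2-d}\le C|z_1-z_2|^{4-d}.
\end{equation*}
Hence the LHS of (\ref{fin_4.7}) is bounded by
\begin{equation*}
C\sum_{z_1,z_2\in \mathbb{Z}^d}|z_1-z_2|^{2-d}\cdot|z_1-z_2|^{4-d}\cdot|x-z_1|^{2-d}|y-z_2|^{2-d}=C\sum_{z_1,z_2\in \mathbb{Z}^d}|z_1-z_2|^{6-2d}|x-z_1|^{2-d}|y-z_2|^{2-d}.
\end{equation*}
Next, sum over $z_2$: by (\ref{cal_4.5}) of Lemma \ref{lemma_calculation2},
\begin{equation*}
\sum_{z_2\in \mathbb{Z}^d} |z_1-z_2|^{6-2d}|y-z_2|^{2-d}\le C|z_1-y|^{2-d}.
\end{equation*}
Finally, summing over $z_1$ via Lemma \ref{lemma_calculation1} gives $\sum_{z_1}|x-z_1|^{2-d}|z_1-y|^{2-d}\le C|x-y|^{4-d}$, completing the proof.

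There is no real obstacle here; the argument is purely mechanical. The only thing to watch is the bookkeeping of exponents when two copies of $|z_1-z_2|^{2-d}$ merge into $|z_1-z_2|^{6-2d}$ (rather than $|z_1-z_2|^{6-d}$), which is precisely the regime $a=2d-6>d$ covered by Lemma \ref{lemma_calculation2}. The use of $d>6$ is essential: it keeps each partial sum convergent and guarantees that the output exponent $4-d$ (resp.\ $2-d$) in the reduction step is still strong enough to iterate.
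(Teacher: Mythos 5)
Your proposal is correct and follows the paper's own argument essentially verbatim: both sum out $z_2$ (resp. $z_3$, then $z_2$, then $z_1$) in the same order, applying Lemma \ref{lemma_calculation1} to collapse the triangle factors and (\ref{cal_4.5}) to handle the resulting $|\cdot|^{6-2d}$ exponent. The exponent bookkeeping is accurate throughout.
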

	\begin{proof}
For (\ref{fin_4.6}), by summing over $z_2$ and $z_1$ in turn, we have 
		\begin{equation*}
			\begin{split}
				&\sum_{z_1,z_2\in \mathbb{Z}^d} |x-z_1|^{2-d}|z_1-z_2|^{2-d}|z_2-x|^{2-d}|z_1-y|^{2-d}\\
				\le &\sum_{z_1\in \mathbb{Z}^d} |x-z_1|^{6-2d}|z_1-y|^{2-d} \ \ \ \ \ \ \ \ \ \ \ \ \ \ \ \ \ \ \ (\text{by Lemma}\ \ref{lemma_calculation1})\\
				\le &C|x-y|^{2-d} \ \ \ \ \  \ \ \ \ \ \ \ \ \ \ \ \ \ \  \ \ \ \ \ \ \ \  \  \  \ \ \ \ \ \  \  \ \ \  (\text{by}\ (\ref{cal_4.5})).
			\end{split}
		\end{equation*}
	For (\ref{fin_4.7}), we sum over $z_3,z_2$ and $z_1$ in turn, and then obtain 
\begin{equation*}
	\begin{split}
		&\sum_{z_1,z_2,z_3\in \mathbb{Z}^d} |z_1-z_2|^{2-d}|z_2-z_3|^{2-d}|z_3-z_1|^{2-d}|x-z_1|^{2-d}|y-z_2|^{2-d}\\
		\le &\sum_{z_1,z_2\in \mathbb{Z}^d} |z_1-z_2|^{6-2d} |x-z_1|^{2-d}|y-z_2|^{2-d}\ \ \  \  \ (\text{by Lemma}\ \ref{lemma_calculation1})\\
		\le & \sum_{z_1\in \mathbb{Z}^d}|x-z_1|^{2-d} |z_1-y|^{2-d}    \ \  \  \ \ \ \ \  \  \ \ \ \ \  \  \ \ \ \ \  \  \ \  (\text{by}\ (\ref{cal_4.5}))\\
		\le & |x-y|^{4-d}\ \  \  \ \  \  \ \ \ \ \  \  \ \ \ \ \  \  \ \ \ \ \   \ \  \  \ \ \ \ \  \  \ \ \ \ \  \  \ \ \ \ \ \ \ (\text{by Lemma}\ \ref{lemma_calculation1}).    \qedhere
	\end{split}
\end{equation*}		
	\end{proof}

	Now we are ready to prove Lemma \ref{lemma_31}.

	\begin{proof}[Proof of Lemma \ref{lemma_31}]

		When the event $\mathsf{A}_{x_1,x_2}:=\{\bm{0} \xleftrightarrow[]{}x_1,\bm{0}\xleftrightarrow[]{}x_2\}$ happens, by the tree expansion (Lemma \ref{lemma_tree_expansion}), there exists a glued loop $\gamma_*$ such that $\{\gamma_* \xleftrightarrow[]{\cup \widetilde{\mathcal{L}}_{1/2}} \bm{0}\}\circ \{\gamma_* \xleftrightarrow[]{\cup \widetilde{\mathcal{L}}_{1/2}} x_1\} \circ \{\gamma_* \xleftrightarrow[]{\cup \widetilde{\mathcal{L}}_{1/2}} x_2\}$ happens. We denote by $\mathsf{A}_{x_1,x_2}^{\mathrm{f}}$ (resp. $\mathsf{A}_{x_1,x_2}^{\mathrm{p}}$, $\mathsf{A}_{x_1,x_2}^{\mathrm{e}}$) the event that $\mathsf{A}_{x_1,x_2}$ happens and the selected glued loop $\gamma_*$ can be the one composed of fundamental loops (resp. point loops, edge loops). It follows that (note that the RHS below is not necessarily a disjoint union)
		\begin{equation}\label{eq_4.6}
			\mathsf{A}_{x_1,x_2} \subset \mathsf{A}_{x_1,x_2}^{\mathrm{f}}\cup \mathsf{A}_{x_1,x_2}^{\mathrm{p}}\cup \mathsf{A}_{x_1,x_2}^{\mathrm{e}}. 
		\end{equation}

		When the event $\mathsf{A}_{x_1,x_2}^{\mathrm{f}}$ happens, there exist $x_3,x_4,x_5\in \mathbb{Z}^d$ and a fundamental loop $\widetilde{\ell}\in \widetilde{\mathcal{L}}_{1/2}$ such that $\mathrm{ran}(\widetilde{\ell}) \cap \widetilde{B}_{x_j}(1)\neq \emptyset$ for $j\in \{3,4,5\}$, and that $\{\bm{0} \xleftrightarrow[]{}x_3\}\circ \{x_1\xleftrightarrow[]{}x_4\}\circ \{x_2\xleftrightarrow[]{}x_5\}$ happens. By the analogous result of Lemma \ref{fina_lemma2.1} for three disjoint subsets of $\mathbb{Z}^d$ (see \cite[Exercise 32]{le2011markov}) and the relation between the loops on $\mathbb{Z}^d$ and $\widetilde{\mathbb{Z}}^d$ presented in Section \ref{subsection_continuous}, the loop measure of fundamental loops that intersect $\widetilde{B}_{x_3}(1)$, $\widetilde{B}_{x_4}(1)$ and $\widetilde{B}_{x_5}(1)$ is bounded from above by $C|x_3-x_4|^{2-d}|x_4-x_5|^{2-d}|x_5-x_3|^{2-d}$. Thus, by the BKR inequality (Corollary \ref{coro_BKR}) and the two-point function estimate (\ref{eq_two_points}) , we have
		\begin{equation}\label{ineq_310}
			\begin{split}
				\mathbb{P}\left( \mathsf{A}_{x_1,x_2}^{\mathrm{f}}\right) 
				\le &C \sum_{x_3,x_4,x_5\in \mathbb{Z}^d}|x_3-x_4|^{2-d}|x_3-x_5|^{2-d}|x_4-x_5|^{2-d}\\
				&\ \ \ \ \ \ \ \ \ \ \ \ \ \ \ \cdot |x_3|^{2-d}|x_1-x_4|^{2-d}|x_2-x_5|^{2-d}\\
				:=& C \sum_{x_3,x_4,x_5\in \mathbb{Z}^d} \mathcal{T}^{x_1,x_2}_{x_3,x_4,x_5}. 
			\end{split}
		\end{equation}

		When the event $\mathsf{A}_{x_1,x_2}^{\mathrm{p}}$ (or $\mathsf{A}_{x_1,x_2}^{\mathrm{e}}$) happens, since every $\gamma^{\mathrm{p}}_\cdot$ (or $\gamma^{\mathrm{e}}_\cdot$) is contained by some $\widetilde{B}_x(1)$, $x\in \mathbb{Z}^d$, there exist $x_3,x_4,x_5\in \mathbb{Z}^d$ with $\max\{|x_3-x_4|,|x_3-x_5|\}\le 2$ such that $\{\bm{0}\xleftrightarrow[]{}x_3\}\circ \{x_1\xleftrightarrow[]{}x_4\}\circ \{x_2\xleftrightarrow[]{}x_5\}$ happens. Similar to (\ref{ineq_310}), we have
		\begin{equation}\label{ineq4.8}
			\begin{split}
				&\max\left\lbrace \mathbb{P}\left( \mathsf{A}_{x_1,x_2}^{\mathrm{p}}\right),\mathbb{P}\left( \mathsf{A}_{x_1,x_2}^{\mathrm{e}}\right)\right\rbrace\\
				\le &C\sum_{x_3\in \mathbb{Z}^d} \sum_{x_4,x_5\in B_{x_3}(2)} |x_3|^{2-d}|x_1-x_4|^{2-d}|x_2-x_5|^{2-d},  
			\end{split}
		\end{equation}
		which implies that $\mathbb{P}\left( \mathsf{A}_{x_1,x_2}^{\mathrm{p}}\right)$ and $\mathbb{P}\left( \mathsf{A}_{x_1,x_2}^{\mathrm{e}}\right)$ are also bounded from above by  $C \sum_{x_3,x_4,x_5\in \mathbb{Z}^d} \mathcal{T}^{x_1,x_2}_{x_3,x_4,x_5}$ since $\min\{|x_3-x_4|^{2-d},|x_3-x_5|^{2-d},|x_4-x_5|^{2-d}\}\ge 4^{2-d}$ for all $x_4,x_5\in B_{x_3}(2)$. Thus, by (\ref{eq_4.6}) we obtain  
		\begin{equation}\label{ineq_new_4.10}
			\sum_{x_1,x_2\in \partial B(N)}\mathbb{P}\left( \bm{0}\xleftrightarrow[]{ }x_1,\bm{0}\xleftrightarrow[]{ }x_2 \right)\le C\sum_{x_1,x_2\in \partial B(N)} \sum_{x_3,x_4,x_5\in \mathbb{Z}^d} \mathcal{T}^{x_1,x_2}_{x_3,x_4,x_5}. 
		\end{equation}


		For the case when $x_3 \in \mathbb{Z}^d\setminus B(2N)$, by Corollary \ref{fina_coro_4.5} and Lemma \ref{lemma_fina_4.2}, we have 
		\begin{equation}\label{ineq_312}
			\begin{split}
				&\sum_{x_1,x_2\in \partial B(N)}\sum_{x_3 \in \mathbb{Z}^d\setminus B(2N)}\sum_{x_4,x_5\in \mathbb{Z}^d} \mathcal{T}^{x_1,x_2}_{x_3,x_4,x_5}\\
				\le &CN^{2-d} \sum_{x_1,x_2\in \partial B(N)}\sum_{x_3 \in \mathbb{Z}^d\setminus B(2N)}\sum_{x_4,x_5\in \mathbb{Z}^d} |x_4-x_5|^{2-d}|x_1-x_4|^{2-d} \\
				&\    \ \ \ \ \ \ \ \ \ \ \cdot |x_2-x_5|^{2-d}|x_3-x_4|^{2-d}|x_3-x_5|^{2-d}\ \ \  (\text{by}\ |x_3|^{2-d}\le CN^{2-d})\\
				\le & CN^{2-d}  \sum_{x_1,x_2\in \partial B(N)}|x_1-x_2|^{4-d} \ \ \  \ \ \  \ \ \ \ \  \ \ \ \  \ \ \ \ \ \ \ \  (\text{by}\ (\ref{fin_4.7}))\\
				\le & CN^{2-d}\cdot N^3 \cdot N^{d-1}=  CN^{4} \ \   \ \ \ \ \ \ \ \ \ \ \ \ \ \ \ \ \ \ \ \ \ \  \ \ \  (\text{by}\ (\ref{fina4.4})). 
			\end{split}
		\end{equation} 
		When $x_3\in B(2N)$, by summing over $x_1$, $x_2$, $x_5$, $x_4$ and $x_3$ in turn, and using Lemmas \ref{lemma_fina_4.2} and \ref{lemma_calculation1}, we get  
		\begin{equation*}\label{ineq_311}
			\begin{split}
				&\sum_{x_1,x_2\in \partial B(N)}\sum_{x_3\in B(2N)}\sum_{x_4,x_5\in \mathbb{Z}^d} \mathcal{T}^{x_1,x_2}_{x_3,x_4,x_5}\\
				\le & CN^2 \sum_{x_3\in B(2N)}\sum_{x_4,x_5\in \mathbb{Z}^d}|x_3-x_4|^{2-d}|x_3|^{2-d}   |x_3-x_5|^{2-d}|x_4-x_5|^{2-d}\ \  (\text{by}\ (\ref{fina4.4}))\\
					\le & CN^2 \sum_{x_3\in B(2N)}\sum_{x_4\in \mathbb{Z}^d}|x_3-x_4|^{6-2d}|x_3|^{2-d}  \  \ \ \ \ \ (\text{by Lemma}\ \ref{lemma_calculation1}) \\
				\le & CN^2  \sum_{x_3\in B(2N)} |x_3|^{2-d} \ \ \ \ \ \ \ \ \ \ \ \ \ \ \ \ \ \ \ \  \ \ \ \ \ \ \ \ \   (\text{by}\ (\ref{fina4.3})) \\
				\le & CN^4 \ \ \ \ \ \ \ \ \ \ \ \ \ \ \ \ \ \ \ \  \ \ \ \ \ \ \ \ \  \ \  \ \ \  \ \ \ \ \ \  \ \ \ \ \ \ \ \ \ (\text{by}\ (\ref{fina4.3}))  .
			\end{split}
		\end{equation*}
		Combined with (\ref{ineq_new_4.10}) and (\ref{ineq_312}), it concludes Lemma \ref{lemma_31}, and thus we complete the proof of the lower bound of Theorem \ref{theorem1}.
	\end{proof}

	
	\section{The error of deleting large loops} \label{section_proof_WW}
	In \cite[Section 5.3]{werner2021clusters}, Werner presented the following heuristic: 
	
	\emph{``In fact, when $a\in (0,d)$, the $N^a$-th largest Brownian loop will have a diameter of the order of $N\times N^{-a/d+o(1)}$. This means for instance that an overwhelming fraction of the numerous large clusters will contain no loop of diameter greater than $N^b$ for $b>6/d$. In other words, if we remove all loops of diameter greater than $N^b$, one will still have at least $N^{d-6+o(1)}$ large clusters, and the estimates for the two-point function will actually remain valid.''
	}

	To sum up, Werner described a strategy to prove the following conjecture. For each fixed $b\in (\frac{6}{d},1)$ and any $x,y\in \mathbb{Z}^d$ with $|x-y|=N$, one has 
	$$
	\mathbb{P}\Big( x \xleftrightarrow[]{\cup \widetilde{\mathcal{L}}_{1/2}^{\le N^b}} y \Big) \asymp N^{2-d}, 
	$$
	where $\widetilde{\mathcal{L}}_{1/2}^{\le M}:= \widetilde{\mathcal{L}}_{1/2}\cdot \mathbbm{1}_{\mathrm{diam}(\mathrm{ran}(\widetilde{\ell}))\le M}$ is the point measure composed of loops in $\widetilde{\mathcal{L}}_{1/2}$ with diameter at most $M$.

	Inspired by the heuristics mentioned above, we prove the analogous result with respect to the one-arm probability, which is not only useful in the proof of Theorem \ref{theorem1}, but also interesting in its own right.
	\begin{proposition}\label{prop_Nb}
		For $d>6$ and any $b\in (\frac{6}{d},1)$, there exist $C_4(d),c_1(d,b)>0$ such that for all $N\ge 1$, 
		\begin{equation}
			0\le \mathbb{P}\Big[ \bm{0} \xleftrightarrow[]{\cup\widetilde{\mathcal{L}}_{1/2}} \partial B(N) \Big] - \mathbb{P}\Big[ \bm{0} \xleftrightarrow[]{\cup\widetilde{\mathcal{L}}_{1/2}^{\le N^b}}\partial B(N) \Big]  \le C_4N^{-2-c_1}. 
		\end{equation}
	\end{proposition}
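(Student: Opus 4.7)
The difference in the proposition equals $\mathbb{P}[\mathsf{E}]$, where
\begin{equation*}
\mathsf{E}:=\bigl\{\bm{0}\xleftrightarrow[]{\cup\widetilde{\mathcal{L}}_{1/2}}\partial B(N)\bigr\}\setminus\bigl\{\bm{0}\xleftrightarrow[]{\cup\widetilde{\mathcal{L}}_{1/2}^{\le N^b}}\partial B(N)\bigr\},
\end{equation*}
and the lower inequality is immediate by monotonicity. My plan has three stages: identify a pivotal large loop, integrate it out using the Mecke formula and BKR, and extract a gain $N^{-c_1}$ from a three-point refinement of Corollary \ref{corollary_2.2} that incorporates the diameter constraint.

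On $\mathsf{E}$, every glued-loop chain in $\cup\widetilde{\mathcal{L}}_{1/2}$ from $\bm{0}$ to $\partial B(N)$ must contain at least one loop $\widetilde{\ell}^\star\in\widetilde{\mathcal{L}}_{1/2}$ with $\mathrm{diam}(\mathrm{ran}(\widetilde{\ell}^\star))>N^b$; by the extraction used in the proof of Lemma \ref{lemma_tree_expansion} one can select such an $\widetilde{\ell}^\star$ so that $\{\bm{0}\xleftrightarrow[]{}\mathrm{ran}(\widetilde{\ell}^\star)\}\circ\{\mathrm{ran}(\widetilde{\ell}^\star)\xleftrightarrow[]{}\partial B(N)\}$ holds, with the two connections realized by disjoint collections of glued loops from $\widetilde{\mathcal{L}}_{1/2}\setminus\{\widetilde{\ell}^\star\}$. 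A union bound over $\widetilde{\ell}^\star$ followed by the Mecke--Campbell formula for the Poisson point process $\widetilde{\mathcal{L}}_{1/2}$ (of intensity $\tfrac{1}{2}\widetilde{\mu}$), Corollary \ref{coro_BKR} applied to the disjoint occurrence, and the two-point function estimate \eqref{eq_two_points} applied to each connection event after summing over lattice entry/exit points of $\widetilde{\ell}$, together yield
\begin{equation*}
\mathbb{P}[\mathsf{E}]\le C\!\!\!\sum_{x_1,x_2\in\mathbb{Z}^d,\,y\in\partial B(N)}\!\!|x_1|^{2-d}|x_2-y|^{2-d}\cdot\widetilde{\mu}\bigl(\{\widetilde{\ell}:x_1,x_2\in\mathrm{ran}(\widetilde{\ell}),\,\mathrm{diam}(\mathrm{ran}(\widetilde{\ell}))>N^b\}\bigr).
\end{equation*}

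The key new input is a refinement of Corollary \ref{corollary_2.2}: in the long-separation regime $|x_1-x_2|>N^b$ the diameter constraint is automatic and the loop measure is $\asymp|x_1-x_2|^{2-d}$; in the short-separation regime $|x_1-x_2|\le N^b$, one forces the loop to visit some additional point $z$ with $|z-x_1|>N^b/2$, bounds the three-point loop measure by $\widetilde{\mu}(\{\widetilde{\ell}:x_1,x_2,z\in\mathrm{ran}(\widetilde{\ell})\})\le C|x_1-x_2|^{2-d}|x_2-z|^{2-d}|z-x_1|^{2-d}$ (a three-set extension of Lemma \ref{fina_lemma2.1} together with \eqref{fina2.7}), and sums over $z$ via \eqref{fina_new_4.4} to obtain $\widetilde{\mu}(\cdot)\le C|x_1-x_2|^{2-d}N^{b(4-d)}$. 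Substituting these bounds and performing the nested summations along the lines of Lemma \ref{lemma_31}---using Lemmas \ref{lemma_fina_4.2}--\ref{lemma_calculation2} and Corollary \ref{fina_coro_4.5}---the gain factor $N^{b(4-d)}$ combines with the lattice-sum convolutions to push the net exponent strictly below $-2$ precisely when $b>6/d$, matching Werner's heuristic \cite{werner2021clusters} that the crossover scale is $bd=6$.

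The hard part will be keeping the final exponent count sharp, because the crude bound $\mathbb{P}[x_2\xleftrightarrow[]{}\partial B(N)]\le C\sum_{y\in\partial B(N)}|x_2-y|^{2-d}$ grows as $N$ for $x_2$ deep inside $B(N)$ and is not by itself strong enough to push the total exponent past $-2$ uniformly in $d>6$. Closing the argument likely requires feeding in an a priori arm bound---such as $\mathbb{P}[\bm{0}\xleftrightarrow[]{}\partial B(N)]\le CN^{-1/2}$ from Ding--Wirth \cite{ding2020percolation} applied by translation, or an iterated/bootstrapped sharpening---to be balanced against the loop-measure gain $N^{b(4-d)}$, the boundary volume $N^{d-1}$ and the various lattice-sum factors. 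The exponent book-keeping pinning down the precise threshold $b=6/d$ is the most delicate part of the proof.
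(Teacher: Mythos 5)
Your proposal takes a genuinely different route from the paper, and as written it does not close; the gap is more serious than the ``delicate bookkeeping'' you flag at the end. The skeleton inequality you derive,
\begin{equation*}
\mathbb{P}[\mathsf{E}]\le C\sum_{x_1,x_2,\,y\in\partial B(N)}|x_1|^{2-d}\,\widetilde{\mu}\bigl(\{\widetilde{\ell}:x_1,x_2\in\mathrm{ran}(\widetilde{\ell}),\ \mathrm{diam}>N^b\}\bigr)\,|x_2-y|^{2-d},
\end{equation*}
is already polynomially too weak before the diameter constraint is even used: dropping that constraint and performing the convolutions via Lemmas \ref{lemma_calculation1}--\ref{lemma_calculation2} and (\ref{fina4.4}) gives $\sum_{x_2}|x_1-x_2|^{2-d}|x_2-y|^{2-d}\le C|x_1-y|^{4-d}$, then $\sum_{x_1}|x_1|^{2-d}|x_1-y|^{4-d}\le C|y|^{6-d}$, then $\sum_{y\in\partial B(N)}CN^{6-d}=CN^{5}$. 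The union bound over boundary points $y$ and over the two connection arms is thus lossy by roughly $N^{7}$ relative to the target $N^{-2-c_1}$, while your diameter gain in the short-separation regime is only $N^{b(4-d)}$, which for $d$ near $7$ is at most about $N^{-3}$. Substituting the a priori one-arm bound $\theta(n)\le Cn^{-1/2}$ from (\ref{ineq_onearm}) in place of $\sum_y|x_2-y|^{2-d}$ does not repair this either. The target $N^{-2-c_1}$ sits \emph{below} the one-arm probability itself, so any argument that pays for two independent arms plus a union bound over the location of the large loop is structurally unable to reach it.

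The paper's proof avoids this entirely by never estimating the probability for a fixed origin. By translation invariance the difference equals $|B(N)|^{-1}\mathbb{E}|V_1\setminus V_2|$, where $V_1\setminus V_2$ is the set of $x\in B(N)$ whose connection to distance $N$ is destroyed by removing large loops. On a good event $\mathsf{G}$ (max cluster volume in $B(N^{1+\alpha})$ at most $M^4\log^2 M$, from the moment bound of Lemma \ref{lemma_2.3} and the deviation bound of Lemma \ref{lemma_large_div}), one has the deterministic bound $|V_1\setminus V_2|\le|\mathfrak{L}|\cdot M^4\log^2 M$ with $\mathfrak{L}$ the set of large loops meeting $B(2N)$, and $\mathbb{E}|\mathfrak{L}|$ is controlled by a counting argument (Lemma \ref{lemma_loop_measure}) that splits loops by whether they visit few or many lattice points. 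The product (number of large loops)$\times$(max cluster volume)$/N^d$ is then below $N^{-2}$ exactly when $bd>6$. If you want to salvage your pivotal-loop strategy, you would need to import at least the cluster-volume input (Lemma \ref{lemma_2.3}) and replace the two-arm union bound by an averaged quantity; at that point you are essentially reconstructing the paper's argument.
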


	To prove Proposition \ref{prop_Nb}, we need some preparations. For any $x\in \mathbb{Z}^d$, we denote by $\mathbf{C}(x)$ the cluster of $\cup \widetilde{\mathcal{L}}_{1/2}$ containing $x$.

	\begin{lemma}[{\cite[Section 5, Equation (2)]{werner2021clusters}}]\label{lemma_2.3}
		For $d>6$, there exists $C_5(d)>0$ such that for any $x\in \mathbb{Z}^d$, $M\ge 1$ and $k\in \mathbb{N}^+$, 
		\begin{equation}
			\mathbb{E}\left( \left|\mathbf{C}(x)\cap B(M) \right|^k  \right) \le C_5^kk!M^{4k-2}.  
		\end{equation}
	\end{lemma}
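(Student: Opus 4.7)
\emph{Proof proposal.} My plan is to establish an Aizenman--Newman--type tree-graph upper bound for the $k$-point connectivity
\begin{equation*}
\mathbb{P}\bigl(x\xleftrightarrow[]{}y_1,\ldots,x\xleftrightarrow[]{}y_k\bigr)
\end{equation*}
and then evaluate it using the integration lemmas of the previous section. First I would expand
\begin{equation*}
\mathbb{E}\bigl(|\mathbf{C}(x)\cap B(M)|^k\bigr)=\sum_{y_1,\ldots,y_k\in B(M)}\mathbb{P}\bigl(x\xleftrightarrow[]{}y_1,\ldots,x\xleftrightarrow[]{}y_k\bigr).
\end{equation*}
To bound each summand, I would apply the tree expansion (Lemma \ref{lemma_tree_expansion}) iteratively together with the BKR inequality (Corollary \ref{coro_BKR}): at each step a glued loop $\gamma_{*}$ splits off one of the remaining $y_i$'s from the others into BKR-disjoint connecting events. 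As in the derivation of (\ref{ineq_310}) and (\ref{ineq4.8}), a fundamental $\gamma_{*}$ contributes a triangle of three $|\cdot|^{2-d}$ factors, while a point or edge $\gamma_{*}$ contributes the same form with the three attachment points forced to lie within lattice distance $2$. Iterating $k-1$ times yields a bound of the form
\begin{equation*}
\mathbb{P}\bigl(x\xleftrightarrow[]{}y_i,\ \forall i\bigr)\le C^k\sum_{T}\sum_{z_1,\ldots,z_{k-1}\in\mathbb{Z}^d}\prod_{(u,v)\in E(T)}|u-v|^{2-d},
\end{equation*}
where $T$ ranges over a family of decorated skeleton trees with external vertices $\{x,y_1,\ldots,y_k\}$ and $k-1$ internal vertices; a standard Cayley-type count shows there are at most $C^kk!$ such topologies, which supplies the $k!$ in the statement.

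Next I would sum each diagram from the leaves inward. Every leaf $y_i\in B(M)$ contributes $\sum_{y\in B(M)}|y-z|^{2-d}\le CM^2$ by (\ref{fina4.3}), producing a total factor $M^{2k}$. The remaining internal vertices are summed out using Lemma \ref{lemma_calculation1}, (\ref{cal_4.5})--(\ref{cal_4.6}), and Corollary \ref{fina_coro_4.5}. The hypothesis $d>6$ is crucial here precisely because the triangle identity $\sum_z|x-z|^{2-d}|y-z|^{2-d}\le C|x-y|^{4-d}$ requires $d>6$; this lets each triangle attached to an already-summed vertex telescope into an effective propagator of exponent $4-d$, which is then absorbed against a subsequent leg via (\ref{cal_4.5}). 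Each of the $k-1$ internal vertices ultimately contributes at most $CM^2$ (the case where an internal vertex is constrained to $B(O(M))$ gives this directly via (\ref{fina4.3}); the far regime is handled by a power-counting argument analogous to (\ref{ineq_312})). Multiplying $M^{2k}\cdot M^{2(k-1)}=M^{4k-2}$ and combining with the tree count $C^kk!$ yields the claimed bound $C_5^kk!M^{4k-2}$.

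The main obstacle is the bookkeeping in the iterated tree expansion: when $\gamma_{*}$ is a fundamental loop, each iteration introduces a triangle rather than a single edge, and one must verify that all the resulting decorated trees reduce to products of $M^2$ per internal vertex under the integration lemmas in every relative geometric regime. A secondary subtlety is producing the sharp combinatorial factor $k!$ rather than $(k!)^2$ or worse: this can be arranged by fixing a canonical iteration order (e.g., always splitting off the smallest-indexed remaining leaf), so that each tree topology admits a unique encoding by a permutation of $\{1,\ldots,k\}$ together with bounded branching data at each iteration.
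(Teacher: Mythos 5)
The paper does not prove this lemma at all: it is imported verbatim from Werner \cite[Section 5, Equation (2)]{werner2021clusters}, so there is no in-paper argument to compare against. Your plan — expand the $k$-th moment as a sum of $k$-point connectivities, apply the tree expansion (Lemma \ref{lemma_tree_expansion}) iteratively with the BKR inequality (Corollary \ref{coro_BKR}), distinguish fundamental versus point/edge glued loops exactly as in (\ref{ineq_310})--(\ref{ineq4.8}), and then power-count the resulting tree diagrams — is the standard Aizenman--Newman route, and it is essentially how the cited bound is actually established; the combinatorial count $(2k-3)!!\le C^k k!$ of binary tree topologies does give the stated $k!$. So the approach is sound.

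One point where your sketch, read literally, would break: you cannot first sum out all leaves to harvest $M^{2k}$ and then claim each internal triangle contributes $M^2$ independently. Once the leaf propagators $|y_i-z|^{2-d}$ are consumed, the remaining internal vertices attached to $x$ carry only $|x-z|^{2-d}$, and the free sum $\sum_{z\in\mathbb{Z}^d}|x-z|^{2-d}$ diverges. The near/far splitting you mention in parentheses (the analogue of (\ref{ineq_312}) for the vertex adjacent to $x$, and the retention of leaf propagators when summing far internal vertices via (\ref{fin_4.7})) is therefore not an optional refinement but the crux of the estimate, and for general $k$ it must be organized systematically — most cleanly as an induction on $k$, splitting off one leaf at a time and showing $\mathbb{E}(Y^{k+1})\le CkM^4\,\mathbb{E}(Y^k)$ with $\mathbb{E}(Y)\le CM^2$ as the base case, which yields $C^kk!M^{4k-2}$ directly. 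With that reorganization (and the routine but necessary care in iterating the tree expansion so that the $k+2$ connection events remain BKR-disjoint at every stage), the proposal goes through; as written it is a correct outline with the hardest step deferred rather than a complete proof.
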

	
	By Lemma \ref{lemma_2.3}, we can prove a large deviation bound for $\left|\mathbf{C}(x)\cap B(M) \right|$. The proof is parallel to \cite[Lemma 4.4]{kozma2011arm}. 
	
	\begin{lemma}\label{lemma_large_div}
		For $d>6$, there exist $C(d),c(d)>0$ such that for all $M\ge 1$ and $s>0$,
		\begin{equation}
			\mathbb{P}\Big(  \max_{x\in B(M)} \left|\mathbf{C}(x)\cap B(M) \right|>sM^4 \Big) \le C M^{d-6} e^{-cs}. 
		\end{equation}
	\end{lemma}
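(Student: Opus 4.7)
The plan is to combine the moment bound of Lemma \ref{lemma_2.3} with the crucial observation that the event in question is self-reinforcing: whenever some point $x\in B(M)$ has $|\mathbf{C}(x)\cap B(M)|>sM^{4}$, then every one of the $\ge sM^{4}$ points in $\mathbf{C}(x)\cap B(M)$ witnesses the same inequality. This will allow us to convert the statement ``at least one such $x$ exists'' into ``at least $sM^{4}$ such $x$'s exist,'' and then apply Markov to the counting variable, gaining a factor of $M^{-4}$ relative to a naive union bound.

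The first step is to upgrade the polynomial moment bound to a moment-generating function estimate. Writing $Y_{x}:=M^{-4}|\mathbf{C}(x)\cap B(M)|$, Lemma \ref{lemma_2.3} reads $\mathbb{E}(Y_{x}^{k})\le C_{5}^{k}k!\,M^{-2}$. Summing the Taylor series yields, for any $\lambda\in(0,C_{5}^{-1})$,
\begin{equation*}
\mathbb{E}(e^{\lambda Y_{x}})=\sum_{k\ge 0}\frac{\lambda^{k}}{k!}\mathbb{E}(Y_{x}^{k})\le M^{-2}\sum_{k\ge 0}(\lambda C_{5})^{k}=\frac{M^{-2}}{1-\lambda C_{5}}.
\end{equation*}
Choosing $\lambda=1/(2C_{5})$ and applying Markov's inequality gives the individual bound
\begin{equation*}
\mathbb{P}\bigl(|\mathbf{C}(x)\cap B(M)|>sM^{4}\bigr)\le 2M^{-2}e^{-s/(2C_{5})}
\end{equation*}
for every $x\in B(M)$ and every $s>0$.

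The second step is the counting argument. Let $N_{s}:=\#\{x\in B(M):|\mathbf{C}(x)\cap B(M)|>sM^{4}\}$. If some $x_{0}\in B(M)$ satisfies $|\mathbf{C}(x_{0})\cap B(M)|>sM^{4}$, then each of the points in $\mathbf{C}(x_{0})\cap B(M)$ lies in the same cluster and hence is itself counted by $N_{s}$; consequently $N_{s}>sM^{4}$. Assume first that $sM^{4}\ge 1$. Using the individual bound from Step~1 and $|B(M)|\le CM^{d}$,
\begin{equation*}
\mathbb{P}\Bigl(\max_{x\in B(M)}|\mathbf{C}(x)\cap B(M)|>sM^{4}\Bigr)\le \mathbb{P}(N_{s}\ge sM^{4})\le \frac{\mathbb{E}N_{s}}{sM^{4}}\le \frac{CM^{d-2}e^{-s/(2C_{5})}}{sM^{4}}\le CM^{d-6}e^{-cs},
\end{equation*}
where in the last step we absorbed the factor $s^{-1}$ into the exponential (at the cost of slightly shrinking $c$). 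For the remaining range $sM^{4}<1$, one has $e^{-cs}\ge e^{-c}$ is of order one, while $M^{d-6}\ge 1$ since $d>6$ and $M\ge 1$, so the trivial bound $\mathbb{P}\le 1$ gives the conclusion after enlarging $C$.

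There is no real obstacle here: the only conceptual point is the self-reinforcement observation that upgrades the event $\{N_{s}\ge 1\}$ to $\{N_{s}\ge sM^{4}\}$, which is what produces the crucial improvement from $M^{d-2}$ to $M^{d-6}$. Everything else is a standard Chebyshev/Chernoff manipulation of the factorial moment bound of Lemma \ref{lemma_2.3}.
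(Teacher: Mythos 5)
Your strategy is essentially the paper's proof in different packaging: the paper encodes your ``self-reinforcement'' observation as the identity $\sum_{m}|\mathbf{C}_m'\cap B(M)|^{l}=\sum_{x\in B(M)}|\mathbf{C}(x)\cap B(M)|^{l-1}$ (summing over clusters versus points), and then applies an $l$-th moment Markov bound with $l\approx s/(2C_5)$ plus Stirling; your split into a pointwise tail bound followed by first-moment Markov on the counter $N_s$ produces exactly the same bookkeeping $M^{d}\cdot M^{-2}\cdot M^{-4}$.

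One step is wrong as written, though it is repairable. The inequality $\mathbb{E}(e^{\lambda Y_x})=\sum_{k\ge0}\frac{\lambda^k}{k!}\mathbb{E}(Y_x^k)\le M^{-2}\sum_{k\ge0}(\lambda C_5)^k$ fails at $k=0$, since $\mathbb{E}(Y_x^0)=1\not\le M^{-2}$ (Lemma \ref{lemma_2.3} is only stated for $k\ge1$). Taken literally you only get $\mathbb{E}(e^{\lambda Y_x})\le 1+O(M^{-2})$, Markov then yields $\mathbb{P}(Y_x>s)\le Ce^{-\lambda s}$ without the factor $M^{-2}$, and your final bound degrades to $CM^{d-4}s^{-1}e^{-cs}$, which is not the claim. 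The fix is one line: either apply Markov to $e^{\lambda Y_x}-1$, i.e.\ $\mathbb{P}(Y_x>s)\le \frac{\mathbb{E}(e^{\lambda Y_x}-1)}{e^{\lambda s}-1}\le \frac{CM^{-2}}{e^{\lambda s}-1}\le C'M^{-2}e^{-\lambda s}$ for $s\ge1$, or use the $k$-th moment Markov inequality directly with $k=\lceil s/(2C_5)\rceil$ and Stirling, which is what the paper does. Separately, absorbing the leftover $s^{-1}$ into the exponential only works for $s$ bounded away from $0$; for $s\le1$ (not just $sM^4<1$) you should invoke the trivial bound $\mathbb{P}\le1\le M^{d-6}e^{-c}\cdot e^{c}$, which your closing remark already essentially covers.
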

	
	\begin{proof}
		We enumerate the clusters of $\cup \widetilde{\mathcal{L}}_{1/2}$ that intersect $B(M)$ by $\mathbf{C}_1',...,\mathbf{C}_{m_*}'$. Note that for any $l\ge 2$, one has a.s. 
		$$
		\sum_{m=1}^{m_*} \left|\mathbf{C}_m'\cap B(M)\right|^l = \sum_{x\in B(M)} \left|\mathbf{C}(x) \cap B(M) \right|^{l-1} 
		$$
		Therefore, by applying Lemma \ref{lemma_2.3} for $k=l-1$, we have 
		\begin{equation}\label{ineq_5.4}
			\begin{split}
				\mathbb{E}\Big( \sum_{m=1}^{m_*} \left|\mathbf{C}_m'\cap B(M)\right|^l  \Big)	= &\mathbb{E}\Big( \sum_{x\in B(M)} \left|\mathbf{C}(x) \cap B(M) \right|^{l-1} \Big)\\
				\le& CC_5^{l-1}(l-1)!M^{4l+d-6}. 
			\end{split}
		\end{equation}
		Let $l_\dagger=l_\dagger(s):=\lceil \frac{s}{2C_5}+1\rceil$. By Markov's inequality, we have 
		\begin{equation*}
			\begin{split}	
				&\mathbb{P}\Big(  \max_{x\in B(M)} \left|\mathbf{C}(x)\cap B(M)  \right|>sM^4 \Big) \\
				\le&  (sM^4)^{-l_\dagger}\mathbb{E}\Big(  \max_{x\in B(M)} \left|\mathbf{C}(x)\cap B(M)  \right|^{l_\dagger} \Big)\\
				\le &(sM^4)^{-l_\dagger}\mathbb{E}\Big( \sum_{m=1}^{m_*} \left|\mathbf{C}_m'\cap B(M)\right|^{l_\dagger}  \Big)
				\le CM^{d-6}e^{-cs},
			\end{split}
		\end{equation*}
	where we applied (\ref{ineq_5.4}) and the Stirling's formula in the last inequality.
	\end{proof}

	We need the following estimate on the loop measure of all oversized loops in a large box. 
	
	\begin{lemma}\label{lemma_loop_measure}
		For any $b\in (0,1)$, $\alpha>0$ and $\epsilon>0$, we have 
		\begin{equation}\label{ineq_5.6}
		\widetilde{\mu}\left[ \left\lbrace \widetilde{\ell}:\  \mathrm{\widetilde{\ell}} \subset \widetilde{B}(N^{1+\alpha}),\mathrm{diam}(\mathrm{ran}( \widetilde{\ell}))\ge N^b \right\rbrace  \right] \le o\left( N^{(1-b+\alpha)d+\epsilon}\right).
		\end{equation}
	\end{lemma}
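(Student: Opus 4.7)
The plan is to convert the loop-measure bound into a capacity calculation via Corollary \ref{corollary_2.2}, by covering the box $B(N^{1+\alpha})$ with a grid of small boxes of scale $N^b$ and forcing each oversized loop to pass through some small box while also reaching its large neighborhood. First, observe that only fundamental loops contribute once $N^b$ exceeds a fixed constant: under the definition of $\mathrm{diam}$ in Section \ref{fina_section_2.1} as a supremum over lattice points, edge loops (visiting no lattice point) have diameter $0$, and point loops (visiting exactly one lattice point) also have diameter $0$. Thus the relevant loops are fundamental and, in particular, contain lattice points.

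Next, fix a grid $\{z_i\}_{i=1}^{M_N}$ in $B(N^{1+\alpha}) \cap \mathbb{Z}^d$ with coordinate spacing $\lfloor N^b/10 \rfloor$, so that $M_N \asymp N^{(1-b+\alpha)d}$ and every lattice point of $B(N^{1+\alpha})$ lies in some $B_{z_i}(N^b/10)$. For a fundamental loop $\widetilde{\ell} \subset \widetilde{B}(N^{1+\alpha})$ with $\mathrm{diam}(\mathrm{ran}(\widetilde{\ell})) \geq N^b$, choose $v_1, v_2 \in \mathrm{ran}(\widetilde{\ell}) \cap \mathbb{Z}^d$ with $|v_1 - v_2| \geq N^b$ and let $z_i$ be a grid point with $v_1 \in B_{z_i}(N^b/10)$; the triangle inequality then forces $|v_2 - z_i| \geq 9N^b/10$, so $\widetilde{\ell}$ intersects both $B_{z_i}(N^b/10)$ and $\partial B_{z_i}(9N^b/10)$. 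A union bound gives
\begin{equation*}
\widetilde{\mu}\bigl(\{\widetilde{\ell} : \widetilde{\ell} \subset \widetilde{B}(N^{1+\alpha}),\, \mathrm{diam}(\mathrm{ran}(\widetilde{\ell})) \geq N^b\}\bigr) \leq \sum_{i=1}^{M_N} \widetilde{\mu}\bigl(\widetilde{\ell} \cap B_{z_i}(N^b/10) \neq \emptyset,\, \widetilde{\ell} \cap \partial B_{z_i}(9N^b/10) \neq \emptyset\bigr).
\end{equation*}

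Each summand is controlled by the translation-invariant form of (\ref{ineq_2.5}): with $A = B_{z_i}(N^b/10) \subset B_{z_i}(9N^b/20)$ and outer radius $9N^b/10$, combined with $\mathrm{cap}(B_{z_i}(N^b/10)) \asymp (N^b)^{d-2}$ from (\ref{ineq_2.2}), each summand is at most $C \cdot (N^b)^{d-2} \cdot (N^b)^{2-d} = O(1)$, so the total is $O(M_N) = O(N^{(1-b+\alpha)d})$, which is $o(N^{(1-b+\alpha)d+\epsilon})$ for every $\epsilon > 0$. I do not anticipate a serious obstacle: the only items to check are that every relevant loop contains a lattice point (so the cover catches it) and that the hypothesis $A \subset B(N/2)$ of Corollary \ref{corollary_2.2} is met at the chosen scales, both of which are immediate from the construction. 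In fact the argument delivers the stronger bound $O(N^{(1-b+\alpha)d})$, so the $N^\epsilon$ slack in the statement presumably just leaves room for looser book-keeping in the intended writeup.
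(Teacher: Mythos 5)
Your proof is correct, but it takes a genuinely different and in fact simpler route than the paper. The paper's proof splits the oversized loops into ``type I'' loops (those admitting an excursion from a visited point $x$ to $\partial B_x(\tfrac12 N^b)$ with atypically small range), whose measure is shown to be stretched-exponentially small via \cite[Lemma 1.5.1]{lawler2013intersections} and the law of large numbers for the range of a random walk, and ``type II'' loops, which necessarily have $|\mathrm{ran}(\widetilde{\ell})|\ge N^{2b-3\epsilon/4}$ and are handled by double counting: summing over \emph{all} $x\in B(N^{1+\alpha})$ the measure of big loops through $x$ gives $\mathcal{T}\le CN^{(1-b+\alpha)d+2b}$ by (\ref{ineq_2.5}), while each type II loop is counted at least $N^{2b-3\epsilon/4}$ times. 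This overcounting-by-range mechanism is exactly what forces the type I/II split and produces the residual $N^{3\epsilon/4}$ slack. Your argument replaces the sum over all lattice points by a sum over a sparse grid at scale $N^b$, so that each crossing event costs only $\mathrm{cap}(B(N^b/10))\cdot(N^b)^{2-d}=O(1)$ by the translated form of (\ref{ineq_2.5}) and (\ref{ineq_2.2}), and the union bound over the $\asymp N^{(1-b+\alpha)d}$ grid boxes needs no lower bound on how often a loop is counted. Your preliminary reductions are also sound: under the paper's definition $\mathrm{diam}(A)=\sup_{v_1,v_2\in A\cap\mathbb{Z}^d}|v_1-v_2|$, point and edge loops indeed have diameter $0$, and the hypothesis $A\subset B(\tfrac{N}{2})$ of Corollary \ref{corollary_2.2} holds since $N^b/10<9N^b/20$. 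The outcome is the sharper bound $O(N^{(1-b+\alpha)d})$, which (as a matching lower bound via disjoint annuli shows) is of the correct order; the paper's extra $N^{\epsilon}$ is pure slack from its bookkeeping and is harmless where the lemma is applied, in the proof of Proposition \ref{prop_Nb}.
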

	\begin{proof}
		Recall the notations $L(A_1,A_2)(\cdot)$ and $\widetilde{\tau}_i$ in Section \ref{subsection_decomposition}. For each loop $\widetilde{\ell}$ involved in the LHS of (\ref{ineq_5.6}), we say it is a type I loop if there exist $x\in B(N^{1+\alpha})$ and $\widetilde{\varrho}\in L(\{x\},\partial B_x(\frac{1}{2}N^b))(\widetilde{\ell})$ such that $\big|\{\widetilde{\varrho}(t):0\le t\le \widetilde{\tau}_1\}\big|\le  N^{2b-\frac{3\epsilon}{4}}$; otherwise, we say $\widetilde{\ell}$ is a type II loop. We denote the collections of these two types of loops by $\mathfrak{L}_{\mathrm{I}}$ and $\mathfrak{L}_{\mathrm{II}}$ respectively.

		For type I loops, by (\ref{fina_2.5}) and the relation between loops on $\mathbb{Z}^d$ and $\widetilde{\mathbb{Z}}^d$ presented in Section \ref{subsection_continuous}, we know that $\widetilde{\mu}\left( \mathfrak{L}_{\mathrm{I}} \right)$ is upper-bounded by
		\begin{equation}\label{fina_5.6}
			\begin{split}
					& \sum_{x\in B(N^{1+\alpha})} \mathbb{P}_x\Big[ \big| \{S^{(i)}\}_{0\le i\le \tau_{\partial B_x(\frac{1}{2}N^b)}}\big|\le N^{2b-\frac{3\epsilon}{4}} \Big]\\
					\le & \left| B(N^{1+\alpha})\right|  \left(\mathbb{P}\left[ \tau_{\partial B(\frac{1}{2}N^b)} \le  N^{2b-\frac{\epsilon}{4}}  \right] + \mathbb{P}\left[\big|\{S^{(i)}\}_{0\le i\le N^{2b-\frac{\epsilon}{4}}} \big|\le N^{2b-\frac{3\epsilon}{4}}  \right] \right). 
			\end{split}
		\end{equation}
			By Lawler \cite[Lemma 1.5.1]{lawler2013intersections}, the first probability on the RHS of (\ref{fina_5.6}) is bounded from above by $\text{s.e.}(N)$. Moreover, for the second probability, by the Markov property and the law of large numbers for the range of a simple random walk (see e.g. Spitzer \cite[Page 35, Theorem 1]{spitzer2001principles}), we have 
		\begin{equation*}\label{ineq_5.9}
			\begin{split}
				&\mathbb{P}\left( \big|\{S^{(i)}\}_{0\le i\le N^{2b-\frac{\epsilon}{4}}} \big|\le N^{2b-\frac{3\epsilon}{4}}  \right)\\
				\le &    \mathbb{P}\bigg( \bigcap_{j=1}^{N^{\frac{\epsilon}{4}}} \Big\{ \big|\{S^{(i)}\}_{(j-1)N^{2b-\frac{\epsilon}{2}}\le i\le jN^{2b-\frac{\epsilon}{2}}} \big|\le  N^{2b-\frac{3\epsilon}{4}} \Big\} \bigg)                    \\  
				= &\left[  \mathbb{P}\left( \big|\{S^{(i)}\}_{0\le i\le N^{2b-\frac{\epsilon}{2}}} \big|\le N^{2b-\frac{3\epsilon}{4}}  \right) \right]^{N^{\frac{\epsilon}{4}}}
				\le \text{s.e.}(N). 
			\end{split}
		\end{equation*}
		Thus, the loop measure of $\mathfrak{L}_{\mathrm{I}}$ is stretched exponentially small.

		For $\mathfrak{L}_{\mathrm{II}}$, let us consider the following summation:
		\begin{equation*}
			\begin{split}
				\mathcal{T}:= \sum_{x\in B(N^{1+\alpha})} 	\widetilde{\mu} \Big[\big\{ \widetilde{\ell}:& \mathrm{ran}(\widetilde{\ell})\subset \widetilde{B}(N^{1+\alpha}), \mathrm{diam}(\mathrm{ran}(\widetilde{\ell}))\ge N^b,\\
				&|\mathrm{ran}(\widetilde{\ell})|\ge N^{2b-\frac{3\epsilon}{4}},x\in \mathrm{ran}(\widetilde{\ell})\big\}  \Big]. 
			\end{split}
		\end{equation*}
		For any type II loop $\widetilde{\ell}$, since $\mathrm{ran}(\widetilde{\ell})$ contains at least $N^{2b-\frac{3\epsilon}{4}}$ different points in $B(N^{1+\alpha})$, $\widetilde{\ell}$ must be counted by $\mathcal{T}$ for at least $N^{2b-\frac{3\epsilon}{4}}$ times. Therefore 
		\begin{equation}\label{ineq_5.11}
			\mathcal{T} \ge N^{2b-\frac{3\epsilon}{4}}\cdot \widetilde{\mu}\left( \mathfrak{L}_{\mathrm{II}} \right). 
		\end{equation}
		Moreover, by $|B(N^{1+\alpha})|\asymp N^{(1+\alpha)d}$ and (\ref{ineq_2.5}), we have 
		\begin{equation}\label{ineq_5.12}
			\mathcal{T} \le CN^{(1+\alpha)d}\cdot N^{-b(d-2)}= C N^{(1-b+\alpha)d+2b}. 
		\end{equation}
		Combining (\ref{ineq_5.11}) and (\ref{ineq_5.12}), we obtain the following estimate of $\widetilde{\mu}\left( \mathfrak{L}_{\mathrm{II}} \right)$, and thus complete the proof:
		\begin{equation*}
			\widetilde{\mu}\left( \mathfrak{L}_{\mathrm{II}} \right)\le C N^{(1-b+\alpha)d+2b}\cdot  N^{-2b+\frac{3\epsilon}{4}}=o\left( N^{(1-b+\alpha)d+\epsilon}\right).   \qedhere
		\end{equation*}
	\end{proof}

	Now we are ready to prove Proposition \ref{prop_Nb}. Recall that we abbreviate ``$\xleftrightarrow[]{\cup\widetilde{\mathcal{L}}_{1/2}} $'' as ``$\xleftrightarrow[]{} $''. In addition, we may also write ``$\xleftrightarrow[]{\cup\widetilde{\mathcal{L}}_{1/2}^{\le M}} $'' as ``$\xleftrightarrow[]{\le M} $''.

	\begin{proof}[Proof of Proposition \ref{prop_Nb}]
		For any $b\in (\frac{6}{d},1)$, we choose sufficiently small $\alpha,\epsilon>0$ such that 
		\begin{equation}\label{eq_219}
			4(1+\alpha)+(-b+\alpha)d+2\epsilon=4-bd+\alpha(d+4)+2\epsilon<-2. 
		\end{equation}
		
		Let $M=N^{1+\alpha}$. By Lemma \ref{lemma_large_div}, we have 
		\begin{equation}\label{eq_PG}
			\mathbb{P}\left[\mathsf{G} \right] :=\mathbb{P}\left[\max_{x\in B(M)} \left| \mathbf{C}(x)\cap B(M) \right|\le M^4\log^2(M)  \right]\ge 1-\text{s.p.}(N).  
		\end{equation}
		Let $V_1:=\{x\in B(N): x\xleftrightarrow[]{} \partial B_x(N) \} $ and $V_2:=\{x\in B(N): x\xleftrightarrow[]{\le N^b} \partial B_x(N) \}$. By (\ref{eq_PG}) and the translation invariance of $\widetilde{\mathcal{L}}_{1/2}$ and $\widetilde{\mathcal{L}}_{1/2}^{\le N^b}$, we have 
		\begin{equation}\label{eq_221}
			\begin{split}
				0\le 	&\mathbb{P}\left[\bm{0} \xleftrightarrow[]{} \partial B(N)\right] - \mathbb{P}\left[\bm{0} \xleftrightarrow[]{\le N^b} \partial B(N)\right]\\
				=&\left|B(N) \right|^{-1}\mathbb{E}\left|V_1\setminus V_2\right| \\
				\le &   \left|B(N) \right|^{-1} \mathbb{E}\left( \left|V_1\setminus V_2\right|\cdot \mathbbm{1}_{\mathsf{G}} \right) + \text{s.p.}(N).  
			\end{split}
		\end{equation}

		We denote by $\mathfrak{L}$ the collection of loops $\widetilde{\ell}\in\widetilde{\mathcal{L}}_{1/2}$ such that $\mathrm{ran}(\widetilde{\ell})\cap B(2N)\neq\emptyset$ and $\mathrm{diam}(\mathrm{ran}(\widetilde{\ell}))>N^b$. Like in the proof of Lemma \ref{lemma_large_div}, we enumerate the clusters of $\cup \widetilde{\mathcal{L}}_{1/2}$ intersecting $B(M)$ by $\mathbf{C}_1',...,\mathbf{C}_{m_*}'$. Note that for any $1\le m\le m_*$, if $\mathbf{C}_m'$ does not intersect any loop of $\mathfrak{L}$, then $\mathbf{C}_m'\cap (V_1\setminus V_2)=\emptyset$. Therefore,
		$$
		V_1\setminus V_2 \subset \bigcup_{m\in [1,m_*]: \exists \widetilde{\ell}\in \mathfrak{L}\ \text{with}\ \mathbf{C}_m'\cap \mathrm{ran}(\widetilde{\ell})\neq \emptyset }  \mathbf{C}_m'\cap B(N). 
		$$
		Also note that each $\widetilde{\ell}$ intersects at most one $\mathbf{C}_m'$. In addition, on the event $\mathsf{G}$, one has $|\mathbf{C}_m'\cap B(N)|\le M^4\log^2(M)$ for all $m\in [1,m_*]$. Thus, we have 
		\begin{equation}\label{eq_222}
			\left| V_1\setminus V_2\right|\cdot \mathbbm{1}_{\mathsf{G}} \le \left|\mathfrak{L} \right| \cdot M^4\log^2(M).
		\end{equation}
		Let $\mathfrak{L}_1:= \{\widetilde{\ell} \in \mathfrak{L}: \mathrm{ran}(\widetilde{\ell})\subset \widetilde{B}(M) \}$ and   $\mathfrak{L}_2:= \mathfrak{L}\setminus \mathfrak{L}_1$. Since each $\widetilde{\ell}\in \mathfrak{L}_1$ is involved in the LHS of (\ref{ineq_5.6}), by Lemma \ref{lemma_loop_measure} we have 
		\begin{equation}\label{ineq_L1}
			\mathbb{E}\left|\mathfrak{L}_1 \right| \le o\left( N^{(1-b+\alpha)d+\epsilon}\right). 
		\end{equation}
		In addition, since each $\widetilde{\ell}\in \mathfrak{L}_2$ intersects both $B(2N)$ and $\partial B(M)$, by (\ref{ineq_2.2}) and (\ref{ineq_2.5}) we have 
		\begin{equation}\label{eq_223}
			\begin{split}
				\mathbb{E}\left|\mathfrak{L}_2 \right|\le CN^{-\alpha(d-2)}. 
			\end{split}			
		\end{equation}
		Combining (\ref{eq_222}), (\ref{ineq_L1}) and (\ref{eq_223}), we get
		$$
		\mathbb{E}\left( \left|V_1\setminus V_2\right|\cdot \mathbbm{1}_{\mathsf{G}} \right)  \le M^4\log^2(M)\left[ o\left( N^{(1-b+\alpha)d+\epsilon}\right)  +CN^{-\alpha(d-2)} \right].  
		$$
		This implies that the RHS of (\ref{eq_221}) is upper-bounded by
		$$
		CN^{-d}\cdot N^{4(1+\alpha)+\epsilon}\cdot N^{(1-b+\alpha)d+\epsilon}= CN^{4(1+\alpha)+(-b+\alpha)d+2\epsilon}:= CN^{-2-c_1},
		$$
		where the existence of $c_1$ is ensured by the requirement in (\ref{eq_219}). 
	\end{proof}

	\section{Outline of the proof of the upper bound}\label{section_outline}
	
	Now we describe our strategy to prove the upper bound of Theorem \ref{theorem1}. The framework we use here is inspired by \cite{kozma2011arm}. The key novelty of our proof lies in a new exploration process, which is precisely desrcibed in Section \ref{section_exploration}.

	For $n\ge 1$, let $\theta(n):=\mathbb{P}[\bm{0}\xleftrightarrow[]{} \partial B(n) ]$. We aim to prove 
	\begin{proposition}\label{prop_recursion}
		For any $d>6$, there exist constants $c_2(d)\in (0,1),C_6(d)>0$ such that for any $\lambda\in \left(0,1 \right] $, there exists $c_3(d,\lambda)>0$ such that for all $\epsilon\in (0,c_3)$ and $N\ge 1$, 
		$$
		\theta\left( (1+\lambda)N\right) \le C_6\epsilon^{-\frac{1}{2}}N^{-2}+ 3d\epsilon^{\frac{3}{5}}N^2 \theta\left(\tfrac{\lambda N}{2} \right)\theta(N)+ (1-c_2)\theta(N)+\frac{C_4}{[(1+\lambda)N]^{2+c_1}}.   
		$$
	\end{proposition}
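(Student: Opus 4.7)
The plan is to combine the large-loop reduction (Proposition~\ref{prop_Nb}), an exploration of $\bm{0}$'s short-loop cluster inside $B(N)$, the spatial Markov property of the loop soup, and a pioneer case split. Setting $M:=(1+\lambda)N$ and fixing $b\in(6/d,1)$, Proposition~\ref{prop_Nb} gives
$$\theta\bigl((1+\lambda)N\bigr)\;\le\;\mathbb{P}\bigl[\bm{0}\xleftrightarrow[]{\le M^{b}}\partial B(M)\bigr]+\frac{C_{4}}{M^{2+c_{1}}},$$
directly producing the last error term in the recursion. Hence it suffices to bound the small-loop one-arm probability.

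Next I would perform an exploration that reveals, in a canonical order (e.g.\ lexicographic on vertices), the loops of $\widetilde{\mathcal{L}}_{1/2}^{\le M^{b}}$ constituting the cluster of $\bm{0}$ inside $B(N)$. The exploration produces an explored region $\mathcal{R}$ and a pioneer set $\mathcal{P}:=\mathcal{R}\cap\partial B(N)$. By the spatial Markov property of the loop soup (Lemma~\ref{lemma_new_2.3} together with \cite[Proposition 3]{werner2016spatial}), conditional on the exploration outcome the loops not intersecting $\mathcal{R}$ form an independent loop soup on the complement. On $\{\bm{0}\xleftrightarrow[]{\le M^{b}}\partial B(M)\}$, since each revealed loop has diameter at most $M^{b}\ll\lambda N$, the connection must pass through some $x^{*}\in\mathcal{P}$ with $x^{*}\xleftrightarrow[]{\text{unrevealed}}\partial B_{x^{*}}(\lambda N/2)$. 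A union bound combined with the conditional independence then yields
$$\mathbb{P}\bigl[\bm{0}\xleftrightarrow[]{\le M^{b}}\partial B(M)\bigr]\;\le\;\theta(\lambda N/2)\,\mathbb{E}\bigl[|\mathcal{P}|\,\mathbf{1}_{\mathcal{P}\neq\emptyset}\bigr]+E_{0},$$
where $E_{0}$ is a polynomially small slack from loops straddling $\partial B(N)$, absorbable elsewhere. I would split on $|\mathcal{P}|\le\epsilon^{3/5}N^{2}$ vs.\ $|\mathcal{P}|>\epsilon^{3/5}N^{2}$: in the former case $\mathbb{E}[|\mathcal{P}|\mathbf{1}_{\mathcal{P}\neq\emptyset,\,|\mathcal{P}|\le\epsilon^{3/5}N^{2}}]\le\epsilon^{3/5}N^{2}\theta(N)$, producing the middle term $3d\,\epsilon^{3/5}N^{2}\theta(\lambda N/2)\theta(N)$ after paying the $3d$ overcount from $\widetilde{B}_{x}(1)$-type boundary fattening; in the latter case, a Cauchy--Schwarz estimate using $\mathbb{E}|\mathcal{P}|^{2}\le CN^{4}$ (which follows from Lemma~\ref{lemma_31} applied to $\partial B(N)$), combined with a further interpolation via the large-deviation bound of Lemma~\ref{lemma_large_div}, delivers the $C_{6}\epsilon^{-1/2}N^{-2}$ bound.

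The $(1-c_{2})\theta(N)$ term will come from the trivial bound $\theta((1+\lambda)N)\le\theta(N)$ refined by exploiting a local positive-probability event $\mathsf{G}$ near $\bm{0}$ (depending on $d$ but not on $\lambda$) that is incompatible with the extension to $\partial B((1+\lambda)N)\setminus\partial B(N)$ on the restricted-small-loop event; writing $c_{2}:=\mathbb{P}(\mathsf{G}\mid\bm{0}\xleftrightarrow[]{}\partial B(N))>0$, uniform in $\lambda$, subtracting $c_{2}\theta(N)$ from the trivial bound yields the asserted term and absorbs the residual configurations not treated by the pioneer split. I expect the \emph{main obstacle} to be rigorously setting up the exploration on the continuous metric graph so that the spatial Markov property can be applied together with the BKR inequality (Corollary~\ref{coro_BKR}): unlike the discrete Kozma--Nachmias setting, a revealed glued loop may contain Brownian excursions that cross $\partial\mathcal{R}$, forcing a careful use of the loop decomposition of Section~\ref{subsection_decomposition} to track the conditional law of the unrevealed loops without spoiling the factorisation into $\theta(\lambda N/2)$ and $\theta(N)$.
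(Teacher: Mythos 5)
Your first step (Proposition~\ref{prop_Nb} for the last error term) and the general shape of the middle term (a union bound over a small boundary/pioneer set times $\theta(\lambda N/2)$, with the spatial Markov property decoupling the continuation) do match the paper's strategy. But the two remaining terms are where the real content lies, and both have genuine gaps. First, the $C_6\epsilon^{-1/2}N^{-2}$ term: in the paper this is exactly $\mathbb{P}\left(|\mathbf{C}(\bm{0})|\ge\epsilon N^4\right)$, controlled by Proposition~\ref{prop_volumn} ($\mathbb{P}(|\mathbf{C}(\bm{0})|\ge M)\le C_6M^{-1/2}$), a nontrivial input proved in Section~\ref{section_volumn} via a ghost field and a Barsky--Aizenman-type differential inequality. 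Your substitute --- Cauchy--Schwarz with $\mathbb{E}|\mathcal{P}|^2\le CN^4$ plus Lemma~\ref{lemma_large_div} --- cannot produce it: Cauchy--Schwarz gives $\mathbb{E}\left[|\mathcal{P}|\mathbbm{1}_{|\mathcal{P}|>\epsilon^{3/5}N^2}\right]\le CN^2\,\mathbb{P}\left(|\mathcal{P}|>\epsilon^{3/5}N^2\right)^{1/2}$, and neither Markov's inequality on $\mathbb{E}|\mathcal{P}|\le CN$ nor Lemma~\ref{lemma_large_div} (which is only effective at thresholds of order $sM^4$ with $s\gtrsim\log M$, far above your threshold $\epsilon^{3/5}N^2$) gives a tail small enough to yield $\epsilon^{-1/2}N^{-2}$ after multiplying by $\theta(\lambda N/2)$; the $M^{-1/2}$ volume tail is indispensable here and is absent from your argument.

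Second, and more seriously, the $(1-c_2)\theta(N)$ term. You propose to extract it from a local positive-probability event $\mathsf{G}$ near $\bm{0}$ incompatible with extending the arm; but whether the arm extends from $\partial B(N)$ to $\partial B((1+\lambda)N)$ is decided by the configuration near $\partial B(N)$, so no local modification near the origin can block the extension while preserving the conditioning $\{\bm{0}\leftrightarrow\partial B(N)\}$, and the claim $\mathbb{P}(\mathsf{G}\mid\bm{0}\leftrightarrow\partial B(N))\ge c_2>0$ uniformly in $N$ is itself a statement of the same depth as the proposition. In the paper this term handles precisely the residual event $\mathsf{B}_3$ (the truncated-cluster boundary $\psi_n^*$ exceeds $L^2$ at \emph{every} scale $n\in[(1+\tfrac{\lambda}{4})N,(1+\tfrac{\lambda}{3})N]$ while $|\mathbf{C}(\bm{0})|<\epsilon N^4$), and its proof is the core of the whole paper: Theorem~\ref{theorem_regularity} (Sections~\ref{fina_section7}--\ref{section8}) shows, via the regularity machinery and a second-moment argument with pivotal pairs, that a large boundary forces volume growth except on an event of probability $(1-c_5)\theta(N)$, after which Markov's inequality over the $\sim\lambda\epsilon^{-3/10}$ disjoint shells yields $\mathbb{P}(\mathsf{B}_3)\le(1-c_2)\theta(N)$. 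Your single-scale pioneer split at $\partial B(N)$ omits this multi-scale search, so the case where the boundary is large at scale $N$ is left without a valid bound.
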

	
It suffices for our proof even if $c_1=0$, but we keep this stronger form in the statement in case this improvement will be useful for some future work.
	
	With Proposition 6.1 at hand, proving the desired upper bound in Theorem \ref{theorem1} is straightforward by induction.

	\begin{proof}[Proof of the upper bound in Theorem \ref{theorem1} assuming Proposition \ref{prop_recursion}]
		We choose a small enough $\lambda\in \left( 0,1 \right] $ such that 
		\begin{equation}\label{req_A1}
			(1+\lambda)^2\le 2, \ \ (1-c_2) (1+\lambda)^2 \le 1-\frac{2c_3}{3}. 
		\end{equation}
		Meanwhile, we also take a sufficiently large $M_0(d,\lambda)$ such that 
		\begin{equation}\label{req_A2}
			(2C_6+ 24d \lambda^{-2})M_0^{-\frac{1}{11}} \le \frac{c_2}{3}, \ \ M_0^{-\frac{20}{11}}\le c_3, \ \ C_4M_0^{-1}\le \frac{c_2}{3}.  
		\end{equation}
		
		Let us prove $\theta(N)\le M_0N^{-2}$ by induction. For the base, we note that the desired bound holds obviously for $N \le \sqrt{M_0}$. Assume the bound $\theta(s)\le M_0s^{-2}$ holds for all $s<(1+\lambda)N$. By Proposition \ref{prop_recursion} with $\epsilon=M_0^{-\frac{20}{11}}$ and the induction hypothesis, we have 
		\begin{equation*}\label{ineq_new_6.3}
			\begin{split}
				&\theta\left((1+\lambda)N\right)\\
				\le  &C_6\epsilon^{-\frac{1}{2}}N^{-2}+ 3d\epsilon^{\frac{3}{5}}N^2 \theta(N)\theta\left(\tfrac{\lambda N}{2} \right)+ (1-c_2)\theta(N)+C_4[(1+\lambda)N]^{-2}\\
				\le & C_6M_0^{\frac{10}{11}}N^{-2}+ 12dM_0^{\frac{10}{11}}\lambda^{-2}N^{-2}+(1-c_2)M_0N^{-2}+C_4[(1+\lambda)N]^{-2}\\
				= & \frac{M_0}{[(1+\lambda)N]^{2}}
				\left[ (1+\lambda)^2\left(C_6+12d \lambda^{-2} \right)M_0^{-\frac{1}{11}}+(1-c_2)(1+\lambda)^2+C_4M_0^{-1} \right].
			\end{split}
		\end{equation*}
		By the requirement of $\lambda$ in (\ref{req_A1}), the RHS is upper-bounded by 
		\begin{equation*}\label{ineq_new_6.4}
			M_0[(1+\lambda)N]^{-2} \left[\left(2C_6+24d \lambda^{-2} \right)M_0^{-\frac{1}{11}}+(1-\frac{2c_2}{3})+ C_4M_0^{-1} \right]. 
		\end{equation*}
		Combined with (\ref{req_A2}), this yields that 
		\begin{equation}
			\begin{split}
				\theta\left((1+\lambda)N\right)\le &M_0[(1+\lambda)N]^{-2} \left[\frac{c_2}{3}+(1-\frac{2c_2}{3})+ \frac{c_2}{3} \right]\\
				=&M_0[(1+\lambda)N]^{-2}. \nonumber
			\end{split}
		\end{equation}
		Now we finish the induction and conclude the upper bound in Theorem \ref{theorem1}.
	\end{proof}

	For $m\in \mathbb{N}^+$ and $x\in \mathbb{Z}^d$, let $\hat{B}_x(m):=\{y\in \mathbb{Z}^d:|y-x|\le m, |y-x|_1<md \}$ be the box obtained by removing all corner points of $B_x(m)$. When $x$ is the origin, we may write $\hat{B}(m):=\hat{B}_{\bm{0}}(m)$. For any $x\in \partial \hat{B}(m)$, we denote by $x^{\mathrm{in}}$ the unique point in $\partial B(m-1)$ with $x^{\mathrm{in}}\sim x$. Note that every corner point of $B(m)$ (i.e. $y\in \mathbb{Z}^d$ with $|y|_1=md$) is not adjacent to $B(m-1)$. This is why we need to restrict the definition of $x^{\mathrm{in}}$ in $\partial \hat{B}(m)$.

	The following definition is crucial for our proof.

	\begin{definition}[Tuple $\mathbf{\Psi}_{n,M}$]\label{definition_Psi}
		(1)	For $n\in \mathbb{N}^+$ and $M\in [1,\infty]$, let $\mathbf{\Psi}_{n,M}^1$ be the cluster containing $\bm{0}$ and composed of the following types of loops in $\widetilde{\mathcal{L}}_{1/2}^{\le M}$:   
		\begin{itemize}
			\item fundamental loops intersecting $\widetilde{B}(n)$; 
			
			\item point loops intersecting some $x\in B(n-1)$;
			
			\item edge loops contained in $\widetilde{B}(n)$. 
			
		\end{itemize}
		We call these loops ``involved loops". Let $\mathbf{\Psi}_{n,M}:=(\mathbf{\Psi}_{n,M}^1,\mathbf{\Psi}_{n,M}^2) $, where 
		\begin{equation}
			\mathbf{\Psi}_{n,M}^2:= \left\lbrace x\in \partial \hat{B}(n) :x\notin \mathbf{\Psi}_{n,M}^1, I_{\{x,x^{\mathrm{in}}\}}\subset \gamma_x^{\mathrm{p}}\cup \mathbf{\Psi}_{n,M}^1   \right\rbrace.  
		\end{equation}


		\noindent(2)  Let $\overline{\mathbf{\Psi}}_{n,M}:=\big[\mathbf{\Psi}_{n,M}^1 \cap \mathbb{Z}^d\setminus B(n-1)\big]\cup \mathbf{\Psi}_{n,M}^2$, $\psi_{n,M}:= \big|\overline{\mathbf{\Psi}}_{n,M}\big|$ and 
		\begin{equation}\label{eq_6.6}
			\widehat{\mathbf{\Psi}}_{n,M}:= \mathbf{\Psi}_{n,M}^1\cup \bigcup_{x\in \mathbf{\Psi}_{n,M}^2} \gamma_x^{\mathrm{p}}.
		\end{equation}
	\end{definition}

	See Figure \ref{fig2} for an illustration of this definition. Note that $\widehat{\mathbf{\Psi}}_{n,M}$ is a cluster of $\cup \widetilde{\mathcal{L}}_{1/2}^{\le M}$. In addition, for any $D\subset \mathbb{Z}^d$, $$\widehat{\mathbf{\Psi}}_{n,M}\cap D=  \big(\mathbf{\Psi}_{n,M}^1\cup \mathbf{\Psi}_{n,M}^2 \big)\cap D,$$ 
	which is measurable with respect to $\mathbf{\Psi}_{n,M}$ (but $\widehat{\mathbf{\Psi}}_{n,M}$ is not).

	\begin{figure}[h]
		\centering
		\includegraphics[width=13cm]{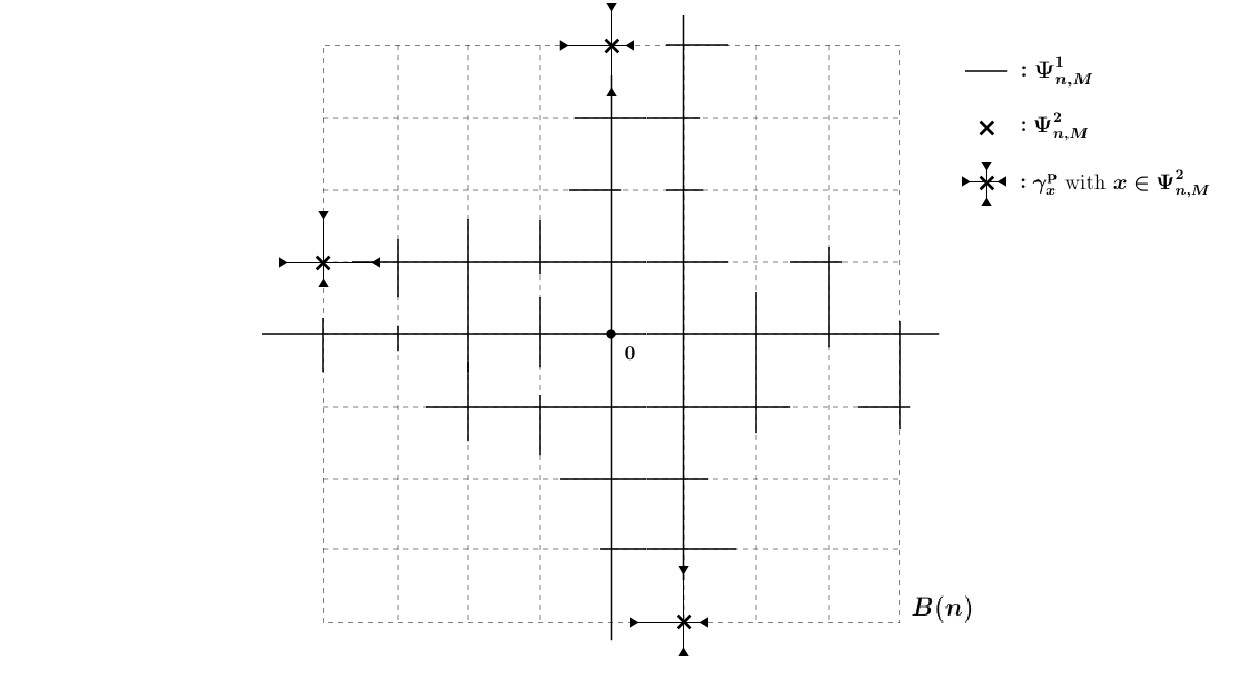}
		\caption{An illustration for $\mathbf{\Psi}_{n,M}$. }
		\label{fig2}
	\end{figure}

	\begin{remark}\label{fina_remark6.3}(1) For any $x\in\mathbf{\Psi}^{2}_{n,M}$, the glued point loop $\gamma_{x}^{\mathrm{p}}$ is only known to intersect $I_{\{x,x^{\mathrm{in}}\}}\cap \mathbf{\Psi}^{1}_{n,M}$. Moreover, for $x\in   \mathbf{\Psi}^{1}_{n,M} \cap \partial \hat{B}(n)$, $\gamma_{x}^{\mathrm{p}}$ is independent of $\mathbf{\Psi}_{n,M}$. Thus, given $\mathbf{\Psi}_{n,M}$, by the FKG inequality, the conditional distribution of $\gamma_{x}^{\mathrm{p}}\cap I_{\{x,y\}}$ (where $x\in \widehat{\mathbf{\Psi}}_{n,M}\cap \partial \hat{B}(n)$, $y\in \partial \hat{B}(n+1)$ and $x\sim y$) stochastically dominates the one without conditioning.

	 (2) At the first glance (or even the second glance), it seems more natural and also simpler to define $\mathbf{\Psi}^\diamond_{n, M}$ (as the replacement for the more complicated $\mathbf{\Psi}_{n, M}$) to be the cluster containing $\bm{0}$ and composed of all involved loops and $\gamma_x^{\mathrm{p}}\cap I_{\{x,x^{\mathrm{in}}\}}$ for $x\in \partial \hat{B}(n)$. However, $\mathbf{\Psi}_{n,M}^{\diamond}$ does not have the property as in Item (1), which is crucial in the subsequent proof. To see this, let us look at the following scenario. Arbitrarily take $x\in \partial \hat{B}(n)$ and then assume that $x\in \mathbf{\Psi}_{n,M}^{\diamond}$ and $x^{\mathrm{in}}\notin \mathbf{\Psi}_{n,M}^{\diamond}$. By the definition of $\mathbf{\Psi}_{n,M}^{\diamond}$, $\gamma_x^{\mathrm{p}}\cap I_{\{x,x^{\mathrm{in}}\}}$ is sampled and depends on the configuration of $\mathbf{\Psi}_{n,M}^{\diamond}$. In addition, for any $y\in \partial \hat{B}(n+1)$ with $x\sim y$, $\gamma_x^{\mathrm{p}}\cap I_{\{x,y\}}$ has a positive correlation with $\gamma_x^{\mathrm{p}}\cap I_{\{x,x^{\mathrm{in}}\}}$ since both of them are positively correlated to the total local time of point loops in $\widetilde{\mathcal{L}}_{1/2}^{\le M}$ intersecting $x$. Thus, arbitrarily given $\mathbf{\Psi}_{n,M}^{\diamond}$ (note that $\gamma_x^{\mathrm{p}}\cap I_{\{x,x^{\mathrm{in}}\}}$ may be  arbitrarily small), one cannot ensure the stochastic domination of $\gamma_{x}^{\mathrm{p}}\cap I_{\{x,y\}}$ as in Item (1).
\end{remark}

	We say $\mathbf{A}=(\mathbf{A}^1,\mathbf{A}^2)$ is an admissible tuple if it is a possible configuration of $\mathbf{\Psi}_{n,M}$. Parallel to (\ref{eq_6.6}), we define a random subset
	\begin{equation}\label{eq_6.7}
		\widehat{\mathbf{A}}:= \mathbf{A}^1\cup \bigcup_{x\in\mathbf{A}^2} \widehat{\gamma}_x^{\mathrm{p}}(\mathbf{A}^1),
	\end{equation}
	where $\{\widehat{\gamma}_x^{\mathrm{p}}(\mathbf{A}^1)\}_{x\in \mathbf{A}^2}$ is independent of $\widetilde{\mathcal{L}}_{1/2}$, and has the same distribution as $\{\gamma_x^{\mathrm{p}}\}_{x\in \mathbf{A}^2}$ conditioning on the event $\cap_{x\in \mathbf{A}^2} \{I_{\{x,x^{\mathrm{in}}\}}\subset \gamma_x^{\mathrm{p}}\cup \mathbf{A}^1\}$. 
	
	
	\begin{definition}[Unused loops]\label{def_unused loops}
		(1)	For any admissible $\mathbf{A}=(\mathbf{A}^1,\mathbf{A}^2)$, we denote by $\widetilde{\mathcal{L}}_{\mathbf{A},M}^{\mathrm{U}}$ the point measure composed of the following types of loops in $\widetilde{\mathcal{L}}_{1/2}^{\le M}$: 
		\begin{itemize}
			\item involved loops $\widetilde{\ell}$ with $\mathrm{ran}(\widetilde{\ell})\cap \mathbf{A}^1=\emptyset $;

			\item loops $\widetilde{\ell}$ with $\mathrm{ran}(\widetilde{\ell})\cap \widetilde{B}(n)=\emptyset$;

			\item point loops including some $x\in \mathbf{A}^1\cap \partial \hat{B}(n)$;

			\item point loops that include some $x\in \partial \hat{B}(n)\setminus (\mathbf{A}^1\cup \mathbf{A}^2)$ and do not intersect $I_{\{x,x^{\mathrm{in}}\}}\cap \mathbf{A}^1$. 
			
		\end{itemize}
		\noindent (2) We define $\widetilde{\mathcal{L}}^{\mathrm{U}}_M$ as $\widetilde{\mathcal{L}}_{\mathbf{A},M}^{\mathrm{U}}$ on the event $\{\mathbf{\Psi}_{n,M}= \mathbf{A}\}$.
	\end{definition}

	When $M=\infty$ (i.e. there is no restriction on the diameter of $\widetilde{\ell}$), we may omit the subscript $\infty$ and denote $\mathbf{\Psi}_n^1:=\mathbf{\Psi}_{n,\infty}^1$, $\mathbf{\Psi}_n^1:=\mathbf{\Psi}_{n,\infty}^2$, $\mathbf{\Psi}_n:=\mathbf{\Psi}_{n,\infty}$,  $\overline{\mathbf{\Psi}}_n:=\overline{\mathbf{\Psi}}_{n,\infty}$, $\psi_n:=\psi_{n,\infty}$, $\widehat{\mathbf{\Psi}}_n:=\widehat{\mathbf{\Psi}}_{n,\infty}$,  $\widetilde{\mathcal{L}}_{\mathbf{A}}^{\mathrm{U}}:=\widetilde{\mathcal{L}}_{\mathbf{A},\infty}^{\mathrm{U}}$ and $\widetilde{\mathcal{L}}^{\mathrm{U}}:=\widetilde{\mathcal{L}}^{\mathrm{U}}_{\infty}$.

	\begin{remark}\label{remark_51}
		We have some useful observations about $\mathbf{\Psi}_{n,M}$ as follows:

		\begin{enumerate}
			
			\item For any admissible tuple $\mathbf{A}=(\mathbf{A}^1,\mathbf{A}^2)$, when $\{\mathbf{\Psi}_{n,M}= \mathbf{A}\}$ happens, $ \widetilde{\mathcal{L}}_{1/2}^{\le M}-\widetilde{\mathcal{L}}^{\mathrm{U}}_{\mathbf{A},M}$ contains all the loops used to contruct $\widehat{\mathbf{\Psi}}_{n,M}$. In light of this, we call the loops in $\widetilde{\mathcal{L}}^{\mathrm{U}}_{\mathbf{A},M}$ unused loops.

			%
			%

			

			\item By the thinning property of Poisson point processes, given $\{\mathbf{\Psi}_{n,M}= \mathbf{A}\}$ (which is measurable with respect to $\sigma(\widetilde{\mathcal{L}}_{1/2}^{\le M}-\widetilde{\mathcal{L}}^{\mathrm{U}}_{M})$), the conditional distribution of $\widetilde{\mathcal{L}}^{\mathrm{U}}_{M}$ is the same as $\widetilde{\mathcal{L}}_{\mathbf{A},M}^{\mathrm{U}}$ without conditioning.

			
			\item Since every loop $\widetilde{\ell}$ included in $\mathbf{\Psi}_{n,M}^1$ has diameter at most $M$ and must intersect $\widetilde{B}(n)$ (by Definition \ref{definition_Psi}), we have $\mathbf{\Psi}_{n,M}^1\cap \mathbb{Z}^d\subset B(n+M)$ and thus $\mathbf{\Psi}_{n,M}^1\cap \mathbb{Z}^d\subset \mathbf{\Psi}_n^1\cap B(n+M)$. For any $x\in \mathbf{\Psi}_{n,M}^2$, we have $I_{\{x,x^{\mathrm{in}}\}}\subset \gamma_x^{\mathrm{p}}\cup \mathbf{\Psi}_{n,M}^1\subset \gamma_x^{\mathrm{p}}\cup \mathbf{\Psi}_n^1$, which implies that $x$ is either in $\mathbf{\Psi}_n^1\cap \partial \hat{B}(n)$ or in $\mathbf{\Psi}_n^2$. In conclusion,  
			\begin{equation}\label{inclusion_6.5}
				\overline{\mathbf{\Psi}}_{n,M} \subset \overline{\mathbf{\Psi}}_n \cap B(n+M). 
			\end{equation}

			\item  If the event $\{0\xleftrightarrow[]{\le M} \partial B(m)\}$ happens for some $m>n+M$, then there exists $v_{\dagger}\in  \widehat{\mathbf{\Psi}}_{n,M}$ such that $v_{\dagger}$ is connected to $\partial B(m)$ by $\cup \widetilde{\mathcal{L}}^{\mathrm{U}}_M$. Suppose that $v_{\dagger}$ is in the interval $I_{\{x_{\dagger},y_{\dagger}\}}$. We claim that either $x_{\dagger}$ or $y_{\dagger}$ is in $\overline{\mathbf{\Psi}}_{n,M}$. When $v_{\dagger}\in \gamma_{z}^{\mathrm{p}}$ for some $z\in \mathbf{\Psi}_{n,M}^2$, we know that either $x_{\dagger}$ or $y_{\dagger}$ is $z$, which is contained in $\overline{\mathbf{\Psi}}_{n,M}$. When $v_{\dagger}\in  \mathbf{\Psi}_{n,M}^1$, we verify the claim separately in the following subcases.

			\begin{enumerate}
				\item Both $x_{\dagger}$ and $y_{\dagger}$ are in $B(n-1)$: We will show that this case cannot occur by contradiction. Since $v_{\dagger}\in\mathbf{\Psi}_{n,M}^1\cap [\cup \widetilde{\mathcal{L}}^{\mathrm{U}}_M]$, there exists a loop $\widetilde{\ell}_{\dagger}\in  \widetilde{\mathcal{L}}^{\mathrm{U}}_M$ intersecting $v_{\dagger}$, which implies $\mathrm{ran}(\widetilde{\ell}_{\dagger})\cap \widetilde{B}(n) \cap \mathbf{\Psi}_{n,M}^1\neq \emptyset$. In addition, $\widetilde{\ell}_{\dagger}$ must be an involved loop since a point loop including some $x\in \partial \hat{B}(n)$ cannot intersect $B(n-1)$. These two facts cause a contradiction with $\widetilde{\ell}_{\dagger}\in  \widetilde{\mathcal{L}}^{\mathrm{U}}_M$.


				\item $x_{\dagger}\in \partial B(n-1)$ and $y_{\dagger}\in \partial \hat{B}(n)$: With the same argument as in Subcase (a), there exists $\widetilde{\ell}_{\dagger}\in  \widetilde{\mathcal{L}}^{\mathrm{U}}_M$ intersecting $v_{\dagger}$ and $\widetilde{B}(n) \cap \mathbf{\Psi}_{n,M}^1$. To avoid the same contradiction as in Subcase (a), it is necessary for $\widetilde{\ell}_{\dagger}$ to be a point loop including $y_{\dagger}$. We now prove that $y_\dagger \in \Psi^1_{n, M}$ by contradiction (this then yields the claim since $\Psi^1_{n, M}\cap \mathbb{Z}^d\setminus B(n-1)\subset \overline{\mathbf{\Psi}}_{n,M}$). Suppose that $y_{\dagger}\notin \mathbf{\Psi}_{n,M}^1$, then we have $x_{\dagger} \in \mathbf{\Psi}_{n,M}^1$ and therefore, $I_{\{x_{\dagger},y_{\dagger}\}}\subset  \mathrm{ran}(\widetilde{\ell}_{\dagger}) \cup\mathbf{\Psi}_{n,M}^1 \subset \gamma_{y_{\dagger}}^{\mathrm{p}}\cup\mathbf{\Psi}_{n,M}^1$. Thus, $\widetilde{\ell}_{\dagger}$ is a point loop containing $y_{\dagger}\in \mathbf{\Psi}_{n,M}^2$, which arrives at a contradiction with $\widetilde{\ell}_{\dagger}\in  \widetilde{\mathcal{L}}^{\mathrm{U}}_M$.


				\item $y_{\dagger}\in \partial B(n-1)$ and $x_{\dagger}\in \partial \hat{B}(n)$: For the same reason as in subcase (b), the claim is valid.

				\item $x_\dagger, y_\dagger \not\in B(n-1)$: Since $\mathbf{\Psi}_{n,M}^1$ is connected and $v_{\dagger}\in \mathbf{\Psi}^1_{n,M}$, we know that either $x_{\dagger}$ or $y_{\dagger}$ is in $\mathbf{\Psi}_{n,M}^1$, and thus is in $\overline{\mathbf{\Psi}}_{n,M}$.

			\end{enumerate}
			To sum up, we now conclude this claim (i.e. either $x_{\dagger}$ or $y_{\dagger}$ is in $\overline{\mathbf{\Psi}}_{n,M}$). Meanwhile, either $x_{\dagger}$ or $y_{\dagger}$ is connected to $\partial B(m)$ by $\cup \widetilde{\mathcal{L}}^{\mathrm{U}}_M$ since $v_{\dagger}$ does so. Putting these two results together, we have: for any $m>n+M$, 
			\begin{equation}\label{inclusion_6.6}
				\left\lbrace \bm{0}\xleftrightarrow[]{\le M} \partial B(m)\right\rbrace  \subset \bigcup_{z_1\in \overline{\mathbf{\Psi}}_{n,M}} \bigcup_{z_2\in \mathbb{Z}^d:|z_1-z_2|_1\le 1} \Big\{ z_2\xleftrightarrow[ ]{\cup \widetilde{\mathcal{L}}^{\mathrm{U}}_M}  \partial B(m)\Big\}. 
			\end{equation}


		\end{enumerate}
		
	\end{remark}

	Recall that $\mathbf{C}(x)$ is the cluster of $\cup\widetilde{\mathcal{L}}_{1/2}$ containing $x$. We take constants $b\in (\tfrac{6}{d},1)$ and $\lambda\in \left(0,1 \right] $, and fix a large integer $N$. We also take a constant $\epsilon>0$ and denote $L= \epsilon^{\frac{3}{10}}N$. Let $\overline{\mathbf{\Psi}}_n^*:= \overline{\mathbf{\Psi}}_n \cap B(n+[(1+\lambda)N]^b)$ and $\psi_{n}^*= |\overline{\mathbf{\Psi}}_n^*|$. When $\{0 \xleftrightarrow[]{} \partial B((1+\lambda)N)  \}$ happens, one of the following events occurs:
	\begin{itemize}
		\item $\mathsf{B}_0$: $\Big\{\bm{0} \xleftrightarrow[]{} \partial B\big((1+\lambda)N\big)  \Big\}\cap \Big\{\bm{0} \xleftrightarrow[]{\le [(1+\lambda)N]^b} \partial B\big((1+\lambda)N\big)  \Big\}^c$. 
		
		\item $\mathsf{B}_1$: $|\mathbf{C}(\bm{0})|\ge \epsilon N^4$.

		\item  $\mathsf{B}_2$: $\exists n\in \big[(1+\frac{\lambda}{4})N,(1+\frac{\lambda}{3})N\big]$ such that  $0<\psi_{n}^*\le L^2$ and $\bm{0} \xleftrightarrow[]{\le [(1+\lambda)N]^b} \partial B\big((1+\lambda)N\big)$.

		\item $\mathsf{B}_3$: $ \forall n\in \big[(1+\frac{\lambda}{4})N,(1+\frac{\lambda}{3})N\big]$, $\psi_{n}^*> L^2$ and $|\mathbf{C}(\bm{0})|< \epsilon N^4$.
		
	\end{itemize}
	Thus, to prove Proposition \ref{prop_recursion}, we only need to control the probabilities of these four events.

	For $\mathsf{B}_0$, by Proposition \ref{prop_Nb}, we have 
	\begin{equation}\label{ineq_B0}
		\begin{split}
			\mathbb{P}(\mathsf{B}_0)= & \mathbb{P}\Big[ \bm{0} \xleftrightarrow[]{} \partial B\big((1+\lambda)N\big) \Big] - \mathbb{P}\Big[ \bm{0} \xleftrightarrow[]{\le [(1+\lambda)N]^b}\partial B\big((1+\lambda)N\big) \Big] \\
			\le &\frac{C_4}{[(1+\lambda)N]^{2+c_1}}.
		\end{split}
	\end{equation}

	For $\mathsf{B}_1$, we use the decay rate of $|\mathbf{C}(\bm{0})|$ in the following proposition, which will be proved in Section \ref{section_volumn}. 
	\begin{proposition}\label{prop_volumn}
		For $d>6$, there exists $C_{6}(d)>0$ such that for all $M\ge 1$, 
		\begin{equation}
			\mathbb{P}\left( \left|\mathbf{C}(\bm{0}) \right|\ge M \right)  \le C_{6}M^{-\frac{1}{2}}. 
		\end{equation}
	\end{proposition}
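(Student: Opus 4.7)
The plan is to combine the factorial moment bound of Lemma~\ref{lemma_2.3} with a scale decomposition keyed to the mean-field relation ``critical cluster volume $\asymp$ radius$^4$''. First I would convert the estimate $\mathbb{E}|\mathbf{C}(\bm{0}) \cap B(R)|^k \le C_5^k k! R^{4k-2}$ into a concentration bound: applying Markov's inequality and optimizing $k\asymp M/(eC_5R^4)$ via Stirling yields the single-point analogue of Lemma~\ref{lemma_large_div},
\[
\mathbb{P}\bigl(|\mathbf{C}(\bm{0}) \cap B(R)| \ge M\bigr) \le C R^{-2} \exp(-cM/R^4).
\]

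Next I would use the elementary decomposition, valid since $\{\bm{0} \not\leftrightarrow \partial B(R)\}$ forces $\mathbf{C}(\bm{0}) \subset B(R)$:
\[
\mathbb{P}(|\mathbf{C}(\bm{0})| \ge M) \le \mathbb{P}\bigl(|\mathbf{C}(\bm{0}) \cap B(R)| \ge M\bigr) + \mathbb{P}\bigl(\bm{0} \leftrightarrow \partial B(R)\bigr).
\]
Taking $R = \lceil M^{1/4}\rceil$ (so that $M/R^4 \asymp 1$) makes the first summand at most $CM^{-1/2}$ by the concentration bound, reducing the problem to the single-scale one-arm estimate $\theta(M^{1/4}) \le CM^{-1/2}$.

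The hard part is this last estimate: the upper bound of Theorem~\ref{theorem1} is precisely what Proposition~\ref{prop_volumn} is used to establish through Proposition~\ref{prop_recursion}, so a naive self-consistent iteration between the volume tail and $\theta$ is circular. My plan to break the circularity is to truncate the loop soup to diameter $\le R^b$ for some fixed $b \in (6/d, 1)$ using Proposition~\ref{prop_Nb}, which replaces $\theta(R)$ by its truncated counterpart at the negligible additive cost $O(R^{-2-c_1})$. Since removing loops only shrinks the cluster, Lemma~\ref{lemma_2.3} descends to the truncated model, so the scheme above can be bootstrapped for the truncated one-arm probability. Starting from the a~priori bound $\theta(R) \le CR^{-1/2}$ of~\cite{ding2020percolation} and accumulating the $R^{-c_1}$ gain from Proposition~\ref{prop_Nb} over $O(\log R)$ scales should progressively improve the truncated one-arm exponent up to the target value~$2$; combined with the negligible truncation error, this yields $\theta(M^{1/4}) \le CM^{-1/2}$ and hence the claim.

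The principal obstacle is exactly this last bootstrap. The ``one-scale'' recursion $g(M) \le CM^{-1/2} + g(M^{1/4})$ that comes directly from Steps~1--2 is divergent, so the $R^{-c_1}$ gain from Proposition~\ref{prop_Nb} must be exploited essentially, presumably via a multi-scale scheme in the spirit of the induction used in Section~\ref{section_outline}. Verifying that the truncation errors remain summable across scales and that the constants line up to make the iteration converge are the main technical hurdles I anticipate.
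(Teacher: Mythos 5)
Your Steps 1--2 are fine: the factorial moment bound of Lemma \ref{lemma_2.3} does give $\mathbb{P}(|\mathbf{C}(\bm{0})\cap B(R)|\ge M)\le CR^{-2}e^{-cM/R^4}$, and with $R=\lceil M^{1/4}\rceil$ the decomposition reduces the proposition to $\theta(M^{1/4})\le CM^{-1/2}$. But this reduction runs in the wrong direction and the circularity you identify is not broken by your proposed fix. In the paper's logical structure, Proposition \ref{prop_volumn} is an \emph{input} to the one-arm upper bound: it is used to control $\mathbb{P}(\mathsf{B}_1)=\mathbb{P}(|\mathbf{C}(\bm{0})|\ge \epsilon N^4)$ inside Proposition \ref{prop_recursion}. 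Proposition \ref{prop_Nb} cannot substitute for that recursion: it only bounds the \emph{difference} $\theta(N)-\mathbb{P}[\bm{0}\xleftrightarrow[]{\cup\widetilde{\mathcal{L}}_{1/2}^{\le N^b}}\partial B(N)]$ by $N^{-2-c_1}$; it says nothing about $\theta$ itself and contains no mechanism that, iterated over scales, upgrades the a priori exponent $1/2$ of \cite{ding2020percolation} to $2$. The only exponent-improving recursion in the paper is Proposition \ref{prop_recursion}, and its proof needs Proposition \ref{prop_volumn}. Even a joint induction on both quantities fails at the quantitative level: replacing $\mathbb{P}(\mathsf{B}_1)\le C_6\epsilon^{-1/2}N^{-2}$ by your bound $C\epsilon^{-1/2}N^{-2}+\theta(\epsilon^{1/4}N)$ and invoking the induction hypothesis $\theta(s)\le M_0 s^{-2}$ produces a contribution $M_0\epsilon^{-1/2}N^{-2}$, whose coefficient is much larger than $M_0$ and destroys the contraction in (\ref{req_A1})--(\ref{req_A2}).

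The paper's actual proof is entirely different and requires no one-arm input. It introduces a ghost field $\mathcal{G}^h$ and the generating function $\mathfrak{R}(h)=\sum_m P_m(1-e^{-mh})=\mathbb{P}(\bm{0}\xleftrightarrow[]{}\mathcal{G}^h)$, then establishes the Barsky--Aizenman-type differential inequality $\mathfrak{R}(h)\mathfrak{R}'(h)\le C\left[\mathfrak{R}(h)-(e^h-1)\mathfrak{R}'(h)\right]\le C$ (Lemmas \ref{lemma_9.4} and \ref{lemma_9.8}), using only the two-point function estimate, the tree expansion, the BKR inequality, and the convolution bounds of Section 4. Integrating gives $\mathfrak{R}(h)\le C_8 h^{1/2}$, and taking $h=M^{-1}$ yields the proposition. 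If you want to salvage your route, you would need an independent, non-circular proof of $\theta(N)\le CN^{-2}$, which is precisely what the paper is organized to avoid assuming at this stage.
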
 
	By Proposition \ref{prop_volumn}, we have 
	\begin{equation}\label{ineq_B1}
		\mathbb{P}(\mathsf{B}_1) \le C_6\epsilon^{-\frac{1}{2}}N^{-2}. 
	\end{equation}

	%


	For $\mathsf{B}_2$, we denote  $\mathbf{\Psi}^{i,\blacktriangle}_n:=\mathbf{\Psi}_{n,[(1+\lambda)N]^b}^i$ for $i\in \{1,2\}$, $\mathbf{\Psi}_n^{\blacktriangle}:=\mathbf{\Psi}_{n,[(1+\lambda)N]^b}$, $\widehat{\mathbf{\Psi}}_n^{\blacktriangle}:=\widehat{\mathbf{\Psi}}_{n,[(1+\lambda)N]^b}$, $\overline{\mathbf{\Psi}}_{n}^{\blacktriangle}:=\overline{\mathbf{\Psi}}_{n,[(1+\lambda)N]^b}$ and $\psi_{n}^{\blacktriangle}:=|\overline{\mathbf{\Psi}}_{n}^{\blacktriangle}|$. Let $k_\dagger:=\min\{k\ge 0: 0<\psi_{n_k}^{\blacktriangle}\le L^2 \}$, where $n_k:=\big\lceil (1+\frac{\lambda}{4})N \big\rceil+k$.

The event $\mathsf{B}_2$ ensures the following two events:
	\begin{itemize}
		\item $\{\psi_{n_k}^{\blacktriangle}>0\}$ for any $n_k\in \big[(1+\frac{\lambda}{4})N,(1+\frac{\lambda}{3})N\big]$ (since the event $\Big\{\bm{0} \xleftrightarrow[]{\le [(1+\lambda)N]^b} \partial B\big((1+\lambda)N\big)\Big\}$ happens).
		
		\item There exists some $n_k\in \big[(1+\frac{\lambda}{4})N,(1+\frac{\lambda}{3})N\big]$ such that $0\le \psi^{\blacktriangle}_{n_k}\le L^2$ (since $\psi_{n_k}^*\ge \psi_{n_k}^{\blacktriangle}$ for all $k\ge 0$ (by (\ref{inclusion_6.5}))). 
	\end{itemize}
	To sum up, one has $\mathsf{B}_2 \subset \Big\{n_{k_\dagger}\in \big[(1+\frac{\lambda}{4})N,(1+\frac{\lambda}{3})N\big]\Big\}$. Thus, by (\ref{inclusion_6.6}), we have 
		\begin{equation}\label{fina_6.11}
		\begin{split}
			\mathbb{P}(\mathsf{B}_2)\le&\sum_{k\in \mathbb{N}:n_k\in [(1+\frac{\lambda}{4})N,(1+\frac{\lambda}{3})N]} \mathbb{P}\left(k_\dagger=k \right)  \mathbb{E}\bigg\{ \sum_{z_1\in \overline{\mathbf{\Psi}}_{n_{k}}^{\blacktriangle}} \sum_{z_2\in \mathbb{Z}^d:|z_2-z_1|_1\le 1}\\
			&\ \ \ \ \ \ \ \ \ \ \ \ \ \ \ \ \ \ \ \ \ \ \ \ \mathbb{P}\Big[ z_2 \xleftrightarrow[]{\widetilde{\mathcal{L}}^{\mathrm{U}}_{[(1+\lambda)N]^b}}  \partial B\big((1+\lambda)N\big)  \big| \mathbf{\Psi}_{n_{k}}^{\blacktriangle},k_\dagger=k \Big]\bigg\}. 
		\end{split}
		\end{equation}
	In fact, given $\mathbf{\Psi}_{n_{k}}^{\blacktriangle}$, then the unused loops $\widetilde{\mathcal{L}}^{\mathrm{U}}_{[(1+\lambda)N]^b}$ (with respect to $\mathbf{\Psi}_{n_{k}}^{\blacktriangle}$) is independent of $\mathbf{\Psi}_{n_{k'}}^{\blacktriangle}$ for all $0\le k'<k$. To see this, we only need to check the loops in $\widetilde{\mathcal{L}}^{\mathrm{U}}_{[(1+\lambda)N]^b}$ (see Definition \ref{def_unused loops}) as follows: 
	\begin{itemize}
		\item  involved loops $\widetilde{\ell}$ with $\mathrm{ran}(\widetilde{\ell})\cap \mathbf{\Psi}_{n_{k}}^{1,\blacktriangle}=\emptyset $: Since $\mathbf{\Psi}_{n_{k'}}^{1,\blacktriangle}\subset \mathbf{\Psi}_{n_{k}}^{1,\blacktriangle}$, we have $\mathrm{ran}(\widetilde{\ell})\cap \mathbf{\Psi}_{n_{k'}}^{1,\blacktriangle}=\emptyset$. Therefore, $\widetilde{\ell}$ is independent of $\mathbf{\Psi}_{n_{k'}}^{1,\blacktriangle}$.

		\item  loops $\widetilde{\ell}$ with $\mathrm{ran}(\widetilde{\ell})\cap \widetilde{B}(n_k)=\emptyset$:  Since $\widetilde{B}(n_{k'}) \subset\widetilde{B}(n_{k})$, we know that $\widetilde{\ell}$ is disjoint from $\widetilde{B}(n_{k'})$ and thus is independent of $\mathbf{\Psi}_{n_{k'}}^{1,\blacktriangle}$.

		\item  Every remaining loop $\widetilde{\ell}$ is a point loop including some $x \in \partial \hat{B}(n_k)$, which is also disjoint of $\widetilde{B}(n_{k'})$ and is independent of $\mathbf{\Psi}_{n_{k'}}^{1,\blacktriangle}$.

	\end{itemize}
	As a result, given $\mathbf{\Psi}_{n_{k}}^{\blacktriangle}$ and the occurrence of $\{k_{\dagger}=k\}$, the conditioning distribution of $\widetilde{\mathcal{L}}^{\mathrm{U}}_{[(1+\lambda)N]^b}$ (with respect to $\mathbf{\Psi}_{n_{k}}^{\blacktriangle}$) is the same as the one only given $\mathbf{\Psi}_{n_{k}}^{\blacktriangle}$. Combined with Item (2) in Remark \ref{remark_51}, this yields that for each $z_2$ involved in the RHS of (\ref{fina_6.11}), we have  
	\begin{equation}\label{fina_6.12}
		\begin{split}
			&\mathbb{P}\Big[ z_2 \xleftrightarrow[]{\widetilde{\mathcal{L}}^{\mathrm{U}}_{[(1+\lambda)N]^b}}  \partial B\big((1+\lambda)N\big)  \big| \mathbf{\Psi}_{n_{k}}^{\blacktriangle},k_\dagger=k \Big] \\
			\le &\mathbb{P}\Big[ z_2 \xleftrightarrow[]{}  \partial B\big((1+\lambda)N\big) \Big]\le \theta\left(\tfrac{\lambda N}{2} \right),
		\end{split}
	\end{equation}
	where in the last inequality we used $$\overline{\mathbf{\Psi}}_{n_{k}}^{\blacktriangle} \subset B(n_{k}+[(1+\lambda)N]^b)\subset B((1+\tfrac{\lambda}{3})N+[(1+\lambda)N]^b).$$ 
	Combining (\ref{fina_6.11}), (\ref{fina_6.12}) and $0<\psi_{n_{k_{\dagger}}}^{\blacktriangle}\le L^2$, we get 
	\begin{equation}\label{ineq_B2}
		\begin{split}
			\mathbb{P}(\mathsf{B}_2) \le &(2d+1)L^2 \theta\left(\tfrac{\lambda N}{2} \right)\mathbb{P}\left( n_{k_\dagger}\in [(1+\tfrac{\lambda}{4})N,(1+\tfrac{\lambda}{3})N]\right)\\
			\le & 3d\epsilon^{\frac{3}{5}}N^2 \theta\left(\tfrac{\lambda N}{2} \right)\theta(N). 
		\end{split}
	\end{equation}

	Finally, let us consider the event $\mathsf{B}_3$. For any $n\in \mathbb{N}^+$, let
	\begin{equation}
		\chi_{n}= \big|\{x\in B(n+L)\setminus B(n): \bm{0}\xleftrightarrow[]{} x \} \big|.
	\end{equation}
	We need the following theorem, which is the core of this paper.

	\begin{theorem}\label{theorem_regularity}
		For $d>6$,  there exist $c_4(d)>0,c_5(d)\in (0,1)$ such that for each fixed $\lambda \in \left( 0, 1 \right] $ and sufficiently small fixed $\epsilon>0$, the following holds for any large enough $N\ge 1$ and any $n\in \big[(1+\frac{\lambda}{4})N,(1+\frac{\lambda}{3})N\big]$:
		\begin{equation}
			\mathbb{P}\left(  \psi_{n}^*\ge L^2, \chi_{n}\le c_4L^4  \right)  \le (1-c_5)\theta(N). 
		\end{equation}
	\end{theorem}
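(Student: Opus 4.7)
The plan is to establish the theorem via the Paley--Zygmund (second moment) inequality applied to $\chi_n$ conditionally on the exploration tuple $\mathbf{\Psi}_n$. Specifically, for every admissible $\mathbf{A} = (\mathbf{A}^1, \mathbf{A}^2)$ with $\psi_n^*(\mathbf{A}) \geq L^2$, the goal is to show $\mathbb{P}(\chi_n \geq c_4 L^4 \mid \mathbf{\Psi}_n = \mathbf{A}) \geq c_5$ for uniform $c_4, c_5 > 0$. Since any lattice point in $\overline{\mathbf{\Psi}}_n^*$ lies outside $B(n-1)$ in the cluster of $\bm 0$, we have $\{\psi_n^* \geq 1\} \subset \{\bm 0 \xleftrightarrow[]{} \partial B(n)\}$, and combined with $n \geq N$ this gives $\mathbb{P}(\psi_n^* \geq L^2) \leq \theta(N)$. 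Integrating against $\mathbb{P}(\mathbf{\Psi}_n = \mathbf{A})$ then yields $\mathbb{P}(\psi_n^* \geq L^2,\, \chi_n < c_4 L^4) \leq (1-c_5)\theta(N)$, as required.

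Fix such $\mathbf{A}$. By Item~(2) of Remark~\ref{remark_51} (with $M = \infty$), the conditional law of $\widetilde{\mathcal{L}}^{\mathrm{U}}$ given $\{\mathbf{\Psi}_n = \mathbf{A}\}$ coincides with the unconditional law of $\widetilde{\mathcal{L}}_{\mathbf{A}}^{\mathrm{U}}$, while by Remark~\ref{fina_remark6.3}(1) and FKG the transition point loops at boundary sites stochastically dominate their unconditional versions. Combining this with the two-point function estimate \eqref{eq_two_points} gives, for every $z \in \overline{\mathbf{\Psi}}_n^*$ and $y \in \mathbb{Z}^d \setminus B(n)$,
\[
\mathbb{P}\!\left(z \xleftrightarrow[]{\cup \widetilde{\mathcal{L}}^{\mathrm{U}}} y \,\middle|\, \mathbf{A}\right) \geq c\,|y-z|^{2-d}.
\]
Since each $z \in \overline{\mathbf{\Psi}}_n^*$ lies within $[(1+\lambda)N]^b = o(L)$ of $\partial B(n)$, a positive fraction of $B_z(L)$ is contained in the annulus $B(n+L) \setminus B(n)$, so $\sum_{y \in B(n+L)\setminus B(n)} |y-z|^{2-d} \geq c L^2$. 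Aggregating over the $\geq L^2$ points $z \in \overline{\mathbf{\Psi}}_n^*$ should yield the first moment bound $\mathbb{E}[\chi_n \mid \mathbf{A}] \gtrsim \psi_n^* L^2 \geq L^4$, with the overlap (inclusion--exclusion) correction controlled via the tree expansion (Lemma~\ref{lemma_tree_expansion}), the BKR inequality (Corollary~\ref{coro_BKR}), and the triangle-type convolutions of Corollary~\ref{fina_coro_4.5}. For the second moment, expanding $\chi_n^2 = \sum_{y_1, y_2} \mathbbm{1}\{y_1 \xleftrightarrow[]{} \bm 0\}\mathbbm{1}\{y_2 \xleftrightarrow[]{} \bm 0\}$ and applying the tree expansion twice --- splitting at a pivotal glued loop separating the two outward excursions from $\overline{\mathbf{\Psi}}_n^*$ --- reduces $\mathbb{E}[\chi_n^2 \mid \mathbf{A}]$ to convolution sums of four two-point functions, which by Lemmas~\ref{lemma_calculation1}--\ref{lemma_calculation2} and Corollary~\ref{fina_coro_4.5} are bounded by $C(\psi_n^* L^2)^2 = C(\mathbb{E}[\chi_n \mid \mathbf{A}])^2$. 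Paley--Zygmund then delivers the desired conditional positive probability.

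The main obstacle is controlling the first moment aggregation uniformly in $\mathbf{A}$: naive Bonferroni corrections produce pairwise terms $\sum_{z_1 \neq z_2 \in \overline{\mathbf{\Psi}}_n^*} \sum_{y,w} |y-w|^{2-d}|z_1-w|^{2-d}|z_2-w|^{2-d}$ that, without care, can match or exceed the leading term $\psi_n^* L^2$, since the points of $\overline{\mathbf{\Psi}}_n^*$ may be arbitrarily arranged on and near $\partial \hat{B}(n)$. Overcoming this will likely require either a refined selection of well-separated entry points whose outgoing clusters are approximately independent, or a direct combinatorial estimate exploiting the smallness of $\epsilon$ (equivalently $L \ll N$), the spatial restriction $\overline{\mathbf{\Psi}}_n^* \subset B(n + [(1+\lambda)N]^b)$, and the high dimension $d > 6$ --- the latter guaranteeing the convergence of the relevant triangle sums in Corollary~\ref{fina_coro_4.5}. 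A parallel but milder difficulty arises for the second moment, where the tree-expansion analysis must also be carried out uniformly in $\mathbf{A}$; here the crucial input is the careful design of $\mathbf{\Psi}_n$ in Definition~\ref{definition_Psi}, which (as emphasized in Remark~\ref{fina_remark6.3}(2)) is precisely tailored so that the FKG stochastic domination of the transition point loops survives the conditioning.
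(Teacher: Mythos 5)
Your proposed route differs from the paper's, and it contains a genuine gap at the step you yourself flag as "the main obstacle": the uniform conditional first-moment bound $\mathbb{E}[\chi_n \mid \mathbf{\Psi}_n = \mathbf{A}] \gtrsim \psi_n^* L^2 \geq L^4$ is simply false for general admissible $\mathbf{A}$ with $\psi_n^*(\mathbf{A}) \geq L^2$. The quantity $\chi_n$ counts each $y$ once, so $\mathbb{E}[\chi_n \mid \mathbf{A}] = \sum_y \mathbb{P}(y \leftrightarrow \widehat{\mathbf{A}} \mid \mathbf{A})$, and each summand is bounded above by a one-arm/two-point probability to the \emph{set} $\widehat{\mathbf{A}}$, not by the sum of two-point functions over all $z \in \overline{\mathbf{A}}$. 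If, say, all $L^2$ points of $\overline{\mathbf{\Psi}}_n^*$ sit in a ball of radius $O(L^{2/d})$ near $\partial \hat{B}(n)$, then $\mathbb{P}(y \leftrightarrow \widehat{\mathbf{A}} \mid \mathbf{A}) \leq C|y - z_0|^{2-d+o(1)}$ for $y$ at macroscopic distance, and the first moment is $O(L^{2+o(1)})$, not $\gtrsim L^4$. No Bonferroni or tree-expansion correction rescues a uniform-in-$\mathbf{A}$ statement, because the statement itself fails for clustered configurations. Consequently the conditional Paley--Zygmund step cannot be carried out for every $\mathbf{A}$, and your final integration over $\mathbf{A}$ does not go through.

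The fix is not a technical refinement of the convolution estimates but a change in the logical structure: one must first prove that clustered configurations are \emph{improbable}. This is what the paper's Section 7 does (via the multi-scale notions of $m$-good, regular, and strongly regular points, the exploration process $\mathcal{T}_n$, and the inheritability of qualified points), culminating in Corollary \ref{coro_61}: except on an event of probability $\mathrm{s.p.}(N)$ (absorbed using the polynomial lower bound (\ref{ineq_onearm}) on $\theta(N)$), at least half of $\overline{\mathbf{\Psi}}_n^*$ is strongly regular. Only for strongly regular points can one find a nearby empty box $B_{x'}(K_*)$ (Lemma \ref{lemma_610}) and control the "only by $\widehat{\mathbf{A}}$" correction terms by $K^{6-d}\log^{16}(K) L^2$ (Lemmas \ref{lemma_p1p2_p1}--\ref{lemma_614}). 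Moreover, the paper runs the second-moment method not on $\chi_n$ directly but on the count $W$ of \emph{admissible pairs} defined via a pivotal glued loop $\gamma^{\mathrm{f}}_{x,K}$; the pivotality is what guarantees both $W \leq \chi_n$ (Remark \ref{remark_8.9}) and the disjointness of the clusters $\mathbf{C}_{y_1,K}, \mathbf{C}_{y_2,K}$ needed for the second-moment upper bound in Part 3 of Lemma \ref{lemma6.113}. Your sketch contains neither the regularity input nor the pivotal-pair device, and both are essential.
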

	
	Now we estimate the probability of $\mathsf{B}_3$ based on Theorem \ref{theorem_regularity}. For any integer $i \in [0,\tfrac{1}{12}\lambda \epsilon^{-\frac{3}{10}}-1]$, let $n_i' := \big\lceil (1+ \tfrac{\lambda}{4}) N + iL\big\rceil$. Note that each $n_i'\in \big[ (1+ \tfrac{\lambda}{4}) N, (1+ \tfrac{\lambda}{3}) N\big]$. We also define 
	$$
	I :=\Big| \big\{  i\in [0,\tfrac{1}{12}\lambda \epsilon^{-\frac{3}{10}}-1]\cap \mathbb{N} :  \psi_{n_i'}^* \ge L^2, 
	\chi_{n_i'} \le c_4L^4  \big\}  \Big|.
	$$
	If $\mathsf{B}_3$ happens, then we have $|\mathbf{C}(\bm{0})|<\epsilon N^4$ and thus 
	$$
	\Big|\big\{i\in [0,\tfrac{1}{12}\lambda \epsilon^{-\frac{3}{10}}-1]\cap \mathbb{N}  : \chi_{n_i'} > c_4L^4  \big\}\Big|< \frac{\epsilon N^4 }{c_4 L^4}= c_4^{-1} \epsilon^{-\frac{1}{5}}.  
	$$
	Therefore, by the Markov's inequality and Theorem \ref{theorem_regularity}, we have 
	\begin{equation}\label{ineq_6.12}
		\begin{split}
			\mathbb{P}(\mathsf{B}_3)\le &\mathbb{P}\left( I \ge \tfrac{1}{12}\lambda \epsilon^{-\frac{3}{10}}-  c_4^{-1} \epsilon^{-\frac{1}{5}}-1\right)  \\
			\le & \frac{\mathbb{E}I}{\tfrac{1}{12}\lambda \epsilon^{-\frac{3}{10}}-  c_4^{-1} \epsilon^{-\frac{1}{5}}-1}\\
			\le & \frac{\frac{1}{12}\lambda \epsilon^{-\frac{3}{10}}(1-c_5)}{\frac{1}{12}\lambda \epsilon^{-\frac{3}{10}}-  c_4^{-1} \epsilon^{-\frac{1}{5}}-1}\cdot \theta(N). 
		\end{split}
	\end{equation}
	For each fixed $\lambda\in \left(0,1 \right] $, by taking a small enough $\epsilon$, we can require that 
	\begin{equation}\label{ineq_6.13}
		\frac{\frac{1}{12}\lambda \epsilon^{-\frac{3}{10}}(1-c_5)}{\frac{1}{12}\lambda \epsilon^{-\frac{3}{10}}-  c_4^{-1} \epsilon^{-\frac{1}{5}}-1}< 1-\frac{c_5}{2}:= 1-c_2. 
	\end{equation}
	By (\ref{ineq_6.12}) and (\ref{ineq_6.13}), we obtain the desired estimate for $\mathbb{P}(\mathsf{B}_3)$ as follows: 
	\begin{equation}\label{ineq_B3}
		\mathbb{P}(\mathsf{B}_3)\le (1-c_2) \theta(N). 
	\end{equation}
	
	
In conclusion, by (\ref{ineq_B0}), (\ref{ineq_B1}), (\ref{ineq_B2}) and (\ref{ineq_B3}), we conclude Proposition \ref{prop_recursion}, and thus complete the proof of Theorem \ref{theorem1} assuming Proposition \ref{prop_volumn} and Theorem \ref{theorem_regularity}. We will prove Proposition \ref{prop_volumn} in Section \ref{section_volumn}. The proof of Theorem \ref{theorem_regularity} will be established in Sections \ref{fina_section7} and \ref{section8}. Specifically, we will prove a core lemma in Section \ref{fina_section7} and then conclude Theorem \ref{theorem_regularity} in Section \ref{section8}.

	\section{Good points, locally good points and qualified points}\label{fina_section7}
	
	As in the last section, we fix $b\in (\frac{6}{d},1)$, $\lambda\in \left( 0,1\right] $ and a sufficiently small $\epsilon>0$. We also take a sufficiently large constant $K(d)>0$. For any $m\ge 1$, we denote $r_m:= K2^{m-1}$. Recall the notations $\widehat{\mathbf{A}}$ and $\widetilde{\mathcal{L}}^{\mathrm{U}}_{\mathbf{A}}$ in (\ref{eq_6.7}) and Definition \ref{def_unused loops} respectively.

	\begin{definition}[$(x,m,l)$-nice tuple]\label{def_nice_set}
		For any $x\in \mathbb{Z}^d$, $m\ge 1$ and admissible $\mathbf{A}=(\mathbf{A}^1,\mathbf{A}^2)$ (i.e. a possible configuration of $\mathbf{\Psi}_n$), we define the function
		\begin{equation}
			\Delta_{x,m}(\mathbf{A}):= 	\mathbb{E}\bigg(\sum_{y\in B_x(r_m)}\mathbbm{1}_{y\xleftrightarrow[]{\cup \widetilde{\mathcal{L}}^{\mathrm{U}}_\mathbf{A}}\widehat{\mathbf{A}}\cap B_x(r_{m}^{4d}) }  \bigg). 
		\end{equation}
		For $l\ge 1$, we say $\mathbf{A}$ is $(x,m,l)$-nice if $\Delta_{x,m}(\mathbf{A})\le r_m^4\log^{l}(r_m)$. 
	\end{definition}

Recall the notation $\overline{\mathbf{\Psi}}_n$ in Item (2) of Definition \ref{definition_Psi}, and also recall that $\overline{\mathbf{\Psi}}_n^*=\overline{\mathbf{\Psi}}_n\cap B(n+[(1+\lambda)N]^b)$.
	
	\begin{definition}[$m$-good point and regular point]\label{def_good_regular_points}\hspace*{\fill}
		\begin{enumerate}
			\item For any $m\ge 1$ and $x\in \mathbb{Z}^d$, we say $x$ is $m$-good if $\mathbf{\Psi}_n$ is $(x,m,16)$-nice. We also say $x$ is $m$-bad if it is not $m$-good.

			\item  If $x$ is $m$-good for all $m\ge 1$, then we say $x$ is regular. Otherwise, we call $x$ an irregular point.

			\item  We say $x$ is strongly regular if $y$ is regular for all $y\in B_x(K^{10d})$.

			\item   We denote the numbers of irregular, strongly regular and $m$-bad points in $\overline{\mathbf{\Psi}}_n^*$ by $\psi_n^{\mathrm{irr}}$, $\psi_n^{\mathrm{SR}}$ and $\psi_n^{m\mbox{-}\mathrm{bad}}$ respectively. 
			
		\end{enumerate}
	\end{definition}

	\begin{remark}\label{remark6.1}
		
		(1)	If $y\in B_x(r_m)\cap \widehat{\mathbf{A}}$, then $\big\{ y\xleftrightarrow[]{\cup \widetilde{\mathcal{L}}^{\mathrm{U}}_\mathbf{A}}\widehat{\mathbf{A}}\cap B_x(r_m^{4d}) \big\}$ a.s. happens. Therefore, we have 
		\begin{equation}
			\Delta_{x,m}(\mathbf{A}) \ge \big| \widehat{\mathbf{A}}\cap B_x(r_m) \big|. 
		\end{equation}
		Thus, when $\mathbf{A}$ is $(x,m,l)$-nice, one has $\big|\widehat{\mathbf{A}}\cap B_x(r_m) \big|\le r_m^4\log^{l}(r_m)$. As a result, when $x$ is $m$-good, we have
		\begin{equation}
			\big|\widehat{\mathbf{\Psi}}_n\cap B_x(r_m)\big|  \le \Delta_{x,m}(\mathbf{\Psi}_n)\le  r_m^4\log^{16}(r_m).
		\end{equation}
		(2) For any admissible tuple $\mathbf{A}$, by Item (2) in Remark \ref{remark_51}, we have 
		\begin{equation}\label{7.5}
			\mathbb{E}\bigg(\sum_{y\in B_x(r_m)}\mathbbm{1}_{y\xleftrightarrow[]{\cup \widetilde{\mathcal{L}}^{\mathrm{U}}}\widehat{\mathbf{\Psi}}_n\cap B_x(r_{m}^{4d}) } \Big|\mathbf{\Psi}_n= \mathbf{A} \bigg)= \Delta_{x,m}(\mathbf{A}). 
		\end{equation}
	\end{remark}

	The main goal of this section is to prove the following lemma, which will be a crucial ingredient in proving Theorem \ref{theorem_regularity}.

	\begin{lemma}\label{lemma_reg}
		With the same conditions as in Theorem \ref{theorem_regularity}, we have 
		\begin{equation}\label{equation_lemma_reg}
			\mathbb{P}\left( \psi_{n}^*\ge L^2, \psi_{n}^{\mathrm{irr}}\ge K^{-20d^2}\psi_{n}^* \right) \le \mathrm{s.p.}(N). 
		\end{equation}
	\end{lemma}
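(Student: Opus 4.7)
The plan is to establish, via a conditional moment argument combined with a union bound over scales and points, that with super-polynomial probability essentially every point in $\overline{\mathbf{\Psi}}_n^*$ is regular. Since the event in question is contained in $\{\psi_n^{\mathrm{irr}} \ge K^{-20d^2} L^2\}$ (using $\psi_n^* \ge L^2$), it suffices to show $\mathbb{P}[\psi_n^{\mathrm{irr}} \ge K^{-20d^2} L^2] \le \mathrm{s.p.}(N)$, which by Markov's inequality reduces to a sufficiently strong pointwise estimate after decomposing $\psi_n^{\mathrm{irr}} \le \sum_{m \ge 1}\psi_n^{m\text{-bad}}$. Scales with $r_m > 3N$ are handled trivially since then $\Delta_{x,m}(\mathbf{\Psi}_n)$ is bounded by the full cluster volume $|\mathbf{C}(\bm{0})|$, which decays by Proposition \ref{prop_volumn}, so one is left with $O(\log N)$ relevant scales and the task becomes a uniform estimate on $\mathbb{P}[x \text{ is } m\text{-bad}]$ for $x \in B(3N)$.

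The central observation I would exploit is that $X_{x,m} \le |\mathbf{C}(\bm{0}) \cap B_x(r_m)|$ almost surely, since any $y$ connected to $\widehat{\mathbf{\Psi}}_n$ through the unused loops is, a fortiori, connected to $\bm{0}$ via the full soup. Combined with Jensen's inequality on the conditional expectation defining $\Delta_{x,m}$, this yields $\mathbb{E}[\Delta_{x,m}^k] \le \mathbb{E}[|\mathbf{C}(\bm{0}) \cap B_x(r_m)|^k]$ for every $k \ge 1$. To control the right-hand side I would use a pivot identity: whenever $\mathbf{C}(\bm{0}) \cap B_x(r_m) \ne \emptyset$, any $y$ in the intersection satisfies $|\mathbf{C}(\bm{0}) \cap B_x(r_m)| = |\mathbf{C}(y) \cap B_x(r_m)| \le |\mathbf{C}(y) \cap B_y(2r_m)|$, whence
\[
\mathbb{E}\bigl[|\mathbf{C}(\bm{0}) \cap B_x(r_m)|^k\bigr] \le \sum_{y \in B_x(r_m)} \mathbb{E}\bigl[\mathbbm{1}_{y \leftrightarrow \bm{0}}\,|\mathbf{C}(y) \cap B_y(2r_m)|^{k-1}\bigr].
\]
Applying Cauchy--Schwarz together with Lemma \ref{lemma_2.3} (after translation) and the two-point function $\mathbb{P}[y \leftrightarrow \bm{0}] \le C|y|^{2-d}$ gives an estimate of the form $\mathbb{E}[\Delta_{x,m}^k] \le C^k (2k)^k r_m^{4k + (d-8)/2}$; optimizing $k \sim \log^{16}(r_m)$ in Markov's inequality then yields the target super-polynomial decay whenever $r_m$ is at least a polynomial in $N$. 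For the remaining small scales (polylogarithmic $r_m$), the plan is to exploit that being $m$-bad at such a scale forces an unusually large local cluster of $\bm{0}$ in a polylog-sized box around $x$, which is super-polynomially unlikely by a net-based application of Lemma \ref{lemma_large_div}. Summing the two types of estimates over the $O(\log N)$ scales, and noting that the decay $|y|^{2-d}$ absorbs the polynomial union-bound cost over $B(3N)$, delivers the claimed bound on $\mathbb{E}[\psi_n^{\mathrm{irr}}]$.

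The main technical obstacle I anticipate is the joint estimate $\mathbb{E}[\mathbbm{1}_{y \leftrightarrow \bm{0}}\,|\mathbf{C}(y) \cap B_y(2r_m)|^{k-1}]$: because both factors are increasing in the loop soup, FKG only provides a lower bound, so a different mechanism is needed to produce an upper bound with the correct scaling. The resolution is to expand the $(k-1)$-th moment of the cluster volume into a sum over $(k-1)$-tuples $(z_1, \ldots, z_{k-1})$ in $B_y(2r_m)$ and apply iterated tree expansion (Lemma \ref{lemma_tree_expansion}) together with the BKR inequality (Corollary \ref{coro_BKR}) to bound each $\mathbb{P}[y \leftrightarrow \bm{0},\ z_1, \ldots, z_{k-1} \leftrightarrow y]$ by a sum of products of two-point functions. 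The resulting tripod-type sums are summable precisely because $d > 6$ (via the estimates in Corollary \ref{fina_coro_4.5}), and the decay $|y|^{2-d}$ is what lets the outer union bound close at the right rate; the delicate bookkeeping of factorial factors and their matching with the $\log^{16k}(r_m)$-slack in the definition of $m$-good is the key technical step.
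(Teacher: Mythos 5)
Your treatment of the large scales is essentially the paper's: for $r_m$ at least roughly $e^{\log^{1/4}(N)}$ the single-point estimate $\mathbb{P}(x\ \text{is}\ m\text{-bad})\le Ce^{-c\log^{16}(r_m)}$ (the paper's Lemma \ref{new_lemma6.3}, proved exactly by the comparison $\Delta_{x,m}\le \max_y|\mathbf{C}(y)\cap B_x(r_m)|$ and Lemma \ref{lemma_large_div}) beats the union bound over $B(n+[(1+\lambda)N]^b)$, and no further idea is needed. The gap is in the small scales, and it is fatal to the whole plan. Your claim that being $m$-bad at a polylogarithmic (let alone constant) scale ``forces an unusually large local cluster ... which is super-polynomially unlikely'' is false: for $r_m=K$ a fixed constant, the event $\{\Delta_{x,m}(\mathbf{\Psi}_n)>r_m^4\log^{16}(r_m)\}$ is a local event whose probability is of order $e^{-c\log^{16}(K)}$ --- a small \emph{constant}, with no decay in $N$ whatsoever. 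Consequently any first-moment/Markov computation is doomed: even granting perfect decoupling between $\{x\in\overline{\mathbf{\Psi}}_n^*\}$ and $\{x\ \text{is}\ m\text{-bad}\}$, one gets $\mathbb{E}[\psi_n^{m\text{-bad}}]\lesssim e^{-c\log^{16}(r_m)}\,\mathbb{E}[\psi_n^*]\asymp e^{-c\log^{16}(r_m)}N^{1+b}$, and dividing by the threshold $K^{-20d^2}L^2\asymp \epsilon^{3/5}N^2$ yields at best $C(\epsilon,K)\,N^{b-1}$. Since $b<1$ this is only polynomially small, not $\mathrm{s.p.}(N)$, and a fixed polynomial rate is also insufficient for the downstream use of the lemma (the $\mathrm{s.p.}(N)$ error must eventually be dominated by $c\,\theta(N)\gtrsim N^{-2}$ in the proof of Theorem \ref{theorem_regularity}).

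What is actually needed at small scales is a \emph{conditional concentration} statement, not a first-moment bound: given that $\overline{\mathbf{\Psi}}_n^*$ contains at least $L^2$ points, only a constant fraction of them can be $m$-bad except with super-polynomially small probability. This is precisely why the paper spends the bulk of Section \ref{fina_section7} on it: the shell is split into sublattices $F(w)$ of spacing $D_N$, badness is relaxed to a hierarchy of $k$-unqualified events whose inheritability (Lemmas \ref{new_lemma_6.6} and \ref{lemma_6.9}) decouples scale $k$ from scale $k+1$, and the exploration process $\mathcal{T}_n$ of Section \ref{section_exploration} reveals the points of $\overline{\mathbf{\Psi}}_n^*\cap F(w)$ one at a time so that Hoeffding-type bounds ((\ref{equation_619})--(\ref{new_equation_648})) apply to the counts $N_*^{k\text{-UQ}}$, $N_*^{\mathrm{con}}$. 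None of this is replaceable by the moment computation you outline; your reduction via $\Delta_{x,m}\le|\mathbf{C}(\bm{0})\cap B_x(r_m)|$, the pivot to $\mathbf{C}(y)\cap B_y(2r_m)$, and the tree-expansion/BKR bookkeeping are all sound as stated, but they only reproduce the single-point estimate, which is simply not strong enough at scales $r_m\le e^{\log^{1/4}(N)}$.
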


	The lemma above implies that when $\psi_n^*$ is at least $L^2$, with high probability, at least half of the points in $\overline{\mathbf{\Psi}}_n^*$ are strongly regular.

	\begin{corollary}\label{coro_61}
		With the same conditions as in Theorem \ref{theorem_regularity}, we have
		\begin{equation}\label{equation_coro_61}
			\mathbb{P}\left( \psi_{n}^*\ge L^2, \psi_{n}^{\mathrm{SR}}\le \tfrac{1}{2}\psi_{n}^* \right) \le \mathrm{s.p.}(N).
		\end{equation}
	\end{corollary}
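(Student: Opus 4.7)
The plan is to derive the corollary from Lemma \ref{lemma_reg} by a deterministic counting argument. Observe that $x \in \overline{\mathbf{\Psi}}_n^*$ fails to be strongly regular if and only if there is some irregular $y \in B_x(K^{10d})$. Double-counting gives
\begin{equation*}
\psi_n^* - \psi_n^{\mathrm{SR}} \;=\; \sum_{x\in \overline{\mathbf{\Psi}}_n^*} \mathbbm{1}_{x\,\text{not strongly regular}} \;\le\; \sum_{y\in \mathcal{I}} \big|\overline{\mathbf{\Psi}}_n^* \cap B_y(K^{10d})\big| \;\le\; |B(K^{10d})| \cdot |\mathcal{I}|,
\end{equation*}
where $\mathcal{I}$ is the set of irregular points $y$ with $B_y(K^{10d}) \cap \overline{\mathbf{\Psi}}_n^* \ne \emptyset$. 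Since $|B(K^{10d})| \le C_d K^{10 d^2}$, the constant $K^{-20d^2}$ appearing in Lemma \ref{lemma_reg} is exactly calibrated to beat this multiplicative loss; if we had $|\mathcal{I}| \le K^{-20d^2}\psi_n^*$ on a good event, then $|B(K^{10d})| \cdot |\mathcal{I}| \le C_d K^{-10 d^2} \psi_n^* < \tfrac{1}{2}\psi_n^*$ for $K$ taken large enough (depending only on $d$), and the corollary would follow.

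Thus the main task reduces to showing that
\begin{equation*}
\mathbb{P}\bigl(\psi_n^* \ge L^2,\ |\mathcal{I}| \ge K^{-20d^2}\psi_n^*\bigr) \le \mathrm{s.p.}(N).
\end{equation*}
The subtlety is that $\mathcal{I}$ may contain irregular points lying \emph{outside} $\overline{\mathbf{\Psi}}_n^*$ but within distance $K^{10d}$ of it, so Lemma \ref{lemma_reg} does not apply to $|\mathcal{I}|$ directly. The plan is to observe that $K^{10d}$ is a fixed $d$-dependent constant, negligible compared with all the scales $r_m$, $L$, $N^b$ appearing in the argument for Lemma \ref{lemma_reg}. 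Either (a) a straightforward modification of the proof of Lemma \ref{lemma_reg} handles a thickened region $\overline{\mathbf{\Psi}}_n^* + B(K^{10d})$ with the same super-polynomial tail, simply by absorbing an extra factor $|B(K^{10d})|$ into the counting step; or (b) one applies Lemma \ref{lemma_reg} separately to $O(K^{10d})$ shifted levels $n' \in [n-K^{10d},\, n+K^{10d}]$ whose associated sets $\overline{\mathbf{\Psi}}_{n'}^*$ collectively cover every $y \in \mathcal{I}$ (using that each such $y$ lies in the cluster $\widehat{\mathbf{\Psi}}_{n'}$ at some suitable level $n'$), then union-bounds the resulting super-polynomially small events.

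The expected main obstacle is verifying that one of these two extensions genuinely yields the same $K^{-20d^2}$ bound with only super-polynomial loss. Approach (a) requires a careful re-reading of how $\overline{\mathbf{\Psi}}_n^*$ enters the estimates in Section \ref{fina_section7}, to confirm that irregular points just outside $\overline{\mathbf{\Psi}}_n^*$ can be handled by the same expansions; approach (b) requires identifying, for each $y \in \mathcal{I}$, a nearby level $n'$ at which $y$ is captured by $\overline{\mathbf{\Psi}}_{n'}^*$, which is plausible since any irregular $y$ close to $\overline{\mathbf{\Psi}}_n^*$ is close to $\widehat{\mathbf{\Psi}}_n$ and hence likely belongs to $\overline{\mathbf{\Psi}}_{n'}$ for some $n'$ within $K^{10d}$ of $n$. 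Once this extension is in hand, the deterministic bound in the first paragraph closes the proof, and the super-polynomial tail is preserved under the final union with the bad event of the extended Lemma \ref{lemma_reg}.
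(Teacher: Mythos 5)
Your deterministic double-counting step is the same as the paper's: every non-strongly-regular $x\in\overline{\mathbf{\Psi}}_n^*$ lies in $B_y(K^{10d})$ for some irregular $y$, so $\psi_n^*-\psi_n^{\mathrm{SR}}$ is at most $|B(K^{10d})|$ times the number of witnessing irregular points, and the exponent $-20d^2$ in Lemma \ref{lemma_reg} is indeed calibrated to beat $|B(K^{10d})|\le CK^{10d^2}$. The paper then finishes in one line by applying Lemma \ref{lemma_reg} to $\psi_n^{\mathrm{irr}}$, the number of irregular points \emph{in} $\overline{\mathbf{\Psi}}_n^*$. Your observation that the witnessing irregular point need not itself lie in $\overline{\mathbf{\Psi}}_n^*$ is a fair reading of Definition \ref{def_good_regular_points}, and the paper's ``Thus'' does gloss over it; but having raised the issue, your proposal does not resolve it, and this is where it fails as a proof.

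Concretely: the entire probabilistic content of your argument is the unproved claim that $\mathbb{P}\bigl(\psi_n^*\ge L^2,\ |\mathcal{I}|\ge K^{-20d^2}\psi_n^*\bigr)\le \mathrm{s.p.}(N)$, and neither of your two sketched routes delivers it. Route (b) rests on a false premise: an irregular point is merely a lattice point $y$ for which some $\Delta_{y,m}(\mathbf{\Psi}_n)$ exceeds $r_m^4\log^{16}(r_m)$; such a $y$ need not belong to $\mathbf{\Psi}_{n'}^1$ or $\mathbf{\Psi}_{n'}^2$ (hence not to $\overline{\mathbf{\Psi}}_{n'}^*$) for \emph{any} level $n'$ --- being within distance $K^{10d}$ of the cluster does not put a point in the cluster --- and moreover ``regular'' is defined relative to the fixed level $n$, so Lemma \ref{lemma_reg} at level $n'$ controls a different notion of irregularity. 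Route (a) is not a matter of ``absorbing an extra factor $|B(K^{10d})|$ into the counting step'': the quantities $\zeta_w$, $\zeta_w^{m\text{-}\mathrm{bad}}$, and all the statistics of the exploration process $\mathcal{T}_n$ in Section \ref{section_exploration} are tied to membership in $\overline{\mathbf{\Psi}}_n^*\cap F(w)$ (e.g.\ inequality (\ref{equation_617}) relies on Item (4) of Lemma \ref{lemma_718}, which only guarantees that points of $\overline{\mathbf{\Psi}}_n^*$ are visited), so a thickened version would require reworking that machinery, not a one-line modification. As submitted, the proof therefore reduces the corollary to an unestablished extension of Lemma \ref{lemma_reg}; the paper's (intended) argument avoids this entirely by taking the irregular witness in $\overline{\mathbf{\Psi}}_n^*$ and invoking Lemma \ref{lemma_reg} as stated.
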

	
	\begin{proof}
		For any $x\in \mathbb{Z}^d$, if $x$ is not strongly regular, then there must exist an irregular point $y$ such that $x\in B_y(K^{10d})$. Thus, we have
		$$
		\psi_{n}^*- \psi_{n}^{\mathrm{SR}}\le \big|B(K^{10d}) \big|\cdot \psi_n^{\mathrm{irr}}.
		$$
		Therefore, when $\left\lbrace \psi_{n}^*\ge L^2, \psi_{n}^{\text{SR}}\le \frac{1}{2}\psi_{n}^*\right\rbrace $ happens, one has 
		\begin{equation}\label{Xn_irr_ge}
			\psi_n^{\mathrm{irr}}\ge \big|B(K^{10d})\big|^{-1} \big(\psi_{n}^*- \psi_{n}^{\mathrm{SR}}\big) \ge cK^{-10d^2}\psi_{n}^* \ge K^{-20d^2}\psi_{n}^*. 
		\end{equation}
		By Lemma \ref{lemma_reg} and (\ref{Xn_irr_ge}), we immediately get the corollary.
	\end{proof}

	We next describe the proof of Lemma \ref{lemma_reg}. Recalling Definition \ref{def_good_regular_points}, one has the following deterministic inequality:
	\begin{equation}
		\psi_{n}^{\text{irr}} \le \sum_{m=1}^{\infty} \psi_n^{m\mbox{-}\mathrm{bad}}. 
	\end{equation}
	Combined with $\sum_{m=1}^\infty m^{-2} < 2$, it suffices to prove that for any $m\geq 1$,
	\begin{equation}\label{new_ineq_609}
		\mathbb{P}\left( \psi_{n}^*\ge L^2, \psi_n^{m\mbox{-}\text{bad}}\ge \tfrac{1}{2}m^{-2} K^{-20d^2}\psi_{n}^* \right) \le \mathrm{s.p.}(N). 
	\end{equation}

	It turns out that for large $m$, the proof of (\ref{new_ineq_609}) is fairly simple since the probability for the existence of a single $m$-bad point already decays super-polynomially, as incorporated in Lemma \ref{new_lemma6.3} below. For small $m$, however, the proof is much more delicate since this necessarily requires to control many points simultaneously, and its proof almost occupies the rest of this section.
	\begin{lemma}\label{new_lemma6.3}
		For any $d>6$, there exist constants $c(d),C(d)>0$ such that for any $x\in \mathbb{Z}^d$ and $m\ge 1$, 
		\begin{equation}
			\mathbb{P}\left( x\ \text{is}\ m\mbox{-}\mathrm{bad} \right) \le Ce^{-c\log^{16}(r_m)}.
		\end{equation}
	\end{lemma}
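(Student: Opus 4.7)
The plan is to turn the $m$-bad tail estimate into a high-moment bound for $\Delta_{x,m}(\mathbf{\Psi}_n)$, using Lemma~\ref{lemma_2.3} as the core input. By (\ref{7.5}) we have $\Delta_{x,m}(\mathbf{\Psi}_n) = \mathbb{E}(W \mid \mathbf{\Psi}_n)$, where
\[ W := \sum_{y \in B_x(r_m)} \mathbbm{1}_{y \xleftrightarrow[]{\cup \widetilde{\mathcal{L}}^{\mathrm{U}}} \widehat{\mathbf{\Psi}}_n \cap B_x(r_m^{4d})}, \]
and therefore, by the conditional Jensen inequality, $\mathbb{E}[\Delta_{x,m}(\mathbf{\Psi}_n)^k] \leq \mathbb{E}[W^k]$ for every integer $k \geq 1$. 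Thus it is enough to control $\mathbb{E}[W^k]$ and then apply Markov.

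Next I would establish the deterministic domination $W \leq |\mathbf{C}(\mathbf{0}) \cap B_x(r_m)|$. If $y$ contributes to $W$, then $y$ lies on some loop of $\widetilde{\mathcal{L}}^{\mathrm{U}} \subset \widetilde{\mathcal{L}}_{1/2}$ that is joined through a chain of $\widetilde{\mathcal{L}}^{\mathrm{U}}$-loops to $\widehat{\mathbf{\Psi}}_n$. Since $\mathbf{\Psi}_n^1$ is assembled from loops in $\widetilde{\mathcal{L}}_{1/2}$ and contains $\mathbf{0}$, we have $\mathbf{\Psi}_n^1 \subset \mathbf{C}(\mathbf{0})$. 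For $z \in \mathbf{\Psi}_n^2$, the defining relation $I_{\{z, z^{\mathrm{in}}\}} \subset \gamma_z^{\mathrm{p}} \cup \mathbf{\Psi}_n^1$ together with $z \notin \mathbf{\Psi}_n^1$ forces $\gamma_z^{\mathrm{p}}$ to meet $\mathbf{\Psi}_n^1$ (two disjoint closed pieces cannot cover the connected interval $I_{\{z, z^{\mathrm{in}}\}}$), whence $\gamma_z^{\mathrm{p}} \subset \mathbf{C}(\mathbf{0})$ as well. So $\widehat{\mathbf{\Psi}}_n \subset \mathbf{C}(\mathbf{0})$, and any contributing $y$ is a lattice point of $\mathbf{C}(\mathbf{0}) \cap B_x(r_m)$.

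Combining this with the translation invariance of $\widetilde{\mathcal{L}}_{1/2}$ and Lemma~\ref{lemma_2.3} applied at base point $-x$ yields
\[ \mathbb{E}[W^k] \leq \mathbb{E}\bigl[|\mathbf{C}(\mathbf{0}) \cap B_x(r_m)|^k\bigr] = \mathbb{E}\bigl[|\mathbf{C}(-x) \cap B(r_m)|^k\bigr] \leq C_5^k\, k!\, r_m^{4k-2}. \]
Markov's inequality and $k! \leq k^k$ then give
\[ \mathbb{P}(x \text{ is } m\text{-bad}) \leq \frac{C_5^k\, k!}{r_m^2 \log^{16k}(r_m)} \leq \frac{1}{r_m^2}\Bigl(\frac{C_5\, k}{\log^{16}(r_m)}\Bigr)^k. \]
Optimizing in $k$ by choosing $k = \lfloor \log^{16}(r_m)/(eC_5)\rfloor$ (whenever this is $\geq 1$) makes the parenthesized quantity $\leq e^{-1}$ and hence produces a bound of the form $r_m^{-2} e^{-c\log^{16}(r_m)} \leq C e^{-c \log^{16}(r_m)}$; the remaining regime of very small $r_m$ is absorbed into $C$ via the trivial bound $\mathbb{P}(\cdot) \leq 1$.

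The only slightly non-routine ingredient is the inclusion $\widehat{\mathbf{\Psi}}_n \subset \mathbf{C}(\mathbf{0})$ and the resulting domination of $W$ by $|\mathbf{C}(\mathbf{0}) \cap B_x(r_m)|$. Once this is in place, the remainder of the argument is a clean combination of conditional Jensen, translation invariance, a direct application of Lemma~\ref{lemma_2.3}, and a one-parameter optimization in $k$.
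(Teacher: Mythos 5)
Your proof is correct, but it reaches the conclusion by a different probabilistic skeleton than the paper. The paper conditions on the bad event, uses the trivial bound $W\le Cr_m^d$ to convert the conditional first-moment lower bound $\mathbb{E}[W\mid\mathsf{B}]\ge r_m^4\log^{16}(r_m)$ into a reverse-Markov estimate $\mathbb{P}\big(W\ge\tfrac12 r_m^4\log^{16}(r_m)\mid\mathsf{B}\big)\ge cr_m^{4-d}\log^{16}(r_m)$, and then compares with the unconditional tail of the maximal cluster volume supplied by Lemma \ref{lemma_large_div}. You instead bound the moments of $\Delta_{x,m}(\mathbf{\Psi}_n)$ directly: conditional Jensen reduces to moments of $W$, the inclusion $W\le|\mathbf{C}(\bm{0})\cap B_x(r_m)|$ reduces to Lemma \ref{lemma_2.3}, and an optimized Markov inequality in $k$ finishes. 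Since Lemma \ref{lemma_large_div} is itself deduced from Lemma \ref{lemma_2.3} by exactly the same Markov-plus-Stirling optimization, you have essentially inlined that derivation and eliminated the conditional reverse-Markov step; the payoff is a shorter argument with a cleaner prefactor ($r_m^{-2}$ rather than $r_m^{d-6}$ before absorption), while the paper's framing has the advantage of being reused verbatim in later arguments (e.g.\ in the proofs of Lemmas \ref{new_lemma_6.6} and \ref{lemma_6.9}). Two small remarks. First, your justification that $\gamma_z^{\mathrm{p}}$ must meet $\mathbf{\Psi}_n^1$ for $z\in\mathbf{\Psi}_n^2$ (``two disjoint closed pieces cannot cover the interval'') is slightly loose, since $\mathbf{\Psi}_n^1\cap I_{\{z,z^{\mathrm{in}}\}}$ need not be closed; but this is precisely the connectedness of $\widehat{\mathbf{\Psi}}_n$ that the paper itself asserts after Definition \ref{definition_Psi} and uses in its own proof (``Since $\widehat{\mathbf{\Psi}}_n$ is connected\dots''), so you are not assuming more than the paper does. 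Second, your domination by $|\mathbf{C}(\bm{0})\cap B_x(r_m)|$ is a bit stronger than needed; the paper only uses that all contributing $y$ lie in one common cluster, bounding $W$ by $\max_{y\in B_x(r_m)}|\mathbf{C}(y)\cap B_x(r_m)|$, which is the safer statement if one worries about degenerate realizations of $\mathbf{\Psi}_n^1$. Neither point is a genuine gap.
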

	\begin{proof}
		We denote the event $\mathsf{B}:= \{x\ \text{is}\ m\mbox{-}\mathrm{bad}\}$. By (\ref{7.5}) and the definition of $m$-bad points, one has 
		\begin{equation*}\label{new_ineq6.12}
			\mathbb{E}\bigg( \sum_{y\in B_x(r_m)}\mathbbm{1}_{y\xleftrightarrow[]{\cup\widetilde{\mathcal{L}}^{\mathrm{U}}}\widehat{\mathbf{\Psi}}_n\cap B_x(r_m^{4d}) }  \Big|\mathsf{B} \bigg)  \ge r_m^4\log^{16}(r_m).
		\end{equation*}
		In addition, since $\sum_{y\in B_x(r_m)}\mathbbm{1}_{y\xleftrightarrow[]{\cup\widetilde{\mathcal{L}}^{\mathrm{U}}}\widehat{\mathbf{\Psi}}_n\cap B_x(r_m^{4d}) }\le |B_x(r_m)|=(2r_m+1)^d$, we have  
		\begin{equation*}\label{new_ineq6.13}
			\begin{split}
				&	\mathbb{E}\bigg( \sum_{y\in B_x(r_m)}\mathbbm{1}_{y\xleftrightarrow[]{\cup\widetilde{\mathcal{L}}^{\mathrm{U}}}\widehat{\mathbf{\Psi}}_n\cap B_x(r_m^{4d}) }  \Big|\mathsf{B} \bigg)\\
				\le & Cr_m^d \mathbb{P}\bigg(    \sum_{y\in B_x(r_m)}\mathbbm{1}_{y\xleftrightarrow[]{\cup\widetilde{\mathcal{L}}^{\mathrm{U}}}\widehat{\mathbf{\Psi}}_n\cap B_x(r_m^{4d}) }\ge  \tfrac{1}{2}r_m^4\log^{16}(r_m)\Big|\mathsf{B} \bigg)+  \tfrac{1}{2}r_m^4\log^{16}(r_m).
			\end{split}	
		\end{equation*}
		Combining these two estimates, we get 
		\begin{equation}\label{new_ineq_615}
			\mathbb{P}\bigg(    \sum_{y\in B_x(r_m)}\mathbbm{1}_{y\xleftrightarrow[]{\cup\widetilde{\mathcal{L}}^{\mathrm{U}}}\widehat{\mathbf{\Psi}}_n\cap B_x(r_m^{4d}) }\ge  \tfrac{1}{2}r_m^4\log^{16}(r_m)\Big|\mathsf{B} \bigg)\ge cr_m^{4-d}\log^{16}(r_m).
		\end{equation}

		Since $\widehat{\mathbf{\Psi}}_n$ is connected, all points connected to $\widehat{\mathbf{\Psi}}_n$ must be connected to each other.  Thus, by Lemma \ref{lemma_large_div} we have 	
		\begin{equation*}\label{new_ineq_6.12}
			\begin{split}
				&\mathbb{P}\bigg(    \sum_{y\in B_x(r_m)}\mathbbm{1}_{y\xleftrightarrow[]{\cup\widetilde{\mathcal{L}}^{\mathrm{U}}}\widehat{\mathbf{\Psi}}_n\cap B_x(r_m^{4d}) }\ge \tfrac{1}{2}r_m^4\log^{16}(r_m)\bigg)  \\
				\le &\mathbb{P}\left(  \max_{y\in B_x(r_m)} \left|\mathbf{C}(y)\cap B(r_m) \right|\ge \tfrac{1}{2}r_m^4\log^{16}(r_m) \right)
				\le Ce^{-c\log^{16}(r_m)}. 
			\end{split}
		\end{equation*}
		Combined with (\ref{new_ineq_615}), the desired bound follows.
	\end{proof}

	By Lemma \ref{new_lemma6.3} and $\overline{\mathbf{\Psi}}_n^*\subset B(n+[(1+\lambda)N]^b) $, we have 
	\begin{equation}
		\mathbb{P}\bigg( \sum_{m\ge m_0} \psi_n^{m\mbox{-}\mathrm{bad}} \ge 1\bigg)  \le \mathrm{s.p.}(N),
	\end{equation}
	where $m_0:= \min \{m: r_{m}\ge e^{\log^{\frac{1}{4}}(N)} \}$. We now need to control the probability for small $m$ as promised. To this end, we fix an arbitrary $m\in [1, m_0-1]$.

	For $\psi_n^{m\mbox{-}\mathrm{bad}}$, we make a further decomposition as follows. Let $D_N:=\big\lfloor e^{\log^{\frac{1}{3.5}}(N)} \big\rfloor$. Note that $r_{m_0}<D_N$. For any $w\in \left[-D_N,D_N \right)^d \cap \mathbb{Z}^d$, we define 
	\begin{equation}\label{candidate set}
		F(w):=\{x\in w+2D_N\cdot \mathbb{Z}^d:x\in B(n+[(1+\lambda)N]^b)\setminus B(n-1) \}.
	\end{equation} 
	We also define
	\begin{equation}\label{7.18}
		\zeta_w=\zeta_w(n):= \big|\overline{\mathbf{\Psi}}_n^*\cap F(w)\big|,
	\end{equation}
	\begin{equation}\label{7.19}
		\zeta_w^{m\mbox{-}\mathrm{bad}}=\zeta_w^{m\mbox{-}\mathrm{bad}}(n):= 	\big|\{x\in \overline{\mathbf{\Psi}}_n^*\cap F(w):x\ \text{is}\ m\mbox{-}\text{bad}\}\big|. 
	\end{equation}
It follows from the definition that  
	\begin{equation}\label{new_eq_614}
		\psi_n^*= \sum_{w\in \left[-D_N,D_N \right)^d \cap \mathbb{Z}^d}	\zeta_w, \ \ \psi_n^{m\mbox{-}\text{bad}}= \sum_{w\in \left[-D_N,D_N \right)^d \cap \mathbb{Z}^d}	\zeta_w^{m\mbox{-}\text{bad}}.
	\end{equation}

	We claim the following inclusion relation:
	\begin{equation}\label{inclusion_633}
		\begin{split}
			&\left\lbrace \psi_{n}^*\ge L^2, \psi_n^{m\mbox{-}\mathrm{bad}}\ge \tfrac{1}{2}m^{-2} K^{-20d^2}\psi_{n}^* \right\rbrace\\
			\subset &\bigcup_{w\in \left[-D_N,D_N \right)^d \cap \mathbb{Z}^d} \Big\{ \zeta_w\ge \frac{L^2}{2^{d+2}K^{20d^2}m^2D_N^d},  \zeta_w^{m\mbox{-}\text{bad}}\ge \frac{\zeta_w}{4 K^{20d^2}m^{2}} \Big\}.
		\end{split}
	\end{equation}
We will prove a contrapositive statement of (\ref{inclusion_633}). To this end, denote
	\begin{equation}
		W_1:= \Big\{ w\in B(D_N): \zeta_w< \frac{L^2}{2^{d+2}K^{20d^2}m^2D_N^d} \Big\},
	\end{equation} 
	\begin{equation}
		W_2 := \Big\{w\in B(D_N)\setminus W_1 :  \zeta_w^{m\mbox{-}\text{bad}}< \frac{\zeta_w}{4K^{20d^2}m^{2} }\Big\}. 
	\end{equation}
	In fact, when the event on the RHS of (\ref{inclusion_633}) does not happen, one has $W_1\cup W_2=\left[-D_N,D_N \right)^d \cap \mathbb{Z}^d$. Thus, by (\ref{new_eq_614}) and $|W_1|\le \big|\left[-D_N,D_N \right)^d\big|= (2D_N)^d$, we have 
	
	\begin{equation}
		\begin{split}
			\psi_n^{m\mbox{-}\text{bad}}= &\sum_{w\in W_1} \zeta_w^{m\mbox{-}\text{bad}}+\sum_{w\in W_2} \zeta_w^{m\mbox{-}\text{bad}}\\
			< &(2D_N)^d\cdot \frac{L^2}{2^{d+2}K^{20d^2}m^2D_N^d}+ \sum_{w\in \left[-D_N,D_N \right)^d \cap \mathbb{Z}^d}\frac{\zeta_w}{4 K^{20d^2}m^{2}}\\
			=&\big[ 4K^{20d^2}m^2\big] ^{-1}\left(L^2+ \psi_{n}^* \right),
		\end{split}
	\end{equation}
	which is incompatible with the event on the LHS of (\ref{inclusion_633}), thereby completing the proof (for the contrapositive statement) of (\ref{inclusion_633}). Therefore, to get (\ref{new_ineq_609}) (which implies Lemma \ref{lemma_reg}), it is sufficient to prove the following lemma (since then (\ref{new_ineq_609}) follows via a simple union bound).
	\begin{lemma}\label{new_lemma_6.3}
		With the same conditions as in Theorem \ref{theorem_regularity}, we have 
		\begin{equation}\label{ineq_727}
			\begin{split}
				\max_{1\le m\le m_0-1,w\in \left[-D_N,D_N \right)^d \cap \mathbb{Z}^d}	\mathbb{P}\bigg[\zeta_w\ge \frac{L^2}{2^{d+2}K^{20d^2}m^2D_N^d}, \zeta_w^{m\mbox{-}\mathrm{bad}}\ge \frac{\zeta_w}{4 K^{20d^2}m^{2}} \bigg] \le \mathrm{s.p.}(N). 
			\end{split}
		\end{equation}
	\end{lemma}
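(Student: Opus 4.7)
\medskip

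\noindent\textbf{Plan.} The strategy is a conditional Chernoff bound built around a sequential exploration of $F(w) \cap \overline{\mathbf{\Psi}}_n^*$. The underlying observation is that for every $m \le m_0 - 1$ and all sufficiently large $N$, the bound $r_m \le e^{\log^{1/4}(N)}$ combined with $D_N = \lfloor e^{\log^{1/3.5}(N)} \rfloor$ yields
\[
 r_m^{4d} \le e^{4d \log^{1/4}(N)} \ll e^{\log^{1/3.5}(N)} \le D_N,
\]
so the balls $\{B_x(r_m^{4d})\}_{x \in F(w)}$ are pairwise disjoint (since points of $F(w)$ are $2D_N$-separated by construction \eqref{candidate set}). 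This spatial separation is the backbone of the entire argument.

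\emph{Step 1 (exploration).} I would enumerate $F(w)$ as $x_1, x_2, \ldots$ and construct a sequential exploration of $\widehat{\mathbf{\Psi}}_n$ that, at step $i$, exposes only those portions of loops in $\widetilde{\mathcal{L}}_{1/2}$ that are not fully contained in $B_{x_i}(r_m^{4d})$, together with their excursions into that ball in the manner prescribed by the spatial Markov property for loop soups (Werner's proposition mentioned after Lemma~\ref{lemma_new_2.3}). This is enough to decide, via the Markovian decomposition, whether $x_i$ lies in $\overline{\mathbf{\Psi}}_n^*$. Crucially, by the thinning property of the Poisson point process, the portion of $\mathbf{\Psi}_n$ inside $B_{x_i}(r_m^{4d})$ and the configuration of unused loops there remain conditionally distributed, given the exploration, as Poisson point processes with their original intensities. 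In this way, the $\sigma$-algebra $\mathcal{F}_i$ generated by the first $i$ steps leaves the internal configuration around $x_i$ essentially untouched.

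\emph{Step 2 (conditional $m$-badness bound).} With this exploration in hand, I would revisit the proof of Lemma~\ref{new_lemma6.3} and note that it is entirely local to $B_{x_i}(r_m^{4d})$: it only uses the marginal law of unused loops and of $\mathbf{\Psi}_n$ inside that ball. Since the exploration preserves these marginals conditionally, the same estimate yields
\[
 \mathbb{P}\bigl[x_i \text{ is } m\text{-bad} \,\bigm|\, \mathcal{F}_i\bigr] \le C e^{-c \log^{16}(r_m)} =: q
\]
on the event $\{x_i \in \overline{\mathbf{\Psi}}_n^*\}$. For $K = K(d)$ sufficiently large, one has $q \le p/10$ with $p := 1/(4 K^{20 d^2} m^2)$, by a direct comparison $\log^{16}(r_m) \ge \log^{16}(K)$ against the polynomial factor $K^{20 d^2} m^2$.

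\emph{Step 3 (Chernoff).} Stochastic domination by conditionally independent Bernoulli$(q)$ variables, together with the standard Chernoff inequality, gives on the event $\{\zeta_w \ge \zeta_0\}$, with $\zeta_0 := L^2/(2^{d+2} K^{20 d^2} m^2 D_N^d)$,
\[
 \mathbb{P}\bigl[\zeta_w^{m\text{-}\mathrm{bad}} \ge p\, \zeta_w\bigr] \le e^{-c\, \zeta_0}.
\]
Inserting $L = \epsilon^{3/10} N$, $D_N^d \le e^{d \log^{1/3.5}(N)}$, and $m \le m_0 \le c \log^{1/4}(N)$ gives $\zeta_0 \ge N^{2 - o(1)}$, so $e^{-c \zeta_0}$ is stretched exponentially (and in particular super-polynomially) small in $N$, which is the desired bound.

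\emph{Main obstacle.} The core difficulty is Step~1: carrying out the exploration in a way that (a) accurately determines membership in $\overline{\mathbf{\Psi}}_n^*$, including the contribution of $\mathbf{\Psi}_n^2$ coming from point loops on $\partial \hat{B}(n)$ (which is not purely a function of the cluster of $\bm{0}$), and (b) preserves the Poisson-type independence of the internal configuration inside each $B_{x_i}(r_m^{4d})$. The correct tool is the spatial Markov property for loop soups, but implementing it rigorously requires a delicate loop-by-loop revelation scheme and careful bookkeeping of unused loops; this is where the bulk of the technical work of Section~\ref{fina_section7} is concentrated.
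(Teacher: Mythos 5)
There is a genuine gap, and it sits exactly where you place your ``main obstacle'': Step~2 asserts that, given the exploration filtration $\mathcal{F}_i$, the configuration inside $B_{x_i}(r_m^{4d})$ retains its unconditional Poisson law, so that the single-point bound of Lemma~\ref{new_lemma6.3} applies conditionally. This is false as stated. To certify $x_i\in\overline{\mathbf{\Psi}}_n^*$ the exploration must reveal loops of the cluster that actually enter the ball around $x_i$; by the spatial Markov property the conditional law inside the ball is that of the unconditioned loops \emph{plus} a collection of forward crossing paths whose number and endpoints are dictated by the revealed data. When the number of crossings of the annulus $B_{x_i}(s_{k+1})\setminus B_{x_i}(s_k)$ is large, this conditional law can make $m$-badness far more likely than the unconditional estimate suggests, so no uniform bound $\mathbb{P}[x_i\text{ is }m\text{-bad}\mid\mathcal{F}_i]\le q$ is available. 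Moreover, $m$-badness itself is not local: $\Delta_{x,m}(\mathbf{\Psi}_n)$ depends on the whole tuple $\mathbf{\Psi}_n$ and on unused loops that may exit $B_x(r_m^{4d})$ and return; the paper must first replace it by the genuinely local notion of $m$-locally bad points (Lemma~\ref{newlemma_locally good}), which already requires the cluster decomposition $\mathbf{\Phi}_{x,z}$ and the bound $|V_x|\le 2\log^6(r_m)$.

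The missing idea is the multi-scale ``qualified point'' hierarchy. The paper does not prove one conditional estimate per point; it introduces scales $s_0<s_1<\cdots<s_{k_0+2}$, calls $x$ $k$-qualified when the annulus $B_x(s_{k+1})\setminus B_x(s_k)$ is crossed at most $\log^6(s_k)$ times, proves an \emph{inheritability} statement (conditionally on being $(k+1)$-qualified, a point is $k$-unqualified with probability $Ce^{-c\log^4(s_k)}$, and conditionally on being $1$-qualified it is $m$-locally bad with probability $Ce^{-c\log^7(r_m)}$), and then cascades a Hoeffding-type concentration estimate down from scale $k_0$ (where a plain union bound over $F(w)$ suffices) to scale $0$ via Lemma~\ref{lemma_7.24}. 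Your exploration-plus-Chernoff skeleton corresponds roughly to one rung of this ladder, but without the qualified-point conditioning the rung has no valid conditional probability bound to hang on. A secondary gap: your Chernoff step counts visited points, whereas the target $\zeta_w$ counts points of $\overline{\mathbf{\Psi}}_n^*$; relating the two requires the uniform connection lower bound of Lemma~\ref{lemma_721} and the bookkeeping statistics $N_*^{\mathrm{con}}$, $N_*^{\mathrm{vis}\mbox{-}\mathcal{D}}$, $N_*^{\mathrm{con}\mbox{-}\mathcal{D}}$, which your sketch omits.
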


	The rest of this section is devoted to the proof of Lemma \ref{new_lemma_6.3}.
	
	%
	
	\subsection{Qualified point}

	We arbitrarily fix $m\in [1,m_0-1]$ and $w\in \left[-D_N,D_N \right)^d \cap \mathbb{Z}^d$. For any $k\in \mathbb{N}$, let $s_k=s_k(m):=r_m^{(4d)^{k+1}}$. Note that $s_{k+1}=(s_k)^{4d}$. We denote 
	\begin{equation}\label{def_k0}
			k_0=k_0(m):=\min\{k\ge 1: s_{k+2}\ge \sqrt{D_N}\}.
	\end{equation}


%
	
	Recall the notation $\kappa(\widetilde{\ell};A_1,A_2)$ in Section \ref{subsection_decomposition}.

	\begin{definition}[$k$-qualified point]\label{def_DQ_point}\hspace*{\fill}
		\begin{enumerate}
			\item	For any $k\ge 1$ and $x\in F(w)$, we say $x$ is $k$-qualified  if the total number of forward crossing paths (with $A_1=B_x(s_k)$ and $A_2=\partial B_x(s_{k+1})$) of loops in $\widetilde{\mathcal{L}}_{1/2}$ is at most $\log^{6}(s_k)$. I.e., 
			\begin{equation}
				\sum_{\widetilde{\ell} \in \widetilde{\mathcal{L}}_{1/2}: \mathrm{ran}(\widetilde{\ell})\cap B_x(s_k)\neq\emptyset,\mathrm{ran}(\widetilde{\ell})\cap \partial B_x(s_{k+1})\neq\emptyset } \kappa(\widetilde{\ell};B_x(s_k),\partial B_x(s_{k+1})) \le \log^{6}(s_k).
			\end{equation}

			\item We say $x$ is $k$-unqualified if it is not $k$-qualified.

			\item We denote the number of $k$-unqualified points in $\overline{\mathbf{\Psi}}_n^*$ by $\psi_n^{k\mbox{-}\text{UQ}}$.

		\end{enumerate}
	\end{definition}

	We first show that for each lattice point, only with a small probability it is $k$-unqualified.

	\begin{lemma}\label{lemma_onepoint}
		There exist $C(d),c(d)>0$ such that for any $x\in F(w)$ and $k\ge 1$,
		\begin{equation}
			\mathbb{P}\left( x\ \text{is}\ k\text{-unqualified}  \right) \le   Ce^{-c\log^{4}(s_k)}. 
		\end{equation} 
	\end{lemma}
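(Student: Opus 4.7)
The strategy is a standard exponential Chernoff bound for a Poisson functional, after using Corollary~\ref{corollary_2.2} to obtain super-geometric decay in $j$ for the loop measure of loops with $j$ back-and-forth crossings between $B_x(s_k)$ and $\partial B_x(s_{k+1})$.

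First, I would set $A_1=B_x(s_k)$, $A_2=\partial B_x(s_{k+1})$, and let
\begin{equation*}
X:=\sum_{\widetilde{\ell}\in \widetilde{\mathcal{L}}_{1/2}}\kappa(\widetilde{\ell};A_1,A_2)
\end{equation*}
denote the total number of forward crossing paths of loops in $\widetilde{\mathcal{L}}_{1/2}$. Writing $N_j$ for the number of loops in $\widetilde{\mathcal{L}}_{1/2}$ with $\kappa(\widetilde{\ell};A_1,A_2)=j$, the thinning property makes $\{N_j\}_{j\ge 1}$ independent Poisson random variables with $X=\sum_{j\ge 1}jN_j$. Applying Corollary~\ref{corollary_2.2} with $N=s_{k+1}=s_k^{4d}$, together with $\mathrm{cap}(B_x(s_k))\asymp s_k^{d-2}$ from (\ref{ineq_2.2}), yields (uniformly in $j$)
\begin{equation*}
    \mathbb{E}N_j\le C\,q^{j},\qquad q:=C'\,s_k^{-(4d-1)(d-2)}.
\end{equation*}
Since $d\ge 7$, the exponent $(4d-1)(d-2)\ge 135$, so $q$ is polynomially small in $s_k$. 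In passing, note that point loops and edge loops cannot contribute to $X$ because their diameter is at most $d$, whereas $s_{k+1}-s_k\gg d$.

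Second, I would exponentiate: for any $t>0$, independence of the $N_j$ gives
\begin{equation*}
\log\mathbb{E}[e^{tX}]=\sum_{j\ge 1}\mathbb{E}N_j\,(e^{tj}-1)\le C\sum_{j\ge 1}(qe^{t})^{j}.
\end{equation*}
Choosing $t:=\tfrac{1}{2}\log(1/q)\asymp \log(s_k)$ makes $qe^{t}=q^{1/2}\ll 1$, so the right-hand side is bounded by a constant. Markov's inequality then yields
\begin{equation*}
\mathbb{P}\bigl(X\ge \log^{6}(s_k)\bigr)\le e^{-t\log^{6}(s_k)}\,\mathbb{E}[e^{tX}]\le \exp\bigl(-c\log^{7}(s_k)+O(1)\bigr),
\end{equation*}
which is in fact stronger than the claimed bound $Ce^{-c\log^{4}(s_k)}$. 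Small values of $s_k$, where $\log s_k$ is bounded, are handled by enlarging $C$ in the final statement.

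The main subtlety I anticipate is confirming that the $\asymp$-constant in Corollary~\ref{corollary_2.2} is truly uniform in $j$, so that the bound $\widetilde{\mu}(\kappa=j)\le Cq^{j}$ is a genuine geometric decay rather than one whose constant blows up with $j$. This requires inspecting Lemma~\ref{fina_lemma2.1} and the hitting-probability estimate (\ref{fina2.8}) to verify that the random-walk return probability $\mathbb{P}_x(S_{\hat{\tau}_{2j}}=x)$ factorizes geometrically in $j$ via the Markov property. Once this uniformity is in hand, the rest of the argument is routine.
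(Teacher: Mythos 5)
Your proposal is correct, and it reaches a slightly stronger conclusion ($e^{-c\log^{7}(s_k)}$ versus the paper's $e^{-c\log^{4}(s_k)}$) by a cleaner route. The paper instead uses a union bound over two events: (a) some single crossing loop has more than $\log^{3}(s_k)$ forward crossings, bounded by the loop measure $\mu_{x,k}\le[C\,\mathrm{cap}(B_x(s_k))\,s_{k+1}^{2-d}]^{\log^{3}(s_k)}$ via the probability $1-e^{-\mu_{x,k}/2}\le\tfrac12\mu_{x,k}$; and (b) the number $N_{x,k}$ of distinct crossing loops exceeds $\log^{3}(s_k)$, bounded by the exponential Markov inequality for a Poisson variable with polynomially small mean. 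Since the total crossing count is at most the product of these two quantities, controlling each at level $\log^{3}(s_k)$ suffices, and each factor contributes $e^{-c\log^{4}(s_k)}$. Your version replaces this factorization with a single Chernoff bound on $X=\sum_j jN_j$, using the full moment generating function of the independent Poisson family $\{N_j\}$. Both arguments rest on exactly the same key input — the uniform-in-$j$ geometric decay $\widetilde{\mu}(\kappa=j)\le[C\,\mathrm{cap}(B_x(s_k))\,s_{k+1}^{2-d}]^{j}$ from Corollary \ref{corollary_2.2} — and your worry about the uniformity of the implicit constant is well placed but resolved by the statement of (\ref{ineq_2.4}), which asserts precisely that the $j$-th root of the measure is comparable to $\mathrm{cap}(A)N^{2-d}$ with constants independent of $j$ (this in turn follows from the geometric factorization of $\mathbb{P}_x(S_{\hat\tau_{2j}}=x)$ via the strong Markov property, exactly as you anticipate, and as the paper itself uses in the display following (\ref{upper_bound_651})). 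The only bookkeeping point worth noting is that $\mathbb{E}N_j=\tfrac12\widetilde{\mu}(\{\kappa=j\})$ because the intensity is $\tfrac12\widetilde{\mu}$, which only changes constants. Your unified MGF computation is arguably preferable; the paper's two-event split has the minor advantage of isolating the "one loop with many crossings" event, a structure it reuses in the proof of Lemma \ref{new_lemma_6.6}.
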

	\begin{proof}
		Let $N_{x,k}$ be the number of loops in $\widetilde{\mathcal{L}}_{1/2}$ that cross $B_x(s_{k+1})\setminus B_x(s_k)$. By Definition \ref{def_DQ_point} we have
		\begin{equation}\label{inequation_647}
			\begin{split}
				&\mathbb{P}\left( x\ \text{is}\ k\text{-unqualified}\right)  \\
				\le &\mathbb{P}\Big[N_{x,k} < \log^{3}(s_k), \exists\ \widetilde{\ell}\in \widetilde{\mathcal{L}}_{1/2}\ \text{with more than}\ \log^{3}(s_k)\ \text{forward crossing}\\
				&\ \ \ \ \text{paths with}\ A_1=B_x(s_k),A_2=\partial B_x(s_{k+1})\Big]+\mathbb{P}\left[N_{x,k} \ge \log^{3}(s_k) \right].
			\end{split}
		\end{equation}

		Let $\mu_{x,k}$ be the loop measure of loops with more than $\log^{3}(s_k)$ forward crossing paths with $A_1=B_x(s_k)$ and $A_2=\partial B_x(s_{k+1})$. By (\ref{ineq_2.4}), we have  
		$$
		\mu_{x,k}\le  \Big[C\cdot \text{cap}\big(B_x(s_k)\big)\cdot (s_{k+1})^{2-d}\Big]^{\log^{3}(s_k)}\le Ce^{-c\log^{4}(s_k)},
		$$
		which implies that the first term on the RHS of (\ref{inequation_647}) is bounded from above by 
		\begin{equation}\label{upper_bound_651}
			\begin{split}
				1-e^{-\frac{1}{2}\mu_{x,k}}\le \tfrac{1}{2}\mu_{x,k}\le Ce^{-c\log^{4}(s_k)}. 
			\end{split}
		\end{equation}
		For the second term, by (\ref{ineq_2.5}), the loop measure of loops crossing $B_x(s_{k+1})\setminus B_x(s_k)$ is at most $$C\cdot \text{cap}\big(B_x(s_k)\big)\cdot (s_{k+1})^{2-d}\le C's_k^{-(4d-1)(d-2)}.$$ 
		Therefore, $N_{x,k}$ is stochastically dominated by $\text{Pois}(\lambda_k)$, where $\lambda_k=\frac{1}{2}C's_k^{-(4d-1)(d-2)} $. Recall that for any $\xi,\lambda>0$ and a Poisson random variable $Y\sim \text{Pois}(\lambda)$, one has $\mathbb{E}[\exp(\xi Y)]=\exp(\lambda(e^{\xi}-1))$. Thus, by using the exponential Markov's inequality, and taking $\xi=\log(\lambda_k^{-1}+1)$, $\lambda=\lambda_k$ and $Y=N_{x,k}$, we have
		\begin{equation}\label{inequality_648}
			\begin{split}
				\mathbb{P}\left[N_{x,k} \ge \log^{3}(s_k)\right]\le e^{-\log(\lambda_k^{-1}+1)\log^{3}(s_k)}\mathbb{E}\left[e^{\log(\lambda_k^{-1}+1) N_{x,k}} \right] \le Ce^{-c\log^{4}(s_k)}.
			\end{split}
		\end{equation}

		Combining (\ref{inequation_647}), (\ref{upper_bound_651}) and (\ref{inequality_648}), we complete the proof.
	\end{proof}

	Recalling that $D_N=\big\lfloor e^{\log^{\frac{1}{3.5}}(N)} \big\rfloor$ and $k_0=\min\big\{k\ge 1: s_{k+2}\ge \sqrt{D_N}\big\}$, one has $$	\sum_{k=k_0}^{\infty}\exp(-c\log^{4}(s_{k}))\le C\exp(-c\log^{4/3.5}(N))=\text{s.p.}(N).$$ Thus, by Lemma \ref{lemma_onepoint} and $\overline{\mathbf{\Psi}}_n^*\subset B(n+[(1+\lambda)N]^b) $, we have 
	\begin{equation}\label{new_ineq_625}
	\mathbb{P}\left( \exists x\in \overline{\mathbf{\Psi}}_n^*\ \text{and} \ k\ge k_0\ \text{such that}\ x\ \text{is}\ k\text{-unqualified} \right)  \le \text{s.p.}(N). 
	\end{equation}

	Next, we will demonstrate the ``inheritability'' of qualified points. I.e., given that a lattice point $x$ is $(k+1)$-qualified, the conditional probability for $x$ to be $k$-qualified is close to $1$. Before proving that, we need a technical lemma as follows. For the sake of fluency, we leave its proof in Section \ref{section_technical_lemma_1}.
	
	\begin{lemma}\label{lemma_num_crossing}
		Let $\mathfrak{N}$ be the number of times that the Brownian motion $\widetilde{S}_t$ on $\widetilde{\mathbb{Z}}^d$ crosses $B_x(s_{k+1})\setminus B_x(s_{k})$ before hitting $\partial B_x(s_{k+2})$. Then there exists $c(d)>0$ such that for any $x\in \mathbb{Z}^d$, $y\in \partial B_x(s_{k+1})$, $z\in\partial \hat{B}_x(s_{k+2})$ and $l\in \mathbb{N}^+$, 
		\begin{equation}\label{ineq_7.31}
			\widetilde{\mathbb{P}}_y\left[\mathfrak{N}= l\big| \widetilde{\tau}_{\partial B_x(s_{k+2})}=\widetilde{\tau}_z\right] \le  s_{k+1}^{-cl}. 
		\end{equation}	 
		As a direct consequence, for any $\gamma>0$ with $e^{\gamma}<s_{k+1}^{c}$,  
		\begin{equation}\label{ineq_7.32}
			\widetilde{\mathbb{E}}_y\left[ \exp(\gamma \mathfrak{N})\big|  \widetilde{\tau}_{\partial B_x(s_{k+2})}= \widetilde{\tau}_z \right]  \le \frac{s_{k+1}^{c}}{s_{k+1}^{c}-e^{\gamma}}.
		\end{equation}
	\end{lemma}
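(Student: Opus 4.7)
The plan is to first establish the unconditional estimate $\widetilde{\mathbb{P}}_y[\mathfrak{N}\ge l]\le s_{k+1}^{-cl}$ by a gambler's-ruin argument, and then to upgrade to the conditional statement (\ref{ineq_7.31}) by showing that restricting to paths that exit $\partial B_x(s_{k+2})$ precisely at $z$ costs only a bounded multiplicative factor, via a Harnack-type comparison for exit measures.

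For the unconditional bound, a completed crossing of $B_x(s_{k+1})\setminus B_x(s_k)$ requires the walk to travel between $\partial B_x(s_{k+1})$ and $\partial B_x(s_k)$, so by the strong Markov property applied at successive hitting times of $\partial B_x(s_{k+1})$ it suffices to bound the probability of a single such round trip. The classical harmonic-measure formula on the annulus $B_x(s_{k+2})\setminus B_x(s_k)$ gives, for any $y'\in\partial B_x(s_{k+1})$,
$$
\widetilde{\mathbb{P}}_{y'}\bigl[\widetilde{\tau}_{\partial B_x(s_k)} < \widetilde{\tau}_{\partial B_x(s_{k+2})}\bigr] \asymp \bigl(s_k/s_{k+1}\bigr)^{d-2} \le s_{k+1}^{-c},
$$
where I use $d\ge 3$ and $s_{k+1}=s_k^{4d}$. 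Iterating yields $\widetilde{\mathbb{P}}_y[\mathfrak{N}\ge l]\le s_{k+1}^{-cl}$. To handle the conditioning, let $\sigma_l$ be the time at which the $l$-th crossing is completed (set $\sigma_l=\infty$ if $\mathfrak{N}<l$); on $\{\mathfrak{N}\ge l\}$ we have $\widetilde{S}_{\sigma_l}\in\partial B_x(s_{k+1})$, so the strong Markov property at $\sigma_l$ combined with a Harnack-type comparison for exit measures gives
$$
\widetilde{\mathbb{P}}_y\bigl[\mathfrak{N}\ge l,\ \widetilde{\tau}_{\partial B_x(s_{k+2})}=\widetilde{\tau}_z\bigr] \le \widetilde{\mathbb{P}}_y[\mathfrak{N}\ge l]\cdot C\,\widetilde{\mathbb{P}}_y\bigl[\widetilde{\tau}_{\partial B_x(s_{k+2})}=\widetilde{\tau}_z\bigr].
$$
Dividing through (and absorbing $C$ into the exponent, which is harmless for $s_{k+1}$ large) yields (\ref{ineq_7.31}); the moment-generating-function estimate (\ref{ineq_7.32}) then follows by summing the geometric series $\sum_{l\ge 0} (e^{\gamma}/s_{k+1}^{c})^{l}$.

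The main technical obstacle is the Harnack-type comparison: for any $y,y'\in\partial B_x(s_{k+1})$ and any $z\in\partial\hat{B}_x(s_{k+2})$, one needs $\widetilde{\mathbb{P}}_{y}[\widetilde{\tau}_{\partial B_x(s_{k+2})}=\widetilde{\tau}_z] \asymp \widetilde{\mathbb{P}}_{y'}[\widetilde{\tau}_{\partial B_x(s_{k+2})}=\widetilde{\tau}_z]$ with an implicit constant depending only on $d$. Since the Brownian motion $\widetilde{S}_t$ agrees with the simple random walk $S_t$ at the level of the lattice skeleton (the extra Brownian excursions within single intervals do not change the first lattice exit point on $\partial\hat{B}_x(s_{k+2})$), this reduces to the classical discrete Harnack estimate for $S_t$ on $\mathbb{Z}^d$ (cf.\ Lawler--Limic). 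The required uniformity follows from the scale separation $s_{k+2}/s_{k+1}=s_{k+1}^{4d-1}$, which makes the precise starting location on $\partial B_x(s_{k+1})$ essentially invisible from the exit site $z$.
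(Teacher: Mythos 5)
Your proposal is correct and uses essentially the same ingredients as the paper's proof in Section \ref{section_technical_lemma_1}: reduction to the embedded simple random walk, the harmonic-measure estimate $\asymp (s_k/s_{k+1})^{d-2}$ for a single inward crossing iterated via the strong Markov property, and the discrete Harnack inequality to show that conditioning on the exit point $z$ costs only a bounded factor. The only difference is bookkeeping — you bound the numerator by $C\,\widetilde{\mathbb{P}}_y[\mathfrak{N}\ge l]\,\widetilde{\mathbb{P}}_y[\widetilde{\tau}_{\partial B_x(s_{k+2})}=\widetilde{\tau}_z]$ via the strong Markov property at $\sigma_l$ and divide, whereas the paper lower-bounds the denominator by its $\mathfrak{N}=0$ contribution and applies Harnack inside the sum over the walk's position after the last crossing — and this is immaterial.
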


	For any $x\in \mathbb{Z}^d$ and $j\in \mathbb{N}$, let $\Omega_{x,j}:=\cup_{l\in \mathbb{N}} \big[\partial B_x(s_{j}) \times \partial \hat{B}_x(s_{j+1})\big]^l$ be the collection of possible configurations of starting and ending points of all forward crossing paths (with $A_1=B_x(s_{j})$ and $A_2=\partial B_x(s_{j+1})$) in a collection of loops.

	For any $\omega_{x,j}\in \Omega_{x,j}$, we denote by $\mathbb{P}(\cdot \mid \omega_{x,j})$ the conditional measure given that the configuration of starting and ending points of all forward crossing paths (with $A_1=B_x(s_{j})$ and $A_2=\partial B_x(s_{j+1})$) in $\widetilde{\mathcal{L}}_{1/2}$ is equal to $\omega_{x,j}$.

		
	\begin{remark}\label{remark_7.11}
	We claim that under $\mathbb{P}(\cdot \mid \omega_{x,j})$ where $\omega_{x,j}=\big((x_1,y_1),...,(x_{l},y_l) \big)$, all the $l$ forward crossing paths are independent and their marginal distribution is given by $\widetilde{\mathbb{P}}_{x_i}( \ \cdot   \ \big|\widetilde{\tau}_{\partial B_x(s_{j+1})}=\widetilde{\tau}_{y_i} )$ for $1\leq i\leq l$. In fact, the conditioning of $\widetilde{\mathbb{P}}(\cdot \mid \omega_{x,j})$ is equivalent to ``the backward crossing paths $\widetilde{\eta}^{\mathrm{B}}_i$ for $1\le i\le l$ are compatible with $\omega_{x,j}$ (i.e. each $\widetilde{\eta}^{\mathrm{B}}_i$ starts from $y_{i-1}$ (where $y_{-1}:=y_l$) and ends at $x_i$)''. At this point, the claim follows by recalling Lemma \ref{lemma_new_2.3}.\end{remark}	
		

%
%

	The next lemma shows the inheritability of $k$-qualified points. 
	\begin{lemma}\label{new_lemma_6.6}
		For any $d>6$, there exist $C(d),c(d)>0$ such that the following holds: for every $\omega_{x,k+1} \in \Omega_{x,k+1}$ such that $x$ is $(k+1)$-qualified with respect to $\omega_{x,k+1}$, we have 
		\begin{equation}
			\mathbb{P}\left( x\ \text{is}\ k\text{-unqualified}\mid \omega_{x,k+1} \right)  \le Ce^{-c\log^{4}(s_k)}.  
		\end{equation} 	
	\end{lemma}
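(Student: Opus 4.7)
The plan is to split the loops contributing to forward crossings at level $k$ into two groups: type (A) loops whose range misses $\partial B_x(s_{k+2})$ and type (B) loops whose range meets $\partial B_x(s_{k+2})$. The key observation is that $\omega_{x,k+1}$ records information only about type (B) loops (a loop contributes to $\omega_{x,k+1}$ only when $\kappa(\widetilde{\ell}; B_x(s_{k+1}), \partial B_x(s_{k+2}))\ge 1$), so conditionally on $\omega_{x,k+1}$ the type (A) loops still form a Poisson point process with their unconditional intensity. I will bound the two contributions to the level-$k$ crossing count separately, and then combine.

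For type (A), the plan is to apply Corollary \ref{corollary_2.2} with $A=B_x(s_k)$ and $N=s_{k+1}$: together with (\ref{ineq_2.2}) and $s_{k+1}=s_k^{4d}$, the loop measure of loops with $\kappa(\widetilde{\ell}; B_x(s_k), \partial B_x(s_{k+1}))=j$ is at most $[Cs_k^{-(d-2)(4d-1)}]^j$. Multiplying by $j$ and summing over $j\ge 1$ bounds the expected total type-(A) contribution by $Cs_k^{-(d-2)(4d-1)}$, which is super-polynomially small in $s_k$; Markov's inequality then controls the probability that any level-$k$ crossing comes from a type (A) loop.

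For type (B), I will use Remark \ref{remark_7.11}: under $\mathbb{P}(\cdot\mid\omega_{x,k+1})$ the $l_0:=|\omega_{x,k+1}|\le\log^6(s_{k+1})=(4d)^6\log^6(s_k)$ level-$(k+1)$ forward crossings are independent Brownian paths from $\partial B_x(s_{k+1})$ to their prescribed first-hit points in $\partial\hat{B}_x(s_{k+2})$. By the reversibility of $\widetilde{S}_t$ with respect to its invariant measure, the $l_0$ backward crossings share the same structure after time reversal, yielding $2l_0$ independent conditioned paths in total. Each such path contributes to the level-$k$ forward-crossing count by at most its annulus-crossing number $\mathfrak{N}_i$. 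Lemma \ref{lemma_num_crossing} applied with $\gamma=\tfrac{c}{2}\log(s_{k+1})$ gives $\mathbb{E}[e^{\gamma \mathfrak{N}_i}\mid\omega_{x,k+1}]\le 2$; then by independence and the exponential Markov inequality,
\begin{equation*}
\mathbb{P}\!\left(\sum_{i=1}^{2l_0}\mathfrak{N}_i\ge \log^6(s_k)\,\Big|\,\omega_{x,k+1}\right)\le e^{-\gamma\log^6(s_k)}\cdot 2^{2l_0}\le \exp\!\left(-c'\log^7(s_k)\right),
\end{equation*}
which easily beats the required $Ce^{-c\log^4(s_k)}$ bound.

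The main obstacle I expect is the time-reversal step for backward crossings. Lemma \ref{lemma_num_crossing} is stated for Brownian motion going outward from $\partial B_x(s_{k+1})$ to $\partial\hat{B}_x(s_{k+2})$; to apply the same exponential moment estimate to a backward crossing --- which travels inward from $\partial\hat{B}_x(s_{k+2})$ to $\partial B_x(s_{k+1})$, conditioned on its first hit point in $A_1=B_x(s_{k+1})$ --- requires verifying that the conditional distribution inherited from $\mathbb{P}(\cdot\mid\omega_{x,k+1})$ is indeed the time reversal of a path covered by Lemma \ref{lemma_num_crossing}, and that annulus-crossing counts are preserved under this reversal. A secondary book-keeping point is the inequality (number of level-$k$ forward crossings contributed by a single level-$(k+1)$ path) $\le$ (annulus crossings of that path), which I would justify by noting that each level-$k$ forward crossing uses one inward and one subsequent outward traversal of the annulus $B_x(s_{k+1})\setminus B_x(s_k)$.
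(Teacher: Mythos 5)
Your decomposition is the same as the paper's (loops that reach $\partial B_x(s_{k+2})$ versus loops confined to $\widetilde{B}_x(s_{k+2})$), and your type-(B) treatment via Remark \ref{remark_7.11} and Lemma \ref{lemma_num_crossing} is essentially the paper's argument. The genuine gap is in type (A). You bound the probability that the type-(A) contribution is \emph{at least one} by its first moment, obtaining $Cs_k^{-(d-2)(4d-1)}$, and call this super-polynomially small. It is not: $s_k^{-(d-2)(4d-1)}=e^{-C\log (s_k)}$ is only polynomially small, whereas the lemma asserts $Ce^{-c\log^{4}(s_k)}$, which decays faster than any power of $s_k$. This loss is not cosmetic; the super-polynomial rate is what makes the later stochastic-domination/Hoeffding step around (\ref{equation_619}) work, where the Bernoulli parameter must beat $e^{-\log^{2.8}(s_k)}$ --- a polynomial bound $s_k^{-C}$ fails this for large $k$. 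The first-moment/threshold-one argument cannot be rescued by choosing a better threshold; instead you should bound the probability that the type-(A) count \emph{exceeds} $\frac{1}{2}\log^{6}(s_k)$, exactly as in the proof of Lemma \ref{lemma_onepoint}: either a single confined loop makes more than $\log^{3}(s_k)$ crossings of $B_x(s_{k+1})\setminus B_x(s_k)$, whose loop measure is at most $[C\,\mathrm{cap}(B_x(s_k))\,s_{k+1}^{2-d}]^{\log^{3}(s_k)}\le Ce^{-c\log^{4}(s_k)}$ by Corollary \ref{corollary_2.2}, or more than $\log^{3}(s_k)$ confined loops cross the annulus, a Poisson tail of the same order. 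Your observation that these confined loops are independent of $\omega_{x,k+1}$ is correct and is precisely what lets this estimate pass to the conditional measure.

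Separately, the ``main obstacle'' you flag --- time-reversing the backward crossings --- is vacuous. With $A_1=B_x(s_{k+1})$ and $A_2=\partial B_x(s_{k+2})$, a backward crossing path runs from its first hit of $A_2$ until its \emph{first} subsequent hit of $A_1$; by definition it never enters the interior of $B_x(s_{k+1})$ and in particular never visits $B_x(s_k)$, so it contributes zero forward crossings at level $k$. Since a type-(B) loop decomposes entirely into its forward and backward crossing paths, all of its level-$k$ crossings come from the $q\le\log^{6}(s_{k+1})$ forward crossing paths, which are exactly the independent conditioned paths covered by Remark \ref{remark_7.11} and Lemma \ref{lemma_num_crossing}. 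No reversal argument is needed, and your exponential Chebyshev computation over those $q$ paths (with $\gamma$ of order $\log s_{k+1}$, or simply $\gamma=1$ as the paper takes) closes that half of the proof.
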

	\begin{proof}
		Note that the loops in $\widetilde{\mathcal{L}}_{1/2}$ crossing $B_x(s_{k+1})\setminus B_x(s_{k})$ can be divided into the following two types:
		$$\mathfrak{L}^{\mathrm{cro}}:=\left\lbrace \widetilde{\ell}\in \widetilde{\mathcal{L}}_{1/2}:\forall j\in \{0,1,2\},\mathrm{ran}(\widetilde{\ell})\cap \partial B_x(s_{k+j})\neq \emptyset \right\rbrace,$$
		$$
		\mathfrak{L}^{\mathrm{in}}:=\left\lbrace \widetilde{\ell}\in \widetilde{\mathcal{L}}_{1/2}:\mathrm{ran}(\widetilde{\ell})\subset \widetilde{B}_{x}(s_{k+2})\ \text{and}\ \forall j\in \{0,1\},\mathrm{ran}(\widetilde{\ell})\cap \partial B_x(s_{k+j})\neq \emptyset \right\rbrace.$$
%
		We denote by $\xi^{\mathrm{cro}}:=\sum_{\widetilde{\ell}\in\mathfrak{L}^{\mathrm{cro}}}\kappa(\widetilde{\ell})$ the total number of forward crossing paths (with $A_1=B_x(s_k)$, $A_2=\partial B_x(s_{k+1})$) of loops in $\mathfrak{L}^{\mathrm{cro}}$. Similarly, let  $\xi^{\mathrm{in}}:=\sum_{\widetilde{\ell}\in\mathfrak{L}^{\mathrm{in}}}\kappa(\widetilde{\ell})$.


		We enumerate the forward crossing paths (with $A_1=B_x(s_{k+1})$ and $A_2=B_x(s_{k+2})$) of loops in $\mathfrak{L}^{\mathrm{cro}}$ as $\widetilde{\eta}_i^{\mathrm{F}}$ for $1\le i\le q(\omega_{x,k+1})$, which starts from $x_i(\omega_{x,k+1})\in \partial B_x(s_{k+1})$ and ends at $y_i(\omega_{x,k+1})\in \partial \hat{B}_x(s_{k+2})$. In the rest of this proof, we write $q:=q(\omega_{x,k+1})$, $x_i:=x_i(\omega_{x,k+1})$ and $y_i:=y_i(\omega_{x,k+1})$ for short. Since $x$ is $(k+1)$-qualified with respect to $\omega_{x,k+1}$, we know that $q\le \log^{6}(s_{k+1})$. By Remark \ref{remark_7.11}, $\widetilde{\eta}_i^{\mathrm{F}}$ for $1\le i\le q$ are conditionally independent and their conditional distributions are given by $\widetilde{\mathbb{P}}_{x_i}\left(\cdot |\widetilde{\tau}_{\partial B_x(s_{k+2})}=\widetilde{\tau}_{y_i} \right)$. We denote by $\mathfrak{N}_i$ the number of times that $\widetilde{\eta}_i^{\mathrm{F}}$ crosses $B_x(s_{k+1})\setminus B_x(s_{k})$. Note that $\xi^{\mathrm{cro}}=\sum_{i=1}^{q} \mathfrak{N}_i $. By the exponential Markov's inequality, Lemma \ref{lemma_num_crossing} and $q\le \log^{6}(s_{k+1})$, we have 
		\begin{equation}\label{ineq_zeta1_629}
			\begin{split}
				&\mathbb{P}\left[\xi^{\mathrm{cro}}\ge \tfrac{1}{2}\log^{6}(s_k)\mid \omega_{x,k+1} \right]\\
				\le &e^{-\frac{1}{2}\log^{6}(s_k)} \prod_{i=1}^{q}\mathbb{E}\big( e^{\mathfrak{N}_i}\mid \omega_{x,k+1} \big) \\
				\le & e^{-\frac{1}{2}\log^{6}(s_k)} \bigg(\frac{s_{k+1}^{c}}{s_{k+1}^{c}-e}\bigg) ^{\log^{6}(s_{k+1})}
				\le  e^{-\frac{1}{4}\log^{6}(s_k)}. 
			\end{split}
		\end{equation}

		Now we consider $\xi^{\mathrm{in}}$, which is determined by $\mathfrak{L}^{\mathrm{in}}$. Since the loops in $\mathfrak{L}^{\mathrm{in}}$ all belong to $\widetilde{\mathcal{L}}_{1/2}$ and are independent of $\omega_{x,k+1}$, using the same argument in the proof of Lemma \ref{lemma_onepoint}, we have 
		\begin{equation}\label{ineq_zeta2_630}
			\mathbb{P}\left[\xi^{\mathrm{in}}\ge \tfrac{1}{2}\log^{6}(s_k)\mid \omega_{x,k+1} \right]\le Ce^{-c\log^{4}(s_k)}. 
		\end{equation}
		Combining (\ref{ineq_zeta1_629}) and (\ref{ineq_zeta2_630}), we complete the proof.
	\end{proof}

	\subsection{Locally good points}\label{section_locally_good}
	
	By Definition \ref{def_good_regular_points}, whether a lattice point $x$ is $m$-good depends on the whole configuration of $\mathbf{\Psi}_n$. This global dependence causes significant difficulty in the analysis. To this end, we approximate $m$-good points by \textit{locally good points} as we define next. Before that, we introduce some notations to simplify our presentation:

	\begin{itemize}
		\item  Let $\widetilde{\eta}^{\text{F}}_{x,i}$ for $1\le i\le q^{\text{F}}_x$ be the forward crossing paths (with $A_1=B_x(s_1)$, $A_2=\partial B_x(s_2)$) of loops in $\widetilde{\mathcal{L}}_{1/2}$ (recall that $m$ is fixed and $s_k=r_m^{(4d)^{k+1}}$). Let $\mathcal{Q}_x^{\text{F}}$ be the collection of subsets of $\{1,2,...,q^{\text{F}}_x\}$. For any $Q\in\mathcal{Q}_x^{\text{F}} $, we denote by $\mathfrak{L}^{\mathrm{F}}_x(Q)$ the collection of all forward crossing paths $\widetilde{\eta}^{\text{F}}_{x,i}$ with $i\in Q$.

		\item  We denote by $\mathfrak{L}_{x}^{\mathrm{inv}}$ the collection of involved loops $\widetilde{\ell}$ with $\mathrm{ran}(\widetilde{\ell})\subset \widetilde{B}_x(s_{2})$ (recall the definition of ``involved loops'' in Definition \ref{definition_Psi}).

		\item  For any $z\in \partial B_x(s_1)$ and $Q\in \mathcal{Q}_x^{\text{F}}$, we denote by $\mathbf{\Phi}_{x,z}^1=\mathbf{\Phi}_{x,z}^1(Q)$ the cluster of $\cup (\mathfrak{L}^{\mathrm{F}}_x(Q)\cup \mathfrak{L}_{x}^{\mathrm{inv}})$ containing $z$. Let $\mathbf{\Phi}_{x,z}^2=\mathbf{\Phi}_{x,z}^2(Q)$ be the collection of points $y\in B_x(s_0)\cap \partial \hat{B}(n)\setminus \mathbf{\Phi}_{x,z}^1$ such that $I_{\{y,y^{\mathrm{in}}\}}\subset \gamma_x^{\mathrm{p}}\cup \mathbf{\Phi}_{x,z}^1$. Then we define $\mathbf{\Phi}_{x,z}=\mathbf{\Phi}_{x,z}(Q):=(\mathbf{\Phi}_{x,z}^1,\mathbf{\Phi}_{x,z}^2)$ and
		\begin{equation}
			\widehat{\mathbf{\Phi}}_{x,z} =\widehat{\mathbf{\Phi}}_{x,z}(Q):= \mathbf{\Phi}_{x,z}^1 \cup \bigcup_{x\in \mathbf{\Phi}_{x,z}^2}  \gamma_x^{\mathrm{p}}.
		\end{equation}
		For completeness, when $z\notin \cup\left( \mathfrak{L}^{\mathrm{F}}_x(Q)\cup \mathfrak{L}_{x}^{\mathrm{inv}}\right)$, let $\mathbf{\Phi}_{x,z}^1,\mathbf{\Phi}_{x,z}^2,\mathbf{\Phi}_{x,z},\widehat{\mathbf{\Phi}}_{x,z}=\emptyset$.

		\item We define a local version of $\widetilde{\mathcal{L}}_{\mathbf{A}}^{\mathrm{U}}$ (recall Definition \ref{def_unused loops}) as follows. For any possible configuration $\mathbf{D}=(\mathbf{D}^1,\mathbf{D}^2)$ of some $\mathbf{\Phi}_{x,z}(Q)$, let $\widetilde{\mathcal{L}}_{x,\mathbf{D}}^{\mathrm{LU}}$ be the point measure composed of the following types of loops in $\widetilde{\mathcal{L}}_{1/2}$, which are contained in $\widetilde{B}_x(s_2)$:
		\begin{itemize}
			\item involved loops $\widetilde{\ell}$ with $\mathrm{ran}(\widetilde{\ell})\cap \mathbf{D}^1=\emptyset $;

			\item loops $\widetilde{\ell}$ with $\mathrm{ran}(\widetilde{\ell})\cap \widetilde{B}(n)=\emptyset$;

			\item point loops including some point $y\in  [\partial \hat{B}(n)\setminus B_x(s_{0})]\cup [\mathbf{D}^1\cap \partial \hat{B}(n)]$;


			\item point loops that include some point $y\in B_x(s_{0})\cap \partial \hat{B}(n)\setminus (\mathbf{D}^1\cup \mathbf{D}^2)$ and do not intersect $I_{\{y,y^{\mathrm{in}}\}}\cap \mathbf{D}^1$. 
			
		\end{itemize}

		\item  We define $\widetilde{\mathcal{L}}_{x,z}^{\mathrm{LU}}=\widetilde{\mathcal{L}}_{x,z}^{\mathrm{LU}}(Q)$ as $\widetilde{\mathcal{L}}_{x,\mathbf{D}}^{\mathrm{LU}}$ on the event $\{\mathbf{\Phi}_{x,z}(Q)=\mathbf{D}\}$.


		\item Parallel to (\ref{eq_6.7}), we define 
		\begin{equation}
			\widehat{\mathbf{D}}:= \mathbf{D}^1\cup \bigcup_{y\in\mathbf{D}^2} \widehat{\gamma}_y^{\mathrm{p}}(\mathbf{D}^1),
		\end{equation}
where $\{\widehat{\gamma}_x^{\mathrm{p}}(\mathbf{D}^1)\}_{x\in \mathbf{D}^2}$ is independent of $\widetilde{\mathcal{L}}_{1/2}$, and has the same distribution as $\{\gamma_x^{\mathrm{p}}\}_{x\in \mathbf{D}^2}$ given $\cap_{x\in \mathbf{D}^2} \{I_{\{x,x^{\mathrm{in}}\}}\subset \gamma_x^{\mathrm{p}}\cup \mathbf{D}^1\}$.

		\item  Let $Q^{\ddagger}$ be the collection of integers $i\in [1,q^{\text{F}}_x]$ such that $\widetilde{\eta}^{\text{F}}_{x,i}$ is contained in an involved loop. Note that $Q^{\ddagger}\in \mathcal{Q}_x^{\text{F}}$. We denote $\mathbf{\Phi}_{x,z}^{\ddagger}:=\mathbf{\Phi}_{x,z}(Q^{\ddagger})$, $\mathbf{\Phi}_{x,z}^{i,\ddagger}:=\mathbf{\Phi}_{x,z}^i(Q^{\ddagger})$ for $i\in \{1,2\}$, $\widehat{\mathbf{\Phi}}_{x,z}^{\ddagger}:=\widehat{\mathbf{\Phi}}_{x,z}(Q^{\ddagger})$ and $\widetilde{\mathcal{L}}_{x,z}^{\mathrm{LU},\ddagger}:= \widetilde{\mathcal{L}}_{x,z}^{\mathrm{LU}}(Q^{\ddagger})$. 
		
	\end{itemize}
	
%
%
%
	
	\begin{remark}\label{remark7.12} 
		 Here are some useful relations between $\mathbf{\Psi}_n$ and $\mathbf{\Phi}_{x,z}^{\ddagger}$:
		\begin{enumerate}
			\item If we delete all loops $\widetilde{\ell}$ included in $\mathbf{\Psi}_n^1$ with  $\mathrm{ran}(\widetilde{\ell})\cap \widetilde{B}_x(s_1)=\emptyset$, and all backward crossing paths with $A_1= B_x(s_1)$ and $A_2=\partial B_x(s_{2})$, then the remaining part of $\mathbf{\Psi}_n^1$ that intersects $\widetilde{B}_x(s_1)$ is composed of several clusters of the form $\mathbf{\Phi}_{x,z}^{1,\ddagger}$ for $z\in \partial B_x(s_1)$ (but not every $\mathbf{\Phi}_{x,z}^{1,\ddagger}$ necessarily intersects $\mathbf{\Psi}_n^1$). Let $U_x^{\ddagger}$ be the collection of $z\in \partial B_x(s_1)$ such that $\mathbf{\Phi}_{x,z}^{1,\ddagger}\subset \mathbf{\Psi}_n^1$. Since the deleted loops and paths are disjoint from $\widetilde{B}(s_{1})$, we have 
			\begin{equation}\label{747}
				\mathbf{\Psi}_n^1\cap \widetilde{B}(s_{1}) = \cup_{z\in U_x^{\ddagger}} \mathbf{\Phi}_{x,z}^{1,\ddagger}\cap \widetilde{B}(s_{1}). 
			\end{equation}
			Thus, if $y\in B_x(s_0)\cap \partial \hat{B}(n)\setminus \mathbf{\Psi}_n^1$ satisfies $I_{\{y,y^{\mathrm{in}}\}}\subset \gamma_y^{\mathrm{p}}\cup \mathbf{\Psi}_n^1$, then there exists some $z\in U_x^{\ddagger}$ with $I_{\{y,y^{\mathrm{in}}\}}\subset \gamma_y^{\mathrm{p}}\cup \mathbf{\Phi}_{x,z}^{1,\ddagger}$. In addition, by (\ref{747}) one has $y \notin \mathbf{\Phi}_{x,z}^{1,\ddagger}$ for all $z\in  U_x^{\ddagger}$. These two facts yield that 
			\begin{equation}\label{749}
				\mathbf{\Psi}_n^2\cap B_x(s_0)\subset  \cup_{z\in U_x^{\ddagger}}\mathbf{\Phi}_{x,z}^{2,\ddagger}.  
			\end{equation}
			By (\ref{747}) and (\ref{749}), one has 
			\begin{equation}\label{newineq_747}
				\widehat{\mathbf{\Psi}}_n\cap B_x(s_0) \subset  \cup_{z\in U_x^{\ddagger}} \widehat{\mathbf{\Phi}}_{x,z}^{\ddagger}\cap B_x(s_0).
			\end{equation}

			\item  We claim that every loop $\widetilde{\ell}\in \widetilde{\mathcal{L}}^{\mathrm{U}}$ with $\mathrm{ran}(\widetilde{\ell})\subset \widetilde{B}_x(s_2)$ is in one of the following cases:
			
			\begin{enumerate}
				\item   $\widetilde{\ell}$ is a point loop including some $y\in \mathbf{\Psi}_n^1\cap B_x(s_{0})\cap  \partial \hat{B}(n)$; 
				
				\item   $\widetilde{\ell}$ is contained in $\widetilde{\mathcal{L}}_{x,z}^{\mathrm{LU},\ddagger}$ for all $z\in U_x^{\ddagger}$.

			\end{enumerate}
		To verify the claim, it suffices to check each type of loops $\widetilde{\ell}$ with $\mathrm{ran}(\widetilde{\ell})\subset \widetilde{B}_x(s_2)$ in Definition \ref{def_unused loops} as follows.
			\begin{itemize}
				\item involved loops $\widetilde{\ell}$ with $\mathrm{ran}(\widetilde{\ell})\cap \mathbf{\Psi}_n^1=\emptyset$: For any $z\in U_x^{\ddagger}$, as mentioned in Item (1), one has $\mathbf{\Phi}_{x,z}^{1,\ddagger}\subset \mathbf{\Psi}_n^1$. Therefore, we have $\mathrm{ran}(\widetilde{\ell})\cap \mathbf{\Phi}_{x,z}^{1,\ddagger}=\emptyset$, and thus $\widetilde{\ell}\in \widetilde{\mathcal{L}}_{x,z}^{\mathrm{LU},\ddagger}$.

				\item loops $\widetilde{\ell}$ with $\mathrm{ran}(\widetilde{\ell})\cap \widetilde{B}(n)=\emptyset$: Obviously, one has $\widetilde{\ell}\in \widetilde{\mathcal{L}}_{x,z}^{\mathrm{LU},\ddagger}$ for all $z\in U_x^{\ddagger}$.

				\item point loops $\widetilde{\ell}$ including some $y\in \mathbf{\Psi}_n^1\cap \partial \hat{B}(n)$: If $y\in B_x(s_0)$, these loops are in Case (a) of the claim. Otherwise, one has $y\in \partial \hat{B}(n) \setminus B_x(s_0)$, and therefore $\widetilde{\ell}\in \widetilde{\mathcal{L}}_{x,z}^{\mathrm{LU},\ddagger}$ for all $z\in U_x^{\ddagger}$.

				\item point loops $\widetilde{\ell}$ that include some $y\in \partial \hat{B}(n)\setminus (\mathbf{\Psi}_n^1\cup \mathbf{\Psi}_n^2)$ and do not intersect $I_{\{y,y^{\mathrm{in}}\}}\cap \mathbf{\Psi}_n^1$: For any $z\in U_x^{\ddagger}$, since $y\notin \mathbf{\Psi}_n^1$, one has $y\notin \mathbf{\Phi}_{x,z}^{1,\ddagger}$. In addition, since $y\notin \mathbf{\Psi}_n^2$, we have $I_{\{y,y^{\mathrm{in}}\}}\not\subset \gamma_y^{\mathrm{p}}\cup \mathbf{\Psi}_n^{1}$, which yields $I_{\{y,y^{\mathrm{in}}\}}\not\subset \gamma_y^{\mathrm{p}}\cup \mathbf{\Phi}_{x,z}^{1,\ddagger}$, and thus $y\notin \mathbf{\Phi}_{x,z}^{2,\ddagger}$. Furthermore, since $\widetilde{\ell}$ does not intersect $I_{\{y,y^{\mathrm{in}}\}}\cap \mathbf{\Psi}_n^1$, $\widetilde{\ell}$ does not intersect $I_{\{y,y^{\mathrm{in}}\}}\cap \mathbf{\Phi}_{x,z}^{1,\ddagger}$ either. These three facts imply that $\widetilde{\ell}\in \widetilde{\mathcal{L}}_{x,z}^{\mathrm{LU},\ddagger}$.

			\end{itemize}
			To sum up, we conclude the claim.

			\item  We have the following inclusion: for any $y\in B_x(r_m)$, 
			\begin{equation}\label{750}
				\bigg\{ y\xleftrightarrow[]{\cup \widetilde{\mathcal{L}}^{\mathrm{U}}_{\mathbf{A}}\cdot \mathbbm{1}_{\widetilde{\ell}\subset \widetilde{B}_x(s_2)}}\widehat{\mathbf{\Psi}}_{n}\cap B_x(s_0) \bigg\} \subset  \bigcup_{z\in U_x^{\ddagger}} \bigg\{ y\xleftrightarrow[]{\cup \widetilde{\mathcal{L}}_{x,z}^{\mathrm{LU},\ddagger}}\widehat{\mathbf{\Phi}}_{x,z}^{\ddagger}\cap B_x(s_0) \bigg\}. 
			\end{equation}
			In fact, when the event on the LHS happens, in $ \widetilde{\mathcal{L}}^{\mathrm{U}}_{\mathbf{A}}\cdot \mathbbm{1}_{\widetilde{\ell}\subset \widetilde{B}_x(s_2)}$ there is a finite sequence of loops $\widetilde{\ell}_i$ for $1\le i\le M$ such that $\widetilde{\ell}_1$ intersects $y$, $\widetilde{\ell}_M$ intersects $\widehat{\mathbf{\Psi}}_{n}\cap B_x(s_0)$, and $\mathrm{ran}(\widetilde{\ell}_i)\cap\mathrm{ran}(\widetilde{\ell}_{i+1})\neq \emptyset$ for all $1\le i\le M-1$. Let $i_{\dagger}:=\min\{i\in [1,M]:\widetilde{\ell}_i\ \text{is a point loop including some}\ v\in \mathbf{\Psi}_n^1\cap B_x(s_0)\cap  \partial \hat{B}(n)\}$, where we set $\min \emptyset=\infty$ for completeness. There are two cases as follows. 
			\begin{enumerate}
				
				\item If $i_{\dagger}=\infty$, by Item (2), we know that $y$ is connected to $\widehat{\mathbf{\Psi}}_{n}\cap B_x(s_0)$ by $\cup \widetilde{\mathcal{L}}_{x,z}^{\mathrm{LU},\ddagger}$ for all $z\in U_x^{\ddagger}$. Combined with (\ref{747}), this yields that the event on the RHS of (\ref{750}) happens.

				\item If $i_{\dagger}\in [1,M]$, then by (\ref{747}), $\widetilde{\ell}_{i_{\dagger}}$ is a point loop including some $v_{\dagger}\in \mathbf{\Phi}_{x,z_{\dagger}}^{1,\ddagger}\cap B_x(s_0)\cap \partial \hat{B}(n)$ for some $z_\dagger\in U_x^{\ddagger}$. Note that $y$ is connected to $\widehat{\mathbf{\Phi}}_{x,z_{\dagger}}^{\ddagger}\cap B_x(s_0)$ by $\cup \{\widetilde{\ell}_1,...,\widetilde{\ell}_{i_{\dagger}}\}$. By Item (2) and the minimality of $i_{\dagger}$, we have $\widetilde{\ell}_i\in \widetilde{\mathcal{L}}_{x,z_\dagger}^{\mathrm{LU},\ddagger}$ for all $1\le i< i_{\dagger}$. Thus, since $\widetilde{\ell}_{i_{\dagger}}$ is also in $\widetilde{\mathcal{L}}_{x,z_\dagger}^{\mathrm{LU},\ddagger}$ (by $v_{\dagger}\in \mathbf{\Phi}_{x,z_{\dagger}}^{1,\ddagger}\cap \partial \hat{B}(n)$), $y$ is connected to $\widehat{\mathbf{\Phi}}_{x,z_{\dagger}}^{\ddagger}\cap B_x(s_0)$ by $\cup \widetilde{\mathcal{L}}_{x,z_\dagger}^{\mathrm{LU},\ddagger}$, which implies the occurrence of the event on the RHS of (\ref{750}). 
			
			\end{enumerate}
			To sum up, we conclude (\ref{750}). 
			
		\end{enumerate}

	\end{remark}

Recall that $s_k=r_m^{(4d)^{k+1}}$. We also introduce a local version of Definition \ref{def_nice_set}:
	
	\begin{definition}[$(x,m,l)$-locally nice tuple]
		For any $x\in \mathbb{Z}^d$, $m\ge 1$ and tuple $\mathbf{D}=(\mathbf{D}^1,\mathbf{D}^2)$ (which is a possible configuration of some $\mathbf{\Phi}_{x,z}(Q)$), we define
		\begin{equation}\label{7.51}
			\Delta_{x,m}^{\mathrm{loc}}(\mathbf{D}):= 	\mathbb{E}\bigg(\sum_{y\in B_x(r_m)}\mathbbm{1}_{y\xleftrightarrow[]{\widetilde{\mathcal{L}}_{x,\mathbf{D}}^{\mathrm{LU}}}\widehat{\mathbf{D}}\cap B_x(s_0) }  \bigg). 
		\end{equation}
		For $l\ge 1$, we say $\mathbf{D}$ is $(x,m,l)$-locally nice if $\Delta_{x,m}^{\mathrm{loc}}(\mathbf{D})\le r_m^4\log^{l}(r_m)$. 
	\end{definition}

Let $V_x=V_x(Q)$ be the collection of $z\in \partial B_x(s_1)$ such that $\mathbf{\Phi}_{x,z}^1$ intersects $B_x(s_0+1)$.

	\begin{definition}[$m$-locally good points]\label{def_local_good_regular_points}\hspace*{\fill}
		\begin{enumerate}
			\item For any $m\in [1,m_0-1]$, $w\in \left[-D_N,D_N \right)^d\cap \mathbb{Z}^d $ and $x\in F(w)$, we say $x$ is $m$-locally good if for any $Q\in \mathcal{Q}_x^{\mathrm{F}}$, the following events occur:
			\begin{enumerate}
				\item $V_x\le 2\log^6(r_m)$.

%
%
%
				
				\item Every tuple $\mathbf{\Phi}_{x,z}$ is $(x,m,9)$-locally nice. I.e., for any $z\in \partial B_x(s_1)$, $$\Delta_{x,m}^{\mathrm{loc}}(\mathbf{\Phi}_{x,z})\le r_{m}^4\log^9(r_m).$$
				
			\end{enumerate}

			\item  We say $x$ is $m$-locally bad if it is not $m$-locally good.

			\item For convenience, we also say a point $x$ is $0$-qualified (resp. $0$-unqualified) if $x$ is $m$-locally good (resp. $m$-locally bad). We remind the readers that the $k$-unqualified points defined in Definition \ref{def_DQ_point} are only valid for $k\ge 1$.

			\item  We denote the number of $m$-locally bad points in $\overline{\mathbf{\Psi}}_n^*$ by $\psi_n^{0\mbox{-}\mathrm{UQ}}$.

		\end{enumerate}
	\end{definition}

	Recall the notation $Q^{\ddagger}$ before Remark \ref{remark7.12}. We denote $V_x^{\ddagger}:=V_x(Q^{\ddagger}) $.


	\begin{lemma}\label{newlemma_locally good}
		For any $x\in \mathbb{Z}^d$, if $x$ is $m$-bad, then $x$ is $m$-locally bad. 
	\end{lemma}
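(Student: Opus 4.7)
I plan to prove the contrapositive: assuming $x$ is $m$-locally good, I aim to show $\Delta_{x,m}(\mathbf{\Psi}_n) \le r_m^4 \log^{16}(r_m)$, so that $x$ is $m$-good. The main tool is the inclusion (\ref{750}) of Remark \ref{remark7.12}(3), specialized to the canonical choice $Q = Q^{\ddagger}$, which upper-bounds the in-$\widetilde{B}_x(s_2)$ connection indicator from $y \in B_x(r_m)$ to $\widehat{\mathbf{\Psi}}_n \cap B_x(s_0)$ by $\sum_{z \in U_x^{\ddagger}} \mathbbm{1}_{y \leftrightarrow \widehat{\mathbf{\Phi}}_{x,z}^{\ddagger} \cap B_x(s_0) \text{ via } \cup\widetilde{\mathcal{L}}_{x,z}^{\mathrm{LU},\ddagger}}$.

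First I would sum this inclusion over $y \in B_x(r_m)$, take conditional expectation given $\mathbf{\Psi}_n$, and establish a local analog of (\ref{7.5})---via thinning and the spatial Markov property of $\widetilde{\mathcal{L}}_{1/2}$---identifying $\mathbb{E}\bigl[\sum_y \mathbbm{1}_{y \leftrightarrow \widehat{\mathbf{\Phi}}_{x,z}^{\ddagger} \cap B_x(s_0) \text{ via } \cup\widetilde{\mathcal{L}}_{x,z}^{\mathrm{LU},\ddagger}} \mid \mathbf{\Phi}_{x,z}^{\ddagger}\bigr]$ with $\Delta_{x,m}^{\mathrm{loc}}(\mathbf{\Phi}_{x,z}^{\ddagger})$. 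Next I observe that only $z \in V_x^{\ddagger}$ contribute to the sum, since if $\mathbf{\Phi}_{x,z}^{1,\ddagger}$ fails to touch $B_x(s_0+1)$ then $\widehat{\mathbf{\Phi}}_{x,z}^{\ddagger} \cap B_x(s_0) = \emptyset$. The locally good conditions (a) and (b) applied to $Q = Q^{\ddagger}$ yield $|V_x^{\ddagger}| \le 2\log^6(r_m)$ and $\Delta_{x,m}^{\mathrm{loc}}(\mathbf{\Phi}_{x,z}^{\ddagger}) \le r_m^4 \log^9(r_m)$ for each relevant $z$. Multiplying produces a bound of $2 r_m^4 \log^{15}(r_m) \le r_m^4 \log^{16}(r_m)$, with ample slack once $K$ (and hence $r_m$) is chosen sufficiently large.

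The genuine obstacle is the gap between the restricted connection event on the LHS of (\ref{750}) and the full event defining $\Delta_{x,m}$: a chain of unused loops that escapes $\widetilde{B}_x(s_2)$ must contain a loop of diameter at least of order $s_2$, and such external loops are not directly constrained by the locally good condition. To close this gap I would extend the re-routing argument sketched in Remark \ref{remark7.12}(3): any escaping chain must cross the annulus $B_x(s_2) \setminus B_x(s_1)$, so its crossings are recorded among the forward crossing paths indexing $Q^{\ddagger}$ or as backward crossings, and by Lemma \ref{lemma_new_2.3} the backward crossings are conditionally independent of the in-$\widetilde{B}_x(s_2)$ forward-crossing and involved-loop structure. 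Thus, conditionally on $\mathbf{\Psi}_n$, the in-$\widetilde{B}_x(s_2)$ portion of any connecting chain must still factor through some $\mathbf{\Phi}_{x,z}^{\ddagger}$ with $z \in U_x^{\ddagger}$, so the right-hand side of (\ref{750}) dominates the unrestricted indicator as well. Making this extension rigorous---in particular reconciling the spatial Markov decomposition with the conditioning on $\mathbf{\Psi}_n$---is the technical heart of the lemma; once in place, the computation in the preceding paragraph delivers the claim.
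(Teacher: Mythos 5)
Your overall architecture matches the paper's: prove the contrapositive, apply the inclusion (\ref{750}) with $Q=Q^{\ddagger}$, restrict to $z\in U_x^{\ddagger}\cap V_x^{\ddagger}$, and multiply $|V_x^{\ddagger}|\le 2\log^6(r_m)$ by $r_m^4\log^9(r_m)$ to land under $r_m^4\log^{16}(r_m)$. That part is sound and is exactly how the paper concludes.

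The gap is in your final paragraph. You propose to show that the right-hand side of (\ref{750}) dominates the \emph{unrestricted} connection indicator, by arguing that a connecting chain which escapes $\widetilde{B}_x(s_2)$ still "factors through" some $\mathbf{\Phi}_{x,z}^{\ddagger}$. This cannot work: the point measures $\widetilde{\mathcal{L}}_{x,z}^{\mathrm{LU},\ddagger}$ by definition contain only loops with range inside $\widetilde{B}_x(s_2)$, so any chain from $y$ to $\widehat{\mathbf{\Psi}}_n\cap B_x(s_0)$ that uses a loop exiting $\widetilde{B}_x(s_2)$ is simply invisible to the events on the right-hand side of (\ref{750}); no amount of re-routing through backward crossings recovers the loops living outside the box, and the conditional-independence statement of Lemma \ref{lemma_new_2.3} does not convert an external loop into an internal one. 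The correct treatment — and the one the paper uses — is much cruder: split $\{y\xleftrightarrow{\cup\widetilde{\mathcal{L}}^{\mathrm{U}}_{\mathbf{A}}}\widehat{\mathbf{A}}\cap B_x(s_0)\}$ into the event that the connection is realized by loops contained in $\widetilde{B}_x(s_2)$ (handled by (\ref{750}) as you do) union the event $\{y\xleftrightarrow{\cup\widetilde{\mathcal{L}}^{\mathrm{U}}_{\mathbf{A}}}\partial B_x(s_2)\}$, and bound the second contribution by the one-arm estimate (\ref{ineq_onearm}): $\sum_{y\in B_x(r_m)}\mathbb{P}[y\leftrightarrow\partial B_x(s_2)]\le Cr_m^d s_2^{-1/2}<r_m^{-30d^3}$, which is absorbed into the slack between $2r_m^4\log^{15}(r_m)$ and $r_m^4\log^{16}(r_m)$. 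Once you replace your proposed "extension" by this additive error term, the argument closes.
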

	\begin{proof}
		Arbitrarily fix a configuration of $\mathbf{\Psi}_n$, say $\mathbf{A}=(\mathbf{A}^1,\mathbf{A}^2)$, such that $x$ is $m$-bad. I.e., $\Delta_{x,m}(\mathbf{A})>r_m^4\log^{16}(r_m)$. On the event $\{\mathbf{\Psi}_n=\mathbf{A}\}$, $U_x^{\ddagger}$, $V_x^{\ddagger}$ and $\mathbf{\Phi}_{x,z}^{\ddagger}$ for $z\in U_x^{\ddagger}$ are all deterministic. For any $y\in B_x(r_m)$, if $\{y\xleftrightarrow[]{\cup \widetilde{\mathcal{L}}^{\mathrm{U}}_{\mathbf{A}}}\widehat{\mathbf{\Psi}}_{n}\cap B_x(s_0)\}$ happens, then either $y$ is connected to $\widehat{\mathbf{\Psi}}_{n}\cap B_x(s_0)$ by $\cup \widetilde{\mathcal{L}}_{\mathbf{A}}^{\mathrm{U}}\cdot \mathbbm{1}_{\mathrm{ran}(\widetilde{\ell})\subset \widetilde{B}_x(s_2)}$, or $y$ is connected to $\partial B_x(s_2)$ by $\cup \widetilde{\mathcal{L}}^{\mathrm{U}}_{\mathbf{A}}$. Recall that the former event implies the one on the RHS of (\ref{750}). Thus, we have 
		\begin{equation}\label{7.52}
			\begin{split}
				\Delta_{x,m}(\mathbf{A})
				\le& \mathbb{E}\bigg(\sum_{y\in B_x(r_m)}\mathbbm{1}_{y\xleftrightarrow[]{\cup \widetilde{\mathcal{L}}^{\mathrm{U}}_\mathbf{A}\cdot \mathbbm{1}_{\mathrm{ran}(\widetilde{\ell})\subset \widetilde{B}_x(s_2)}}\widehat{\mathbf{A}}\cap B_x(s_0) }  \bigg)\\
				&+ \sum_{y\in B_x(r_m)}\mathbb{P}\Big[   y \xleftrightarrow[]{\cup \widetilde{\mathcal{L}}^{\mathrm{U}}_{\mathbf{A}}}\partial B_x(s_2)\Big] \\
				\le & \sum_{z\in U_x^{\ddagger}} \Delta_{x,m}^{\mathrm{loc}}\left( \mathbf{\Phi}_{x,z}^{\ddagger}\right) + \sum_{y\in B_x(r_m)}\mathbb{P}\Big[   y \xleftrightarrow[]{\cup \widetilde{\mathcal{L}}^{\mathrm{U}}_{\mathbf{A}}}\partial B_x(s_2)\Big].
			\end{split}
		\end{equation}
	We denote $\hat{U}_x^{\ddagger}:= U_x^{\ddagger}\cap V_x^{\ddagger}$. For any $z\in U_x^{\ddagger}$, if $\Delta_{x,m}^{\mathrm{loc}}\left( \mathbf{\Phi}_{x,z}^{\ddagger}\right)\neq 0$, then we have $\widehat{\mathbf{\Phi}}^{\ddagger}_{x,z}\cap B_x(s_0)\neq \emptyset$, which implies that $\mathbf{\Phi}_{x,z}^{1,\ddagger}\cap B_x(s_0+1)\neq \emptyset$, and thus $z\in \hat{U}_x^{\ddagger}$. Therefore, the first term on the RHS of (\ref{7.52}) is equal to $\sum_{z\in \hat{U}_x^{\ddagger}} \Delta_{x,m}^{\mathrm{loc}}\left( \mathbf{\Phi}_{x,z}^{\ddagger}\right)$. In addition, by (\ref{ineq_onearm}) and $|B_x(r_m)|\le Cr_m^d$, the second term on the RHS of (\ref{7.52}) is bounded from above by $Cr_m^ds_2^{-\frac{1}{2}}<r_m^{-30d^3}$. In conclusion, 
		\begin{equation}\label{754ineq}
			\Delta_{x,m}(\mathbf{A}) \le \sum_{z\in \hat{U}_x^{\ddagger}} \Delta_{x,m}^{\mathrm{loc}}(\mathbf{\Phi}_{x,z}^{\ddagger})+ r_m^{-30d^3}. 
		\end{equation}

	We conclude this lemma by proving its contrapositive statement as follows. Assume that $x$ is $m$-locally good. Then one has $|\hat{U}_x^{\ddagger}|\le |V_x^{\ddagger}| \le 2\log^6(r_m)$ and $\Delta_{x,m}^{\mathrm{loc}}(\mathbf{\Phi}_{x,z}^{\ddagger})\le r_m^4\log^{9}(r_m)$ for all $z\in \hat{U}_x^{\ddagger}$. Thus, by (\ref{754ineq}), we obtain that $x$ is $m$-good since
		\begin{equation*}
			\Delta_{x,m}(\mathbf{A})\le 2\log^6(r_m)\cdot r_m^4\log^{9}(r_m)< r_m^4\log^{16}(r_m).    \qedhere
		\end{equation*}
\end{proof}

	Next, we show the inheritability of $0$-qualified points. I.e., conditioned on the event that $x$ is $1$-qualified, we have $x$ is also $m$-locally good (i.e., $0$-qualified) with a uniformly high probability. Recall that the inheritability of $k$-qualified points ($k\ge 1$) has been proved in Lemma \ref{new_lemma_6.6}.

	We first record a technical lemma, where the bound is suboptimal but suffices
		for our purpose. The proof can be carried out in the same way as \cite[Lemma 1.1]{kozma2011arm}, so we just omit it. 
	

	\begin{lemma}\label{lemma_66}
		There exist $c(d),C(d)>0$ such that for any $R>1$ and any $y\in \partial B(R-1)$, 
		\begin{equation}
		\mathbb{P}\left( \tau_{y} <\tau_{\partial B(R)}\right)  \ge ce^{-C\log^2(R)}. 
		\end{equation}	
		As a direct consequence, for any $z\in \partial \hat{B}(R)$,
		\begin{equation}
			\mathbb{P}\left( \tau_{\partial B(R)}=\tau_z\right)  \ge ce^{-C\log^2(R)}. 
		\end{equation}
	\end{lemma}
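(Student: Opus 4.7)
The target rate $e^{-C\log^2(R)}$ is characteristic of a multi-scale chaining argument over $\Theta(\log R)$ scales, with each scale contributing a polynomial-in-$R$ loss. My plan is to force the walk through a sequence of lattice waypoints $w_0 = \bm{0}, w_1, \dots, w_K = y$ along the discretized segment from $\bm{0}$ to $y$, chosen so that $r_k := |w_k - y|$ decays geometrically like $(R-1)(2/3)^k$ until $r_K < 1$, at which point we set $w_K = y$; the number of stages is $K \asymp \log R$. The crucial geometric fact is that, because each $w_{k-1}$ lies on the ray from $\bm{0}$ to $y \in \partial B(R-1)$, one has $|w_{k-1}| = R-1-r_{k-1}$, whence $\mathrm{dist}(w_{k-1}, \partial B(R)) \ge 1 + r_{k-1}$, so $B_{w_{k-1}}(r_{k-1}) \subset B(R)$; at the same time $|w_{k-1} - w_k| = r_{k-1}/3$, giving a safety ratio of $3$ between the available radius and the step size.

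By the strong Markov property applied at the successive hitting times $\tau_{w_k}$,
\begin{equation*}
\mathbb{P}(\tau_y < \tau_{\partial B(R)}) \;\ge\; \prod_{k=1}^{K} \mathbb{P}_{w_{k-1}}\!\left(\tau_{w_k} < \tau_{\partial B_{w_{k-1}}(r_{k-1})}\right).
\end{equation*}
Each factor is a classical ``hit a point before exiting a box'' estimate for the transient simple random walk on $\mathbb{Z}^d$, $d \ge 3$. Using the Green's function identity $\mathbb{P}_x(\tau_y < \tau_{\partial B_x(L)}) = G_{B_x(L)}(x,y)/G_{B_x(L)}(y,y)$ together with $G(x,y) \asymp |x-y|^{2-d}$, the crude bound $\max_{z \in \partial B_x(L)} G(z,y) \le C L^{2-d}$, and $G_{B_x(L)}(y,y) \le C$, one gets a lower bound of order $c\, r_{k-1}^{2-d}$ for each factor (positivity of $c$ uses that the ratio $L/|w_{k-1} - w_k| = 3$ is bounded away from $1$). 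Multiplying and using $r_k \asymp R (2/3)^k$ with $K = O(\log R)$ yields $\prod_k c\, r_{k-1}^{2-d} \ge e^{-C\log^2 R}$, since the dominant $K^2$ term in the log of the product comes from $\sum_{k=1}^K \log r_{k-1}^{-(d-2)} \asymp (d-2) K^2 \asymp \log^2 R$.

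The second inequality is an immediate corollary. For $z \in \partial\hat{B}(R)$, the first part applied with $y = z^{\mathrm{in}} \in \partial B(R-1)$ gives $\mathbb{P}(\tau_{z^{\mathrm{in}}} < \tau_{\partial B(R)}) \ge c\, e^{-C\log^2 R}$, and by the strong Markov property at $\tau_{z^{\mathrm{in}}}$ the conditional probability that the very next jump lands on $z$ (among the $2d$ equally likely neighbors of $z^{\mathrm{in}}$) is $1/(2d)$; on this event $\tau_z = \tau_{\partial B(R)}$. I expect the main obstacle to be the uniform Green's function lower bound across scales, in particular verifying that after rounding the continuous waypoints to lattice points, both the inclusion $B_{w_{k-1}}(r_{k-1}) \subset B(R)$ and the radius-to-step-size ratio $\ge 3$ survive; for small $k$ the $O(1)$ rounding error is negligible, and for $k$ close to $K$ (where $r_k = O(1)$) the last few stages can be absorbed into a single constant-probability move handled via standard local SRW estimates.
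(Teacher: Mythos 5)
Your multi-scale chaining argument is essentially the same proof the paper intends: it omits the details and cites \cite[Lemma 1.1]{kozma2011arm}, which is established by exactly this kind of geometric waypoint chaining over $\Theta(\log R)$ scales with a polynomial cost per scale, and your derivation of the corollary via $z^{\mathrm{in}}$ and a final $1/(2d)$ jump is the standard one. The only point to tighten is the "safety ratio of $3$": because $G(x,y)\asymp|x-y|^{2-d}$ is stated in the $\ell^\infty$ norm with unspecified constants (and the $\ell^\infty$/$\ell^2$ discrepancy alone costs a factor $d^{(d-2)/2}$), you should take the ratio of box radius to step size to be a sufficiently large $d$-dependent constant so that $G_{B_{w_{k-1}}(r_{k-1})}(w_{k-1},w_k)\ge c\,|w_{k-1}-w_k|^{2-d}$ genuinely holds; this only rescales $K$ by a constant and does not affect the $e^{-C\log^2 R}$ form.
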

	


	The following lemma presents the inheritability of $0$-qualified points. Recall the notations $\Omega_{x,j}$ and $\mathbb{P}(\cdot \mid \omega_{x,j})$ in the paragraphs before Remark \ref{remark_7.11}.

	\begin{lemma}\label{lemma_6.9}
		For any $d>6$, there exist $C(d)>0,c(d)>0$ such that for any $m\in [1,m_0-1]$, $w\in \left[-D_N,D_N \right)^d\cap \mathbb{Z}^d$, $x\in F(w)$, and any configuration $\omega_{x,1}\in \Omega_{x,1}$ such that $x$ is $1$-qualified with respect to $\omega_{x,1}$, we have 
		\begin{equation}
			\mathbb{P}\left( x\ \text{is}\ m\text{-locally bad} \mid \omega_{x,1}  \right) \le Ce^{-c\log^{7}(r_m)}. 
		\end{equation}
	\end{lemma}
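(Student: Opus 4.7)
My plan is to condition on $\omega_{x,1}$ and follow the strategy of the proof of Lemma~\ref{new_lemma_6.6}, adapted to the ground level $k=0$ and with a strengthened Markov exponent. By Remark~\ref{remark_7.11}, conditionally on $\omega_{x,1}$ the forward crossing paths $\widetilde{\eta}^{\mathrm{F}}_{x,i}$, $1\le i\le q_x^{\mathrm{F}}$, are independent with marginals $\widetilde{\mathbb{P}}_{x_i}(\cdot\mid \widetilde{\tau}_{\partial B_x(s_2)}=\widetilde{\tau}_{y_i})$, and $1$-qualification gives $q_x^{\mathrm{F}}\le \log^{6}(s_1)=(4d)^{12}\log^{6}(r_m)$; moreover the involved loops inside $\widetilde{B}_x(s_2)$ remain independent of $\omega_{x,1}$. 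Set $\xi^{\mathrm{cro}}:=\sum_{i=1}^{q_x^{\mathrm{F}}}\mathfrak{N}_i$ for the total number of times these paths cross $B_x(s_1)\setminus B_x(s_0)$, and let $\xi^{\mathrm{in}}$ be the analogous count coming from loops entirely contained in $\widetilde{B}_x(s_2)$ that cross both $\partial B_x(s_0)$ and $\partial B_x(s_1)$.

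The first step is to apply Lemma~\ref{lemma_num_crossing} with the exponent $\gamma=\log(r_m)$ rather than the constant exponent used in Lemma~\ref{new_lemma_6.6}. Since $e^{\gamma}=r_m\ll s_1^{c}$, estimate~\eqref{ineq_7.32} yields $\widetilde{\mathbb{E}}_{x_i}[e^{\gamma \mathfrak{N}_i}\mid \omega_{x,1}]\le 1+O(r_m/s_1^{c})$ uniformly in $i$, and multiplying over the $\le \log^{6}(s_1)$ independent paths gives $\mathbb{E}[e^{\gamma \xi^{\mathrm{cro}}}\mid \omega_{x,1}]=O(1)$, using that $\log^{6}(s_1)\cdot r_m/s_1^{c}\to 0$. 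The exponential Markov inequality then yields $\mathbb{P}[\xi^{\mathrm{cro}}\ge \tfrac{1}{2}\log^{6}(r_m)\mid \omega_{x,1}]\le C\exp(-\tfrac{1}{2}\log^{7}(r_m))$. The analogous bound $\mathbb{P}[\xi^{\mathrm{in}}\ge \tfrac{1}{2}\log^{6}(r_m)]\le C\exp(-c\log^{7}(r_m))$ follows from a Poisson-tail argument paralleling Lemma~\ref{lemma_onepoint}, combined with the loop-measure estimate in Corollary~\ref{corollary_2.2} at level $k=0$ and the same choice $\gamma=\log(r_m)$.

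On the good event $\{\xi^{\mathrm{cro}}+\xi^{\mathrm{in}}\le \log^{6}(r_m)\}$, I will verify both parts of Definition~\ref{def_local_good_regular_points}. For part~(a), each $z\in V_x(Q)$ is an endpoint in $\partial B_x(s_1)$ of at least one forward or backward crossing of $B_x(s_1)\setminus B_x(s_0)$ (since the cluster containing $z$ must descend to $B_x(s_0+1)$), so distinct $z$'s yield distinct crossings and $|V_x(Q)|\le 2(\xi^{\mathrm{cro}}+\xi^{\mathrm{in}})\le 2\log^{6}(r_m)$. For part~(b), the two-point function~\eqref{eq_two_points} together with~\eqref{fina4.3} give $\Delta^{\mathrm{loc}}_{x,m}(\mathbf{D})\le C r_m^{2}|\widehat{\mathbf{D}}\cap B_x(s_0)|$, and on the good event $|\widehat{\mathbf{D}}\cap B_x(s_0)|$ will be controlled by $C r_m^{2}\log^{9}(r_m)$ through a local cluster-volume estimate based on Lemma~\ref{lemma_large_div}. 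The main obstacle will be this last step: sharpening the control on $|\widehat{\mathbf{D}}\cap B_x(s_0)|$ requires combining the crossing-count bound on $\xi^{\mathrm{cro}}+\xi^{\mathrm{in}}$ with a multi-scale concentration argument along the dyadic shells $\{B_x(2^{k}r_m)\}_{k\ge 0}$ so as to propagate the cluster-volume large deviation from scale $r_m$ up to scale $s_0$, since a naive application of Lemma~\ref{lemma_large_div} at scale $s_0$ would yield a bound of order $s_0^{4}$, which is far too large for the desired $r_m^{4}\log^{9}(r_m)$ estimate.
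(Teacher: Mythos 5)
Your proposal has two genuine gaps, and they concern precisely the two conditions in Definition~\ref{def_local_good_regular_points} that must be verified. For condition (a), you claim $|V_x(Q)|\le 2(\xi^{\mathrm{cro}}+\xi^{\mathrm{in}})$ because ``the cluster containing $z$ must descend to $B_x(s_0+1)$'' and hence produces a crossing of the annulus $B_x(s_1)\setminus B_x(s_0)$. This is false: $\mathbf{\Phi}_{x,z}^1$ is a cluster of $\cup(\mathfrak{L}^{\mathrm{F}}_x(Q)\cup\mathfrak{L}^{\mathrm{inv}}_x)$, and it can connect $\partial B_x(s_1)$ to $B_x(s_0+1)$ through a chain of many small loops no single one of which crosses the annulus, so the cardinality of $V_x(Q)$ is not controlled by any crossing count. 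The paper instead observes that at most $q_x^{\mathrm{F}}\le\log^6(s_1)$ of these clusters can contain a forward crossing path, so $|V_x(Q)|>2\log^6(s_1)$ forces at least $\log^6(s_1)$ pairwise disjoint collections of glued loops each certifying $\{B(s_0+1)\xleftrightarrow{}\partial B(s_1)\}$, and the BKR inequality together with the one-arm upper bound \eqref{ineq_onearm} gives the superpolynomially small probability.

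For condition (b), the inequality $\Delta^{\mathrm{loc}}_{x,m}(\mathbf{D})\le Cr_m^2|\widehat{\mathbf{D}}\cap B_x(s_0)|$ discards the spatial decay $|y-v|^{2-d}$ entirely and would require $|\widehat{\mathbf{D}}\cap B_x(s_0)|\lesssim r_m^2\log^9(r_m)$, which is hopeless since $s_0=r_m^{4d}$ and the cluster can legitimately contain on the order of $s_0^4$ points in $B_x(s_0)$; no ``multi-scale concentration along dyadic shells'' can rescue a bound that is wrong already at the level of the first inequality. More fundamentally, your good event $\{\xi^{\mathrm{cro}}+\xi^{\mathrm{in}}\le\log^6(r_m)\}$ constrains only the \emph{number} of crossing paths, not the volume or geometry of the clusters they generate, so it cannot imply that $\mathbf{\Phi}_{x,z}$ is $(x,m,9)$-locally nice. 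The paper's argument is structurally different: it runs a reverse second-moment step (if $\Delta^{\mathrm{loc}}_{x,m}(\mathbf{\Phi}_{x,z})\ge r_m^4\log^9(r_m)$ then with conditional probability at least $cr_m^{4-d}\log^9(r_m)$ a cluster of volume $\tfrac12 r_m^4\log^9(r_m)$ actually appears in $B_x(r_m)$), and then converts the conditional probability given $\omega_{x,1}$ into an unconditional one by exhibiting an explicit event $\mathsf{A}_\dagger$ of probability at least $ce^{-C\log^8(s_2)}$ that implies the conditioning, so that Lemma~\ref{lemma_large_div} applies. This lower bound on $\mathbb{P}(\mathsf{A}_\dagger)$, built from the hitting estimates of Lemma~\ref{lemma_66} and the loop-measure identity \eqref{fina_2.5}, is the key ingredient your proposal is missing; without it there is no way to control the law of the local clusters conditionally on $\omega_{x,1}$. (You also omit the union over $Q\in\mathcal{Q}_x^{\mathrm{F}}$, with $|\mathcal{Q}_x^{\mathrm{F}}|\le 2^{\log^6(s_1)}$, and over $z\in\partial B_x(s_1)$, which is why the per-event bounds must be as small as $e^{-c\log^9(r_m)}$ rather than merely $e^{-c\log^7(r_m)}$.)
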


	\begin{proof}

		Recall the notations $\widetilde{\eta}^{\mathrm{F}}_{x,i}$, $q_x^{\mathrm{F}}$, $\mathcal{Q}_x^{\mathrm{F}}$, $\mathfrak{L}_{x}^{\mathrm{F}}$, $\mathfrak{L}_{x}^{\mathrm{inv}}$ and $\mathbf{\Phi}_{x,z}$ below the first paragraph of Section \ref{section_locally_good}. Also recall $V_x$ in the sentence before Definition \ref{def_local_good_regular_points}.

		Since $x$ is $1$-qualified, we have $q_{x}^{\text{F}}\le \log^{6}(s_1)$, which implies $|\mathcal{Q}_x^{\mathrm{F}}|\le 2^{\log^{6}(s_1)}$. We denote the starting point and the ending point of $\widetilde{\eta}^{\mathrm{F}}_{i,x}$ by $y_i$ and $z_i$ respectively, where $y_i$ and $z_i$ are deterministic given $\omega_{x,1}$. For any $Q\in \mathcal{Q}_x^{\mathrm{F}}$, let $\mathsf{B}_{1}^Q:=\{V_x(Q)>2\log^{6}(s_1)\}$ and $\mathsf{B}_{2,z}^Q:=  \{\mathbf{\Phi}_{x,z}(Q)\ \text{is not}\ (x,m,9)\text{-nice} \}$ for $z\in \partial B_x(s_1)$. By Definition \ref{def_local_good_regular_points}, if $x$ is $m$-locally bad, then there exists $Q\in \mathcal{Q}_x^{\mathrm{F}}$ such that either $\mathsf{B}_1^Q$ happens, or $\mathsf{B}_{2,z}^Q$ happens for some $z\in \partial B_x(s_1)$.


		On the event $\mathsf{B}_1^Q$, since each $\widetilde{\eta}^{\mathrm{F}}_{x,i}$ can be contained in at most one cluster of the form $\mathbf{\Phi}_{x,z}^1$, the number of clusters $\mathbf{\Phi}_{x,z}^1$ that intersect $B_x(s_0+1)$ and do not contain any forward crossing path $\widetilde{\eta}^{\mathrm{F}}_{x,i}$ is at least $2\log^{6}(s_1)-\log^{6}(s_1)=\log^{6}(s_1)$. Since these clusters do not share a common glued loop (we excluded clusters with forward crossing paths exactly to achieve this property), their existence ensures that there are at least $\log^{6}(s_1)$ disjoint collections of glued loops certifying $\{B(s_0+1)\xleftrightarrow[]{} \partial B(s_1)\} $. Thus, by the BKR inequality and (\ref{ineq_onearm}), we have (recalling $s_0=r_m^{4d}$ and $s_1=r_m^{16d^2}$)
		\begin{equation}\label{7.66}
			\begin{split}
				\mathbb{P}\left(  \mathsf{B}_1^Q\big|\omega_{x,1} \right)  \le &\left\lbrace \mathbb{P}\left[B(s_0+1)\xleftrightarrow[]{} \partial B(s_1) \right] \right\rbrace^{\log^{6}(s_1)}\\
				\le &  \Big(  Cr_{m}^{4d(d-1)}\cdot s_1^{-\frac{1}{2}} \Big)  ^{\log^{6}(s_1)}
				\le Ce^{-c\log^{7}(r_m)}. 
			\end{split}
		\end{equation}

		Now let us focus on $\mathsf{B}_{2,z}^Q$. Similar to Item (2) in Remark \ref{remark_51}, we know that given $\{\mathbf{\Phi}_{x,z}(Q)=\mathbf{D}\}$, the conditional distribution of $\widetilde{\mathcal{L}}^{\text{LU}}_{x,z}$ is the same as $\widetilde{\mathcal{L}}_{x,\mathbf{D}}^{\mathrm{LU}}$ without conditioning. Moreover, $\widetilde{\mathcal{L}}^{\text{LU}}_{x,z}$ is independent of the conditioning $\omega_{x,1}$ since all loops in $\widetilde{\mathcal{L}}^{\text{LU}}_{x,z}$ are contained in $\widetilde{B}_x(s_{k+2})$. Thus, for any $\mathbf{D}$ such that $\{\mathbf{\Phi}_{x,z}(Q)=\mathbf{D}\}\subset \mathsf{B}_{2,z}^Q$, we have 
		\begin{equation}\label{ineq_759}
			\begin{split}
				&\mathbb{E}\bigg(  \sum_{y\in B_x(r_m)}\mathbbm{1}_{y\xleftrightarrow[]{\cup \widetilde{\mathcal{L}}^{\text{LU}}_{x,z}}\widehat{\mathbf{\Phi}}_{x,z}\cap B_x(s_0) } \Big|   \mathbf{\Phi}_{x,z}(Q)=\mathbf{D},\omega_{x,1} \bigg)\\
				=&\Delta_{x,m}^{\mathrm{loc}}(\mathbf{D}) \ge   r_{m}^4\log^{9}(r_m). 
			\end{split}
		\end{equation}	
		Recall that for any random variable $Y$ with $0\le Y\le a$ a.s. and $\mathbb{E}(Y)\ge b$, one has $\mathbb{P}(Y\ge b')\ge  (b-b')/a$ for all $b'\in (0,b)$. Thus, by taking $a=|B_x(r_m)|\le Cr_m^d$, $b=r_{m}^4\log^{9}(r_m)$ and $b'=\tfrac{1}{2}r_{m}^4\log^{9}(r_m)$, we have 
		\begin{equation}\label{new_6.69.5}
			\begin{split}
				&\mathbb{P}\bigg(  \sum_{y\in B_x(r_m)}\mathbbm{1}_{y\xleftrightarrow[]{\widetilde{\mathcal{L}}^{\text{LU}}_{x,z}}\widehat{\mathbf{\Phi}}_{x,z}\cap B_x(s_0) }\ge \tfrac{1}{2}r_{m}^4\log^{9}(r_m) \Big| \mathbf{\Phi}_{x,z}(Q)=\mathbf{D},\omega_{x,1}\bigg)\\
				\ge & cr_{m}^{4-d}\log^{9}(r_m). 
			\end{split}
		\end{equation}
		Recall that $\mathbf{C}(y)$ is the cluster of $\cup \widetilde{\mathcal{L}}_{1/2}$ containing $y$. Note that all the points $y\in B_x(r_m)$ that are connected to $\widehat{\mathbf{\Phi}}_{x,z}\cap B_x(s_0)$ are connected to each other. Therefore, by taking integral over the event $\mathsf{B}_{2,z}^{Q}$ conditioning on $\omega_{x,1}$ for both sides of (\ref{new_6.69.5}), we have 
		\begin{equation}\label{ineq_new_761}
			\begin{split}
				& \mathbb{P}\bigg( \max_{y\in B_x(r_{m})} \left|\mathbf{C}(y) \cap B_x(r_m) \right|>\tfrac{1}{2}r_{m}^4\log^{9}(r_m) \Big| \omega_{x,1} \bigg) \\
				\ge & 	 cr_{m}^{4-d}\log^{9}(r_m)\cdot \mathbb{P}\Big( \mathsf{B}_{2,z}^{Q}\big|\omega_{x,1}\Big). 
			\end{split}
		\end{equation}
		

		Now let us control the LHS of (\ref{ineq_new_761}). Recall $L(A_1,A_2)(\widetilde{\ell})$, $\kappa(\widetilde{\ell};A_1,A_2)$, and $\widetilde{\tau}_{i}$ for $1\le i\le 2\kappa(\widetilde{\ell})$ in Section \ref{subsection_decomposition}. We denote by $\mathfrak{L}(\omega_{x,1})$ the collection of loops $\widetilde{\ell}$ with $\kappa(\widetilde{\ell};B_x(s_0),\partial B_x(s_1))=q_x^{\mathrm{F}}$ such that there exists $\widetilde{\varrho}\in L(B_x(s_0),\partial B_x(s_1))(\widetilde{\ell})$ satisfying $\widetilde{\varrho}(\widetilde{\tau}_{2i})= y_i$ and $\widetilde{\varrho}(\widetilde{\tau}_{2i+1})= z_i$ for $1\le i\le q_x^{\mathrm{F}}$. In fact, for any $\widetilde{\ell}\in \mathfrak{L}(\omega_{x,1})$, the multiplicity (recalling $J$ in (\ref{fina_2.5})) of its projection on $\mathbb{Z}^d$ is upper-bounded by the number of crossings $\kappa=q_x^{\mathrm{F}}$, and therefore is at most $\log^6(s_1)$. Thus, by (\ref{fina_2.5}) and the relation between loops on $\mathbb{Z}^d$ and $\widetilde{\mathbb{Z}}^d$ mentioned in Section \ref{subsection_continuous}, we have 
		\begin{equation}\label{fin_750}
			\widetilde{\mu}\left[ \mathfrak{L}(\omega_{x,1})\right] \ge \log^{-6}(s_1) \prod_{i=1}^{q_x^{\mathrm{F}}} \mathbb{P}_{y_i}\left( \tau_{\partial B_x(s_2)}= \tau_{z_i}  \right)  \cdot \mathbb{P}_{z_i}\left( \tau_{ \partial B_x(s_1)}=  \tau_{y_i}<\infty \right). 
		\end{equation} 
			For the first part of the product on the RHS of (\ref{fin_750}), by the strong Markov property, we have 
		\begin{equation}\label{fina_7.52}
			\begin{split}
				&\mathbb{P}_{y_i}\left( \tau_{\partial B_x(s_2)}= \tau_{z_i}  \right)\\
				\ge&  \mathbb{P}_{y_i}\left( \tau_{\bm{0}}<\tau_{\partial B_x(s_2)}  \right)\cdot \mathbb{P}_{\bm{0}}\left(\tau_{\partial B_x(s_2)}= \tau_{z_i}  \right)\\
				=&\mathbb{P}_{\bm{0}}\left( \tau_{y_i}<\tau_{\partial B_x(s_2)}  \right)\cdot \mathbb{P}_{\bm{0}}\left(\tau_{\partial B_x(s_2)}= \tau_{z_i}  \right)  \ \ (\text{by reversing the random walk})       \\
				\ge &ce^{-C\log^2(s_2)} \ \ \ \ \  \ \ \ \ \  \ \ \ \ \  \ \ \ \ \  \ \ \ \ \  \ \ \ \  \ \ \ \ \ \ \ \ (\text{by Lemma}\ \ref{lemma_66}). 
			\end{split}
		\end{equation}
			For the second part, note that we can find $v_i\in \partial B_x(s_2)$ such that $y_i\in \hat{B}_{v_i}(s_2-s_1)$ for each $y_i\in B_x(s_1)$. Therefore, by the strong Markov property, we have 
		\begin{equation}\label{ineq.7.62}
			\begin{split}
				& \mathbb{P}_{z_i}\left( \tau_{ \partial B_x(s_1)}=  \tau_{y_i}<\infty \right)\\
				\ge &  \mathbb{P}_{z_i} \left( \tau_{\partial B_{v_i}(0.5s_2)}< \tau_{B_{x}(s_1)}\right) \cdot \min_{v\in \partial B_{v_i}(0.5s_2)}  \mathbb{P}_{v}\left( \tau_{v_i}<\tau_{\partial B_{v_i}(s_2-s_1)} \right) \\
				& \cdot  \mathbb{P}_{v_i}\left( \tau_{\partial B_{v_i}(s_2-s_1)}=\tau_{y_i} \right)\\
				\ge & c e^{-C\log^2(s_2)}\ \ \ \ \ \  \ \ \  \ \ \  (\text{by the invariance principle and Lemma}\ \ref{lemma_66}). 
			\end{split}
		\end{equation} 
		Combining (\ref{fin_750}), (\ref{fina_7.52}), (\ref{ineq.7.62}) and the fact that $q_x^{\mathrm{F}}\le \log^6(s_1)$, we get 
		\begin{equation}\label{new_ineq6.64}
			\begin{split}
				\widetilde{\mu}\left[ \mathfrak{L}(\omega_{x,1})\right] \ge c e^{-C\log^8(s_2)}. 
			\end{split}
		\end{equation}
	Let $\mathfrak{L}^{c}(\omega_{x,1})$ be the collection of loops in the complement of $\mathfrak{L}(\omega_{x,1})$ that cross $B_x(s_{k+2})\setminus B_x(s_{k+1})$. By (\ref{ineq_2.5}), we have 
		\begin{equation}\label{fin_7.54}
			\widetilde{\mu}\left[ \mathfrak{L}^c(\omega_{x,1})\right]\le Cs_1^{d-2}s_2^{2-d}. 
		\end{equation}
		We denote by $\mathsf{A}_{\dagger}$ the event that in $\widetilde{\mathcal{L}}_{1/2}$ there is exactly one loop in $\mathfrak{L}(\omega_{x,1})$ and there is no loop in $\mathfrak{L}^c(\omega_{x,1})$. By (\ref{new_ineq6.64}) and (\ref{fin_7.54}), 
		\begin{equation}\label{fin_7.55}
			\mathbb{P}\left(\mathsf{A}_{\dagger}\right)= \tfrac{1}{2}\widetilde{\mu}\left[ \mathfrak{L}(\omega_{x,1})\right] \cdot e^{-\frac{1}{2}\widetilde{\mu}\left[ \mathfrak{L}(\omega_{x,1})\right]}\cdot e^{-\frac{1}{2}\widetilde{\mu}\left[ \mathfrak{L}^c(\omega_{x,1})\right]}\ge c e^{-C\log^8(s_2)}. 
		\end{equation}
		Since $\mathsf{A}_{\dagger}$ implies the conditioning $\omega_{x,1}$, by Lemma \ref{lemma_large_div} and (\ref{fin_7.55}), we have 
			\begin{equation*}\label{new_ineq_6.71}
			\begin{split}
				&\mathbb{P}\Big( \max_{y\in B_x(r_{m})} \left|\mathbf{C}(y) \cap B_x(r_m) \right|>\tfrac{1}{2}r_{m}^4\log^{9}(r_m) \big| \omega_{x,1}  \Big)\\
				\le & \mathbb{P}\Big( \max_{y\in B_x(r_{m})} \left|\mathbf{C}(y) \cap B_x(r_m) \right|>\tfrac{1}{2}r_{m}^4\log^{9}(r_m) \Big) \left[	\mathbb{P}\left(\mathsf{A}_{\dagger}\right) \right]^{-1} 
				\le Ce^{-c\log^{9}(r_m)}.
			\end{split}
		\end{equation*}
	Combined with (\ref{ineq_new_761}), this gives us 
		\begin{equation}\label{newineq_6.77}
		\mathbb{P}\left(\mathsf{B}_{2,z}^{Q}\big| \omega_{x,1}\right)  \le Ce^{-c\log^{9}(r_m)}.  
	\end{equation}

			
%


%
%
	
		Finally, we conclude the desired bound as follows:  
		\begin{equation*}
			\begin{split}
				&\mathbb{P}\left( x\ \text{is}\ m\text{-locally bad} \mid \omega_{x,1}  \right)\\ \le & \sum_{Q\in \mathcal{Q}_x^{\mathrm{F}}}\mathbb{P}\left(  \mathsf{B}_1^Q\big|\omega_{x,1} \right) +  \sum_{Q\in \mathcal{Q}_x^{\mathrm{F}}} \sum_{z\in \partial B_x(s_1)} \mathbb{P}\left(\mathsf{B}_{2,z}^{Q}\big| \omega_{x,1}\right)\\
				\le &C|\mathcal{Q}_x^{\mathrm{F}}|\cdot \left(e^{-c\log^{7}(r_m)}+s_1^{d-1}e^{-c\log^{9}(r_m)} \right) \  \ \ (\text{by}\ (\ref{7.66})\ \text{and}\ (\ref{newineq_6.77}))\\
				\le &  Ce^{-c\log^{7}(r_m)}\ \ \ \ \ \ \ \ \ \ \ \ \ \ \ \ \ \ \ \ \ \ \ \ \ \ \ \ \ \ \ \ \ \ \ \ \ (\text{by}\ |\mathcal{Q}_x^{\mathrm{F}}|\le 2^{\log^{6}(s_1)}).   \qedhere
			\end{split} 
		\end{equation*}  
	\end{proof}

	\subsection{Exploration processes} \label{section_exploration}
	
	In this subsection, we introduce the exploration process $\mathcal{T}_n$ which completes a construction of $\mathbf{\Psi}_n^1$ (recall Definition \ref{definition_Psi}) upon termination. As an additional feature, during the process $\mathcal{T}_n$, we will keep track of the ordering for appearances of loops and we will record some statistics, and these will be very useful for the proof of Lemma \ref{new_lemma_6.3}.
	
		
	Recall that we already fixed $m\in [1,m_0]$ and $w\in \left[-D_N,D_N \right)^d\cap \mathbb{Z}^d$. Now we also fix an arbitrary integer $k\in [0,k_0-1]$. Recall the definitions of $F(w)$ and $k_0$ in (\ref{candidate set}) and (\ref{def_k0}) respectively. We divide $F(w)$ into $F^{\mathrm{I}}(w):= \{x\in F(w): \widetilde{B}_x(s_{k+2})\cap \widetilde{B}(n)=\emptyset\}$ and $F^{\mathrm{II}}(w):=F(w)\setminus F^{\mathrm{I}}(w)$.

%
%


	Unless otherwise specified, in the construction of $\mathcal{T}_n$, when we refer to a forward or backward crossing path, we always assume $A_1=\cup_{x\in F(w)} B_x(s_{k+1})$ and $A_2=\cup_{x\in F(w)} \partial B_x(s_{k+2})$. Note that $A_1$ and $A_2$ are disjoint since $|x_1-x_2|\ge 2D_N>2s_{s_{k+2}}$ for any distinct points $x_1,x_2\in F(w)$ and any $k\in [0,k_0-1]$. As a result, for any $x\in F(w)$, the forward and backward crossings in the annulus $B_x(s_{k+2})\setminus B_x(s_{k+1})$ are the same as with respect to $A_1 = B_x (s_{k+1})$ and $A_2 = \partial B_x(s_{k+2})$. Recall the definition of involved loops in Definition \ref{definition_Psi}. We say a crossing path is involved if it is included by some involved loop.

	%


	 The exploration process $\mathcal{T}_n$ is described as follows.  
%
%
%
%


	\textbf{Step 0:} We define the collection $\mathfrak{L}^{w}$ by  $$\mathfrak{L}^{w}:=\Big\{\widetilde{\ell}\in \widetilde{\mathcal{L}}_{1/2}:\widetilde{\ell}\ \text{intersects}\ B_x(s_{k+1})\ \text{and}\ \partial B_x(s_{k+2})\ \text{for some}\ x\in  F(w)  \Big\}.$$
	We sample every backward crossing path $\widetilde{\eta}^{\mathrm{B}}$ of every loop in $\mathfrak{L}^{w}$ except its Brownian excursions at $\cup_{x\in F(w)}\partial \hat{B}_x(s_{k+2})$ (i.e. we reserve the randomness of these Brownian excursions and only sample the remaining part of $\widetilde{\eta}^{\mathrm{B}}$). For each forward crossing path $\widetilde{\eta}^{\mathrm{F}}$ of a loop in $\mathfrak{L}^{w}$, its starting point and ending point are now fixed. Thus, now we can determine the collection of $(k+1)$-unqualified points in $F(w)$ and denote it by $\mathcal{D}$.

	We also sample all forward crossing paths $\widetilde{\eta}^{\mathrm{F}}$ contained in $\cup_{x\in F^{\mathrm{II}}(w)}\widetilde{B}_{x}(s_{k+2})$. Note that a loop $\widetilde{\ell}\in \mathfrak{L}^{w}$ is involved if and only if it contains a forward or backward crossing path that intersects $\widetilde{B}(n)$, and that the Brownian excursions of a fundamental loop $\widetilde{\ell}$ do not make any difference on whether $\widetilde{\ell}$ intersects $\widetilde{B}(n)$. In addition, every $\widetilde{\eta}^{\mathrm{F}}$ contained in $\cup_{x\in F^{\mathrm{I}}(w)}\widetilde{B}_{x}(s_{k+2})$ cannot intersect $\widetilde{B}(n)$ (since $\widetilde{B}_{x}(s_{k+2})\cap \widetilde{B}(n)=\emptyset$ for all $x\in F^{\mathrm{I}}(w)$). Thus, now we can determine which loop in $\mathfrak{L}^{w}$ is involved. Let $\mathcal{E}$ be the collection of all involved crossing paths sampled up until now.

	Since $\widetilde{B}_{x}(s_{k+2})\cap \widetilde{B}(n)=\emptyset$ for all $x\in F^{\mathrm{I}}(w)$, every loop contained in $\widetilde{B}_{x}(s_{k+2})$ is not involved. In light of this, we say a point $x\in F^{\mathrm{I}}(w)$ is inactive if there is no involved forward crossing path in $\widetilde{B}_{x}(s_{k+2})$. We also say the remaining points in $F(w)$ are active. Especially, all points in $F^{\mathrm{II}}(w)$ are active. We denote by $\mathcal{A}$ the collection of all active points. Note that $\mathcal{A}$ is already determined. Then we sample the Brownian excursions of all backward crossing paths in $\mathcal{E}$ at $\cup_{F(w)\setminus \mathcal{A}} \partial \hat{B}_x(s_{k+2})$. See Figure \ref{pic3} for an illustration of Step $0$.

		\begin{figure}[h]
		\centering
		\includegraphics[width=13cm]{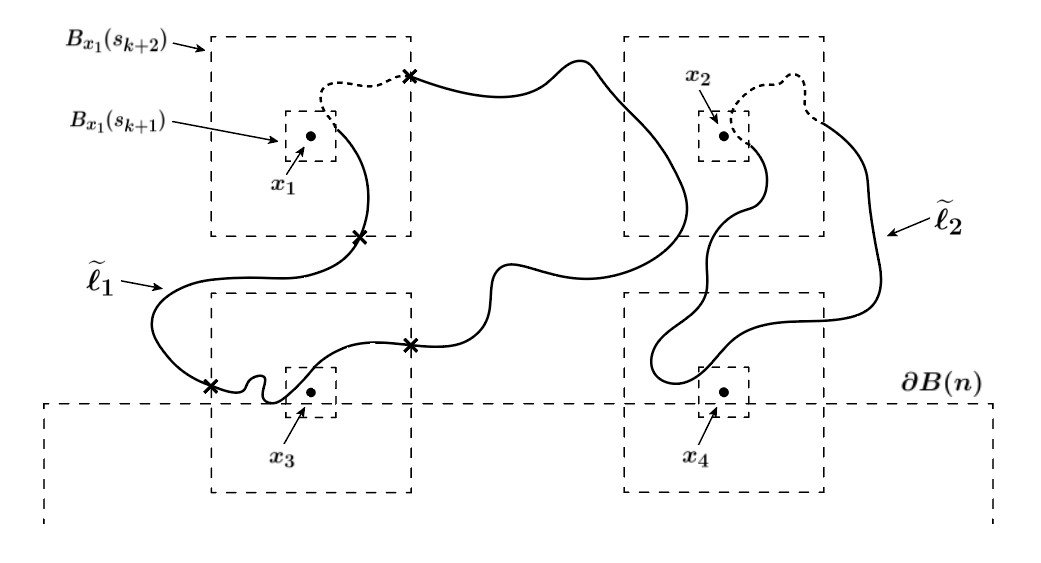}
		\caption{An illustration for Step $0$. In this example, $\widetilde{\ell}_1$ is involved but $\widetilde{\ell}_2$ is not. $x_1$, $x_3$ and $x_4$ are active, while $x_2$ is inactive. Moreover, the Brownian excursions of $\widetilde{\ell}_1$ at the positions highlighted by ``$\boldsymbol{\times}$'' are not sampled. Furthermore, the solid curves are sampled, while dashed ones are not.  }
		\label{pic3}
		\end{figure}

The following statistics for $\mathcal{T}_n$ will be recorded. We will provide their initial values, and then describe how they are updated as the construction of $\mathcal{T}_n$ proceeds:

\begin{itemize}
	
	\item $\mathbf{C}_p$ with $\mathbf{C}_0=\{\bm{0}\}\subset \widetilde{\mathbb{Z}}^d$: the existing cluster. I.e., the cluster containing $\bm{0}$ and composed of the collection of all involved loops (or their crossing paths) which have been sampled.

	\textbf{P.S.}: The subscript $p$ of $\mathbf{C}_p$ indicates that $\mathbf{C}_p$ is the existing cluster after Step $p$. The subscript $p$ in notations for other statistics also has the same meaning. As we will show later, there is an intermediate cluster $\mathbf{C}_p^{\dagger}$ in each step. We also call this intermediate cluster ``existing cluster'' although it will only be used in the construction but not in the analysis later.

	While two crossing paths of a loop may not be connected to each other by themselves, they are connected in the loop cluster (since they are from the same loop). Thus, when referring to the cluster including paths in $\mathcal{E}$, we always consider all crossing paths from the same loop as connected.


	\item $F_p$ with $F_0=\mathcal{A}$: the collection of all unvisited active points $x\in F(w)$.

	\textbf{P.S.}: We hereby introduce the definition of a visited point $x$. For any $p\in \mathbb{N}^+$ and $x\in   F_{p-1}$ (i.e. $x$ is active and is not visited up to Step $p-1$), we say $x$ is visited in Step $p$ if the aforementioned existing cluster $\mathbf{C}_p^{\dagger}$ (which grows as $\mathcal{T}_n$ progresses) intersects $\partial \hat{B}_x(s_{k+2})$. During the construction of $\mathcal{T}_n$, we say $x$ is unvisited if it is not visited yet.

	 Intuitively, ``$x$ is visited in Step $p$'' indicates that with a positive probability $x$ is connected to $\mathbf{C}_p^{\dagger}$ by the involved loops and forward crossing paths in $\widetilde{B}_x(s_{k+2})$ (which justifies our choice of the word ``visited''). See Lemma \ref{lemma_721} for a precise statement on a uniform lower bound on this probability. Note that the reason we maintain the randomness of the Brownian excursions at $\cup_{x\in \mathcal{A}}\partial \hat{B}_x(s_{k+2})$ in Step $0$ (also in some subsequent steps) is to ensure this uniform lower bound.

	\item $N_{p}^{\text{vis}}$ with $N_{0}^{\text{vis}}=0$: the number of visited points.

	\item $N_{p}^{k\text{-UQ}}$ with $N_{0}^{k\text{-UQ}}=0$: the number of visited, $k$-unqualified points.

	\item $N_{p}^{\text{vis-}\mathcal{D}}$ with $N_{0}^{\text{vis-}\mathcal{D}}=0$: the number of visited points in $\mathcal{D}$.

	\item $N_{p}^{\text{con}}$ with $N_{0}^{\text{con}}=0$: the number of visited points $x\in \mathcal{A}$ such that $x$ is connected to the existing cluster $\mathbf{C}_p^{\dagger}$ by $\cup\mathfrak{I}_{p}^x$, where $\mathfrak{J}_{p}^x$ is the collection of the involved loops and involved forward crossing paths in $\widetilde{B}_x(s_{k+2})$ and the Brownian excursions of sampled loops at $\partial \hat{B}_x(s_{k+2})$.

%

	
	%
	%

	%
	
	%
	%
	%

	\item $N_{p}^{\text{con-}\mathcal{D}}$ with $N_{0}^{\text{con-}\mathcal{D}}=0$: the number of points counted by both $N_{p}^{\text{vis-}\mathcal{D}}$ and $N_{p}^{\text{con}}$.

\end{itemize}

\textbf{Step $\boldsymbol{p}$ ($p \ge 1$)}: Suppose that we have completed the $(p-1)$-th step of $\mathcal{T}_n$ and as a result have obtained $\mathbf{C}_{p-1}$, $F_{p-1}$, $N_{p-1}^{\text{vis}}$, $N_{p-1}^{k\text{-UQ}}$, $N_{p-1}^{\text{vis-}\mathcal{D}}$, $N_{p-1}^{\text{con}}$ and $N_{p-1}^{\text{con-}\mathcal{D}}$. Now we describe the $p$-th step as follows. 

%

Firstly, we sample all unsampled fundamental loops $\widetilde{\ell}$ in the following collection except their Brownian excursions at $\cup_{x\in \mathcal{A}} \partial \hat{B}_x(s_{k+2})$: 
$$
\left\lbrace \widetilde{\ell}\notin \mathfrak{L}^w: \mathrm{ran}(\widetilde{\ell})\cap \widetilde{B}(n)\neq \emptyset, \mathrm{ran}(\widetilde{\ell})\cap \mathbf{C}_{p-1}\neq \emptyset\ \text{and}\ \forall x\in \mathcal{A}, \mathrm{ran}(\widetilde{\ell}) \not\subset \widetilde{B}_x(s_{k+2}) \right\rbrace. 
$$

\textbf{P.S.}: We say a fundamental loop is unsampled if none of its edges has been sampled.



%
%

%
%

Secondly, we sample all unsampled glued point loops $\gamma_{y}^{\mathrm{p}}$ with $\mathbf{C}_{p-1}\cap \gamma_{y}^{\mathrm{p}}\neq \emptyset$ for $y\in B(n-1)\setminus \cup_{x\in \mathcal{A}}\hat{B}_{x}(s_{k+2})$. Moreover, for each $y\in \cup_{x\in F_{p-1}}\partial \hat{B}_{x}(s_{k+2})\cap B(n-1)$, we sample whether the glued point loop $\gamma_{y}^{\mathrm{p}}$ or the Brownian excursions of all sampled loops at $y$ intersect $\mathbf{C}_{p-1}$ (but do not sample the whole configuration of $\gamma_{y}^{\mathrm{p}}$ or these Brownian excursions).

Thirdly, for each $y\in \mathbb{Z}^d$ and each of its incident edge $e$ with $I_e\subset \widetilde{B}(n)$ and $I_e\not\subset \cup_{x\in \mathcal{A}} \widetilde{B}_{x}(s_{k+2})$, if $y\in\mathbf{C}_{p-1}$ then we let $v_{e,y}$ be the furthest point in $I_e$ connected to $y$ by $\mathbf{C}_{p-1}\cap I_e$. We sample the cluster of the glued loop $\gamma_{e}^{\mathrm{e}}$ containing $v_{e,y}$.

Let $\mathbf{C}_{p}^{\dagger}$ be the cluster composed of $\mathbf{C}_{p-1}$ and all these sampled loops (or partial loops). (\textbf{P.S.}: For each $y\in \cup_{x\in F_{p-1}}\partial \hat{B}_{x}(s_{k+2})\cap B(n-1)$, if $\gamma_{y}^{\mathrm{p}}$ is sampled to intersect $\mathbf{C}_{p-1}\cap I_e$ for some edge $e$, then we include $I_e$ in $\mathbf{C}_{p}^{\dagger}$. In addition, if the Brownian excursions of all sampled loops at $y$ are sampled to intersect $\mathbf{C}_{p-1}\cap I_e$ for some edge $e$, then we add both $I_e$ and the sampled part of every loop that intersects $y$ to $\mathbf{C}_{p}^{\dagger}$.)

There are two sub-cases for the subsequent construction:

\textbf{Case $\boldsymbol{p}$.1:} If $\mathbf{C}_{p}^{\dagger}$ does not intersect $\cup_{x\in F_{p-1}} \hat{B}_x(s_{k+2})$, then we set $\mathbf{C}_{p}=\mathbf{C}_{p}^{\dagger}$, maintain all other statistics and go to the next step.


\textbf{Case $\boldsymbol{p}$.2:} Otherwise, we enumerate all points $x\in F_{p-1}$ with $\hat{B}_x(s_{k+2}) \cap \mathbf{C}_{p}^{\dagger}\neq \emptyset$ as $\{x_{l}\}_{1\le l\le l_p}$. Then we sample all forward crossing paths and loops contained in every $\widetilde{B}_{x_l}(s_{k+2})$, and sample all Brownian excursions of sampled loops at every $\partial \hat{B}_{x_l}(s_{k+2})$. Let $\mathbf{C}_p$ be the cluster composed of $\mathbf{C}_{p-1}$, and all involved loops and involved crossing paths sampled up until now. If $\mathbf{C}_{p}=\mathbf{C}_{p-1}$, then we maintain all statistics and stop the process. Otherwise, we update the values of our statistics in the following way and then go the the next step:

\begin{itemize}

\item[-] $F_{p}:=F_{p-1} \setminus \{x_1,...,x_{l_p}\}$ and  $N^{\text{vis}}_{p}:= N^{\text{vis}}_{p-1}+l_p$.

\item[-] $N_{p}^{k\text{-UQ}}:=N_{p-1}^{k\text{-UQ}}+\big|\{1\le l\le l_p:x_l\ \text{is}\ k\text{-unqualified}\}\big|$.

\item[-] $N_{p}^{\text{vis-}\mathcal{D}}:=N_{p-1}^{\text{vis-}\mathcal{D}}+\big|\{1\le l\le l_p:x_l\in \mathcal{D} \}\big|$.

\item[-]   $N_{p}^{\text{con}}:=N_{p-1}^{\text{con}}+\big|\{1\le l\le l_p:x_l\ \text{and}\ \mathbf{C}_{p}^{\dagger}\ \text{are connected by}\ \cup \mathfrak{I}_{p}^{x_l}\}\big|$.

\item[-]   $N_{p}^{\text{con-}\mathcal{D}}:=N_{p-1}^{\text{con-}\mathcal{D}}+\big|\{1\le l\le l_p:x_{l}\ \text{is counted by both}\ N_{p}^{\text{vis-}\mathcal{D}}\ \text{and}\ N_{p}^{\text{con}}\}\big|$.

\end{itemize}

This completes the construction of our exploration process. It is easy to see that each process $\mathcal{T}_n$ a.s. stops after a finite number of steps, which is denoted as $p_*$. Let $\mathbf{C}_{*}$, $F_{*}$, $N_{*}^{\text{vis}}$, $N_{*}^{k\text{-UQ}}$, $N_{*}^{\text{vis-}\mathcal{D}}$, $N_{*}^{\text{con}}$ and $N_{*}^{\text{con-}\mathcal{D}}$ be the corresponding statistics of $\mathbf{C}_p$, $F_{p}$, $N_{p}^{\text{vis}}$, $N_{p}^{k\text{-UQ}}$, $N_{p}^{\text{vis-}\mathcal{D}}$, $N_{p}^{\text{con}}$ and $N_{p}^{\text{con-}\mathcal{D}}$ when $\mathcal{T}_n$ stops.

Recall $\overline{\mathbf{\Psi}}_n^*$ in the paragraph after Remark \ref{remark_51}.


\begin{lemma}\label{lemma_718}
	(1) If an involved loop intersects $\mathbf{C}_*$, it is included in $\mathbf{C}_*$.

	\noindent(2) If $\mathbf{C}_*$ intersects an involved loop or involved forward crossing path in $\widetilde{B}_x(s_{k+2})$ for some $x\in F(w)$, then $x$ is visited.

	\noindent(3)  $\mathcal{T}_n$ constructs $\mathbf{\Psi}_n^1$ eventually. I.e., $\mathbf{C}_*=\mathbf{\Psi}_n^1$.

	\noindent(4) For any $x\in \overline{\mathbf{\Psi}}_n^*\cap F(w)$, $x$ must be visited in some step of $\mathcal{T}_n$.
\end{lemma}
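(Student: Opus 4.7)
The plan is to establish the four items in the stated order, exploiting the fundamental invariant built into $\mathcal{T}_n$: at each step, every involved loop that already touches the growing cluster $\mathbf{C}_{p-1}$ is fully sampled in that step, except that loops confined to $\widetilde{B}_x(s_{k+2})$ for some active and not-yet-visited $x$ are deferred, and only revealed once $x$ becomes visited (via Case $p.2$). With this invariant as the backbone, all four items follow essentially by chasing the sampling rules.

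For (1), I would induct on $p$. The base $\mathbf{C}_0=\{\bm{0}\}$ is trivial. For the inductive step, let $\widetilde{\ell}$ be an involved loop meeting $\mathbf{C}_p\setminus \mathbf{C}_{p-1}$. If $\widetilde{\ell}$ is not contained in $\widetilde{B}_x(s_{k+2})$ for any active $x$, then the outer fundamental/point/edge sampling phase of step $p$ captures it in $\mathbf{C}_p^{\dagger}\subset \mathbf{C}_p$. Otherwise, some active $x$ has $\widetilde{\ell}\subset \widetilde{B}_x(s_{k+2})$; then $\mathbf{C}_p^{\dagger}$ reaches $\hat{B}_x(s_{k+2})$, so $x$ becomes visited in step $p$ and Case $p.2$ samples every loop inside $\widetilde{B}_x(s_{k+2})$, including $\widetilde{\ell}$. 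The convention that all crossing paths of a single loop are identified as connected handles the case in which $\widetilde{\ell}\in \mathfrak{L}^w$ had some of its crossings sampled already in Step 0. For (2), the only way $\mathbf{C}_*$ can meet an involved loop or involved forward crossing path inside $\widetilde{B}_x(s_{k+2})$ is through Case $p.2$, since the outer phase of every step $p\ge 1$ explicitly excludes loops contained in $\widetilde{B}_x(s_{k+2})$ for active $x$, and Step 0 only pre-samples crossings of loops in $\mathfrak{L}^w$ without adjoining them to $\mathbf{C}_0=\{\bm{0}\}$. So there is a step $p$ with $\mathbf{C}_p^{\dagger}\cap \hat{B}_x(s_{k+2})\neq \emptyset$, which is exactly the event that $x$ is visited.

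For (3), the inclusion $\mathbf{C}_*\subset \mathbf{\Psi}_n^1$ is immediate from the construction. Conversely, if $v\in \mathbf{\Psi}_n^1$, pick a finite chain of involved loops from $\bm{0}$ to $v$ and apply (1) repeatedly along the chain. For (4), decompose $x\in \overline{\mathbf{\Psi}}_n^*\cap F(w)$ into the cases $x\in \mathbf{\Psi}_n^1\cap \mathbb{Z}^d\setminus B(n-1)$ and $x\in \mathbf{\Psi}_n^2$. In the first case, $x\in \mathbf{C}_*$ by (3), so any involved loop through $x$ sits inside $\widetilde{B}_x(s_{k+2})$ and intersects $\mathbf{C}_*$, and (2) shows $x$ is visited. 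In the second case, the relation $I_{\{x,x^{\mathrm{in}}\}}\subset \gamma_x^{\mathrm{p}}\cup \mathbf{\Psi}_n^1$ forces an involved point loop at $x$ (or an involved loop containing $x^{\mathrm{in}}\in B(n-1)$) to lie in $\mathbf{C}_*$, and in both sub-subcases $\mathbf{C}_*\cap \hat{B}_x(s_{k+2})\neq \emptyset$ at some step, so again $x$ is visited. The main obstacle I anticipate is the bookkeeping for $x\in F^{\mathrm{II}}(w)$ in part (2), where $\widetilde{B}_x(s_{k+2})$ overlaps $\widetilde{B}(n)$; there I will need to verify carefully that involved loops strictly inside $\widetilde{B}_x(s_{k+2})$ are indeed deferred until the visit event, rather than being inadvertently captured by the outer sampling phases of step $p\ge 1$ or by the forward-crossing pre-sampling of Step 0.
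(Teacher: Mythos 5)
Your proposal is correct and follows essentially the same route as the paper: the same case division in (1) (loops not confined to any active ball, loops confined to an active ball, loops in $\mathfrak{L}^w$ handled via the convention that crossing paths of one loop count as connected), with (2) extracted from that analysis, (3) by maximality/connectivity, and (4) by combining (2) with (3). The only slip is in (4), where an involved loop through $x$ need not ``sit inside $\widetilde{B}_x(s_{k+2})$'' (it may belong to $\mathfrak{L}^w$ and exit the ball), but its forward crossing path through $x$ does lie in $\widetilde{B}_x(s_{k+2})$, so your appeal to item (2) — which you correctly stated for forward crossing paths as well — still closes the argument exactly as in the paper.
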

\begin{proof}
We prove all these four items one by one. 

 (1) We divide the involved loops into four types and prove Item (1) separately:

\textbf{Type 1} (All involved loops $\widetilde{\ell}\in \widetilde{\mathcal{L}}_{1/2}^{\mathrm{f}}$ with $\widetilde{\ell}\notin \mathfrak{L}^w$ and $ \mathrm{ran}(\widetilde{\ell}) \not\subset \widetilde{B}_x(s_{k+2})$ for all $x\in \mathcal{A}$): If such a loop $\widetilde{\ell}$ intersects $\mathbf{C}_{*}$, then it also intersects the existing cluster in some step of $\mathcal{T}_n$, and thus is sampled and contained in $\mathbf{C}_{*}$.

\textbf{Type 2} (All involved edge loops and point loops that are not contained in $\cup_{x\in \mathcal{A}}\widetilde{B}_{x}(s_{k+2})$): For the same reason as in Type 1, these loops are included by $\mathbf{C}_{*}$.

\textbf{Type 3} (All involved loops $\widetilde{\ell}$ contained in some $\widetilde{B}_x(s_{k+2})$, $x\in \mathcal{A}$): Since $\widetilde{\ell}$ intersects $\mathbf{C}_{*}$, we know that $\widetilde{\ell}$ intersects $\mathbf{C}_{p'}^{\dagger}$ for some $p'\in [0,p_*-1]$. Since $\mathbf{C}_{p'}^{\dagger}$ is connected, we see that $\mathbf{C}_{p'}^{\dagger}$ intersects $\partial \hat{B}_x(s_{k+2})$ and as a result, $x$ is visited. Thus, $\widetilde{\ell}$ is sampled and included in $\mathbf{C}_{*}$.

\textbf{Type 4}	(All involved loops $\widetilde{\ell}\in \mathfrak{L}^w$): We assume that $\mathbf{C}_{*}$ intersects some involved loops $\widetilde{\ell}\in \mathfrak{L}^w$, and we next prove that $\widetilde{\ell}$ is included in $\mathbf{C}_{*}$. Since $\widetilde{\ell}$ is fully decomposed into backward and forward crossing paths, $\mathbf{C}_{*}$ either intersects some backward crossing path $\widetilde{\eta}^{\mathrm{B}}$ or forward crossing path $\widetilde{\eta}^{\mathrm{F}}$ of $\widetilde{\ell}$. We next consider these two (possibly overlapping) subcases separately.

\begin{enumerate}
	\item[(a)] $\mathbf{C}_{*}$ intersects $\widetilde{\eta}^{\mathrm{B}}$: By the construction of $\mathcal{T}_n$, $\widetilde{\eta}^{\mathrm{B}}$ (and also every other backward crossing path of $\widetilde{\ell}$) must be contained in $\mathbf{C}_*$. Therefore, for every $x_{\diamond}\in F(w)$ such that $\widetilde{B}_{x_{\diamond}}(s_{k+2})$ contains some forward crossing path of $\widetilde{\ell}$, $x_{\diamond}$ will be visited, which implies that $\widetilde{\ell}$ must be completely sampled, and thus is included in $\mathbf{C}_{*}$.

	\item[(b)] $\mathbf{C}_{*}$ intersects $\widetilde{\eta}^{\mathrm{F}}$: Suppose that $\widetilde{\eta}^{\mathrm{F}}$ is contained in $\widetilde{B}_{x}(s_{k+2})$ for some $x\in F(w)$. For the same reason as in Type 3, one can show that $x$ is visited, and thus $\widetilde{\eta}^{\mathrm{F}}$ is sampled and contained in $\mathbf{C}_{*}$. With the same argument as in Subcase(a), $\widetilde{\ell}$ is also included in $\mathbf{C}_{*}$.
	
\end{enumerate}
To sum up, we conclude Item (1).

(2) Since there is no involved loop in $\widetilde{B}_x(s_{k+2})$ for any $x\in F(w)\setminus \mathcal{A}$, this follows directly by combining the above analysis for Type 3 and Subcase (b) for Type 4.

(3) It is clear from the definition of $\mathcal{T}_n$ that $\mathbf{C}_*$ is composed of involved loops. Therefore, $\mathbf{C}_*\subset \mathbf{\Psi}_n^1$. If $\mathbf{C}_*\subsetneq \mathbf{\Psi}_n^1$, since $\mathbf{C}_*$ and $\mathbf{\Psi}_n^1$ are both connected subsets, in $\mathbf{\Psi}_n^1$ there exists an involved loop $\widetilde{\ell}$ that intersects $\mathbf{C}_*$ and is not included in $\mathbf{C}_*$. However, by Item (1), such $\widetilde{\ell}$ does not exist. By contradiction, we get Item (3).

(4) For any $x\in \overline{\mathbf{\Psi}}_n^*\cap F(w)$, $\widetilde{B}_x(1)$ must intersect some involved loop $\widetilde{\ell}$ or forward corssing path $\widetilde{\eta}^{\mathrm{F}}$ in $\widetilde{B}_x(s_{k+2})$, which is included in $\mathbf{\Psi}_n^1$($=\mathbf{C}_*$, by Item (3)). By Item (2), the existence of such $\widetilde{\ell}$ or $\widetilde{\eta}^{\mathrm{F}}$ implies that $x$ is visited. Now the proof is complete. \end{proof}

Recall $\zeta_w$ in (\ref{7.18}). For any $k\in \mathbb{N}$, we define
\begin{equation}
\zeta_w^{k\mbox{-}\mathrm{UQ}}:= \big|x\in \overline{\mathbf{\Psi}}_n^*\cap F(w):x\ \text{is}\ k\text{-unqualified}\big|.
\end{equation}


%

\begin{lemma}
Almost surely we have
\begin{equation}\label{equation_617}
	\zeta_{w}^{k\mbox{-}\mathrm{UQ}} \le N_{*}^{k\mbox{-}\mathrm{UQ}}  \le 	N_{*}^{\mathrm{vis}},
\end{equation}
\begin{equation}\label{equation_618}
	N_{*}^{\mathrm{con}}\le \zeta_{w} \le N_{*}^{\mathrm{vis}}, 
\end{equation}
\begin{equation}\label{ineq_7.67}
	N_{*}^{\mathrm{con}\mbox{-}\mathcal{D}}\le 	\zeta_{w}^{(k+1)\mbox{-}\mathrm{UQ}}.
\end{equation}
\end{lemma}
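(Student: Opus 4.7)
All three inequalities are bookkeeping identities for the exploration process $\mathcal{T}_n$, and I would prove them in the order they are stated, relying almost entirely on Lemma \ref{lemma_718}.

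For the first line, the upper bound $N_{*}^{k\mbox{-}\mathrm{UQ}}\le N_{*}^{\mathrm{vis}}$ is immediate from the update rules: each time $N_{p}^{k\mbox{-}\mathrm{UQ}}$ is incremented (because some visited $x_l$ is $k$-unqualified) the quantity $N_{p}^{\mathrm{vis}}$ is incremented as well. The lower bound $\zeta_{w}^{k\mbox{-}\mathrm{UQ}}\le N_{*}^{k\mbox{-}\mathrm{UQ}}$ follows from Lemma \ref{lemma_718}(4): every $x\in \overline{\mathbf{\Psi}}_n^*\cap F(w)$ is visited by $\mathcal{T}_n$, so each such $x$ that is $k$-unqualified contributes to $N_{*}^{k\mbox{-}\mathrm{UQ}}$, and no $x$ is counted twice because the update step removes $x_1,\ldots,x_{l_p}$ from $F_{p-1}$.

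For the second line, the bound $\zeta_w \le N_{*}^{\mathrm{vis}}$ is the same application of Lemma \ref{lemma_718}(4). The slightly less trivial half is $N_{*}^{\mathrm{con}}\le \zeta_w$. Here I would argue as follows: suppose $x\in \mathcal{A}$ is counted in $N_{*}^{\mathrm{con}}$ at step $p$, so $x$ is connected to the existing cluster $\mathbf{C}_{p}^{\dagger}$ via $\cup \mathfrak{J}_{p}^{x}$. By the very definition of $\mathfrak{J}_p^x$, this connection is realized by involved loops/involved forward crossing paths in $\widetilde{B}_x(s_{k+2})$ and Brownian excursions of already sampled loops at $\partial\hat{B}_x(s_{k+2})$, all of which are absorbed into $\mathbf{C}_p$ at step $p$ (or already lie in $\mathbf{C}_{p-1}$). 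Consequently the lattice point $x$ itself lies in $\mathbf{C}_*$, and by Lemma \ref{lemma_718}(3) one has $x\in \mathbf{\Psi}_n^1$. Since $x\in F(w)\subseteq B(n+[(1+\lambda)N]^b)\setminus B(n-1)$, this places $x$ in $\mathbf{\Psi}_n^1\cap (\mathbb{Z}^d\setminus B(n-1))\cap B(n+[(1+\lambda)N]^b)\subseteq \overline{\mathbf{\Psi}}_n^*\cap F(w)$, so $x$ is indeed counted by $\zeta_w$. Distinct visited points remain distinct, hence $N_{*}^{\mathrm{con}}\le \zeta_w$.

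The third line is a direct refinement. A point $x$ counted by $N_{*}^{\mathrm{con}\mbox{-}\mathcal{D}}$ is by definition counted by both $N_{*}^{\mathrm{vis}\mbox{-}\mathcal{D}}$ and $N_{*}^{\mathrm{con}}$; being in $\mathcal{D}$ means $x$ is $(k+1)$-unqualified (recall that $\mathcal{D}$ is precisely the set of $(k+1)$-unqualified points of $F(w)$, as determined in Step $0$ of $\mathcal{T}_n$), and the previous paragraph shows $x\in \overline{\mathbf{\Psi}}_n^*\cap F(w)$. Hence $x$ contributes to $\zeta_w^{(k+1)\mbox{-}\mathrm{UQ}}$. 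The proof is pure bookkeeping; the only points requiring care are that visited points are not double-counted (guaranteed by the rule $F_p=F_{p-1}\setminus\{x_1,\ldots,x_{l_p}\}$) and that ``connection via $\cup\mathfrak{J}_p^x$'' forces membership in $\mathbf{\Psi}_n^1$, which is the content of Lemma \ref{lemma_718}(3). I therefore anticipate no serious obstacle.
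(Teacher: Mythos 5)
Your proposal is correct and follows essentially the same bookkeeping route as the paper: both rely on Item (4) of Lemma \ref{lemma_718} for the bounds involving $\zeta_w$ and $N_*^{\mathrm{vis}}$, and on the observation that a point counted by $N_*^{\mathrm{con}}$ (or $N_*^{\mathrm{con}\mbox{-}\mathcal{D}}$) lies in $\overline{\mathbf{\Psi}}_n^*\cap F(w)$. Your explicit appeal to Lemma \ref{lemma_718}(3) to justify that ``connected via $\cup\mathfrak{J}_p^x$'' forces membership in $\mathbf{\Psi}_n^1$ merely spells out a step the paper states without elaboration.
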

\begin{proof}

\textbf{Proof of (\ref{equation_617}):}  Since every point $x$ counted by $\zeta_{w}^{k\text{-UQ}}$ is in $\overline{\mathbf{\Psi}}_n^*\cap F(w)$, by Item (4) of Lemma \ref{lemma_718}, $x$ is visited in some step of $\mathcal{T}_n$. Thus, $x$ is also counted by $N_{*}^{k\text{-UQ}}$ since it is $k$-unqualified. As a result, we obtain $\zeta_{w}^{k\text{-UQ}} \le N_{*}^{k\text{-UQ}}$. The second inequality of (\ref{equation_617}) is straightforward since every point counted by $N_{*}^{k\text{-UQ}}$ is visited.

\textbf{Proof of (\ref{equation_618}):} Recall that for each $x$ counted by $N_{*}^{\text{con}}$, $x$ is connected to the existing cluster by the involved loops and involved forward crossing paths in $\widetilde{B}_x(s_{k+2})$ and the Brownian excursions of sampled loops at $\partial \hat{B}_x(s_{k+2})$. This implies that $x$ is counted by $\zeta_{w}$, and thus $N_{*}^{\text{con}} \le \zeta_{w}$. In addition, By Item (4) of Lemma \ref{lemma_718}, for every $x$ counted by $\zeta_{w}$ (i.e. $x\in \overline{\mathbf{\Psi}}_n^*\cap F(w)$), $x$ must be visited in some step of $\mathcal{T}_n$, which implies $\zeta_{w} \le N_{*}^{\text{vis}}$.

\textbf{Proof of (\ref{ineq_7.67}):} For the same reason as proving $N_{*}^{\text{con}} \le \zeta_{w}$, the points counted by $N_{*}^{\text{con}\mbox{-}\mathcal{D}}$ are in $\overline{\mathbf{\Psi}}_n^*\cap F(w)$. Thus, all these points are also counted by $\zeta_{w}^{(k+1)\mbox{-}\text{UQ}}$ since they are $(k+1)$-unqualifid. Now we also conclude (\ref{ineq_7.67}).  
\end{proof}


Recall $\mathfrak{I}_{p}^x$ in the definition of $N_{p}^{\mathrm{con}}$, and $\mathbf{C}_{p}^{\dagger}$ in the construction of Step $p$. As promised, in the following lemma we will prove a uniform lower bound for the probability that a visited point $x$ is counted by $N_*^{\mathrm{con}}$. For the sake of fluency in writing, we leave its proof in Section \ref{section_technical_lemma_2}.

Let $\mathfrak{C}_{p}^x$ be the collection of all possible configurations of $\mathcal{T}_n$ up to sampling $\mathbf{C}_{p}^{\dagger}$ such that $x$ is visited in Step $p$. For any $\omega\in \mathfrak{C}_{p}^x$, let $\mathbb{P}\left(\cdot \mid \omega \right)$ be the conditional measure given that the configuration of $\mathcal{T}_n$ up to sampling $\mathbf{C}_{p}^{\dagger}$ is exactly $\omega$.

%
%
%

\begin{lemma}\label{lemma_721}
	There exist $c(d),C(d)>0$ such that for any $x\in F(w)$, $p\in \mathbb{N}^+$ and $\omega\in \mathfrak{C}_{p}^x$, we have 
	\begin{equation}\label{new_ineq_7.89}
		\mathbb{P}\Big( x\xleftrightarrow[]{\mathfrak{I}_{p}^x} \mathbf{C}_{p}^{\dagger} \mid \omega\Big)\ge  ce^{-C\log^{2}(s_{k+2})}. 
	\end{equation}
\end{lemma}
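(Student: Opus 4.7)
The plan is to choose a lattice point $y^*\in\mathbf{C}_p^\dagger\cap\hat{B}_x(s_{k+2})$, which exists by the definition of $\mathfrak{C}_p^x$, and exhibit a favorable event $\mathsf{A}$ depending only on randomness left undetermined by $\omega$ such that on $\mathsf{A}$ there is a path in $\cup\mathfrak{I}_p^x$ from $x$ to $y^*$, while $\mathbb{P}(\mathsf{A}\mid\omega)\ge ce^{-C\log^2(s_{k+2})}$. The unsampled randomness available consists of the Brownian excursions at $\partial\hat{B}_x(s_{k+2})$, any fundamental, point, or edge loops entirely contained in $\widetilde{B}_x(s_{k+2})$ (which the exploration does not sample until $x$ itself is visited in the final case of Step $p$), and, when $x\in F^{\mathrm{I}}(w)$, the interior trajectories of the involved forward crossing paths in $\widetilde{B}_x(s_{k+2})$ whose endpoints alone have been revealed.

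If $x\in F^{\mathrm{II}}(w)$, then $\widetilde{B}_x(s_{k+2})\cap\widetilde{B}(n)\neq\emptyset$, so I would fix any lattice point $z^*\in B_x(s_{k+2})\cap B(n-1)$, and let $\mathsf{A}$ be the event that $\widetilde{\mathcal{L}}_{1/2}$ contains a fundamental loop $\widetilde{\ell}^*$ with $\mathrm{ran}(\widetilde{\ell}^*)\subset\widetilde{B}_x(s_{k+2})$ visiting the three lattice points $x$, $y^*$, $z^*$. Such a loop is automatically involved (through $z^*\in\widetilde{B}(n)$), lies in $\mathfrak{I}_p^x$, and connects $x$ to $y^*$. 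Since it is fully contained in $\widetilde{B}_x(s_{k+2})$, its presence is independent of $\omega$; its loop measure is bounded below by chaining three random walk legs $x\to z^*\to y^*\to x$ each staying inside $B_x(s_{k+2})$ with probability at least $ce^{-C\log^2(s_{k+2})}$ by the translation-invariant analogue of Lemma \ref{lemma_66}.

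If instead $x\in F^{\mathrm{I}}(w)$, no loop with range in $\widetilde{B}_x(s_{k+2})$ can be involved, but the activeness of $x$ guarantees an involved forward crossing path $\widetilde{\eta}^{\mathrm{F}}\in\mathcal{E}$ in $\widetilde{B}_x(s_{k+2})$ with revealed endpoints $u^*\in B_x(s_{k+1})$ and $v^*\in\partial B_x(s_{k+2})$ but withheld interior trajectory. By Lemma \ref{lemma_new_2.3} (the spatial Markov property) combined with the careful design of the exploration, the conditional distribution of this trajectory given $\omega$ is $\widetilde{\mathbb{P}}_{u^*}(\,\cdot\,\mid\widetilde{\tau}_{A_2}=\widetilde{\tau}_{v^*})$, independent of every other unsampled object. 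I would take $\mathsf{A}$ to be the event that $\widetilde{\eta}^{\mathrm{F}}$ passes through both $x$ and $y^*$, which forces $x$ and $y^*$ onto the same arc of $\widetilde{\eta}^{\mathrm{F}}\in\mathfrak{I}_p^x$. Decomposing the trajectory at $x$ and $y^*$ by the strong Markov property, applying the translation-invariant analogue of Lemma \ref{lemma_66} to each leg, and dividing by the conditioning probability $\widetilde{\mathbb{P}}_{u^*}(\widetilde{\tau}_{A_2}=\widetilde{\tau}_{v^*})\ge ce^{-C\log^2(s_{k+2})}$ (itself from Lemma \ref{lemma_66}) produces the required bound on $\mathbb{P}(\mathsf{A}\mid\omega)$.

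The main obstacle will be the verification, in the $F^{\mathrm{I}}(w)$ case, that the interior trajectory of $\widetilde{\eta}^{\mathrm{F}}$ conditioned on the entire history $\omega$ of $\mathcal{T}_n$ through $\mathbf{C}_p^\dagger$ truly has the asserted Brownian-bridge-type distribution. This demands careful bookkeeping that Step 0 and every subsequent step of $\mathcal{T}_n$ reveal only the endpoints of these forward crossing paths, never their interiors, together with an appeal to the spatial Markov structure of the loop soup (Lemma \ref{lemma_new_2.3}). A minor secondary issue, common to both cases, is the need to upgrade Lemma \ref{lemma_66} (stated for hitting a point from the origin) to a uniform estimate $\mathbb{P}_{y_1}(\tau_{y_2}<\tau_{\partial B_x(s_{k+2}+1)})\ge ce^{-C\log^2(s_{k+2})}$ between arbitrary pairs $y_1,y_2\in B_x(s_{k+2})$, which follows by translation-invariance and reversibility of the simple random walk.
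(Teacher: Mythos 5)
Your overall architecture matches the paper's: split on $x\in F^{\mathrm{I}}(w)$ versus $x\in F^{\mathrm{II}}(w)$, use a fresh fundamental loop through three prescribed points in the second case, use the conditional law $\widetilde{\mathbb{P}}_{u^*}(\,\cdot\mid\widetilde{\tau}_{A_2}=\widetilde{\tau}_{v^*})$ of an involved forward crossing path in the first, and feed everything through the hitting estimates of Lemma \ref{lemma_66}. However, there is a genuine gap at the interface between the fresh randomness and the already-explored cluster. You aim the fresh object at a lattice point $y^*\in\mathbf{C}_p^\dagger\cap\hat{B}_x(s_{k+2})$, but nothing forces such a point to lie in the interior: in the worst (and entirely typical) case $\mathbf{C}_p^\dagger$ meets the box only at $\partial\hat{B}_x(s_{k+2})$. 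In the $F^{\mathrm{I}}$ case this is fatal as written: a forward crossing path with $A_2=\partial B_x(s_{k+2})$ is by definition killed the instant it reaches $\partial B_x(s_{k+2})$ and is conditioned to exit at $v^*$, so it can never visit a lattice point $y^*\in\partial\hat{B}_x(s_{k+2})$ with $y^*\neq v^*$; your event $\mathsf{A}$ then has probability zero. In the $F^{\mathrm{II}}$ case there is a related problem: a loop whose range meets $\partial B_x(s_{k+2})$ belongs to $\mathfrak{L}^w$, so its backward crossing data was already revealed in Step $0$ and it is not "independent of $\omega$"; your fresh loop must therefore stay off $\partial B_x(s_{k+2})$, which again prevents it from visiting a boundary lattice point $y^*$ without extra conditioning on its excursions.

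The paper's fix is exactly the ingredient you list among the available randomness but never use: the Brownian excursions of the sampled loops at $\partial\hat{B}_x(s_{k+2})$, which are the third component of $\mathfrak{I}_p^x$ and were deliberately left unsampled (or only partially constrained) by the exploration. One first picks $y_\diamond\in\mathbf{C}_p^\dagger\cap\partial\hat{B}_x(s_{k+2})$ and shows, at the cost of a constant factor $c(d)$ (via the Bessel-$0$ description of the excursion, or FKG when the excursion is already known to reach part of another edge), that the excursion at $y_\diamond$ covers the half-edge down to the midpoint $\widetilde{y}_\diamond$ of $I_{\{y_\diamond,y_\diamond^{\mathrm{in}}\}}$, a point strictly inside $\widetilde{B}_x(s_{k+2})\setminus\partial B_x(s_{k+2})$. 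Only then is the fresh crossing path (resp.\ fresh loop with discrete skeleton in $B_x(s_{k+2}-1)$) required to hit $x$ and $\widetilde{y}_\diamond$, which is compatible with its killing at $\partial B_x(s_{k+2})$ (resp.\ with it being genuinely unsampled) and is where the $ce^{-C\log^2(s_{k+2})}$ bound comes from. Without this half-edge bridge your argument does not go through; with it, the rest of your outline is essentially the paper's proof.
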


For every $(k+1)$-qualified point $x$ which is counted by $N_{*}^{\text{vis}}$ (note that the number of such $x$ is $N_{*}^{\text{vis}}-N_{*}^{\text{vis-}\mathcal{D}}$), by the inheritability property of $k$-qualified points (see Lemmas \ref{new_lemma_6.6} and \ref{lemma_6.9}), the probability that $x$ is $k$-unqualified is at most $Ce^{-c\log^{4}(s_k)}$. As a result, $N_{*}^{k\text{-UQ}}-N_{*}^{\text{vis-}\mathcal{D}}$ is stochastically dominated by the sum of $N_{*}^{\text{vis}}-N_{*}^{\text{vis-}\mathcal{D}}$ i.i.d Bernoulli random variables with parameter $Ce^{-c\log^{4}(s_k)}$. Thus, by the Hoeffding’s inequality (see e.g. Vershynin \cite[Theorem 2.2.6]{vershynin2018high}), we have: for any $M>e^{\log^{2.9}(s_k)}$, 
\begin{equation}\label{equation_619}
\begin{split}
	&\mathbb{P}\left[N_{*}^{k\text{-UQ}}-N_{*}^{\text{vis-}\mathcal{D}}\ge e^{-\log^{2.8}(s_k)}(N_{*}^{\text{vis}}-N_{*}^{\text{vis-}\mathcal{D}}), N_{*}^{\text{vis}}-N_{*}^{\text{vis-}\mathcal{D}}\ge M \right]\\
	\le& \exp(-M^{0.8}). 
\end{split}
\end{equation}

Similarly, by Lemma \ref{lemma_721}, each time when a point $y$ is counted by $N_*^{\text{vis}}$, at least with $ce^{-C\log^{2}(s_{k+2})}$ probability it is also counted by $N_*^{\text{con}}$. Therefore, $N_*^{\text{con}}$ stochastically dominates the sum of $N_{*}^{\text{vis}}$ i.i.d Bernoulli random variables with parameter $ce^{-C\log^{2}(s_{k+2})}$. Consequently, for any $M>e^{\log^{2.2}(s_k)}$, we have 
\begin{equation}\label{ineq_N*cro}
\mathbb{P}\left( N_{*}^{\text{con}}\le  e^{-\log^{2.1}(s_k)}N_{*}^{\text{vis}},\ N_{*}^{\text{vis}}\ge M \right) \le \exp(-M^{0.8}).
\end{equation}
For the same reason, we also have: for any $M>e^{\log^{2.2}(s_k)}$, 
\begin{equation}\label{new_equation_648}
\mathbb{P}\left( N_{*}^{\text{con-}\mathcal{D}}\le  e^{-\log^{2.1}(s_k)}N_{*}^{\text{vis-}\mathcal{D}},\ N_{*}^{\text{vis-}\mathcal{D}}\ge M \right)  \le \exp(-M^{0.8}).
\end{equation}



\begin{lemma}\label{lemma_7.24}
For any $m\in [1,m_0-1]$, $w\in \left[-D_N,D_N \right)^d\cap\mathbb{Z}^d$ and $k\in [k_0-1]$, 
\begin{equation}\label{ineq_7.96}
	\mathbb{P}\left[  \zeta_{w} \ge L^{1.5},\zeta_{w}^{k\mbox{-}\mathrm{UQ}}\ge \frac{\zeta_{w}}{e^{\log^{2.2}(s_{k})}} , \zeta_{w}^{(k+1)\mbox{-}\mathrm{UQ}}<\frac{\zeta_{w}}{e^{\log^{2.2}(s_{k+1})}} \right] 
	\le e^{-N}. 
\end{equation} 
\end{lemma}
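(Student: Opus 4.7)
The plan is to show that the event $\mathcal{E}$ on the left of (\ref{ineq_7.96}) is contained in the union of the three rare deviation events from (\ref{equation_619}), (\ref{ineq_N*cro}), and (\ref{new_equation_648}), each of which is stretched-exponentially unlikely with exponent $M^{0.8}$ for a threshold $M$ that will turn out to be comparable to $L^{1.5}$. A union bound then gives probability at most $3\exp(-cL^{1.2})$, which is much smaller than $e^{-N}$ for large $N$ since $L^{1.2}=\epsilon^{0.36}N^{1.2}$. The containment argument itself is entirely deterministic, linking the intrinsic counts $\zeta_w,\zeta_w^{k\text{-UQ}},\zeta_w^{(k+1)\text{-UQ}}$ to the exploration-process counts $N_*^{\mathrm{vis}},N_*^{k\text{-UQ}},N_*^{\mathrm{vis}\text{-}\mathcal{D}},N_*^{\mathrm{con}},N_*^{\mathrm{con}\text{-}\mathcal{D}}$ through the bridge inequalities (\ref{equation_617})--(\ref{ineq_7.67}). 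Throughout, I will exploit that $s_k\le\sqrt{D_N}=e^{\log^{1/3.5}(N)/2}$, so every factor $e^{\log^{\alpha}(s_k)}$ is sub-polynomial in $N$ and is absorbable into the polynomial threshold $L^{1.5}$.

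Assuming $\mathcal{E}$, I will first apply (\ref{ineq_N*cro}) with $M=L^{1.5}$ (legal since $N_*^{\mathrm{vis}}\ge\zeta_w\ge L^{1.5}$ by (\ref{equation_618})) to conclude, outside its rare event, the upper bound $N_*^{\mathrm{vis}}\le e^{\log^{2.1}(s_k)}\zeta_w$. Then I will lower-bound $N_*^{\mathrm{vis}\text{-}\mathcal{D}}$ by splitting at $M_0:=L^{1.5}/2$: if $N_*^{\mathrm{vis}}-N_*^{\mathrm{vis}\text{-}\mathcal{D}}<M_0$, then trivially $N_*^{\mathrm{vis}\text{-}\mathcal{D}}\ge N_*^{\mathrm{vis}}-M_0\ge M_0$; otherwise I invoke (\ref{equation_619}) with $M=M_0$ to get, outside its rare event, $N_*^{k\text{-UQ}}-N_*^{\mathrm{vis}\text{-}\mathcal{D}}\le e^{-\log^{2.8}(s_k)}N_*^{\mathrm{vis}}\le e^{\log^{2.1}(s_k)-\log^{2.8}(s_k)}\zeta_w$, and combining with $N_*^{k\text{-UQ}}\ge \zeta_w/e^{\log^{2.2}(s_k)}$ from (\ref{equation_617}) yields $N_*^{\mathrm{vis}\text{-}\mathcal{D}}\ge \tfrac{1}{2}\zeta_w/e^{\log^{2.2}(s_k)}$ once $K$ is chosen large enough that $\log^{2.8}(s_k)\ge \log^{2.1}(s_k)+\log^{2.2}(s_k)+\log 4$. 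Either way, $N_*^{\mathrm{vis}\text{-}\mathcal{D}}\ge M_1:=L^{1.5}/(2e^{\log^{2.2}(s_k)})$. Applying (\ref{new_equation_648}) with $M=M_1$ then gives $N_*^{\mathrm{con}\text{-}\mathcal{D}}\ge \tfrac{1}{2}\zeta_w e^{-\log^{2.1}(s_k)-\log^{2.2}(s_k)}$ outside its rare event, and (\ref{ineq_7.67}) promotes this to the same lower bound on $\zeta_w^{(k+1)\text{-UQ}}$. This contradicts the third clause of $\mathcal{E}$ because $\log^{2.2}(s_{k+1})=(4d)^{2.2}\log^{2.2}(s_k)$ dominates $\log^{2.1}(s_k)+\log^{2.2}(s_k)+\log 2$ for $K$ large, forcing $\mathcal{E}$ to lie inside the union of the three rare events.

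The main obstacle is the bookkeeping of thresholds and sub-polynomial prefactors. Each of (\ref{equation_619}), (\ref{ineq_N*cro}), (\ref{new_equation_648}) has a minimum threshold of $e^{\log^{2.9}(s_k)}$ or $e^{\log^{2.2}(s_k)}$ on its applicable $M$, while the final union bound requires $M^{0.8}\gg N$; the range where both hold is nonempty only because $L^{1.5}$ is polynomial in $N$ whereas $e^{\log^{\alpha}(s_k)}$ is merely sub-polynomial on the range $k\le k_0-1$. The final contradiction similarly hinges on the hierarchy $\log^{2.2}(s_{k+1})\gg \log^{2.2}(s_k)+\log^{2.1}(s_k)$, which holds uniformly in $k$ because of the $(4d)^{2.2}$ gap between consecutive scales but does require $s_k$ itself to be at least moderately large; both requirements are enforced by the a priori largeness of $K$ fixed at the start of Section \ref{fina_section7}.
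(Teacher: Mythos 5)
Your proposal is correct and follows essentially the same route as the paper: both arguments reduce the event to the three concentration estimates (\ref{equation_619}), (\ref{ineq_N*cro}), (\ref{new_equation_648}) via the bridge inequalities (\ref{equation_617})--(\ref{ineq_7.67}), with the same case split on whether the $\mathcal{D}$-points make up more or less than half of the visited points (the paper packages this as the inclusions $\mathsf{A}_3\subset\mathsf{A}_1\cup\mathsf{A}_2\cup\cdots$, while you run it as a direct deduction and apply (\ref{new_equation_648}) at the slightly smaller threshold $M_1$). One small compression to fix: in your first case ($N_*^{\mathrm{vis}}-N_*^{\mathrm{vis}\text{-}\mathcal{D}}<M_0$) you only record $N_*^{\mathrm{vis}\text{-}\mathcal{D}}\ge M_1$, but the downstream conclusion $N_*^{\mathrm{con}\text{-}\mathcal{D}}\ge\tfrac{1}{2}\zeta_w e^{-\log^{2.1}(s_k)-\log^{2.2}(s_k)}$ needs the $\zeta_w$-proportional bound $N_*^{\mathrm{vis}\text{-}\mathcal{D}}\ge\tfrac{1}{2}\zeta_w e^{-\log^{2.2}(s_k)}$; this follows at once in that case from $N_*^{\mathrm{vis}\text{-}\mathcal{D}}>\tfrac{1}{2}N_*^{\mathrm{vis}}\ge\tfrac{1}{2}\zeta_w$ (using (\ref{equation_618})), so the argument is sound once that line is inserted.
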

\begin{proof}
%
%
%
%
We denote the events in the LHS of (\ref{equation_619}), (\ref{ineq_N*cro}) and (\ref{new_equation_648}) by $\mathsf{A}^{k\text{-UQ}}(M)$, $\mathsf{A}^{\text{con}}(M)$ and $\mathsf{A}^{\text{con-}\mathcal{D}}(M)$ respectively. We claim some inclusion relations as follows:
\begin{equation}\label{inclusion_649}
	\begin{split}
		\mathsf{A}_1:= &\left\lbrace N_*^{\text{vis}}\ge L^{1.5}, \zeta_{w}^{k\mbox{-}\mathrm{UQ}}\ge e^{-\log^{2.4}(s_{k})}\zeta_{w}+N_*^{\text{vis-}\mathcal{D}},N_*^{\text{vis-}\mathcal{D}}\le \tfrac{1}{2}N_*^{\text{vis}}   \right\rbrace\\
		\subset &\mathsf{A}^{k\text{-UQ}}\left( \tfrac{1}{2}L^{1.5}\right)\cup \mathsf{A}^{\text{con}}(L^{1.5}),
	\end{split}
\end{equation}
\begin{equation}\label{inclusion_650}
	\begin{split}
		\mathsf{A}_2:= &  \left\lbrace N_*^{\text{vis}}\ge L^{1.5}, N_*^{\text{vis-}\mathcal{D}}> \tfrac{1}{2}N_*^{\text{vis}}, \zeta_{w}^{(k+1)\text{-UQ}}< \frac{\zeta_{w}}{e^{\log^{2.2}(s_{k+1})}}  \right\rbrace \\
		\subset & \mathsf{A}^{\text{con-}\mathcal{D}}\left( \tfrac{1}{2}L^{1.5}\right),
	\end{split}
\end{equation}
\begin{equation}\label{inclusion_651}
	\begin{split}
		\mathsf{A}_3:=& \left\lbrace N_*^{\text{vis}}\ge L^{1.5}, \zeta_{w}^{k\mbox{-}\mathrm{UQ}}\ge \frac{\zeta_{w}}{e^{\log^{2.2}(s_{k})}}, \zeta_{w}^{(k+1)\text{-UQ}}<  \frac{\zeta_{w}}{e^{\log^{2.2}(s_{k+1})}}  \right\rbrace \\
		\subset & A^{\text{con-}\mathcal{D}}\left( \tfrac{1}{2}L^{1.5}\right)\cup A^{k\text{-UQ}}\left( \tfrac{1}{2}L^{1.5}\right) \cup A^{\text{con}}(L^{1.5}).
	\end{split}
\end{equation}

We start with confirming (\ref{inclusion_649}). Since $\mathsf{A}_1$ implies $N_*^{\text{vis}}\ge L^{1.5}$ and $N_*^{\text{vis}}-N_*^{\text{vis-}\mathcal{D}}\ge \frac{1}{2}N_*^{\text{vis}}\ge \frac{1}{2}L^{1.5}$ (where $\frac{1}{2}L^{1.5}>e^{\log^{2.9}(s_k)}$ for all $k\le k_0$), on the event $\mathsf{A}_1\cap \big[\mathsf{A}^{k\mbox{-}\mathrm{UQ}}\left(\tfrac{1}{2}L^{1.5}\right)\cup \mathsf{A}^{\text{con}}(L^{1.5})\big]^c$, one has 
\begin{equation}\label{781}
	N_{*}^{k\text{-UQ}}-N_*^{\text{vis-}\mathcal{D}}< e^{-\log^{2.8}(s_k)}(N_*^{\text{vis}}-N_*^{\text{vis-}\mathcal{D}}),
\end{equation}
 \begin{equation}\label{782}
	N_*^{\text{con}}>  e^{-\log^{2.1}(s_k)}N_*^{\text{vis}}.
\end{equation}
Thus, $\mathsf{A}_1\cap \big[\mathsf{A}^{k\mbox{-}\mathrm{UQ}}\left(\frac{1}{2}L^{1.5}\right)\cup \mathsf{A}^{\text{con}}(L^{1.5})\big]^c$ implies 
\begin{equation}\label{ineq_7.74}
	\begin{split}
		\zeta_{w}^{k\mbox{-}\mathrm{UQ}} \le  &(N_{*}^{k\text{-UQ}}-N_*^{\text{vis-}\mathcal{D}})+N_*^{\text{vis-}\mathcal{D}}\ \ \ (\text{by}\ (\ref{equation_617})\ \text{and}\ (\ref{ineq_7.67})) \\
		< 	& e^{-\log^{2.8}(s_k)}(N_*^{\text{vis}}-N_*^{\text{vis-}\mathcal{D}})+N_*^{\text{vis-}\mathcal{D}}\ \ \ \ \  \ \ (\text{by}\ (\ref{781}))\\
		\le &e^{-\log^{2.8}(s_k)}N_*^{\text{vis}}+N_*^{\text{vis-}\mathcal{D}}\\
		< & e^{\log^{2.1}(s_k)}\cdot e^{-\log^{2.8}(s_k)} N_*^{\text{con}}+N_*^{\text{vis-}\mathcal{D}}\ \ \ \ \ \   \ \ (\text{by}\ (\ref{782}))\\
		\le  & e^{\log^{2.1}(s_k)}\cdot e^{-\log^{2.8}(s_k)} \zeta_{w}+N_*^{\text{vis-}\mathcal{D}}\ \ \ \ \ \  \ \ \ \ \ (\text{by}\ (\ref{equation_618})). 
	\end{split}
\end{equation}
In addition, note that $\mathsf{A}_1\subset \{\zeta_{w}^{k\mbox{-}\mathrm{UQ}}\ge e^{-\log^{2.4}(s_{k})}\zeta_{w}+N_*^{\text{vis-}\mathcal{D}} \}$. Therefore, on the event $\mathsf{A}_1\cap \big[\mathsf{A}^{k\mbox{-}\mathrm{UQ}}\left(\frac{1}{2}L^{1.5}\right)\cup \mathsf{A}^{\text{con}}(L^{1.5})\big]^c$, $\zeta_{w}^{k\mbox{-}\mathrm{UQ}} $ satisfies
$$
e^{-\log^{2.4}(s_{k})}\zeta_{w}+N_*^{\text{vis-}\mathcal{D}}	\le 	\zeta_{w}^{k\mbox{-}\mathrm{UQ}} < e^{\log^{2.1}(s_k)}\cdot e^{-\log^{2.8}(s_k)} \zeta_{w}+N_*^{\text{vis-}\mathcal{D}},
$$
which is a contradiction and in turn implies that $\mathsf{A}_1\cap \big[\mathsf{A}^{k\mbox{-}\mathrm{UQ}}\left(\frac{1}{2}L^{1.5}\right)\cup \mathsf{A}^{\text{con}}(L^{1.5})\big]^c=\emptyset$ (and thus verifies (\ref{inclusion_649})).

For (\ref{inclusion_650}), when $\mathsf{A}_2\cap \big[\mathsf{A}^{\text{con-}\mathcal{D}}\left(\frac{1}{2}L^{1.5}\right)\big]^c$ happens, since $N_*^{\text{vis-}\mathcal{D}}> \frac{1}{2}N_*^{\text{vis}}\ge \frac{1}{2}L^{1.5}>e^{\log^{2.2}(s_k)}$ for all $k\le k_0$, we have  
\begin{equation}\label{ineq_7.103}
	\begin{split}
		N_{*}^{\text{con-}\mathcal{D}}> e^{-\log^{2.1}(s_k)}N_{*}^{\text{vis-}\mathcal{D}}.  
	\end{split}
\end{equation}
Thus, the event $\mathsf{A}_2\cap \big[\mathsf{A}^{\text{con-}\mathcal{D}}\left(\frac{1}{2}L^{1.5}\right)\big]^c$ implies 
\begin{equation*}
	\begin{split}
		\zeta_{w}^{(k+1)\mbox{-}\mathrm{UQ}} \ge &N_{*}^{\text{con-}\mathcal{D}} \ \ \ \ \ \ \ \ \ \ \ \  \ \ \ \ \ \ \ \ \ \ \ \ \ \  (\text{by}\ (\ref{ineq_7.67}))\\
		>&e^{-\log^{2.1}(s_k)}N_{*}^{\text{vis-}\mathcal{D}}\ \ \ \ \ \ \ \ \ \ \ \ \ \  (\text{by}\ (\ref{ineq_7.103}))\\
		> &\tfrac{1}{2}e^{-\log^{2.1}(s_k)}N_{*}^{\text{vis}}\ \  \ (\text{by}\ N_*^{\text{vis-}\mathcal{D}}> \tfrac{1}{2}N_*^{\text{vis}})\\
		\ge & \tfrac{1}{2}e^{-\log^{2.1}(s_k)}\zeta_{w}\ \ \  \ \ \ \ \ \ \  \ \ \  \ \ \ \   (\text{by}\ (\ref{equation_618})),  
	\end{split}
\end{equation*}
which is incompatible with $\mathsf{A}_2\subset \{\zeta_{w}^{(k+1)\text{-UQ}}<  e^{-\log^{2.2}(s_{k+1})} \zeta_{w}\}$. Consequently, we obtain the inclusion in (\ref{inclusion_650}).

For (\ref{inclusion_651}), by the definition of $\mathsf{A}_2$ in (\ref{inclusion_650}) one has $$\mathsf{A}_2^c= \{N_*^{\text{vis}}< L^{1.5}\} \cup  \{N_*^{\text{vis-}\mathcal{D}}\le  \tfrac{1}{2}N_*^{\text{vis}}\}\cup \{\zeta_{w}^{(k+1)\text{-UQ}}\ge  e^{-\log^{2.2}(s_{k+1})} \zeta_{w}\}, $$
where $\{N_*^{\text{vis}}< L^{1.5}\}$ and $\{\zeta_{w}^{(k+1)\text{-UQ}}\ge  e^{-\log^{2.2}(s_{k+1})} \zeta_{w}\}$ are both incompatible with $\mathsf{A}_3$. Therefore, $\mathsf{A}_3\cap \mathsf{A}_2^c$ implies $N_*^{\text{vis-}\mathcal{D}}\le \frac{1}{2}N_*^{\text{vis}}$. Furthermore, when $\mathsf{A}_3\cap \mathsf{A}_2^c\cap \big[\mathsf{A}^{\text{con-}\mathcal{D}}\left( \frac{1}{2}L^{1.5}\right)\big]^c$ happens, we have 
\begin{equation*}
	\begin{split}
		&e^{-\log^{2.4}(s_{k})}\zeta_{w}+N_*^{\text{vis-}\mathcal{D}}\\
		< &e^{-\log^{2.4}(s_{k})}\zeta_{w}+e^{\log^{2.1}(s_{k})}N_*^{\text{con-}\mathcal{D}}\ \ \   (\text{since}\ \{N_*^{\text{vis}}\ge  L^{1.5}\}\cap \big[\mathsf{A}^{\text{con-}\mathcal{D}}( \tfrac{1}{2}L^{1.5})\big]^c\ \text{happens}) \\
		\le &e^{-\log^{2.4}(s_{k})}\cdot e^{\log^{2.2}(s_{k})} \zeta_{w}^{k\mbox{-}\mathrm{UQ}} +e^{\log^{2.1}(s_{k})}N_*^{\text{con-}\mathcal{D}}\ \ \ \ \ \ \ \ \ \  \ \ \ \ \ \ \ \ \ \ \ \ \ \  (\text{since}\ \mathsf{A}_3\ \text{happens})\\
		\le & e^{-\log^{2.4}(s_{k})}\cdot e^{\log^{2.2}(s_{k})} \zeta_{w}^{k\mbox{-}\mathrm{UQ}} +e^{\log^{2.1}(s_{k})}\zeta_{w}^{(k+1)\mbox{-}\mathrm{UQ}}\ \  \ \ \ \  \ \ \ \ \ \  \ \ \ \ \  \ \ \ \ \ \ \ \ \ \ \ \ \ \   (\text{by}\  (\ref{ineq_7.67}))\\
		< & \left[e^{-\log^{2.4}(s_{k})}\cdot e^{\log^{2.2}(s_{k})}+e^{\log^{2.1}(s_{k})}\cdot e^{\log^{2.2}(s_{k})-\log^{2.2}(s_{k+1})}  \right] \zeta_{w}^{k\mbox{-}\mathrm{UQ}} \ \  (\text{since}\ \mathsf{A}_3\ \text{happens})\\
		<  & \zeta_{w}^{k\mbox{-}\mathrm{UQ}}. 
	\end{split}
\end{equation*}
In conclusion, we have $\mathsf{A}_3\cap \mathsf{A}_2^c\cap \big[\mathsf{A}^{\text{con-}\mathcal{D}}\left( \frac{1}{2}L^{1.5}\right)\big]^c\subset\mathsf{A}_1$, which implies 
\begin{equation}\label{7.105}
	\mathsf{A}_3 \subset \mathsf{A}_1\cup \mathsf{A}_2 \cup \mathsf{A}^{\text{con-}\mathcal{D}}\left( \tfrac{1}{2}L^{1.5}\right).
\end{equation}
Combining (\ref{inclusion_649}), (\ref{inclusion_650}) and (\ref{7.105}), we obtain (\ref{inclusion_651}).

We denote the event on the LHS of (\ref{ineq_7.96}) by $\mathsf{A}_0$. Since $\zeta_{w}\le N_{*}^{\text{vis}}$ (by (\ref{equation_618})), we have $\mathsf{A}_0\subset \mathsf{A}_3$. Thus, by (\ref{equation_619}), (\ref{ineq_N*cro}), (\ref{new_equation_648}) and (\ref{inclusion_651}), we get
\begin{equation*}
	\mathbb{P}\left( \mathsf{A}_0\right)\le \mathbb{P}\left( \mathsf{A}_3\right)  \le 2e^{-(L^{1.5}/2)^{0.8}}+e^{-(L^{1.5})^{0.8}}\le e^{-N}.   \qedhere
\end{equation*}
\end{proof}

With Lemma \ref{lemma_7.24} in hand, we are ready to prove Lemma \ref{new_lemma_6.3}.
\begin{proof}[Proof of Lemma \ref{new_lemma_6.3}]
	
	By Lemma \ref{newlemma_locally good}, we know that $\zeta_w^{m\mbox{-}\mathrm{bad}}\le \zeta_w^{0\mbox{-}\mathrm{UQ}}$. Therefore, by the inequalities $\frac{L^2}{2^{d+2}K^{20d^2}m^2D_N^d}>L^{1.5}$ and $e^{-\log^{2.2}(s_{0})}< (4 K^{20d^2}m^{2})^{-1}$, we have 
\begin{equation}\label{ineq776}
	\begin{split}
		&\mathbb{P}\left[\zeta_w\ge \frac{L^2}{2^{d+2}K^{20d^2}m^2D_N^d}, \zeta_w^{m\mbox{-}\mathrm{bad}}\ge \frac{\zeta_w}{4 K^{20d^2}m^{2}} \right] \\
		\le &\sum_{k=0}^{k_0-1}\mathbb{P}\left[  \zeta_{w} \ge L^{1.5},\zeta_{w}^{k\mbox{-}\mathrm{UQ}}\ge \frac{\zeta_{w}}{e^{\log^{2.2}(s_{k})}} , \zeta_{w}^{(k+1)\mbox{-}\mathrm{UQ}}<\frac{\zeta_{w}}{e^{\log^{2.2}(s_{k+1})}} \right] \\
		&+\mathbb{P}\left[\exists x\in F(w)\ \text{and} \ k\ge k_0\ \text{such that}\ x\ \text{is}\ k\text{-unqualified} \right]. 
	\end{split}
\end{equation}
By (\ref{new_ineq_625}) and Lemma \ref{lemma_7.24}, the RHS of (\ref{ineq776}) is upper-bounded by $\mathrm{s.p.}(N)$. 	
\end{proof}

Since we have confirmed Lemma \ref{new_lemma_6.3}, the estimate in (\ref{new_ineq_609}) is now proved. As a result, Lemma \ref{lemma_reg} and Corollary \ref{coro_61} follow as well.


\subsection{Proof of technical lemmas}

This subsection includes the proofs of Lemmas \ref{lemma_num_crossing} and \ref{lemma_721}, which are related to some estimates about random walks and loop measure.

\subsubsection{Proof of Lemma \ref{lemma_num_crossing}} \label{section_technical_lemma_1}

	We first focus on the proof of (\ref{ineq_7.31}). By the relation (presented in Section \ref{subsection_BM}) between the Brownian motion $\widetilde{S}_t$ in $\widetilde{\mathbb{Z}}^d$ and the continuous-time simple random walk $S_t$ in $\mathbb{Z}^d$, it suffices to prove that
	\begin{equation}\label{fina_774}
		\mathbb{P}_y\left[\mathfrak{N}= l\big| \tau_{\partial B_x(s_{k+2})}=\tau_z\right] \le  s_{k+1}^{-cl}, 
	\end{equation}	 
where we also denote by $\mathfrak{N}$ the number of times that $S_t$ crosses $B_x(s_{k+1})\setminus B_x(s_k)$ before hitting $\partial B_x(s_{k+2})$. 

	Without loss of generality, we assume $x=\bm{0}$. Similar to $\hat{\tau}_{\cdot}$ (defined before Lemma \ref{fina_lemma2.1}), we define a sequence of stopping times as follows. Let $\bar{\tau}_{\bm{0}}=0$. For any $p\in \mathbb{N}$, we define $\bar{\tau}_{2p+1}:=\min\{\bar{\tau}_{2p}<t< \tau_{\partial B(s_{k+2})}:S_t\in \partial B(s_k) \}$ and $\bar{\tau}_{2p+2}:=\min\{t>\bar{\tau}_{2p+1}:S_t\in \partial B(s_{k+1}) \}$. Note that $\mathfrak{N}$ is the smallest integer such that $\bar{\tau}_{2\mathfrak{N}+1}=\infty$. By the law of total probability, we have 
\begin{equation}\label{ineq_7.33}
	\begin{split}
		&\mathbb{P}_y\left[\mathfrak{N}= l\big| \tau_{\partial B(s_{k+2})}=\tau_z\right]\\
		=& \frac{\sum_{y_*\in \partial B(s_{k+1})} \mathbb{P}_y\left[\mathfrak{N}= l,S_{\bar{\tau}_{2\mathfrak{N}}}=y_* ,\tau_{\partial B(s_{k+2})}=\tau_z\right]}{\sum_{y_*\in \partial B(s_{k+1})}\mathbb{P}_y\left[S_{\bar{\tau}_{2\mathfrak{N}}}=y_* ,\tau_{\partial B(s_{k+2})}=\tau_z\right]}\\
		\le & \frac{\sum_{y_*\in \partial B(s_{k+1})} \mathbb{P}_y\left[\mathfrak{N}= l,S_{\bar{\tau}_{2\mathfrak{N}}}=y_* ,\tau_{\partial B(s_{k+2})}=\tau_z\right]}{\sum_{y_*\in \partial B(s_{k+1})}\mathbb{P}_y\left[\mathfrak{N}= 0,S_{\bar{\tau}_{2\mathfrak{N}}}=y_* ,\tau_{\partial B(s_{k+2})}=\tau_z\right]}\\
		=& \frac{\sum_{y_*\in \partial B(s_{k+1})} \mathbb{P}_y\left[\mathfrak{N}= l,S_{\bar{\tau}_{2\mathfrak{N}}}=y_* ,\tau_{\partial B(s_{k+2})}=\tau_z\right]}{\mathbb{P}_y\left[\tau_{z}=\tau_{\partial B(s_{k+2})}<\tau_{\partial B(s_k)} \right]}. 
	\end{split}
\end{equation}
For each term on the numerator, by the strong Markov property, we have 
\begin{equation}\label{ineq_7.34}
	\begin{split}
		&\mathbb{P}_y\left[\mathfrak{N}= l,S_{\bar{\tau}_{2\mathfrak{N}}}=y_* ,\tau_{\partial B(s_{k+2})}=\tau_z\right]\\
		=&\sum_{y_1\in \partial B(s_{k+1})} \sum_{x_1\in \partial B(s_k)}\mathbb{P}_y\left[S_{\bar{\tau}_{2l-2}}=y_1 \right]\mathbb{P}_{y_1}\left[\tau_{x_1}=\tau_{\partial B(s_k)}<\tau_{\partial B(s_{k+2})} \right]\\
		&	\ \ \ \ \ \ \ \ \ \ \ \  	\ \ \ \ \ \ \ \ \ \ \ \  \ \cdot \mathbb{P}_{x_1}\left[\tau_{\partial B(s_{k+1})}=\tau_{y_*} \right] \mathbb{P}_{y_*}\left[\tau_{z}=\tau_{\partial B(s_{k+2})}<\tau_{\partial B(s_k)} \right]. 
	\end{split}
\end{equation}
In addition, by the Harnack's inequality (see e.g. \cite[Theorem 1.7.2]{lawler2013intersections}), one has 
\begin{equation}\label{ineq_new_737}
	\max_{x_1\in \partial B(s_{k})}\mathbb{P}_{x_1}\left[\tau_{\partial B(s_{k+1})}=\tau_{y_*} \right] \le C \mathbb{P}\left[\tau_{\partial B(s_{k+1})}=\tau_{y_*} \right],
\end{equation}
\begin{equation}\label{ineq_new_738}
	\max_{y_*\in \partial B(s_{k+1})} \mathbb{P}_{y_*}\left[\tau_{z}=\tau_{\partial B(s_{k+2})}<\tau_{\partial B(s_k)} \right]\le C \mathbb{P}_{y}\left[\tau_{z}=\tau_{\partial B(s_{k+2})}<\tau_{\partial B(s_k)} \right]. 
\end{equation}
By (\ref{ineq_7.34}), (\ref{ineq_new_737}) and (\ref{ineq_new_738}), we get
\begin{equation}\label{ineq_new_735}
	\begin{split}
		&\sum_{y_*\in \partial B(s_{k+1})} \mathbb{P}_y\left[\mathfrak{N}= l,S_{\bar{\tau}_{2\mathfrak{N}}}=y_* ,\tau_{\partial B(s_{k+2})}=\tau_z\right]\\
		\le &C\mathbb{P}_{y}\left[\tau_{z}=\tau_{\partial B(s_{k+2})}<\tau_{\partial B(s_k)} \right]\cdot \Big(\sum_{y_*\in \partial B(s_{k+1})}\mathbb{P}\left[\tau_{\partial B(s_{k+1})}=\tau_{y_*} \right]\Big) \\
		&\cdot \Big( \sum_{y_1\in \partial B(s_{k+1})}\mathbb{P}_y\left[S_{\bar{\tau}_{2l-2}}=y_1 \right]  \cdot  \sum_{x_1\in \partial B(s_k)} \mathbb{P}_{y_1}\left[\tau_{x_1}=\tau_{\partial B(s_k)}<\tau_{\partial B(s_{k+2})} \right]\Big)\\
		\le  & C\mathbb{P}_{y}\left[\tau_{z}=\tau_{\partial B(s_{k+2})}<\tau_{\partial B(s_k)} \right]\cdot \mathbb{P}_y\left[\bar{\tau}_{2l-2}<\infty \right]\\
		& \cdot \max_{y_1\in \partial B(s_{k+1})}\mathbb{P}_{y_1}\left[\tau_{\partial B(s_k)}<\tau_{\partial B(s_{k+2})} \right]. 
	\end{split}
\end{equation}
Furthermore, by \cite[Proposition 6.5.2]{lawler2010random} and (\ref{ineq_2.2}), one has 
\begin{equation}\label{ineq_7.36}
	\max_{y_1\in \partial B(s_{k+1})}\mathbb{P}_{y_1}\left[\tau_{\partial B(s_k)}<\tau_{\partial B(s_{k+2})} \right]\le Cs_{k}^{-(4d-1)(d-2)}. 
\end{equation}
Combining (\ref{ineq_7.33}), (\ref{ineq_new_735}) and (\ref{ineq_7.36}), we obtain
\begin{equation}\label{ineq_7.39}
	\mathbb{P}_y\left[\mathfrak{N}= l\big| \tau_{\partial B(s_{k+2})}=\tau_z\right]\le Cs_{k}^{-(4d-1)(d-2)}\cdot \mathbb{P}_y\left[\bar{\tau}_{2l-2}<\infty \right].  
\end{equation}

Repeating the argument in proving (\ref{ineq_7.39}) for $l-1$ times, we also have 
\begin{equation}\label{ineq_7.40}
	\mathbb{P}_y\left[\bar{\tau}_{2l-2}<\infty \right]\le \big[Cs_{k}^{-(4d-1)(d-2)}\big]^{l-1}.
\end{equation}
By (\ref{ineq_7.39}) and (\ref{ineq_7.40}), we get (\ref{fina_774}) and thus conclude (\ref{ineq_7.31}). Based on (\ref{ineq_7.31}), the proof of (\ref{ineq_7.32}) is a straightforward
calculation as follows:
\begin{equation*}
	\begin{split}
		\widetilde{\mathbb{E}}_y\left[ \exp(\gamma \mathfrak{N})\big|  \widetilde{\tau}_{\partial B_x(s_{k+2})}= \widetilde{\tau}_z \right] \le& 1+\sum_{l\ge 1}e^{\gamma l} \widetilde{\mathbb{P}}_y\left[\mathfrak{N}= l\big| \widetilde{\tau}_{\partial B_x(s_{k+2})}=\widetilde{\tau}_z\right]\\
		\le &1+ \sum_{l\ge 1}e^{\gamma l}s_{k+1}^{-cl} = \frac{s_{k+1}^{c}}{s_{k+1}^{c}-e^{\gamma}}.    \qedhere 
	\end{split}
\end{equation*}
Now we complete the proof of Lemma \ref{lemma_num_crossing}   \qed
\subsubsection{Proof of Lemma \ref{lemma_721}} \label{section_technical_lemma_2}

Before proving Lemma \ref{lemma_721}, we need the following lemma as preparation.

\begin{lemma}There exist $c(d),C(d)>0$ such that: 
\begin{enumerate}
	\item  For any $y\in \partial B(s_{k+1})$, $z\in \partial \hat{B}(s_{k+2})$ and $v\in \partial B(s_{k+2}-1)$, 
		\begin{equation}\label{ineq_750}
		\mathbb{P}_y\left[\tau_{\bm{0}}<\tau_v< \tau_{\partial B(s_{k+2})}\mid \tau_{\partial B(s_{k+2})}=\tau_z \right] \ge ce^{-C\log^2(s_{k+2})}.
		\end{equation}

	\item  For any $v_1,v_2\in B(s_{k+2}-1)$, 
 \begin{equation}\label{ineq_751}
 	\mu \left(\{\ell: \bm{0},v_1,v_2\in \mathrm{ran}(\ell)\ \text{and}\ \mathrm{ran}(\ell)\subset B(s_{k+2}-1) \} \right)\ge ce^{-C\log^2(s_{k+2})}.  
 \end{equation}

\end{enumerate}
\end{lemma}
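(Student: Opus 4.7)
For Part~(1), the plan is to lower-bound the numerator of the conditional probability and use the trivial bound $1$ on the denominator. Applying the strong Markov property successively at $\tau_{\bm{0}}$ and $\tau_v$, the numerator factorizes as the product of $\mathbb{P}_y[\tau_{\bm{0}}<\tau_v\wedge\tau_{\partial B(s_{k+2})}]$, $\mathbb{P}_{\bm{0}}[\tau_v<\tau_{\partial B(s_{k+2})}]$, and $\mathbb{P}_v[\tau_{\partial B(s_{k+2})}=\tau_z]$; it thus suffices to show each factor is at least $ce^{-C\log^2(s_{k+2})}$. The middle factor is essentially Lemma~\ref{lemma_66}, since $v\in \partial B(s_{k+2}-1)$. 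For the first factor, I would shrink the ambient box to $B(s_{k+2}/2)$, which comfortably contains $y\in \partial B(s_{k+1})$ (using $s_{k+1}\ll s_{k+2}$) and automatically excludes $v\in \partial B(s_{k+2}-1)$; the bound then follows from Lemma~\ref{lemma_66} combined with the reversibility identity $G_B(x,y)=G_B(y,x)$, exactly as in the passage~(\ref{fina_7.52}). For the third factor, I would decompose $v\to z^{\mathrm{in}}\to z$: $\mathbb{P}_v[\tau_{z^{\mathrm{in}}}<\tau_{\partial B(s_{k+2})}]$ is handled by chaining through the origin (two uses of Lemma~\ref{lemma_66} and reversibility), while $\mathbb{P}_{z^{\mathrm{in}}}[\tau_{\partial B(s_{k+2})}=\tau_z]\ge (2d)^{-1}$ by a single step from $z^{\mathrm{in}}$ to $z$.

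For Part~(2), the plan is to isolate a subfamily of ``nice'' loops whose measure can be evaluated cleanly via~(\ref{fina_2.5}). Call a rooted loop $\varrho$ of length $k$ \emph{nice} if $\varrho^{(0)}=\varrho^{(k)}=\bm{0}$, there exist $k_1,k_2\ge 1$ with $\varrho^{(k_1)}=v_1$ and $\varrho^{(k_1+k_2)}=v_2$, $\varrho$ visits $\bm{0}$ only at positions $0$ and $k$, and $\varrho\subset B(s_{k+2}-1)$. Each unrooted nice loop has multiplicity $J=1$ and a unique rooted representative with $x_0=\bm{0}$, so~(\ref{fina_2.5}) gives $\mu(\{\text{nice loops}\})=\sum_{(x_0,\ldots,x_k)\text{ nice},\,x_0=\bm{0}}(2d)^{-k}$. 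Splitting each nice sequence into three legs $\bm{0}\to v_1$, $v_1\to v_2$, $v_2\to \bm{0}$ and reinterpreting $\sum_{k_i}|\eta_i|(2d)^{-k_i}$ as a jump-chain probability (via the strong Markov property at $\tau_{v_1}$ and $\tau_{v_2}$) factorizes this sum as $\mathbb{P}_{\bm{0}}[\tau_{v_1}^+<\tau_{\bm{0}}^+\wedge\tau_{\partial B(s_{k+2})}]\cdot\mathbb{P}_{v_1}[\tau_{v_2}<\tau_{\bm{0}}\wedge\tau_{\partial B(s_{k+2})}]\cdot \mathbb{P}_{v_2}[\tau_{\bm{0}}<\tau_{\partial B(s_{k+2})}]$. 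Each of the three factors is again at least $ce^{-C\log^2(s_{k+2})}$ by the same subbox-restriction strategy. Since nice loops form a subset of the loops in the lemma statement, this yields the claim.

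The main obstacle in both parts is producing the $ce^{-C\log^2(s_{k+2})}$ lower bounds for the random-walk probabilities that carry ``avoidance'' constraints (hit a target before hitting another distinguished point and before exiting the box), since Lemma~\ref{lemma_66} as stated allows no forbidden point. The remedy, which suffices throughout, is to replace the ambient box by a suitably smaller intermediate box that contains the target and the starting point but excludes the forbidden point; the avoidance then comes for free, and one applies Lemma~\ref{lemma_66} together with reversibility, mirroring the manipulations already carried out in equations~(\ref{fina_7.52})--(\ref{ineq.7.62}) in the proof of Lemma~\ref{lemma_6.9}. Degenerate cases where $\bm{0}, v_1, v_2$ are not all distinct can be handled by simpler versions of the same argument.
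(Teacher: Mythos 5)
Your Part~(1) argument is essentially the paper's: bound the conditional probability below by the unconditioned intersection, split the path $y\to\bm{0}\to v\to z$ by the strong Markov property, enforce $\tau_{\bm 0}<\tau_v$ by confining the first leg to a box that excludes $v\in\partial B(s_{k+2}-1)$, and finish with reversibility and Lemma~\ref{lemma_66} (the paper routes the exit through $\bm 0$ and invokes the second half of Lemma~\ref{lemma_66} directly, while you go through $z^{\mathrm{in}}$ and take one forced jump; both are fine). Your ``nice loop'' device in Part~(2), which forces multiplicity $J=1$ and a unique rooted representative so that (\ref{fina_2.5}) can be summed without overcounting, is a legitimate and arguably more careful bookkeeping than the paper's.

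However, Part~(2) has a genuine gap exactly where you declare the obstacle resolved. By insisting that a nice loop visits $\bm 0$ only at positions $0$ and $k$, your three legs acquire the constraint of \emph{avoiding the origin}: the first factor is $\mathbb{P}_{\bm 0}[\tau_{v_1}^+<\tau_{\bm 0}^+\wedge\tau_{\partial B(s_{k+2})}]$ and the second is $\mathbb{P}_{v_1}[\tau_{v_2}<\tau_{\bm 0}\wedge\tau_{\partial B(s_{k+2})}]$. Your announced remedy --- shrink the ambient box to one containing the start and the target but excluding the forbidden point --- cannot apply here: for the first leg the forbidden point \emph{is} the starting point $\bm 0$, and for the second leg $v_1$ and $v_2$ are arbitrary in $B(s_{k+2}-1)$ and may be adjacent to, or separated by, the origin, so no box separates them from $\bm 0$. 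The required bounds are still true, but they need a different argument: for the first leg, the strong Markov property at $\tau_{\bm 0}^+$ gives $\mathbb{P}_{\bm 0}[\tau_{v_1}^+<\tau_{\bm 0}^+\wedge\tau_{\partial}]=\mathbb{P}_{\bm 0}[\tau_{v_1}<\tau_{\partial}]\,(1-\mathbb{P}_{\bm 0}[\tau_{\bm 0}^+<\tau_{v_1}^+\wedge\tau_{\partial}])\ge \mathrm{esc}_{\{\bm 0\}}(\bm 0)\cdot\mathbb{P}_{\bm 0}[\tau_{v_1}<\tau_{\partial}]$, which costs only a constant by transience; the second leg needs a comparable (and slightly more delicate) estimate showing that forbidding the single site $\bm 0$ changes $\mathbb{P}_{v_1}[\tau_{v_2}<\tau_{\partial}]$ by at most a constant factor. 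The paper sidesteps all of this by choosing a decomposition whose three factors, $\mathbb{P}_{\bm 0}[\tau_{v_1}<\tau_{\partial}]$, $\mathbb{P}_{v_1}[\tau_{v_2}<\tau_{\partial}]$ and $\mathbb{P}_{v_2}[\tau_{\bm 0}<\tau_{\partial}]$, carry no avoidance constraint at all and are each handled by chaining through the origin, reversal, and Lemma~\ref{lemma_66}, exactly as in (\ref{fina_7.52})--(\ref{ineq.7.62}). You should either supply the ``avoiding one point costs only a constant'' estimates for your legs, or drop the single-visit requirement and control the overcounting some other way.
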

\begin{proof}
		(1)  The inequality (\ref{ineq_750}) can be proved as follows:  
			\begin{equation}\label{fina_788}
			\begin{split}
				&\mathbb{P}_y\left[\tau_{\bm{0}}<\tau_v< \tau_{\partial B(s_{k+2})}\mid \tau_{\partial B(s_{k+2})}=\tau_z \right]\\
				\ge &\mathbb{P}_y\left[\tau_{\bm{0}}<\tau_v< \tau_{\partial B(s_{k+2})}=\tau_z \right]\\
				\ge & \mathbb{P}_y\left[\tau_{\bm{0}}<\tau_{\partial B(s_{k+2}-1)} \right]\cdot \mathbb{P}_{\bm{0}}\left[\tau_{v}<\tau_{\partial B(s_{k+2})} \right] \cdot \mathbb{P}_{v}\left[\tau_{\bm{0}}<\tau_{\partial B(s_{k+2})} \right]\\
				&  \cdot \mathbb{P}_{\bm{0}}\left[\tau_{\partial B(s_{k+2})}=\tau_{z} \right] \ \ \  \ \ \ \ \ \ \ \ \ \ (\text{by strong Markov property})\\
				=& \mathbb{P}_{\bm{0}}\left[\tau_{y}<\tau_{\partial B(s_{k+2}-1)} \right]\cdot \mathbb{P}_{\bm{0}}\left[\tau_{v}<\tau_{\partial B(s_{k+2})} \right] \cdot \mathbb{P}_{\bm{0}}\left[\tau_{v}<\tau_{\partial B(s_{k+2})} \right]\\
				&  \cdot \mathbb{P}_{\bm{0}}\left[\tau_{\partial B(s_{k+2})}=\tau_{z} \right] \ \ \ \  \ \ \ \ \ \ \ \ \  (\text{by reversing the random walk}) \\
				\ge &  ce^{-C\log^2(s_{k+2})}  \ \ \ \ \ \ \ \ \  \ \ \ \ \ \ \ \  \ \ \ \ \  \ (\text{by Lemma}\ \ref{lemma_66}).
			\end{split}
		\end{equation}

	\noindent(2) We denote by $\mathfrak{L}_{v_1,v_2}$ the collection of loops $\ell$ that satisfy the following: there exists $\varrho\in \ell$ such that before intersecting $\partial B(s_{k+2})$, $\varrho$ starts from $\bm{0}$, first hits $v_1$, then hits $v_2$ and finally return to $\bm{0}$. By (\ref{fina_2.5}), the loop measure of $\mathfrak{L}_{v_1,v_2}$ is bounded from below by 
	\begin{equation}
	\begin{split}
		&\mathbb{P}_{\bm{0}}\left[ \tau_{v_1}<\tau_{\partial B(s_{k+2})} \right] \cdot   \mathbb{P}_{v_1}\left[\tau_{v_2}<\tau_{\partial B(s_{k+2})}\right]\cdot \mathbb{P}_{v_2}\left[\tau_{\bm{0}}<\tau_{\partial B(s_{k+2})}\right]\\
		\ge &\mathbb{P}_{\bm{0}}\left[ \tau_{v_1}<\tau_{\partial B(s_{k+2})} \right]  \cdot   \mathbb{P}_{v_1}\left[\tau_{\bm{0}}<\tau_{\partial B(s_{k+2})}\right]\cdot \mathbb{P}_{\bm{0}}\left[\tau_{v_2}<\tau_{\partial B(s_{k+2})}\right]\\
		&\cdot \mathbb{P}_{v_2}\left[\tau_{\bm{0}}<\tau_{\partial B(s_{k+2})}\right]\ \ \ \ \ \ \   \ \ \ \ \ \ \ \  (\text{by strong Markov property})\\
		=& \mathbb{P}_{\bm{0}}\left[ \tau_{v_1}<\tau_{\partial B(s_{k+2})} \right]  \cdot   \mathbb{P}_{\bm{0}}\left[\tau_{v_1}<\tau_{\partial B(s_{k+2})}\right]\cdot \mathbb{P}_{\bm{0}}\left[\tau_{v_2}<\tau_{\partial B(s_{k+2})}\right]\\
		&\cdot \mathbb{P}_{\bm{0}}\left[\tau_{v_2}<\tau_{\partial B(s_{k+2})}\right]\ \ \ \ \ \  \ \  \ \ \ \ \ \ \ (\text{by reversing the random walk})\\
		\ge &ce^{-C\log^2(s_{k+2})}\  \ \ \ \ \ \ \ \ \ \ \ \ \ \ \ \ \ \ \ \ \ \  \ \ \  (\text{by Lemma}\ \ref{lemma_66}).
	\end{split}
	\end{equation}
	This implies (\ref{ineq_751}) since for every $\ell\in \mathfrak{L}_{v_1,v_2}$, one has $\bm{0},v_1,v_2\in \mathrm{ran}(\ell)$ and $\mathrm{ran}(\ell)\subset B(s_{k+2}-1)$.
\end{proof}

Recall in the construction of Step $0$ in $\mathcal{T}_n$ that $\mathcal{A}$ is the collection of active points in $F(w)$, and that $\mathcal{E}$ is the collection of involved crossing paths sampled in Step $0$. Now we are ready to prove Lemma \ref{lemma_721}. 

\begin{proof}[Proof of Lemma \ref{lemma_721}]
	Arbitrarily take $\omega\in \mathfrak{C}^x_p$. Recall that on the conditioning of $\omega$, one has that $x\in \mathcal{A}$, and that $\mathbf{C}_p^{\dagger}$ is determined and intersects $\partial \hat{B}_x(s_{k+2})$. We arbitrarily choose a point $y_{\diamond}\in \mathbf{C}_p^{\dagger}\cap \partial \hat{B}_x(s_{k+2})$ in some prefixed manner. Then one of the following happens:
	\begin{enumerate}

	\item  $\mathbf{C}_p^{\dagger}$ includes an involved fundamental loop $\widetilde{\ell}$ intersecting $y_{\diamond}$.

	\item  $y_{\diamond}\in B(n-1)$, and $\mathbf{C}_p^{\dagger}$ includes the glued point loop $\gamma_{y_{\diamond}}^{\mathrm{p}}$.

	\end{enumerate}
 We denote by $y_{\diamond}'$ the unique point in $\partial B_x(s_{k+2}-1)$ such that $y_{\diamond}'\sim y_{\diamond}$. Let $\widetilde{y}_{\diamond}:= \frac{1}{2}(y_{\diamond}+y_{\diamond}')$. We also denote by $y_{\diamond}''$ the unique point in $\partial \hat{B}_x(s_{k+2}+1)$ such that $y_{\diamond}''\sim y_{\diamond}$.

 In Case (1), recall that the Brownian excursions of $\widetilde{\ell}$ at $y_{\diamond}$ are either not sampled, or are sampled to intersect $\mathbf{C}_{p-1}\cap I_{\{y_{\diamond},y_{\diamond}'' \}}$. If these Brownian excursions are not sampled, then (recalling Section \ref{subsection_continuous}) the conditional distribution (given $\omega$) of the union of these Brownian excursions can be described as a function of an exponential random variable and a Bessel-$0$ process. Thus, conditioning on $\omega$, $\{\widetilde{y}_{\diamond}\in \mathrm{ran}(\widetilde{\ell})\}$ happens with at least probability $c(d)>0$. Otherwise (i.e. these Brownian excursions are sampled to intersect $\mathbf{C}_{p-1}\cap I_{\{y_{\diamond},y_{\diamond}'' \}}$), by the FKG inequality, $\{\widetilde{y}_{\diamond}\in \mathrm{ran}(\widetilde{\ell})\}$ also happens with at least probability $c(d)>0$. In Case (2), it also follows from the FKG inequality that $\{\widetilde{y}_{\diamond}\in \gamma_{y_{\diamond}}^{\mathrm{p}}\}$ happens with at least probability $c(d)>0$. In conclusion, to verify this lemma, it suffices to prove that 
\begin{equation}\label{fina_7.91}
	\begin{split}
			&\mathbb{P}\left(\exists\ \text{involved}\ \widetilde{\ell}\ \text{or}\ \widetilde{\eta}^{\mathrm{F}}\ \text{in}\ \widetilde{B}_x(s_{k+2})\ \text{intersecting}\ x\ \text{and}\ \widetilde{y}_{\diamond} \ \big| \  \omega \right)\\
			\ge &ce^{-C\log^2(s_{k+2})}. 
	\end{split}
\end{equation}
In what follows, We prove (\ref{fina_7.91}) separately in two different cases when $x\in F^{\mathrm{I}}(w)$ and $x\in F^{\mathrm{II}}(w)$.

\textbf{When $x\in F^{\mathrm{I}}(w)$:} Since $x\in F^{\mathrm{I}}(w)\cap \mathcal{A}$, there exists an involved forward crossing path $\widetilde{\eta}^{\mathrm{F}}$ in $\widetilde{B}_x(s_{k+2})$. Suppose that $\widetilde{\eta}^{\mathrm{F}}$ starts from $z_1\in \partial B_x(s_{k+1})$ and ends at $z_2\in \partial \hat{B}(s_{k+2}-1)$. According to the construction of $\mathcal{T}_n$, the conditional distribution (given $\omega$) of $\widetilde{\eta}^{\mathrm{F}}$ is exactly  $\widetilde{\mathbb{P}}_{z_1}\left(\cdot|\widetilde{\tau}_{\partial B_x(s_{k+2})}=\widetilde{\tau}_{z_2}  \right)$. Thus, the LHS of (\ref{fina_7.91}) is at least 
\begin{equation*}
	\widetilde{\mathbb{P}}_{z_1}\left(\widetilde{\tau}_x<\widetilde{\tau}_{\widetilde{y}_{\diamond}}< \widetilde{\tau}_{\partial B_x(s_{k+2})} \mid\widetilde{\tau}_{\partial B_x(s_{k+2})}=\widetilde{\tau}_{z_2}  \right). 
\end{equation*}
By the relation between the random walk on $\mathbb{Z}^d$ and the Brownian motion presented in Section \ref{subsection_BM}, the probability above is bounded from below by  
	\begin{equation*}
		\begin{split}
				c\cdot \mathbb{P}_{z_1}\left(\tau_x<\tau_{y_{\diamond}'}< \tau_{\partial B_x(s_{k+2})} \mid \tau_{\partial B_x(s_{k+2})}=\tau_{z_2}  \right).
		\end{split}	
	\end{equation*}
	Combined with (\ref{ineq_750}), it concludes (\ref{fina_7.91}) for $x\in F^{\mathrm{I}}(w)$.

	\textbf{When $x\in F^{\mathrm{II}}(w)$:} We arbitrarily take a lattice point $v\in B_x(s_{k+2}-1)\cap \hat{B}(n)$. 
	Since the loops in $\widetilde{B}_x(s_{k+2})$ are independent of the conditioning $\omega$, the LHS of (\ref{fina_7.91}) is at least  
		\begin{equation*}
			\frac{1}{2} \widetilde{\mu}\left(\{\widetilde{\ell}: x,\widetilde{y}_{\diamond},v\in \mathrm{ran}(\widetilde{\ell})\ \text{and}\ \mathrm{ran}(\widetilde{\ell})\subset \widetilde{B}_x(s_{k+2}) \} \right). 
		\end{equation*}
	By the relation between the loops on $\mathbb{Z}^d$ and $\widetilde{\mathbb{Z}}^d$ presented in Section \ref{subsection_continuous}, this loop measure is bounded from below by 
	\begin{equation*}
		 c\cdot \mu\left(\{\ell: x,y_{\diamond}',v\in \mathrm{ran}(\ell)\ \text{and}\ \mathrm{ran}(\ell)\subset B_x(s_{k+2}-1) \} \right). 
	\end{equation*}
	Therefore, by (\ref{ineq_751}), we also conclude (\ref{fina_7.91}) for $x\in F^{\mathrm{II}}(w)$, and thus complete the proof.\end{proof}

\section{Proof of Theorem \ref{theorem_regularity}}\label{section8}

In this section, we aim to prove Theorem \ref{theorem_regularity} by using Corollary \ref{coro_61}. This part of proof is inspired by \cite[Section 5]{kozma2011arm}. Here is an overview for this section. Our main aim is to give a lower bound for the probability of $\{\psi_n^{\mathrm{SR}}\ge \tfrac{1}{2}L^2, \chi_n\le cL^4 \}$ (see Lemma \ref{lemma_regular}), which indicates that each strongly regular point roughly generates $O(L^2)$ points in the loop cluster. To achieve this, we employ the second moment method. On the one hand, we will prove a lower bound in Lemma \ref{lemma_6.15} for the first moment of the number of points that are connected to some regular points, where a \textit{pivotal property} is employed to prevent excessive duplication for counting (see Definition \ref{fina_def_8.6}); on the other hand, we will prove an upper bound in Lemma \ref{lemma6.113} for the second moment of the aforementioned number, and thus obtain Lemma \ref{lemma_regular}. Finally, we conclude Theorem \ref{theorem_regularity} by combining Corollary \ref{coro_61} and Lemma \ref{lemma_regular}.

Recall that $K>0$ is a sufficiently large constant and $r_m=K\cdot 2^{m-1}$. Let $m_*:=\min\{m\in \mathbb{N}^+:r_{m}\ge K^{2d}\}$ and $K_*:=r_{m_*}$. Note that $K_*\in [K^{2d},2K^{2d}]$. For any $x\in \mathbb{Z}^d\setminus B(n-1)$, at least one of the $2d$ faces of $\partial B_x(K^4_*)$ is disjoint from $B(n)$. We choose one such face, denoted by $\mathcal{S}_x=\mathcal{S}_x(n,K)$, in an arbitrary and prefixed manner.

\begin{lemma}\label{lemma_610}
For $d>6$, $x\in \mathbb{Z}^d\setminus B(n-1)$ and sufficiently large $K$, if $x$ is a strongly regular point, then there exists $x'\in \partial B_x(K_*^4)$ such that $\widehat{\mathbf{\Psi}}_n\cap B_{x'}(K_*)=\emptyset$. 
\end{lemma}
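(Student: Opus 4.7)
The plan is to find a single reference lattice point $y\in B_x(K^{10d})$ near the face $\mathcal{S}_x$, use the $m$-good bound at $y$ (guaranteed by the strong regularity of $x$) to control simultaneously all candidate boxes $B_{x'}(K_*)$ with $x'$ on $\mathcal{S}_x$, and then rule out the possibility that every such box meets $\widehat{\mathbf{\Psi}}_n$ by a volumetric packing argument.

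First I would let $y$ be the center of $\mathcal{S}_x$, so $|y-x|=K_*^4$. Since $K_*\le 2K^{2d}$, one has $|y-x|\le 16K^{8d}\le K^{10d}$ for $K$ large, hence $y\in B_x(K^{10d})$ is regular. Next I would pick the unique $m\ge 1$ with $r_m\in[2K_*^4,4K_*^4)$; because every $x'\in\mathcal{S}_x$ satisfies $|x'-y|\le K_*^4$, this forces $B_{x'}(K_*)\subset B_y(K_*^4+K_*)\subset B_y(r_m)$. Using that $y$ is $m$-good and applying Remark~\ref{remark6.1}(1), we would obtain the upper bound
\begin{equation}\label{plan-up}
\big|\widehat{\mathbf{\Psi}}_n\cap B_y(r_m)\big|\le r_m^4\log^{16}(r_m)\le CK_*^{16}\log^{16}(K_*).
\end{equation}

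For the contradiction step, I would suppose that every $B_{x'}(K_*)$ with $x'\in\partial B_x(K_*^4)$ meets $\widehat{\mathbf{\Psi}}_n$, and place a cubical grid of spacing $2K_*+1$ on the $(d-1)$-dimensional face $\mathcal{S}_x$ of side length $2K_*^4$. This would yield at least $c_d K_*^{3(d-1)}$ centers $x'_1,\ldots,x'_N\in\mathcal{S}_x$ whose boxes $B_{x'_i}(K_*)$ are pairwise disjoint and sit inside $B_y(r_m)$; since each such box would contribute a distinct point of $\widehat{\mathbf{\Psi}}_n$, we would conclude
\begin{equation}\label{plan-lo}
\big|\widehat{\mathbf{\Psi}}_n\cap B_y(r_m)\big|\ge c_d K_*^{3(d-1)}.
\end{equation}
Since $d\ge 7$ implies $3(d-1)\ge 18>16$, combining (\ref{plan-up}) with (\ref{plan-lo}) would give $K_*^{3(d-1)-16}\le C\log^{16}(K_*)$, which fails for $K$ (and hence $K_*$) large enough; this contradiction produces the desired $x'$.

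The main (and essentially only) delicate point is the coordination between $y$, $m$, and the face $\mathcal{S}_x$, namely the uniform inclusion $B_{x'}(K_*)\subset B_y(r_m)$ for every $x'\in\mathcal{S}_x$ so that a single application of the $m$-good estimate at the one reference point $y$ dominates all the disjoint boxes in (\ref{plan-lo}). Once this coordination is arranged (which is precisely why we center $y$ on $\mathcal{S}_x$ and take $r_m\asymp K_*^4$), the remainder is a clean Dirichlet-type packing argument and the exponent gap $3(d-1)-16\ge 2$ easily absorbs the $\log^{16}$ factor; I do not anticipate any substantive obstacle.
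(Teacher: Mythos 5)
Your proposal is correct and follows essentially the same route as the paper: a packing of $\asymp K_*^{3(d-1)}$ disjoint $K_*$-boxes on the face $\mathcal{S}_x$ against the regularity bound $r_m^4\log^{16}(r_m)\asymp K_*^{16}\log^{16}(K_*)$, with the gap $3(d-1)>16$ for $d>6$. The only cosmetic difference is that you route the volume bound through an auxiliary regular point $y$ at the center of $\mathcal{S}_x$, whereas the paper applies the bound directly to $B_x(2K_*^4)$ using the regularity of $x$ itself; both are valid.
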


\begin{proof}
 By a simple volume consideration, we can find $cK_*^{3(d-1)}$ points $x_1',...,x_{cK_*^{3(d-1)}}'$ in $\mathcal{S}_x$ such that the minimal pairwise distance is at least $3K_*$ (so in particular $B_{x'_i}(K_*) \cap B_{x'_j}(K_*) = \emptyset$ for all $i\neq j$). Since $x$ is strongly regular, by Item (1) of Remark \ref{remark6.1} we have 
\begin{equation}
	\big| B_x(2K_*^4)\cap \widehat{\mathbf{\Psi}}_n  \big| \le CK_*^{16}\log^{16}(K_*^4). 
\end{equation}
Combined with the fact that $CK_*^{16}\log^{16}(K^4_*)< cK_*^{3(d-1)}$ for all large enough $K$, it yields that there exists some $x_i'$ such that $B_{x_i'}(K_*)\cap \widehat{\mathbf{\Psi}}_n=\emptyset$. 
\end{proof}

In light of Lemma \ref{lemma_610}, for each strongly regular point $x\in \overline{\mathbf{\Psi}}_n^*$, we may define $x'=x'(\mathbf{\Psi}_n)$ to be the first point (in some arbitrary and prefixed order) such that $\widehat{\mathbf{\Psi}}_n \cap B_{x'}(K_*)=\emptyset$. Note that $x'$ is regular since $x'\in B_x(K_*^4)\subset B_x(K^{10d})$.

For any $A_1,A_2,A_3\subset \widetilde{\mathbb{Z}}^d$, we say $A_1$ and $A_2$ are connected (by $\cup \widetilde{\mathcal{L}}_{1/2}$) off $A_3$ if there exists a collection $\mathfrak{L}$ of loops in $\widetilde{\mathcal{L}}_{1/2}$ disjoint from $A_3$ such that $A_1 \xleftrightarrow[]{\cup \mathfrak{L}} A_2$. We write it as ``$A_1\xleftrightarrow[]{} A_2\ \text{off}\ A_3$''. We may omit the braces when $A_i=\{v\}$ for some $i\in \{1,2\}$ and $v\in \widetilde{\mathbb{Z}}^d$.

\begin{lemma}\label{lemma_8.2}
	For any $\widetilde{\ell}\in \widetilde{\mathcal{L}}_{1/2}$ with $\mathrm{ran}(\widetilde{\ell})\cap \widehat{\mathbf{\Psi}}_n=\emptyset$, we have $\widetilde{\ell} \in \widetilde{\mathcal{L}}^{\mathrm{U}}$. As an immediate consequence, for any $A_1, A_2 \subset \widetilde{\mathbb{Z}}^d$, the event $\{A_1\xleftrightarrow[]{} A_2\ \text{off}\ \widehat{\mathbf{\Psi}}_n\}$ is measurable with respect to $\widetilde{\mathcal{L}}^{\mathrm{U}}$. 
\end{lemma}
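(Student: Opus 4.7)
The plan is to prove the first statement by a case analysis on the type of $\widetilde{\ell}$ (fundamental, point, or edge), showing in each case that $\widetilde{\ell}$ satisfies one of the four bullets listed in Definition \ref{def_unused loops} (with $M = \infty$). The overall strategy is to split into whether $\widetilde{\ell}$ is involved, and if not, whether $\mathrm{ran}(\widetilde{\ell})\cap \widetilde{B}(n) = \emptyset$.

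If $\widetilde{\ell}$ is an involved loop, I would argue that since $\mathbf{\Psi}_n^1 \subset \widehat{\mathbf{\Psi}}_n$, the hypothesis gives $\mathrm{ran}(\widetilde{\ell})\cap \mathbf{\Psi}_n^1=\emptyset$, placing $\widetilde{\ell}$ in $\widetilde{\mathcal{L}}^{\mathrm{U}}$ via the first bullet of Definition \ref{def_unused loops}. If $\widetilde{\ell}$ is not involved, I would first reduce the possibilities: a non-involved fundamental loop automatically has $\mathrm{ran}(\widetilde{\ell})\cap \widetilde{B}(n)=\emptyset$ (by the very definition of an involved fundamental loop); a non-involved edge loop lies in the interior of an interval $I_{\{y_1,y_2\}}$ with $\{y_1,y_2\}\cap B(n-1)=\emptyset$, so again $\mathrm{ran}(\widetilde{\ell})\cap \widetilde{B}(n)=\emptyset$; and a non-involved point loop at $y$ must have $y\notin B(n-1)$. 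In the first two of these subcases, the second bullet applies. For point loops, a short geometric check shows that if $y$ is a corner of $B(n)$ (i.e.\ $y\in\partial B(n)\setminus\partial\hat{B}(n)$), then every interval incident to $y$ either leaves $B(n)$ or has both endpoints in $\partial B(n)$, hence is not a subset of $\widetilde{B}(n)$; thus $\gamma^{\mathrm p}_y\cap \widetilde{B}(n)=\emptyset$ and the second bullet again applies. This leaves the sole nontrivial case: a point loop at some $y\in \partial \hat{B}(n)\setminus B(n-1)$.

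In this remaining case, the hypothesis $\mathrm{ran}(\widetilde{\ell})\cap \widehat{\mathbf{\Psi}}_n=\emptyset$ excludes $y\in \mathbf{\Psi}_n^1$ (for then $y\in \widehat{\mathbf{\Psi}}_n$ would lie in the range) and also excludes $y\in \mathbf{\Psi}_n^2$ (for then $\mathrm{ran}(\widetilde{\ell})\subset \gamma^{\mathrm p}_y\subset \widehat{\mathbf{\Psi}}_n$, by the definition of $\widehat{\mathbf{\Psi}}_n$ in Definition \ref{definition_Psi}). Hence $y\in \partial \hat{B}(n)\setminus(\mathbf{\Psi}_n^1\cup \mathbf{\Psi}_n^2)$, and since $I_{\{y,y^{\mathrm{in}}\}}\cap \mathbf{\Psi}_n^1\subset \widehat{\mathbf{\Psi}}_n$ is disjoint from $\mathrm{ran}(\widetilde{\ell})$, the fourth bullet of Definition \ref{def_unused loops} applies. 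The consequence is then immediate: on $\{A_1\xleftrightarrow{} A_2\ \text{off}\ \widehat{\mathbf{\Psi}}_n\}$, any connecting collection of loops has ranges disjoint from $\widehat{\mathbf{\Psi}}_n$, so by the first part these loops all belong to $\widetilde{\mathcal{L}}^{\mathrm{U}}$; conversely, whether such a connection exists is determined by looking inside $\widetilde{\mathcal{L}}^{\mathrm{U}}$ for loops that avoid $\widehat{\mathbf{\Psi}}_n$.

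I expect the main (mild) obstacle to be the bookkeeping in the point-loop subcase: one has to separate corner points from $\partial\hat{B}(n)$ and, within $\partial\hat{B}(n)$, to carefully use both the defining property of $\mathbf{\Psi}_n^2$ and the hypothesis to rule out $y\in \mathbf{\Psi}_n^1\cup \mathbf{\Psi}_n^2$. Everything else reduces to direct matching with the bullets of Definition \ref{def_unused loops}, with no estimates required.
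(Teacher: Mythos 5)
Your proof is correct and is essentially the paper's argument run in the forward direction: the paper characterizes the complement $\widetilde{\mathcal{L}}_{1/2}-\widetilde{\mathcal{L}}^{\mathrm{U}}$ as three loop types and derives a contradiction from $\mathrm{ran}(\widetilde{\ell})\cap\widehat{\mathbf{\Psi}}_n=\emptyset$ using exactly the same two facts you use ($\mathbf{\Psi}_n^1\subset\widehat{\mathbf{\Psi}}_n$ and $\gamma_y^{\mathrm p}\subset\widehat{\mathbf{\Psi}}_n$ for $y\in\mathbf{\Psi}_n^2$), while you match each loop directly to a bullet of Definition \ref{def_unused loops}. The only (harmless) omission is the sub-case of a point loop at $y\notin B(n)$, which is handled by the same one-line geometric observation as your corner-point case.
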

\begin{proof}
   We prove this lemma by contradiction. Suppose that there is $\widetilde{\ell}_{\diamond}\in \widetilde{\mathcal{L}}_{1/2}-\widetilde{\mathcal{L}}^{\mathrm{U}}$ such that $\mathrm{ran}(\widetilde{\ell}_{\diamond})\cap \widehat{\mathbf{\Psi}}_n=\emptyset$. By Definition \ref{def_unused loops}, loops in $\widetilde{\mathcal{L}}_{1/2}-\widetilde{\mathcal{L}}^{\mathrm{U}}$ can be divided into the following types:
    \begin{enumerate}
   	\item a fundamental or edge loop intersecting both $\widetilde{B}(n)$ and $\mathbf{\Psi}_n^1$;

   	\item a point loop that includes some $x\in B(n-1)$ and intersects $\mathbf{\Psi}_n^1$;

   	\item a point loop that includes some $x\in \partial \hat{B}(n)\setminus \mathbf{\Psi}_n^1$ and satisfies $I_{\{x,x^{\mathrm{in}}\}}\subset \gamma^{\mathrm{p}}_x\cup \mathbf{\Psi}_n^1$. 
   	
   \end{enumerate}
   On the one hand, $\widetilde{\ell}_{\diamond}$ does not belong to Type (1) or (2) since $\mathrm{ran}(\widetilde{\ell}_{\diamond})\cap \mathbf{\Psi}_n^1 \subset \mathrm{ran}(\widetilde{\ell}_{\diamond})\cap \widehat{\mathbf{\Psi}}_n=\emptyset$. On the other hand, if $\widetilde{\ell}_{\diamond}$ belongs to Type (3), then $\widetilde{\ell}_{\diamond}$ is a point loop including some $x\in \mathbf{\Psi}_n^2$. Since $\mathbf{\Psi}_n^2 \subset \widehat{\mathbf{\Psi}}_n$, this is contradictory with $\mathrm{ran}(\widetilde{\ell}_{\diamond})\cap \widehat{\mathbf{\Psi}}_n=\emptyset$. \end{proof}

  We recall some necessary notations before presenting the next definition: 
   
   \begin{itemize}
   	\item For any $x\in \mathbb{Z}^d$, we denote by $\mathbf{C}(x)$ the cluster of $\cup \widetilde{\mathcal{L}}_{1/2}$ containing $x$;
   	
   	\item $L= \epsilon^{\frac{3}{10}} N$ for some constant $\epsilon>0$;

   	\item $n\in [(1+\frac{\lambda}{4})N,(1+\frac{\lambda}{3})N]$ for some constant $\lambda\in \left(0,1 \right] $;

   	\item $\overline{\mathbf{\Psi}}_n^*:= \overline{\mathbf{\Psi}}_n \cap B(n_*)$ where $n_*:=n+[(1+\lambda)N]^b$, and $\psi_{n}^*= |\overline{\mathbf{\Psi}}_n^*|$.

   \end{itemize}

For any $x\in \mathbb{Z}^d\setminus B(n-1) $, we define the point $x_{\dagger}=x_{\dagger}(x,n)$ as follows: when $x\in \partial \hat{B}(n)$, let $x_{\dagger}$ be the unique point in $\partial \hat{B}(n+1)$ such that $x_{\dagger}\sim x$; otherwise, let $x_{\dagger}=x$. Then we define $\widetilde{x}_{\dagger}:=\frac{1}{2}(x+x_{\dagger})$.

\begin{definition}[potential pair]\label{potential}
For any $x\in \mathbb{Z}^d\setminus B(n-1)$, $y\in B_x(\frac{1}{2}L)\setminus B(n_*)$ and integer $M\in \mathbb{N}^+$, we say $(x,y)$ is an $M$-potential pair if the following events happen:
\begin{equation*}
	\mathsf{P}_1(x,M):= \left\lbrace x\in \overline{\mathbf{\Psi}}_n^*\right\rbrace \cap \left\lbrace x\ \text{is strongly regular} \right\rbrace\cap \left\lbrace \psi_n^{\mathrm{SR}}=M \right\rbrace \cap \left\lbrace \widetilde{x}_{\dagger}\in \widehat{\mathbf{\Psi}}_n \right\rbrace ,  
\end{equation*}
\begin{equation*}
	\mathsf{P}_2(x,y):= \left\lbrace x'\xleftrightarrow[]{}y\ \text{off}\ \widehat{\mathbf{\Psi}}_n \right\rbrace, 
\end{equation*}
\begin{equation*}
	\mathsf{P}_3(x):= \left\lbrace \mathbf{C}(x)\cap \mathbf{C}(x')=\emptyset \right\rbrace.  
\end{equation*}
\end{definition}

Note that the $M$-potential pair is not a symmetric relation since $y$ is not even necessarily strongly regular when $(x, y)$ is an $M$-potential pair.

\begin{lemma}\label{lemma_p1p2_p1}
For $d>6$, there exists $c_{6}(d)>0$ such that for all large enough $K$, any $x\in \mathbb{Z}^d\setminus B(n-1)$ and $M\in \mathbb{N}^+$,  
\begin{equation}\label{8.5}
	\sum_{y\in B_x(\frac{1}{2}L)\setminus B(n_*)}\mathbb{P}\left[	\mathsf{P}_1(x,M)\cap 	\mathsf{P}_2(x,y) \right] \ge c_{6} L^2 \mathbb{P}\left[\mathsf{P}_1(x,M)\right]. 
\end{equation}
\end{lemma}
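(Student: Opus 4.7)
The event $\mathsf{P}_2(x,y)$ depends only on loops of $\widetilde{\mathcal{L}}_{1/2}$ whose ranges are disjoint from $\widehat{\mathbf{\Psi}}_n$ (Lemma \ref{lemma_8.2}). I will therefore condition on enough randomness to freeze $\mathbf{\Psi}_n$ together with the glued point loops $\gamma_x^{\mathrm{p}}$ for $x\in\mathbf{\Psi}_n^2$, so that $\widehat{\mathbf{\Psi}}_n$ and hence $x'$ become deterministic. By Item (2) of Remark \ref{remark_51} and the thinning property of the loop soup, the conditional law of the remaining loops is a critical loop soup restricted to loops avoiding $\widehat{\mathbf{A}}$. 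It suffices to prove the deterministic bound
\begin{equation*}
\sum_{y\in B_x(L/2)\setminus B(n_*)}\mathbb{P}\bigl(x'\xleftrightarrow{}y\ \text{off}\ \widehat{\mathbf{A}}\bigr)\ \ge\ c_6 L^{2}
\end{equation*}
uniformly over admissible configurations $(\mathbf{A},\widehat{\mathbf{A}})$ compatible with $\mathsf{P}_1(x,M)$. Multiplying by $\mathbf{1}_{\mathsf{P}_1}$ and taking expectations then yields (\ref{8.5}).

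\textbf{Two-point lower bound off $\widehat{\mathbf{A}}$.} Strong regularity of $x$ forces every $z\in B_x(K^{10d})\supset B_{x'}(2K_*^{4})$ to be regular, so Item (1) of Remark \ref{remark6.1} gives the volume bound $|\widehat{\mathbf{A}}\cap B_{x'}(r_m)|\le r_m^{4}\log^{16}r_m$ at every dyadic scale $r_m$. Since $d>6$, this is $o(r_m^{d-2})$, and using the trivial bound $\mathrm{cap}(A)\le |A|$ together with (\ref{ineq_2.2}) we get $\mathrm{cap}(\widehat{\mathbf{A}}\cap B_{x'}(r_m))\le r_m^{4}\log^{16}r_m\ll r_m^{d-2}\asymp \mathrm{cap}(\partial B_{x'}(r_m))$. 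A standard multi-scale capacity comparison then shows that a continuous-time simple random walk started from $x'$ crosses each annulus $B_{x'}(r_{m+1})\setminus B_{x'}(r_m)$ without meeting $\widehat{\mathbf{A}}$ with probability bounded below uniformly in $m$; the analogous statement holds from any $y\notin\widehat{\mathbf{A}}$ at distance $\ge K_*^{4}$ from $x'$. Plugging these escape probabilities into Lupu's formula (Lemma \ref{lemma_two_point}) applied to the Green's function $G^{\widehat{\mathbf{A}}}$ killed on $\widehat{\mathbf{A}}$ gives
\begin{equation*}
\mathbb{P}\bigl(x'\xleftrightarrow{}y\ \text{off}\ \widehat{\mathbf{A}}\bigr)\ \gtrsim\ |x'-y|^{2-d}\qquad\text{for every }y\notin\widehat{\mathbf{A}}\text{ with }K_*^{4}\le|x'-y|\le L/2.
\end{equation*}

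\textbf{Summation and main obstacle.} For $r\in[K_*^{4},L/4]$, at least $cr^{d-1}$ points of $\partial B_{x'}(r)$ lie in $B_x(L/2)\setminus B(n_*)\setminus\widehat{\mathbf{A}}$: indeed $B_x(L/2)\supset B_{x'}(L/3)$ because $|x-x'|=K_*^{4}\ll L$; $n_*-n\le[(1+\lambda)N]^{b}=o(L)$ since $b<1$ and $L=\epsilon^{3/10}N$ for $N$ large; and the at most $r^{4}\log^{16}r=o(r^{d-1})$ points of $\widehat{\mathbf{A}}$ in $\partial B_{x'}(r)$ are negligible. Summing the pointwise bound over dyadic scales,
\begin{equation*}
\sum_{y}\mathbb{P}\bigl(x'\xleftrightarrow{}y\ \text{off}\ \widehat{\mathbf{A}}\bigr)\ \ge\ c\sum_{r=K_*^{4}}^{L/4}r^{d-1}\cdot r^{2-d}\ \ge\ c' L^{2},
\end{equation*}
which closes the argument after taking $K$ large enough to absorb the lower cutoff. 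The principal obstacle is the capacity-based two-point bound: strong regularity controls only the \emph{volume} of $\widehat{\mathbf{A}}\cap B_{x'}(r_m)$, and even though the trivial $\mathrm{cap}(A)\le|A|$ suffices here for $d>6$, combining the per-scale escape estimates into a genuine lower bound on $G^{\widehat{\mathbf{A}}}(x',y)$ requires a careful inductive coupling, similar in spirit to the multi-scale arguments in Section \ref{fina_section7}. Should this direct capacity route become cumbersome, an alternative is to upper bound the complement $\{x'\leftrightarrow y\}\setminus\{x'\leftrightarrow y\ \text{off}\ \widehat{\mathbf{A}}\}\subset\bigcup_{z\in\widehat{\mathbf{A}}}\bigl(\{x'\leftrightarrow z\}\circ\{z\leftrightarrow y\}\bigr)$ via BKR (Corollary \ref{coro_BKR}) and the tree expansion (Lemma \ref{lemma_tree_expansion}), summing with (\ref{eq_two_points}) and Lemma \ref{lemma_calculation1}, and subtracting from Lemma \ref{lemma_two_point}.
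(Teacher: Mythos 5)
Your reduction to the conditional statement $\sum_{y}\mathbb{P}[\mathsf{P}_2(x,y)\mid\widehat{\mathbf{\Psi}}_n=\widehat{\mathbf{A}}]\ge cL^2$, via Lemma \ref{lemma_8.2} and Item (2) of Remark \ref{remark_51}, is exactly the paper's first step. But your primary route through the killed Green's function has a genuine gap: the claimed pointwise bound $\mathbb{P}(x'\xleftrightarrow{}y\ \text{off}\ \widehat{\mathbf{A}})\gtrsim|x'-y|^{2-d}$ for \emph{every} $y\notin\widehat{\mathbf{A}}$ in the annulus is false. Strong regularity of $x$ controls only the volume of $\widehat{\mathbf{A}}$ in balls centered at $x'$; it says nothing about the local geometry of $\widehat{\mathbf{A}}$ around $y$. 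A point $y$ whose incident edges are all severed by $\widehat{\mathbf{A}}$ on the metric graph has $G^{\widehat{\mathbf{A}}}(x',y)=0$, and more generally $y$ densely surrounded by $\widehat{\mathbf{A}}$ can have connection probability off $\widehat{\mathbf{A}}$ far below $|x'-y|^{2-d}$; your parenthetical ``the analogous statement holds from any $y$'' is unsupported. (A secondary issue: per-annulus escape probabilities ``bounded below uniformly in $m$'' do not suffice — the product over the $\asymp\log L$ scales must be bounded below, which requires the hitting probabilities $r_m^{6-d}\log^{16}r_m$ to be summable and small; this happens to hold for $d>6$ and large $K$, but it is the whole point, not a routine step.) You yourself flag this as the ``principal obstacle,'' and it is not closed.

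The one sentence you append as a fallback is in fact the paper's proof, and it is the route you should have executed: write $\mathbb{P}(x'\xleftrightarrow{}y\ \text{off}\ \widehat{\mathbf{A}})=\mathbb{P}(x'\xleftrightarrow{}y)-\mathbb{P}(x'\xleftrightarrow{}y\ \text{only by}\ \widehat{\mathbf{A}})$. The first term sums to at least $cL^2$ by (\ref{eq_two_points}). For the second, the tree expansion and BKR give a triple diagram which, after summing over $y$ and the internal vertices via (\ref{fina4.3}) and (\ref{fin_4.6}), reduces to $CL^2\sum_{z\in\widehat{\mathbf{A}}\cap\mathbb{Z}^d}|z-x'|^{2-d}$; this is where $\widehat{\mathbf{A}}\cap B_{x'}(K_*)=\emptyset$ and the dyadic volume bounds $|\widehat{\mathbf{A}}\cap B_{x'}(r_m)|\le r_m^4\log^{16}(r_m)$ enter, yielding $CK^{6-d}\log^{16}(K)L^2$, which is absorbed by taking $K$ large. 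This subtraction scheme only ever needs control of $\widehat{\mathbf{A}}$ near $x'$ — precisely what regularity provides — and thereby sidesteps the pointwise problem at $y$ that breaks your main argument. As submitted, the proposal identifies the right ingredients but does not constitute a proof.
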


\begin{proof}

In order to prove the lemma, it suffices to show that for any sufficiently large $K$, and an arbitrary realization $\widehat{\mathbf{A}}$ for $\widehat{\mathbf{\Psi}}_n$ on which $\mathsf{P}_1(x,M)$ occurs, we have
		\begin{equation}\label{ineq_687}
				\sum_{y\in B_x(\frac{1}{2}L)\setminus B(n_*)}	\mathbb{P}\left[\mathsf{P}_2(x,y) \mid \widehat{\mathbf{\Psi}}_n =\widehat{\mathbf{A}}  \right] \ge  cL^2 
			\end{equation} 
		(since we can obtain (\ref{8.5}) by averaging over $\widehat{\mathbf{A}}$). In what follows, we prove (\ref{ineq_687}).


By Lemma \ref{lemma_8.2} and Item (2) of Remark \ref{remark_51} we have 
\begin{equation}\label{ineq_681}
	\begin{split}
		\mathbb{P}\left[\mathsf{P}_2(x,y) \mid \widehat{\mathbf{\Psi}}_n =\widehat{\mathbf{A}}   \right]
		=&	\mathbb{P}\left( x'\xleftrightarrow[]{}y\ \text{off}\ \widehat{\mathbf{A}}\mid \widehat{\mathbf{\Psi}}_n =\widehat{\mathbf{A}}  \right)  \\
		=& 	\mathbb{P}\left( x'\xleftrightarrow[]{}y\ \text{off}\ \widehat{\mathbf{A}}\right) \\
		=&  	\mathbb{P}\left( x'\xleftrightarrow[]{}y\right) - 	\mathbb{P}\left( x'\xleftrightarrow[]{}y\ \text{only by}\ \widehat{\mathbf{A}} \right) ,
	\end{split}
\end{equation}
where ``only by $\widehat{\mathbf{A}}$'' means that in any collection of loops connecting $x'$ and $y$, there is at least one loop intersecting $\widehat{\mathbf{A}}$. On the event $\{x'\xleftrightarrow[]{}y\ \text{only by}\ \widehat{\mathbf{A}}\}$, there exists a glued loop $\gamma_{*}$ intersecting $\widehat{\mathbf{A}}$ such that $\{x'\xleftrightarrow[]{}\gamma_{*}\} \circ \{y\xleftrightarrow[]{}\gamma_{*}\}$ happens. Similar to (\ref{ineq_new_4.10}), by the BKR inequality and the two-point function estimate, we have 
\begin{equation}\label{ineq_6.82}
	\begin{split}
		&\mathbb{P}\left( x'\xleftrightarrow[]{}y\ \text{only by}\ \widehat{\mathbf{A}}\right)  \\
		\le &C\sum_{z_1\in \widehat{\mathbf{A}}\cap\mathbb{Z}^d,z_2,z_3\in \mathbb{Z}^d} \widetilde{\mu}\Big( \big\{ \widetilde{\ell}: \widetilde{\ell}\cap \widetilde{B}_{z_i}(1)\neq\emptyset,\forall i=1,2,3\big\}\Big) \\
		&\ \ \ \ \ \ \ \ \ \ \ \ \ \  \ \ \ \ \ \ \ \cdot  \mathbb{P}\left[\widetilde{B}_{z_2}(1)\xleftrightarrow[]{} x' \right]\cdot  \mathbb{P}\left[\widetilde{B}_{z_3}(1)\xleftrightarrow[]{} y \right]\\
		\le & C\sum_{z_1\in \widehat{\mathbf{A}} \cap\mathbb{Z}^d,z_2,z_3\in \mathbb{Z}^d} |z_1-z_2|^{2-d}|z_2-z_3|^{2-d}|z_3-z_1|^{2-d}|z_2-x'|^{2-d}|z_3-y|^{2-d}. 
	\end{split}
\end{equation}
Therefore, by Lemma \ref{lemma_fina_4.2} and Corollary \ref{fina_coro_4.5}, we have  
\begin{equation}\label{ineq_8.8}
	\begin{split}
		&\sum_{y\in B_x(\frac{1}{2}L)\setminus B(n_*)}\mathbb{P}\left( x'\xleftrightarrow[]{}y\ \text{only by}\ \widehat{\mathbf{A}} \right) \\
		\le &CL^2\sum_{z_1\in \widehat{\mathbf{A}} \cap\mathbb{Z}^d,z_2,z_3\in \mathbb{Z}^d} |z_1-z_2|^{2-d}|z_2-z_3|^{2-d}|z_3-z_1|^{2-d}\\
		& \ \ \ \ \ \ \ \ \ \ \  \ \ \ \ \ \ \ \  \ \ \ \ \ \cdot |z_2-x'|^{2-d} \ \ \ \ \ \ \ \ \   \ \ \ \ \ \ (\text{by}\  (\ref{fina4.3}))\\
		\le &CL^2\sum_{z_1\in \widehat{\mathbf{A}} \cap\mathbb{Z}^d} |z_1-x'|^{2-d}\ \ \ \ \ \ \ \ \ \ \ \ \ \ \ \ \ \ \ \ \ \ \ \ \  (\text{by}\  (\ref{fin_4.6}))\\
	\le &CL^2 \Big(\big|\widehat{\mathbf{A}} \cap B_{x'}(K_*)\big|+ \sum_{m=m_*+1}^{\infty}\big|\widehat{\mathbf{A}} \cap B_{x'}(r_{m})\big|\cdot r_{m-1}^{2-d}\Big).
	\end{split}
\end{equation}
It follows from the definition of $x'$ that $\big|\widehat{\mathbf{A}}\cap B_{x'}(K_*)\big|=0$. Moreover, since $x$ is strongly regular and $x'\in B_x(K^{10d})$, we know that $x'$ is regular, and thus $\big|\widehat{\mathbf{A}} \cap B_{x'}(r_{m})\big|\le r_{m}^4\log^{16}(r_{m})$. In conclusion, the RHS of (\ref{ineq_8.8}) can be upper-bounded by 
\begin{equation}\label{ineq_6.84}
	\begin{split}
		\sum_{m=m_*+1}^{\infty} r_{m}^4\log^{16}(r_{m})\cdot r_{m-1}^{2-d}\le CK^{6-d}\log^{16}(K). 
	\end{split}
\end{equation}
Combining (\ref{ineq_8.8}) and (\ref{ineq_6.84}), we obtain
\begin{equation}\label{ineq_685}
	\sum_{y\in B_x(\frac{1}{2}L)\setminus B(n_*)}\mathbb{P}\left( x'\xleftrightarrow[]{}y\ \text{only by}\ \widehat{\mathbf{A}} \right) \le  CK^{6-d}\log^{16}(K)L^2.
\end{equation}


Meanwhile, by the two-point function estimate, we have 
\begin{equation}\label{ineq_686}
	\sum_{y\in B_x(\frac{1}{2}L)\setminus B(n_*)}\mathbb{P}\left( x'\xleftrightarrow[]{}y\right) \ge c\sum_{y\in B_x(\frac{1}{2}L)\setminus B(n_*)}|x'-y|^{2-d} \ge    cL^2.
\end{equation}
Since $K^{6-d}\log^{16}(K)$ converges to $0$ as $K\to \infty$, by (\ref{ineq_681}), (\ref{ineq_685}) and (\ref{ineq_686}), there exists a constant $K_0(d)>0$ such that (\ref{ineq_687}) holds for all $K\ge K_0$.   
\end{proof}

\begin{lemma}\label{lemma_614}
For $d>6$, there exists $c_{7}(d)>0$ such that for all large enough $K$, any $x\in \mathbb{Z}^d\setminus B(n-1)$ and any $M\in \mathbb{N}^+$,  
\begin{equation}\label{new_8.14}
	\sum_{y\in B_x(\frac{1}{2}L)\setminus B(n_*)}\mathbb{P}\left[\mathsf{P}_1(x,M)\cap \mathsf{P}_2(x,y)\cap \mathsf{P}_3(x) \right] \ge c_{7} L^2 \mathbb{P}\left[\mathsf{P}_1(x,M)\right]. 
\end{equation}
\end{lemma}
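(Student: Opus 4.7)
The natural strategy is to write
\[
\sum_{y}\mathbb{P}[\mathsf{P}_{1}\cap\mathsf{P}_{2}\cap\mathsf{P}_{3}]=\sum_{y}\mathbb{P}[\mathsf{P}_{1}\cap\mathsf{P}_{2}]-\sum_{y}\mathbb{P}[\mathsf{P}_{1}\cap\mathsf{P}_{2}\cap\mathsf{P}_{3}^{c}],
\]
invoke Lemma~\ref{lemma_p1p2_p1} to lower bound the first sum by $c_{6}L^{2}\mathbb{P}[\mathsf{P}_{1}]$, and then show that for all $K$ sufficiently large
\[
\sum_{y}\mathbb{P}[\mathsf{P}_{1}\cap\mathsf{P}_{2}\cap\mathsf{P}_{3}^{c}]\le \tfrac{c_{6}}{2}L^{2}\mathbb{P}[\mathsf{P}_{1}],
\]
so that the lemma follows with $c_{7}=c_{6}/2$.

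To control the bad sum, note that on $\mathsf{P}_{3}^{c}$ one has $\mathbf{C}(x)=\mathbf{C}(x')$, so $x\leftrightarrow x'$ in $\cup\widetilde{\mathcal{L}}_{1/2}$. Condition on $\mathbf{\Psi}_{n}=\mathbf{A}$, which is measurable with respect to $\widetilde{\mathcal{L}}_{1/2}-\widetilde{\mathcal{L}}^{\mathrm{U}}_{\mathbf{A}}$; by Item~(2) of Remark~\ref{remark_51} the unused loops are conditionally distributed as $\widetilde{\mathcal{L}}^{\mathrm{U}}_{\mathbf{A}}$. Under this conditioning, $\mathsf{P}_{2}$ asks for a chain of $\widetilde{\mathcal{L}}^{\mathrm{U}}_{\mathbf{A}}$-loops from $x'$ to $y$ disjoint from $\widehat{\mathbf{A}}$, while $\mathsf{P}_{3}^{c}$ asks for a chain from $x'$ to $\widehat{\mathbf{A}}$ within $\widetilde{\mathcal{L}}^{\mathrm{U}}_{\mathbf{A}}$. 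Since both chains emanate from $x'$, I would apply the tree-expansion of Lemma~\ref{lemma_tree_expansion} at $x'$ with targets $\{y\}$ and $\widehat{\mathbf{A}}$ inside the Poisson process $\widetilde{\mathcal{L}}^{\mathrm{U}}_{\mathbf{A}}$, producing a glued loop $\gamma_{*}$ certifying $\{\gamma_{*}\leftrightarrow x'\}\circ\{\gamma_{*}\leftrightarrow y\}\circ\{\gamma_{*}\leftrightarrow\widehat{\mathbf{A}}\}$. The BKR inequality (Corollary~\ref{coro_BKR}), combined with stochastic domination of $\widetilde{\mathcal{L}}^{\mathrm{U}}_{\mathbf{A}}$ by $\widetilde{\mathcal{L}}_{1/2}$ and the two-point estimate (\ref{eq_two_points}), yields an upper bound of the same shape as (\ref{ineq_6.82}). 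Summing over $y$ via (\ref{fina4.3}) contracts one leg to $CL^{2}$, while the remaining loop-measure sums close through Lemma~\ref{lemma_calculation2} and Corollary~\ref{fina_coro_4.5}, leaving
\[
\sum_{y}\mathbb{P}\big[\mathsf{P}_{2}\cap\mathsf{P}_{3}^{c}\,\big|\,\mathbf{\Psi}_{n}=\mathbf{A}\big]\le CL^{2}\sum_{w\in\widehat{\mathbf{A}}\cap\mathbb{Z}^{d}}|x'-w|^{4-d}.
\]

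It remains to dominate the residual sum on $\mathsf{P}_{1}$. By the choice of $x'$ in Lemma~\ref{lemma_610} we have $\widehat{\mathbf{A}}\cap B_{x'}(K_{*})=\emptyset$; and since $x'\in B_{x}(K^{10d})$, the strong regularity of $x$ forces $x'$ to be regular, so $|\widehat{\mathbf{A}}\cap B_{x'}(r_{m})|\le r_{m}^{4}\log^{16}(r_{m})$ for every $m\ge 1$. Dyadic decomposition around $x'$ and summation by parts give a factor of order $K_{*}^{8-d}\log^{O(1)}(K_{*})\le K^{-\kappa}$ with $\kappa=\kappa(d)>0$ from the portion of $\widehat{\mathbf{A}}$ inside the range of strong regularity; for the contribution from $w$ at large distance from $x'$, I would pass from the conditional bound to the expectation
\[
\sum_{w}|x'-w|^{4-d}\,\mathbb{P}[\mathsf{P}_{1},\,w\in\widehat{\mathbf{A}}]
\]
and apply the triangle estimate $\mathbb{P}[\bm{0}\leftrightarrow x,\bm{0}\leftrightarrow w]\le C\sum_{z}|z|^{2-d}|z-x|^{2-d}|z-w|^{2-d}$ (another tree-expansion step) together with Lemmas~\ref{lemma_calculation1}--\ref{lemma_calculation2}, which supplies the polynomial decay in $N$ valid for $d>6$. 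Combining both ranges produces $\sum_{y}\mathbb{P}[\mathsf{P}_{1}\cap\mathsf{P}_{2}\cap\mathsf{P}_{3}^{c}]\le CK^{-\kappa}L^{2}\mathbb{P}[\mathsf{P}_{1}]$, which is at most $\tfrac{c_{6}}{2}L^{2}\mathbb{P}[\mathsf{P}_{1}]$ for $K$ large enough. The main obstacle will be balancing these two regimes cleanly: pointwise strong regularity controls $\widehat{\mathbf{A}}$ near $x'$, while the averaged triangle estimate controls it at large distances, and the transition between the two must be executed so that the constants combine into a gain of order $K^{-\kappa}$ independent of $\mathbf{A}$.
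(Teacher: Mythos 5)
Your overall architecture is the same as the paper's: reduce to bounding $\sum_{y}\mathbb{P}\left[\mathsf{P}_2(x,y)\cap[\mathsf{P}_3(x)]^c\mid\widehat{\mathbf{\Psi}}_n=\widehat{\mathbf{A}}\right]$ uniformly over realizations $\widehat{\mathbf{A}}$ on which $\mathsf{P}_1(x,M)$ holds, via tree expansion at $x'$ with targets $y$ and $\widehat{\mathbf{A}}$, BKR, and the two-point function, and then absorb the error into Lemma \ref{lemma_p1p2_p1} by taking $K$ large. However, there is a genuine quantitative gap in how you close the estimate.

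The problem is your intermediate reduction to $CL^2\sum_{w\in\widehat{\mathbf{A}}\cap\mathbb{Z}^d}|x'-w|^{4-d}$, obtained by replacing $\mathbb{P}\big(\widehat{\mathbf{A}}\xleftrightarrow[]{\widetilde{\mathcal{L}}^{\mathrm{U}}_{\mathbf{A}}}z_1\big)$ with $\sum_{w\in\widehat{\mathbf{A}}}|z_1-w|^{2-d}$ and summing out $z_1$. With only the regularity information $|\widehat{\mathbf{A}}\cap B_{x'}(r_m)|\le r_m^4\log^{16}(r_m)$, the annulus at scale $r_m$ contributes $r_m^4\log^{16}(r_m)\cdot r_m^{4-d}=r_m^{8-d}\log^{16}(r_m)$, and $\sum_m r_m^{8-d}\log^{16}(r_m)$ diverges for $6<d\le 8$; your own displayed gain $K_*^{8-d}\log^{O(1)}(K_*)$ is not small (indeed is large) in exactly that range, so the claimed factor $K^{-\kappa}$ does not materialize for $d=7,8$. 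The paper avoids this loss by \emph{not} summing out $z_1$ prematurely: it keeps the bound in the form $CL^2\sum_{z_1}\mathbb{P}\big(\widehat{\mathbf{A}}\xleftrightarrow[]{\widetilde{\mathcal{L}}^{\mathrm{U}}_{\mathbf{A}}}z_1\big)|z_1-x'|^{2-d}$ and, on the dyadic annulus at scale $r_m$, bounds the sum of connection probabilities by the functional $\Delta_{x',m+1}(\mathbf{A})\le r_{m+1}^4\log^{16}(r_{m+1})$ from the $(x',m,16)$-niceness of $\mathbf{\Psi}_n$ (plus the one-arm bound for connections reaching $\widehat{\mathbf{A}}\setminus B_{x'}(r_{m+1}^{4d})$). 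This pairs the count $r_m^4\log^{16}(r_m)$ with the weight $r_m^{2-d}$ rather than $r_m^{4-d}$, giving the summable $r_m^{6-d}\log^{16}(r_m)$ for every $d>6$. Note that $\Delta_{x',m}$ controls the expected number of points \emph{connected to} $\widehat{\mathbf{A}}$ by unused loops, which is strictly stronger than a bound on $|\widehat{\mathbf{A}}\cap B_{x'}(r_m)|$ alone; this extra strength is precisely what is needed.

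Your proposed patch for the far field — averaging over $\mathbf{A}$ and invoking the unconditional triangle estimate — cannot repair this, for a structural reason: the target inequality requires the upper bound to carry the factor $\mathbb{P}[\mathsf{P}_1(x,M)]$, which encodes $\{x\ \text{strongly regular}\}$ and $\{\psi_n^{\mathrm{SR}}=M\}$. Once you pass to $\sum_w|x'-w|^{4-d}\,\mathbb{P}[\bm{0}\xleftrightarrow[]{}x,\bm{0}\xleftrightarrow[]{}w]$ you have discarded those events, and the resulting bound is proportional to (a diagrammatic bound on) $\mathbb{P}[\bm{0}\xleftrightarrow[]{}x]$, which can vastly exceed $\mathbb{P}[\mathsf{P}_1(x,M)]$. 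The uniform-in-$\widehat{\mathbf{A}}$ conditional bound is therefore not optional: it is the only way to reinstate $\mathbb{P}[\mathsf{P}_1(x,M)]$ by averaging at the end, and it must be carried out with the $\Delta_{x',m}$ regularity functional as in the paper.
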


\begin{proof}
To verify this lemma, it suffices to prove that for any large enough $K$ and any realization $\widehat{\mathbf{A}}$ for $\widehat{\mathbf{\Psi}}_n$ on which $\mathsf{P}_1(x,M)$ happens, one has 
	\begin{equation}\label{new_8.25}
		\begin{split}
			\sum_{y\in B_x(\frac{1}{2}L)\setminus B(n_*)} \mathbb{P}\left[ \mathsf{P}_2(x,y)\cap [\mathsf{P}_3(x)]^c \mid \widehat{\mathbf{\Psi}}_n =\widehat{\mathbf{A}}  \right] \le CL^2K^{6-d}\log^{16}(K). 
		\end{split}
	\end{equation}
	In fact, by averaging over $\widehat{\mathbf{A}}$ in (\ref{new_8.25}), we have 
	\begin{equation}\label{new_ineq_108_2}
		\begin{split}
				&\sum_{y\in B_x(\frac{1}{2}L)\setminus B(n_*)} \mathbb{P}\left[\mathsf{P}_1(x,M)\cap \mathsf{P}_2(x,y)\cap \big[\mathsf{P}_3(x)\big]^c \right] \\
			\le &CL^2K^{6-d}\log^{16}(K)\mathbb{P}\left[\mathsf{P}_1(x,M)\right].  
		\end{split}	
	\end{equation}
	For all sufficiently large $K$ with $CK^{6-d}\log^{16}(K)<\frac{1}{2}c_{6}$, (\ref{new_8.14}) follows from Lemma \ref{lemma_p1p2_p1} and (\ref{new_ineq_108_2}). We proceed to show (\ref{new_8.25}) in the remainder of this proof.

%

	 On the event $\mathsf{P}_2(x,y)\cap [\mathsf{P}_3(x)]^c$, by Lemma \ref{lemma_8.2}, $x'$ is connected to both $\widehat{\mathbf{A}}$ and $y$ by $\widetilde{\mathcal{L}}^{\mathrm{U}}_{\mathbf{A}}$. Therefore, by the tree expansion, there exists a glued loop $\gamma_*$ such that $\{\widehat{\mathbf{A}}\xleftrightarrow[]{\widetilde{\mathcal{L}}^{\text{U}}_{\mathbf{A}}} \gamma_*\}\circ \{x'\xleftrightarrow[]{} \gamma_*  \}\circ \{y\xleftrightarrow[]{} \gamma_*  \}$ happens. Thus, similar to (\ref{ineq_6.82}), we have 
	\begin{equation*}\label{new_ineq_6.100}
		\begin{split}
			& \mathbb{P}\left[ \mathsf{P}_2(x,y)\cap [\mathsf{P}_3(x)]^c \mid \widehat{\mathbf{\Psi}}_n =\widehat{\mathbf{A}}  \right] \\
			\le & C \sum_{z_1,z_2,z_3\in \mathbb{Z}^d}\mathbb{P}\Big( \widehat{\mathbf{A}} \xleftrightarrow[]{\widetilde{\mathcal{L}}^{\text{U}}_{\mathbf{A} }} z_1\Big)   |z_1-z_2|^{2-d}|z_2-z_3|^{2-d}\\
			&\ \ \ \ \ \ \ \ \ \ \ \ \ \ \  \cdot    |z_3-z_1|^{2-d}|z_2-x'|^{2-d}|z_3-y|^{2-d}. 
		\end{split}
	\end{equation*}	
Therefore, by (\ref{fina4.3}) and (\ref{fin_4.6}), we have  
	\begin{equation}\label{new_ineq_101}
		\begin{split}
				&\sum_{y\in B_x(\frac{1}{2}L)\setminus B(n_*)} \mathbb{P}\left[ \mathsf{P}_2(x,y)\cap [\mathsf{P}_3(x)]^c \mid \widehat{\mathbf{\Psi}}_n =\widehat{\mathbf{A}}  \right]\\
				\le &CL^2\sum_{z_1\in \mathbb{Z}^d}\mathbb{P}\Big( \widehat{\mathbf{A}} \xleftrightarrow[]{\widetilde{\mathcal{L}}^{\text{U}}_{\mathbf{A}}} z_1\Big) |z_1-x'|^{2-d}. 
		\end{split}
	\end{equation}	
	The sum on the RHS can be decomposed into $\mathbb{I}^{(1)}+\sum_{m=1}^{\infty}\mathbb{I}^{(2)}_{m}$, where (recall $r_1=K$)
	$$
	\mathbb{I}^{(1)}:= \sum_{z_1\in B_{x'}(K)}\mathbb{P}\Big( \widehat{\mathbf{A}} \xleftrightarrow[]{\widetilde{\mathcal{L}}^{\text{U}}_{\mathbf{A}}} z_1\Big) |z_1-x'|^{2-d}, 
	$$
	$$
	\mathbb{I}^{(2)}_{m}:= \sum_{z_1\in B_{x'}(r_{m+1})\setminus B_{x'}(r_{m}) }\mathbb{P}\Big( \widehat{\mathbf{A}} \xleftrightarrow[]{\widetilde{\mathcal{L}}^{\text{U}}_{\mathbf{A}}} z_1\Big) |z_1-x'|^{2-d}. 
	$$



For $\mathbb{I}^{(1)}$, since $|B_{x'}(K)|\asymp K^d$, $|z_1-x'|^{2-d}\le 1$ and $\widehat{\mathbf{A}}\cap B_{x'}(K_*)=\emptyset$, we have 
\begin{equation}\label{8.18}
	\begin{split}
		\mathbb{I}^{(1)}\le&  CK^d \max_{z_1\in B_{x'}(K)}\mathbb{P}\left[ z_1 \xleftrightarrow[]{} \widehat{\mathbf{A}}\setminus  B_{x'}(K_*)\right]  \\
		\le &CK^d\max_{z_1\in B_{x'}(K)}\sum_{m=m_*+1}^{\infty} \sum_{z_4\in \widehat{\mathbf{A}} \cap B_{x'}(r_{m})\setminus B_{x'}(r_{m-1})} |z_4-z_1|^{2-d}.
	\end{split}
\end{equation}
For any $z_1\in B_{x'}(K)$ and $z_4\in \widehat{\mathbf{A}}\cap B_{x'}(r_{m})\setminus B_{x'}(r_{m-1})$ for $m>m_*$, we have 
\begin{equation*}\label{8.19}
	|z_4-z_1|\ge |z_4-x'|-|x'-z_1|\ge |z_4-x'|-K\ge  cr_{m}. 
\end{equation*}
In addition, since $x$ is strongly regular (and thus $x'$ is regular), one has 
\begin{equation*}\label{8.20}
	\big|\widehat{\mathbf{A}}\cap B_{x'}(r_{m})\setminus B_{x'}(r_{m-1})\big|\le r_{m}^4\log^{16}(r_{m}). 
\end{equation*}
Thus, the RHS of (\ref{8.18}) is bounded from above by 
\begin{equation}\label{8.21}
	 CK^d\sum_{m=m_*+1}^{\infty} r_{m}^4\log^{16}(r_{m})\cdot r_{m}^{2-d}\le CK^d\cdot K_{*}^{6-d} \log(K_*)\le CK^{1-d},
\end{equation}
where we used $K_*\ge K^{2d}$ in the last inequality.

For each $\mathbb{I}^{(2)}_{m}$, since $|z_1-x'|\ge r_m$ for all $z_1\in B_{x'}(r_{m+1})\setminus B_{x'}(r_{m})$, we have (recalling Definition \ref{def_nice_set}) 
\begin{equation*}
	\begin{split}
		\mathbb{I}^{(2)}_{m}\le&  Cr_{m}^{2-d} \sum_{z_1\in B_{x'}(r_{m+1})\setminus B_{x'}(r_{m}) }\mathbb{P}\Big( \widehat{\mathbf{A}} \xleftrightarrow[]{\widetilde{\mathcal{L}}^{\text{U}}_{\mathbf{A}}} z_1\Big) \\
		\le  &Cr_{m}^{2-d}\bigg[\Delta_{x',m+1}(\mathbf{A})+\sum_{z_1\in B_{x'}(r_{m+1}) }  \mathbb{P}\Big( z_1\xleftrightarrow[]{\widetilde{\mathcal{L}}^{\text{U}}_{\mathbf{A}}} \widehat{\mathbf{A}}\setminus B_{x'}(r_{m+1}^{4d}) \Big)   \bigg].    
	\end{split}
\end{equation*}
Since $x'$ is regular, one has $\Delta_{x',m+1}(\mathbf{A})\le r_{m+1}^4\log^{16}(r_{m+1})$. In addition, since $\cup \widetilde{\mathcal{L}}^{\text{U}}_{\mathbf{A}}$ is stochastically dominated by $\cup \widetilde{\mathcal{L}}_{1/2}$, by (\ref{ineq_onearm}) we have 
\begin{equation*}
	\sum_{z_1\in B_{x'}(r_{m+1}) }  \mathbb{P}\Big(z_1  \xleftrightarrow[]{\widetilde{\mathcal{L}}^{\text{U}}_{\mathbf{A}}}\widehat{\mathbf{A}}\setminus B_{x'}(r_{m+1}^{4d}) \Big) \le Cr_{m+1}^{d}\cdot r_{m+1}^{-\frac{1}{2}\cdot 4d}<1.
\end{equation*}
Consequently, $\mathbb{I}^{(2)}_{m}$ is upper-bounded by 
\begin{equation}\label{8.24}
	 Cr_{m}^{2-d}\cdot \left[ r_{m+1}^4\log^{16}(r_{m+1})+1\right] \le Cr_{m}^{6-d}\log^{16}(r_{m}). 
\end{equation}

By (\ref{new_ineq_101}), (\ref{8.21}) and (\ref{8.24}), we obtain (\ref{new_8.25}) and finally conclude the lemma:	
\begin{equation}
	\begin{split}
		&\sum_{y\in B_x(\frac{1}{2}L)\setminus B(n_+)} \mathbb{P}\left[ \mathsf{P}_2(x,y)\cap [\mathsf{P}_3(x)]^c \mid \widehat{\mathbf{\Psi}}_n =\widehat{\mathbf{A}}  \right] \\
		\le &CL^2\bigg( \mathbb{I}^{(1)}+\sum_{m=1}^{\infty}\mathbb{I}^{(2)}_{m}\bigg) \\
		\le &CL^2 \bigg(K^{1-d}+ \sum_{m=1}^{\infty}r_{m}^{6-d}\log^{16}(r_{m})\bigg) \\
		\le &CL^2K^{6-d}\log^{16}(K).  \qedhere
	\end{split}
\end{equation}
\end{proof}


In order to introduce our aforementioned pivotal event, for any $x\in \mathbb{Z}^d\setminus B(n-1)$, we define $\mathfrak{L}_{x,K}$ as the collection of all fundamental loops that are contained in $\widetilde{B}_x(K^4_*+1)\setminus \widetilde{B}(n)$, and visit $\widetilde{x}_{\dagger}$ and every point in $\mathcal{S}_x$ (and thus also visits $x'$). Note that there exists some constant $u_0(K,d)>0$ such that $\widetilde{\mu}(\mathfrak{L}_{x,K})\ge u_0(K,d)$ for all $x\in \mathbb{Z}^d\setminus B(n-1)$. Let $\gamma_{x,K}^{\mathrm{f}}$ be the union of ranges of loops contained in both $\widetilde{\mathcal{L}}_{1/2}$ and $\mathfrak{L}_{x,K}$. It follows from the definition that every loop in $\mathfrak{L}_{x,K}$ is not involved (recalling Definition \ref{definition_Psi}), and thus $\gamma_{x,K}^{\mathrm{f}}$ is independent of $\widehat{\mathbf{\Psi}}_n$.

%
%
%
%
%
%

\begin{definition}[admissible pair]\label{fina_def_8.6}
For any $x\in \mathbb{Z}^d\setminus B(n-1)$ and $y\in B_x(\frac{1}{2}L)\setminus B(n_*)$, we say $(x,y)$ is admissible if the following events happen: 
\begin{enumerate}
	\item $x\in \overline{\mathbf{\Psi}}_n^*$ and $x$ is strongly regular.

	\item The event $\{y\xleftrightarrow[]{}\widehat{\mathbf{\Psi}}_n\}$ happens and is pivotal with respect to $\gamma_{x,K}^{\mathrm{f}}$. Precisely, ``pivotal'' means that if we delete all loops included in $\gamma_{x,K}^{\mathrm{f}}$ from $\widetilde{\mathcal{L}}_{1/2}$, then the event $\{y\xleftrightarrow[]{}\widehat{\mathbf{\Psi}}_n\}$ no longer happens.

\end{enumerate}
We denote the total number of all admissible pairs by 
\begin{equation}\label{def_W}
	W=\big|\big\{(x,y):x\in \mathbb{Z}^d\setminus B(n-1),y\in B_x(\tfrac{1}{2}L)\setminus B(n_*),(x,y)\ \text{is admissible} \big\}\big|. 
\end{equation}
\end{definition}

\begin{lemma}\label{lemma_6.15}
For all sufficiently large $K$ and any $M\in \mathbb{N}^+$, there exists $c_{8}(K,d)>0$ such that
\begin{equation}\label{new_ineq_8.22}
	\mathbb{E}\left[W\cdot \mathbbm{1}_{\psi_n^{\mathrm{SR}}=M} \right] \ge c_{8}L^2M\mathbb{P}\left( \psi_n^{\mathrm{SR}}=M\right). 
\end{equation}
\end{lemma}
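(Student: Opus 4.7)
The plan is to derive \eqref{new_ineq_8.22} from Lemma \ref{lemma_614} via a Poisson decoupling that trades the rare event ``$\mathsf{P}_3$ holds'' (which, as we will see, forces the loops in $\mathfrak{L}_{x,K}$ to be absent) for the event ``$(x,y)$ is admissible'' (which needs those loops to be present). Concretely, I decompose $\widetilde{\mathcal{L}}_{1/2} = \widetilde{\mathcal{L}}_{1/2}' \sqcup \mathcal{G}_{x,K}$ with $\mathcal{G}_{x,K} := \widetilde{\mathcal{L}}_{1/2}\cap \mathfrak{L}_{x,K}$, so that $\gamma_{x,K}^{\mathrm{f}}=\cup\mathcal{G}_{x,K}$. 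By the thinning property these two point processes are independent, and since every loop in $\mathfrak{L}_{x,K}$ avoids $\widetilde{B}(n)$ and is fundamental, the construction of $\mathbf{\Psi}_n$, $\widehat{\mathbf{\Psi}}_n$, $\overline{\mathbf{\Psi}}_n^*$, ``strongly regular'', and $\psi_n^{\mathrm{SR}}$ uses only $\widetilde{\mathcal{L}}_{1/2}'$. Writing $\mathsf{P}_i'$ for the analogue of $\mathsf{P}_i$ computed with $\widetilde{\mathcal{L}}_{1/2}'$, we have $\mathsf{P}_1=\mathsf{P}_1'$ and $\mathsf{P}_1\cap\mathsf{P}_2=\mathsf{P}_1\cap\mathsf{P}_2'$, the latter because the loops of $\mathcal{G}_{x,K}$ visit $\widetilde{x}_\dagger\in\widehat{\mathbf{\Psi}}_n$ on $\mathsf{P}_1$ and so cannot participate in ``off $\widehat{\mathbf{\Psi}}_n$'' connections.

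Next, on $\mathsf{P}_1$ a non-empty $\mathcal{G}_{x,K}$ contains a loop visiting both $x'$ and $\widetilde{x}_\dagger\in\widehat{\mathbf{\Psi}}_n\subset\mathbf{C}(x)$, which forces $\mathbf{C}(x)=\mathbf{C}(x')$ and kills $\mathsf{P}_3$, while on $\{\mathcal{G}_{x,K}=\emptyset\}$ we have $\mathsf{P}_3=\mathsf{P}_3'$. Therefore
\begin{equation*}
\mathsf{P}_1 \cap \mathsf{P}_2 \cap \mathsf{P}_3 \;=\; \mathsf{P}_1' \cap \mathsf{P}_2' \cap \mathsf{P}_3' \cap \{\mathcal{G}_{x,K}=\emptyset\}.
\end{equation*}
Symmetrically, on $\mathsf{P}_1'\cap\mathsf{P}_2'\cap\mathsf{P}_3'\cap\{\mathcal{G}_{x,K}\neq\emptyset\}$, a loop of $\mathcal{G}_{x,K}$ yields the connection $y\leftrightarrow x' \leftrightarrow \widetilde{x}_\dagger\in\widehat{\mathbf{\Psi}}_n$, and any putative second path $y\leftrightarrow\widehat{\mathbf{\Psi}}_n$ in $\widetilde{\mathcal{L}}_{1/2}'$ would, when combined with $\mathsf{P}_2'$, produce $x'\in\mathbf{C}'(x)$ and contradict $\mathsf{P}_3'$; hence $(x,y)$ is admissible on this event.

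Using the independence of $\mathcal{G}_{x,K}$ and $\widetilde{\mathcal{L}}_{1/2}'$ together with the two identities above, I obtain
\begin{equation*}
\mathbb{P}\bigl(\mathsf{P}_1'\cap\mathsf{P}_2'\cap\mathsf{P}_3'\cap\{\mathcal{G}_{x,K}\neq\emptyset\}\bigr) \;=\; \frac{\mathbb{P}(\mathcal{G}_{x,K}\neq\emptyset)}{\mathbb{P}(\mathcal{G}_{x,K}=\emptyset)}\,\mathbb{P}\bigl(\mathsf{P}_1\cap\mathsf{P}_2\cap\mathsf{P}_3\bigr),
\end{equation*}
and the ratio equals $e^{\widetilde{\mu}(\mathfrak{L}_{x,K})/2}-1\geq \kappa(K,d)>0$ uniformly in $x\notin B(n-1)$, since $\widetilde{\mu}(\mathfrak{L}_{x,K})\geq u_0(K,d)$. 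Summing over $(x,y)$ and applying Lemma \ref{lemma_614} yields
\begin{equation*}
\mathbb{E}\bigl[W\,\mathbbm{1}_{\psi_n^{\mathrm{SR}}=M}\bigr] \;\geq\; \kappa(K,d)\,c_7\,L^2 \sum_{x} \mathbb{P}(\mathsf{P}_1(x,M)).
\end{equation*}

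The remaining task, which I expect to be the main technical obstacle, is the lower bound $\sum_x \mathbb{P}(\mathsf{P}_1(x,M))\geq cM\,\mathbb{P}(\psi_n^{\mathrm{SR}}=M)$. For $x\in\overline{\mathbf{\Psi}}_n^*$ with $x\notin\partial\hat{B}(n)$ the defining condition $\widetilde{x}_\dagger\in\widehat{\mathbf{\Psi}}_n$ is automatic because $\widetilde{x}_\dagger=x$; the delicate case is $x\in\overline{\mathbf{\Psi}}_n^*\cap\partial\hat{B}(n)$, which I handle by conditioning on $\mathbf{\Psi}_n$ and applying Remark \ref{fina_remark6.3}(1). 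For $x\in\mathbf{\Psi}_n^2$ the conditional law of $\gamma_x^{\mathrm{p}}\cap I_{\{x,x_\dagger\}}$ stochastically dominates its unconditional law, giving $\mathbb{P}(\widetilde{x}_\dagger\in\gamma_x^{\mathrm{p}}\mid\mathbf{\Psi}_n)\geq c>0$; for $x\in\mathbf{\Psi}_n^1\cap\partial\hat{B}(n)$ the independence (given $\mathbf{\Psi}_n$) of the outward Brownian excursions of involved loops already visiting $x$ provides, after an FKG/stochastic-domination estimate, a constant lower bound on $\mathbb{P}(\widetilde{x}_\dagger\in\mathbf{\Psi}_n^1\mid\mathbf{\Psi}_n)$. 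Averaging and using $\mathbb{E}[\psi_n^{\mathrm{SR}}\mathbbm{1}_{\psi_n^{\mathrm{SR}}=M}]=M\,\mathbb{P}(\psi_n^{\mathrm{SR}}=M)$ will then deliver the claim, and the lemma follows with $c_8:=c\,\kappa(K,d)\,c_7$.
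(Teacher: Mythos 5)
Your proposal is correct and follows essentially the same route as the paper: the Poisson independence of the loops in $\mathfrak{L}_{x,K}$ from the rest of $\widetilde{\mathcal{L}}_{1/2}$ (which the paper phrases as the injection $\pi_{x,K}^{\mathrm{f}}$ on configurations) converts $M$-potential pairs into admissible pairs at the cost of the uniformly bounded ratio $\mathbb{P}(\gamma_{x,K}^{\mathrm{f}}\neq\emptyset)/\mathbb{P}(\gamma_{x,K}^{\mathrm{f}}=\emptyset)$, after which Lemma \ref{lemma_614} and Remark \ref{fina_remark6.3}(1) give the bound $\sum_x\mathbb{P}(\mathsf{P}_1(x,M))\ge c'M\,\mathbb{P}(\psi_n^{\mathrm{SR}}=M)$ exactly as you describe. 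Your verification of pivotality (a second $\widetilde{\mathcal{L}}_{1/2}'$-connection from $y$ to $\widehat{\mathbf{\Psi}}_n$ would contradict $\mathsf{P}_3'$) correctly fills in the detail the paper leaves implicit.
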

\begin{proof} 
Recall the events $\mathsf{P}_1$, $\mathsf{P}_2$ and $\mathsf{P}_3$ in Definition \ref{potential}. If $(x,y)$ is an $M$-potential pair, then we have:
\begin{itemize}
	\item The event $\{\gamma_{x,K}^{\mathrm{f}}=\emptyset\}$ happens. Otherwise, since $\widetilde{x}_{\dagger} \in \widehat{\mathbf{\Psi}}_n\cap \gamma_{x,K}^{\mathrm{f}}$, we get $\mathbf{C}(x)\cap \mathbf{C}(x')\neq\emptyset$ and thus $\mathsf{P}_3(x)$ fails.

	
	\item If we add one loop $\widetilde{\ell}\in \mathfrak{L}_{x,K}$ into the configuration of $\widetilde{\mathcal{L}}_{1/2}$, then $(x,y)$ becomes an admissible pair.


\end{itemize}

We define a mapping $\pi_{x,K}^{\mathrm{f}}$ as follows, which maps a configuration of $\widetilde{\mathcal{L}}_{1/2}$ to a collection of configurations of $\widetilde{\mathcal{L}}_{1/2}$. Precisely, for any $\omega$, which is a configuration of $\widetilde{\mathcal{L}}_{1/2}$ such that $\gamma_{x,K}^{\mathrm{f}}=\emptyset$, we define  $$\pi_{x,K}^{\mathrm{f}}(\omega):= \bigg\{ \omega+ \sum_{i\in I}\mathbbm{1}_{\widetilde{\ell}_i}: \emptyset\neq I \subset \mathbb{N},\widetilde{\ell}_i\in  \mathfrak{L}_{x,K}  \bigg\}.$$
Note that $\pi_{x,K}^{\mathrm{f}}$ is an injection. By the aforementioned observations, for any $\omega$ such that $(x,y)$ is an $M$-potential pair, any configuration in $\pi_{x,K}^{\mathrm{f}}(\omega)$ satifies that $(x,y)$ is admissible and $\psi_n^{\mathrm{SR}}=M$ (recalling that $\gamma_{x,K}^{\mathrm{f}}$ is independent of $\widehat{\mathbf{\Psi}}_n$). As a result, 
\begin{equation*}
	\begin{split}
		&\mathbb{P}\left[(x,y)\ \text{is admissible},\ \psi_n^{\text{SR}}=M \right]\\
		\ge &   \mathbb{P} \left[\pi_{x,K}^{\mathrm{f}}(\omega):\omega\ \text{such that}\ \mathsf{P}_1(x,M)\cap \mathsf{P}_2(x,y)\cap \mathsf{P}_3(x)\ \text{happens} \right]                                   \\
		= &\frac{\mathbb{P}( \gamma_{x,K}^{\mathrm{f}}\neq \emptyset) }{\mathbb{P}( \gamma_{x,K}^{\mathrm{f}}=\emptyset)}\cdot \mathbb{P}\left[\mathsf{P}_1(x,M)\cap \mathsf{P}_2(x,y)\cap \mathsf{P}_3(x) \right]\\
		\ge & c(K,d)\cdot \mathbb{P}\left[\mathsf{P}_1(x,M)\cap \mathsf{P}_2(x,y)\cap \mathsf{P}_3(x) \right]\ \ \ \ \ (\text{by}\ \widetilde{\mu}(\mathfrak{L}_{x,K})\ge u_0(K,d)).   
	\end{split}
\end{equation*}
By summing over all $x\in \mathbb{Z}^d\setminus B(n-1)$ and $y\in B_x(\frac{1}{2}L)\setminus B(n_*)$, we have 
\begin{equation}\label{new_ineq_8.23}
	\begin{split}
		&\mathbb{E}\left[W\cdot \mathbbm{1}_{\psi_n^{\text{SR}}=M} \right]\\
		=& \sum_{x\in \mathbb{Z}^d\setminus B(n-1),y\in B_x(\frac{1}{2}L)\setminus B(n_*)} \mathbb{P}\left[(x,y)\ \text{is admissible},\ \psi_n^{\text{SR}}=M \right]\\
		\ge &c(K,d) \sum_{x\in \mathbb{Z}^d\setminus B(n-1),y\in B_x(\frac{1}{2}L)\setminus B(n_*)}\mathbb{P}\left[\mathsf{P}_1(x,M)\cap \mathsf{P}_2(x,y)\cap \mathsf{P}_3(x) \right]\\
		\ge & c(K,d)c_{7}L^2 \sum_{x\in \mathbb{Z}^d\setminus B(n-1)}\mathbb{P}\left[\mathsf{P}_1(x,M) \right]\ \ \ \ \ (\text{by Lemma}\  \ref{lemma_614}).
	\end{split}
\end{equation}
By Item (1) of Remark \ref{fina_remark6.3}, arbitrarily given a configuration of $\mathbf{\Psi}_n$ with $x\in \overline{\mathbf{\Psi}}_n^*$, with at least probabilty $c'(d)>0$ the event $\big\{\widetilde{x}_{\dagger}\in \widehat{\mathbf{\Psi}}_n\big\}$ occurs. Therefore, the sum on the RHS of (\ref{new_ineq_8.23}) is bounded from below by 
\begin{equation*}
	\begin{split}
		c'\sum_{x\in \mathbb{Z}^d\setminus B(n-1)} \mathbb{P}\left[ x\in \overline{\mathbf{\Psi}}_n^*,\ x\ \text{is strongly regular, and}\  \psi_n^{\mathrm{SR}}=M   \right] =c'M \mathbb{P}\left(  \psi_n^{\text{SR}}=M\right).
	\end{split}
\end{equation*}
Combined with (\ref{new_ineq_8.23}), the proof of this lemma is complete. \end{proof}

\begin{lemma}\label{lemma6.113}
For any $K>0$ and $M\ge \frac{1}{2}L^2$, there exists $C_{7}(K,d)>0$ such that \begin{equation}\label{ineq_6.113}
	\mathbb{E}\left[W^2\cdot \mathbbm{1}_{\psi_n^{\text{SR}}=M} \right] \le C_{7}L^4M^2\mathbb{P}\left( \psi_n^{\text{SR}}=M\right). 
\end{equation}
\end{lemma}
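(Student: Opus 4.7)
The strategy adapts the first-moment computation of Lemmas \ref{lemma_p1p2_p1} and \ref{lemma_614} to the second moment: expand $W^2$ into a sum over quadruples $(x_1, x_2, y_1, y_2)$, condition on $\mathbf{\Psi}_n = \mathbf{A}$, and apply the BKR inequality (Corollary \ref{coro_BKR}) together with a tree-expansion decomposition of the conditional joint connection probability.

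The first step is to replace $W$ by a simpler dominating quantity. Admissibility of $(x, y)$ forces the cluster $\mathbf{D}_y$ of $y$ in $\cup(\widetilde{\mathcal{L}}_{1/2} \setminus \gamma_{x,K}^{\mathrm{f}})$ to be disjoint from $\widehat{\mathbf{\Psi}}_n$ (by pivotality), yet to meet some loop $\widetilde{\ell}_* \in \gamma_{x,K}^{\mathrm{f}}$ that visits $x'$. Extracting this structure gives
\[
W \le \widetilde{W} := \sum_{x, y} \mathbbm{1}_{x \in \overline{\mathbf{\Psi}}_n^*,\ x\ \mathrm{s.r.}}\, \mathbbm{1}_{y \xleftrightarrow[]{} x'\ \mathrm{off}\ \widehat{\mathbf{\Psi}}_n}
\]
up to a small correction from the single loop $\widetilde{\ell}_*$ (which may touch $\widehat{\mathbf{\Psi}}_n$ at $\widetilde{x}_{\dagger}$), absorbed using the strong regularity of $x$. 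By Lemma \ref{lemma_8.2}, after conditioning on $\mathbf{\Psi}_n = \mathbf{A}$, each event $E_i := \{y_i \xleftrightarrow[]{} x_i'\ \mathrm{off}\ \widehat{\mathbf{A}}\}$ is a connecting event measurable with respect to the unused loops $\widetilde{\mathcal{L}}^{\mathrm{U}}_{\mathbf{A}}$. Decomposing $E_1 \cap E_2 \subset (E_1 \circ E_2) \cup \{E_1, E_2\ \mathrm{share\ a\ glued\ loop}\}$, the first term factorizes via Corollary \ref{coro_BKR} as $\mathbb{P}(E_1 \mid \mathbf{\Psi}_n = \mathbf{A})\, \mathbb{P}(E_2 \mid \mathbf{\Psi}_n = \mathbf{A})$, with each factor at most $C|y_i - x_i'|^{2-d}$. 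The ``only by $\widehat{\mathbf{A}}$'' subtraction is handled exactly as in Lemma \ref{lemma_614} using the strong regularity bound $|\widehat{\mathbf{A}} \cap B_{x_i'}(r_m)| \le r_m^4 \log^{16}(r_m)$. Summing $|y_i - x_i'|^{2-d}$ over $y_i \in B_{x_i}(\tfrac{1}{2}L) \setminus B(n_*)$ via (\ref{fina4.3}) yields $CL^2$ per factor; summing over the $M^2$ ordered pairs of strongly regular points $x_i \in \overline{\mathbf{\Psi}}_n^*$ produces the target $CL^4 M^2\, \mathbb{P}(\psi_n^{\mathrm{SR}} = M)$.

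The shared-loop contribution is handled by a four-point tree expansion: iterating Lemma \ref{lemma_tree_expansion} produces a common glued loop $\gamma_*$ whose connections to $x_1', x_2', y_1, y_2$ occur disjointly, producing a convolution of five two-point functions weighted by a loop-measure term near $\gamma_*$. Using Corollary \ref{fina_coro_4.5} and Lemma \ref{lemma_fina_4.2} together with the strong-regularity corrections near $x_i'$, this sum is at most $C K^{6-d} \log^{O(1)}(K) \cdot L^4 M^2\, \mathbb{P}(\psi_n^{\mathrm{SR}} = M)$, which is absorbed for large $K$ exactly in the spirit of the threshold $K^{6-d}\log^{16}(K) \to 0$ used in Lemmas \ref{lemma_p1p2_p1}--\ref{lemma_614}. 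The main technical obstacle is setting up and summing this four-point tree expansion cleanly, since the natural iterated application of Lemma \ref{lemma_tree_expansion} produces a five-fold convolution that must be reduced using the chain of estimates in Corollary \ref{fina_coro_4.5}; a subsidiary point of care is the pivotality-based dominance $W \le \widetilde{W}$, because the single pivotal loop $\widetilde{\ell}_*$ can itself meet $\widehat{\mathbf{\Psi}}_n$ at $\widetilde{x}_{\dagger}$ and must be incorporated into the final estimate via a strong-regularity correction term.
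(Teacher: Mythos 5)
Your reduction $W\le\widetilde W$ discards the pivotality condition from Definition \ref{fina_def_8.6}, and that is exactly where the argument breaks. Pivotality is not a decoration: in the paper's treatment of the off-diagonal case $x_1\neq x_2$, it is shown (Observations (1)--(3) in Part 3 of the proof, cf.\ Remark \ref{remark_8.9}) that admissibility of both pairs forces the clusters $\mathbf{C}_{y_1,K}$ and $\mathbf{C}_{y_2,K}$ to be \emph{disjoint}, so the two connection events occur disjointly by construction, BKR applies with no error term, and one gets the clean product $|x_1-y_1|^{2-d}|x_2-y_2|^{2-d}$, hence $L^4M^2$. Once you pass to $\widetilde W$, for $x_1\neq x_2$ the events $E_1,E_2$ can genuinely fail to occur disjointly ($y_1$ and $y_2$ may lie in one common cluster reaching both $x_1'$ and $x_2'$), and everything hinges on your "shared-loop" correction term.

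That correction term does not close. First, the glued loop(s) shared between $E_1$ and $E_2$ are \emph{not} required to meet $\widehat{\mathbf A}$, so the $K^{6-d}$ gain of Lemmas \ref{lemma_p1p2_p1}--\ref{lemma_614} — which comes from $\widehat{\mathbf A}\cap B_{x'}(K_*)=\emptyset$ and the regularity bounds on $|\widehat{\mathbf A}\cap B_{x'}(r_m)|$ — is unavailable: the branch points range freely over $\mathbb Z^d$ and can sit right at $x_1'$. Second, the correct tree-graph bound for two non-disjoint connections has two branch points (consistent with your "five two-point functions", but not obtainable from a single application of Lemma \ref{lemma_tree_expansion}, which yields only three arms), and the topology in which $y_1,y_2$ attach to one branch point $z_1$ while $x_1',x_2'$ attach to the other $z_2$ gives
\begin{equation*}
\sum_{y_1,y_2,z_1,z_2}|y_1-z_1|^{2-d}|y_2-z_1|^{2-d}|z_1-z_2|^{2-d}|x_1'-z_2|^{2-d}|x_2'-z_2|^{2-d}\ \gtrsim\ L^6\,|x_1-x_2|^{2-d}
\end{equation*}
(take $z_2$ within $O(1)$ of $x_1'$ and $z_1$ anywhere in $B_{x_1}(L/4)$). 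Summing this over pairs of strongly regular points cannot in general be dominated by $CL^4M^2$: the hypothesis $M\ge\tfrac12L^2$ rescues terms of the form $L^6M$ (as in the paper's Part 2, where $x_1=x_2$), but not a contribution of order $L^6$ times the number of nearby off-diagonal pairs, which can be as large as $M^{1+4/d}\gg L^{-2}M^2$ when $M\asymp L^2$. Your diagonal and $x_1=x_2$ computations are fine; for $x_1\neq x_2$ you must keep the pivotality in the quantity whose second moment you compute and derive the disjoint occurrence structurally, rather than attempt a diagrammatic bound on the shared part.
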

\begin{proof}
Recall that $L\asymp n$. The term on the LHS of (\ref{ineq_6.113}) can be written as 
\begin{equation*}\label{equation_829}
	\begin{split}
		\sum_{\forall i\in \{1,2\},x_i\in \mathbb{Z}^d\setminus B(n-1), y_i\in B_{x_i}(\frac{1}{2}L)\setminus B(n_*)} \mathbb{P}\left[\forall i\in \{1,2\},(x_{i},y_{i})\ \text{is admissible},\psi_n^{\text{SR}}=M\right].
	\end{split}
\end{equation*}
We divide the sum above into the following three parts, which we denote by $\mathcal{S}_1$, $\mathcal{S}_2$ and $\mathcal{S}_3$ respectively:

\begin{enumerate}
	\item[Part 1:] $x_1=x_2$, $y_1=y_2$; 
	
	\item[Part 2:] $x_1=x_2$, $y_1\neq y_2$;  
	
	\item[Part 3:] $x_1\neq x_2$.
\end{enumerate}

In what follows, we prove the upper bounds for $\mathcal{S}_1$, $\mathcal{S}_2$ and $\mathcal{S}_3$ separately. Assume that $\{\psi_n^{\text{SR}}=M\}$ occurs, and $(x_1,y_1)$ and $(x_2,y_2)$ are both admissible. We denote $\mathsf{P}(x,M):=\big\{ x\in \overline{\mathbf{\Psi}}_n^*,\ x\ \text{is strongly regular},\ \psi_n^{\mathrm{SR}}=M\big\}$.

\textbf{Part 1:} Since $\mathsf{P}(x_1,M)$ and $\{B_{x_1}(2K^4_*)\xleftrightarrow[]{} y_1\ \text{off}\ \widehat{\mathbf{\Psi}}_n\}$ both happen (note that they are certified by two disjoint collections of glued loops), by the BKR inequality and the two-point function estimate, we have 
\begin{equation}\label{ineq_117}
	\begin{split}
		\mathcal{S}_1\le &C(K,d)\sum_{x_1\in \mathbb{Z}^d\setminus B(n-1), y_1\in B_{x_1}(\frac{1}{2}L)\setminus B(n_*)} \mathbb{P}\left[ \mathsf{P}(x_1,M)\right] \cdot |x_1-y_1|^{2-d}\\
		 \le & C(K,d) L^2  \sum_{x_1\in \mathbb{Z}^d\setminus B(n-1)} \mathbb{P}\left[ \mathsf{P}(x_1,M)\right]\ \ \ \ \ \ (\text{by}\ (\ref{fina4.3}))    \\
		=& C(K,d) L^2M\cdot \mathbb{P}\left( \psi_n^{\text{SR}}=M\right). 
	\end{split}
\end{equation}

%

%



\textbf{Part 2:} Note that $\mathsf{P}(x_1,M)$, $\{B_{x_1}(2K^4_*)\xleftrightarrow[]{} y_1\ \text{off}\ \widehat{\mathbf{\Psi}}_n\}$ and $\{B_{x_1}(2K^4_*)\xleftrightarrow[]{} y_2\ \text{off}\ \widehat{\mathbf{\Psi}}_n\}$ happen. By the tree expansion, there exists a glued loop $\gamma_*$ such that 
\begin{equation*}
	\{\gamma_*\xleftrightarrow[]{} \widetilde{B}_{x_1}(2K^4_*)\ \text{off}\  \widehat{\mathbf{\Psi}}_n  \} \circ  \{\gamma_* \xleftrightarrow[]{} y_1\ \text{off}\ \widehat{\mathbf{\Psi}}_n  \} \circ  \{\gamma_* \xleftrightarrow[]{} y_2\ \text{off}\  \widehat{\mathbf{\Psi}}_n  \}
\end{equation*}
happens. Since the event $\mathsf{P}(x_1,M)$ is certified by a disjoint collection of glued loops, by the BKR inequality and the two-point function estimate, we have 
\begin{equation*}\label{new_ineq6.118}
	\begin{split}
		\mathcal{S}_2 \le & C(K,d)\sum_{x_1\in \mathbb{Z}^d\setminus B(n-1)} 	\sum_{y_1,y_2\in B_{x_1}(\frac{1}{2}L)\setminus B(n_*)} \sum_{z_1,z_2,z_3\in\mathbb{Z}^d} \mathbb{P}\left[ \mathsf{P}(x_1,M)\right]|z_1-z_2|^{2-d} \\
		&\ \ \ \ \ \ \ \ \ \ \ \ \ \ \ \ \cdot |z_2-z_3|^{2-d}|z_3-z_1|^{2-d} |z_1-x_1|^{2-d}|z_2-y_1|^{2-d}|z_3-y_2|^{2-d}.
	\end{split}
\end{equation*}
If the sum on the RHS is also over the restriction $|z_1-x_1|\le  L$ (we denote this part of sum by $\mathcal{S}_2^{(1)}$), then we sum over $y_1$ and $y_2$, and apply Lemma \ref{lemma_fina_4.2} and Corollary \ref{fina_coro_4.5} to get its upper bound as follows:
\begin{equation*}\label{ineq_6.119}
	\begin{split}
		\mathcal{S}_2^{(1)}\le &C(K,d)L^4\sum_{x_1\in \mathbb{Z}^d\setminus B(n-1)} \sum_{z_1,z_2,z_3\in\mathbb{Z}^d:|z_1-x_1|\le L} \mathbb{P}\left[ \mathsf{P}(x_1,M)\right]|z_1-z_2|^{2-d} \\
		&\ \ \ \ \ \ \ \ \ \ \ \ \ \ \ \ \ \  \ \ \ \ \ \ \ \ \ \ \ \ \ \   \cdot |z_2-z_3|^{2-d}|z_3-z_1|^{2-d} |z_1-x_1|^{2-d}\ \ \ \  (\text{by}\ (\ref{fina4.3}))\\
		\le &   C(K,d)L^{4} \sum_{x_1\in \mathbb{Z}^d\setminus B(n-1)} \sum_{z_1\in \mathbb{Z}^d: |z_1-x_1|\le  L } \mathbb{P}\left[ \mathsf{P}(x_1,M)\right]  |z_1-x_1|^{2-d} \ \ \    (\text{by}\ (\ref{fin_4.6}))     \\
		\le & C(K,d)L^{6}M \cdot \mathbb{P}\left( \psi_n^{\text{SR}}=M\right)\  \ \ \  \ \ \ \ \ \ \ \ \ \ \ \ \ \ \ \ \ \ \ \ \ \ \ \ \ \ \ \ \ \ \ \ \ \ \ \ \ \ \ \ \  (\text{by}\ (\ref{fina4.3})).
	\end{split}
\end{equation*}
In the remaining case (i.e. $|z_1-x_1|> L$; let this part of sum be $\mathcal{S}_2^{(2)}$), we sum over $y_2$, and then apply Corollary \ref{fina_coro_4.5} to obtain 
\begin{equation*}\label{ineq_6.120}
	\begin{split}
		\mathcal{S}_2^{(2)}\le 	&C(K,d) L^2 \sum_{x_1\in \mathbb{Z}^d\setminus B(n-1), y_1\in B_{x_1}(\frac{1}{2}L)\setminus B(n_*)}\sum_{z_1,z_2,z_3\in\mathbb{Z}^d: |z_1-x_1|> L}\mathbb{P}\left[ \mathsf{P}(x_1,M)\right]   \\
		&\ \  \      \ \  \cdot  |z_1-z_2|^{2-d}|z_2-z_3|^{2-d}|z_3-z_1|^{2-d} |z_1-x_1|^{2-d}|z_2-y_1|^{2-d} \ \ \  (\text{by}\ (\ref{fina4.3})) \\
		\le &C(K,d) L^2  \sum_{x_1\in \mathbb{Z}^d\setminus B(n-1), y_1\in B_{x_1}(\frac{1}{2}L)\setminus B(n_*)}\sum_{z_1\in \mathbb{Z}^d\setminus B_{x_1}(L)} \mathbb{P}\left[ \mathsf{P}(x_1,M)\right]\\
		&\ \  \     \ \ \ \ \ \  \  \ \ \  \  \  \  \ \ \ \ \  \  \ \ \  \  \  \  \ \ \ \ \ \  \  \ \ \  \  \  \  \ \ \ \ \ \  \  \  \cdot  |z_1-x_1|^{2-d}|z_1-y_1|^{2-d} \  \ \   (\text{by}\ (\ref{fin_4.6})).
	\end{split}
\end{equation*}
For any $x_1\in \mathbb{Z}^d\setminus B(n-1)$, $y_1\in B_{x_1}(\frac{1}{2}L)\setminus B(n_*)$ and $z_1\in \mathbb{Z}^d\setminus B_{x_1}(L)$, by $|z_1-y_1|\ge |z_1-x_1|-|x_1-y_1|\ge \frac{1}{2}|z_1-x_1|$ and (\ref{fina_new_4.4}), we have $$\sum_{z_1\in \mathbb{Z}^d\setminus B_{x_1}(L)}|z_1-x_1|^{2-d}|z_1-y_1|^{2-d}\le 	C\sum_{z_1\in \mathbb{Z}^d\setminus B_{x_1}(L)} |z_1-x_1|^{4-2d}\le CL^{4-d}.$$
Combined with the previous upper bound for $\mathcal{S}_2^{(2)}$, it yields that  
\begin{equation*}\label{ineq_6.122}
	\begin{split}
		\mathcal{S}_2^{(2)}\le 	& C(K,d)L^{6-d} \big|B(\tfrac{1}{2}L)\big|\cdot\sum_{x_1\in \mathbb{Z}^d\setminus B(n-1)} \mathbb{P}\left[ \mathsf{P}(x_1,M)\right] \\
		\le &C(K,d)L^{6}M\cdot \mathbb{P}\left( \psi_n^{\text{SR}}=M\right).
	\end{split}
\end{equation*}
Combining these two estimates for $\mathcal{S}_2^{(1)}$ and $\mathcal{S}_2^{(2)}$, we obtain
\begin{equation}\label{ineq_123}
	\mathcal{S}_2\le C(K,d)L^{6}M\cdot \mathbb{P}\left( \psi_n^{\text{SR}}=M\right).
\end{equation}

\textbf{Part 3:}  Since $x_1\neq x_2$, $\gamma_{x_{1},K}^{\mathrm{f}}$ is independent of $\gamma_{x_{2},K}^{\mathrm{f}}$. For each $i\in \{1,2\}$, we denote by $\mathbf{C}_{y_i,K}$ (resp. $\mathbf{C}_{y_i,K}^*$) the cluster containing $y_i$ and composed of loops in $\widetilde{\mathcal{L}}_{1/2}\cdot \mathbbm{1}_{\widetilde{\ell}\notin \mathfrak{L}_{x_1,K}\cup \mathfrak{L}_{x_2,K}}$ (resp. $\widetilde{\mathcal{L}}_{1/2}\cdot \mathbbm{1}_{\widetilde{\ell}\notin \mathfrak{L}_{x_i,K}}$).

Here are some useful observations.

\begin{enumerate}
	
	\item  For each $i\in \{1,2\}$, $\mathbf{C}_{y_i,K}=\mathbf{C}_{y_i,K}^*$. To see this, we only need to prove that $\mathbf{C}_{y_i,K}$ is disjoint from $\gamma_{x_{3-i},K}^{\mathrm{f}}$. We prove this by contradiction. Without loss of generality, assume that $\mathbf{C}_{y_1,K}$ intersects $\gamma_{x_{2},K}^{\mathrm{f}}$. Then $y_1$ can be connected to $\widehat{\mathbf{\Psi}}_n$ without $\gamma_{x_{1},K}^{\mathrm{f}}$ (since $\gamma_{x_{2},K}^{\mathrm{f}}$ intersects $\widehat{\mathbf{\Psi}}_n$), which is contradictory with the pivotality of $\gamma_{x_{1},K}^{\mathrm{f}}$.

	\item For each $i\in \{1,2\}$, $\mathbf{C}_{y_i,K}$ intersects $\gamma_{x_i,K}^{\mathrm{f}}$, and thus also intersects $B_{x_i}(2K^4_*)$. In fact, since $(x_i,y_i)$ is admissible, $\mathbf{C}_{y_i,K}^*$ must intersect $\gamma_{x_i,K}^{\mathrm{f}}$. Combined with Observation (1), it implies this observation.

	\item $\mathbf{C}_{y_1,K}$ is disjoint from $\mathbf{C}_{y_2,K}$. Otherwise, one has $\mathbf{C}_{y_1,K}=\mathbf{C}_{y_2,K}$ and therefore, $\mathbf{C}_{y_1,K}$ intersects both $\gamma_{x_1,K}^{\mathrm{f}}$ and $\gamma_{x_2,K}^{\mathrm{f}}$ (by Observation (2)). As a result, $\mathbf{C}_{y_1,K}$ and $\widehat{\mathbf{\Psi}}_n$ can be connected by either $\gamma_{x_1,K}^{\mathrm{f}}$ or $\gamma_{x_2,K}^{\mathrm{f}}$, which is contradictory with the fact that $\gamma_{x_1,K}^{\mathrm{f}}$ is pivotal.

\end{enumerate}
Observations (1) and (2) imply that
\begin{equation*}
	\{y_1\xleftrightarrow[]{} B_{x_1}(2K^4_*)\ \text{off}\  \widehat{\mathbf{\Psi}}_n    \} \circ  \{y_2\xleftrightarrow[]{} B_{x_2}(2K^4_*)\ \text{off}\  \widehat{\mathbf{\Psi}}_n    \}
\end{equation*}
happens. Since in addition the event $\mathsf{P}(x_1,M) \cap \mathsf{P}(x_2,M)$ is certified by a disjoint collection of glued loops, by the BKR inequality and the two-point function estimate, we have 
\begin{equation}\label{ineq_6.124}
	\begin{split}
		\mathcal{S}_3\le &C(K,d)\sum_{\forall i\in \{1,2\},x_i\in \mathbb{Z}^d\setminus B(n-1), y_i\in B_{x_i}(\frac{1}{2}L)\setminus B(n_*)} \mathbb{P}\left[	\mathsf{P}(x_1,M) \cap \mathsf{P}(x_2,M)\right] \\
		&\ \ \ \ \ \ \ \ \ \ \ \ \ \ \  \ \ \ \ \  \ \ \ \ \ \ \ \ \ \ \ \ \ \ \ \ \ \ \ \ \ \ \ \ \ \ \ \ \ \ \ \ \ \cdot |x_1-y_1|^{2-d}|x_2-y_2|^{2-d}\\
		\le & C(K,d)L^4\sum_{x_1,x_2\in \mathbb{Z}^d\setminus B(n-1)}\mathbb{P}\left[	\mathsf{P}(x_1,M) \cap \mathsf{P}(x_2,M)\right]\ \ \ \ \ \ \ \ (\text{by}\ (\ref{fina4.3}))\\
		\le  &C(K,d)L^4M^2 \cdot \mathbb{P}\left( \psi_n^{\text{SR}}=M\right).
	\end{split}
\end{equation}

Finally, we put (\ref{ineq_117}), (\ref{ineq_123}), (\ref{ineq_6.124}) and the requirement $M\ge \frac{1}{2}L^2$ together, and then the proof is complete.
\end{proof}

\begin{remark}\label{remark_8.9}
Recall that Observation (3) in the analysis of Part 3 above indicates that if $(x_1,y_1)$ and $(x_2,y_2)$ are both admissible, and $x_1\neq x_2$, then $\mathbf{C}_{y_1,K}$ is disjoint from $\mathbf{C}_{y_2,K}$, which implies that $y_1\neq y_2$. As a result, if $(x_1,y)$ and $(x_2,y)$ are both admissible (i.e. taking $y_1=y_2=y$), then we must have $x_1=x_2$.
\end{remark}

%
%
%
%
%
%
%
%
%
%
%
%
%
%
%
%
%
%
%
%
%
%

Recall that $\chi_{n}=\big|\{x\in B(n+L)\setminus B(n): \bm{0} \xleftrightarrow[]{} x \} \big|$.

\begin{lemma}\label{lemma_regular}
There exist $c_{9}(d)>0$ and $c_{10}(d)\in (0,1)$ such that under the same conditions as Theorem \ref{theorem_regularity}, we have  
\begin{equation}\label{equation_lemma_regular}
	\mathbb{P}\left( \psi_n^{\mathrm{SR}}\ge \tfrac{1}{2}L^2, \chi_n\le c_{9}L^4 \right) \le (1-c_{10}) \theta(N). 
\end{equation}
\end{lemma}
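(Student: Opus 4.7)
The plan is a standard second-moment argument applied to the admissible-pair count $W$ from Definition \ref{fina_def_8.6}, combining the first-moment bound of Lemma \ref{lemma_6.15} and the second-moment bound of Lemma \ref{lemma6.113} with the deterministic inequality $W \le \chi_n$ and the trivial comparison $\mathbb{P}(\psi_n^{\mathrm{SR}}>0) \le \theta(N)$.

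First I would establish $W \le \chi_n$. If $(x,y)$ is admissible in the sense of Definition \ref{fina_def_8.6}, then $y$ is connected to $\widehat{\mathbf{\Psi}}_n \ni \bm 0$ via $\cup\widetilde{\mathcal{L}}_{1/2}$, so $y \in \mathbf{C}(\bm 0)$. The constraints $x \in B(n_*)$ and $y \in B_x(\tfrac{1}{2}L) \setminus B(n_*)$, together with $n_* - n = \lfloor (1+\lambda)N\rfloor^{b}$ being much smaller than $L = \epsilon^{3/10} N$ (because $b<1$), force $n < |y| \le n_* + \tfrac{1}{2}L \le n+L$ once $N$ is large. Thus every admissible $y$ is counted by $\chi_n$. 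Finally, by Remark \ref{remark_8.9}, for each $y$ there is at most one $x$ with $(x,y)$ admissible, so the map $(x,y)\mapsto y$ is injective on admissible pairs, giving $W \le \chi_n$.

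Next, fix $M \ge L^2/2$ and condition on $\mathcal{E}_M := \{\psi_n^{\mathrm{SR}} = M\}$. Lemmas \ref{lemma_6.15} and \ref{lemma6.113} translate to
\begin{equation*}
\mathbb{E}[W \mid \mathcal{E}_M] \;\ge\; c_{8} L^{2} M, \qquad \mathbb{E}[W^{2} \mid \mathcal{E}_M] \;\le\; C_{7} L^{4} M^{2}.
\end{equation*}
The Paley--Zygmund inequality at threshold $\tfrac{1}{2}\mathbb{E}[W\mid \mathcal{E}_M]$ yields
\begin{equation*}
\mathbb{P}\bigl(W \ge \tfrac{1}{2} c_{8} L^{2} M \bigm| \mathcal{E}_M \bigr) \;\ge\; \frac{(\mathbb{E}[W\mid \mathcal{E}_M])^{2}}{4\,\mathbb{E}[W^{2}\mid \mathcal{E}_M]} \;\ge\; \frac{c_{8}^{2}}{4 C_{7}} \;=:\; c_{10}.
\end{equation*}
Using $M \ge L^{2}/2$ we obtain $\tfrac{1}{2}c_{8} L^{2} M \ge \tfrac{1}{4} c_{8} L^{4}$; setting $c_{9} := c_{8}/5$ and combining with $W \le \chi_n$ gives $\mathbb{P}(\chi_n > c_{9} L^{4} \mid \mathcal{E}_M) \ge c_{10}$, and summing over $M \ge L^{2}/2$ yields
\begin{equation*}
\mathbb{P}\bigl(\psi_n^{\mathrm{SR}} \ge \tfrac{1}{2}L^{2},\ \chi_n \le c_{9} L^{4}\bigr) \;\le\; (1-c_{10})\, \mathbb{P}\bigl(\psi_n^{\mathrm{SR}} \ge \tfrac{1}{2}L^{2}\bigr).
\end{equation*}

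To close the argument I would bound $\mathbb{P}(\psi_n^{\mathrm{SR}} \ge \tfrac{1}{2}L^{2}) \le \theta(N)$. The event forces $\overline{\mathbf{\Psi}}_n^{*} \neq \emptyset$, so either there exists a point of $\mathbf{\Psi}_n^{1}$ outside $B(n-1)$, or there exists $x \in \mathbf{\Psi}_n^{2}$; in the first case the connected set $\mathbf{\Psi}_n^{1}$ linking $\bm 0$ to that point must cross $\partial B(n)$, and in the second case $x \in \partial\hat B(n)\subset \partial B(n)$ with $x^{\mathrm{in}} \in \mathbf{\Psi}_n^{1}$ and $I_{\{x,x^{\mathrm{in}}\}} \subset \gamma_x^{\mathrm{p}} \cup \mathbf{\Psi}_n^{1}$, giving a continuous connection to $\partial B(n)$ in $\cup \widetilde{\mathcal{L}}_{1/2}$. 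Hence $\{\psi_n^{\mathrm{SR}} > 0\} \subset \{\bm 0 \xleftrightarrow[]{} \partial B(n)\}$, and since $n \ge (1+\lambda/4)N > N$ we get $\mathbb{P}(\psi_n^{\mathrm{SR}} \ge \tfrac{1}{2}L^{2}) \le \theta(n) \le \theta(N)$, completing the lemma with $c_{10}$ as above. The serious work lies in Lemmas \ref{lemma_6.15} and \ref{lemma6.113}; the only subtlety here is the use of Remark \ref{remark_8.9} (a consequence of the pivotal structure) to avoid double-counting $y$'s and hence obtain the clean bound $W \le \chi_n$.
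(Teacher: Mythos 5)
Your proposal is correct and follows essentially the same route as the paper: the bound $W\le \chi_n$ via Remark \ref{remark_8.9}, a second-moment (Paley--Zygmund type) inequality applied to $W$ conditionally on $\{\psi_n^{\mathrm{SR}}=M\}$ using Lemmas \ref{lemma_6.15} and \ref{lemma6.113}, summation over $M\ge \tfrac12 L^2$, and the inclusion $\{\psi_n^{\mathrm{SR}}>0\}\subset\{\bm 0\xleftrightarrow[]{}\partial B(N)\}$. The only cosmetic difference is that the paper uses the one-sided inequality $\mathbb{P}[Z>u]\ge(\mathbb{E}Z-u)^2/\mathbb{E}[Z^2]$ at $u=c_9L^4$ rather than Paley--Zygmund at threshold $\tfrac12\mathbb{E}[W\mid\mathcal{E}_M]$, which changes nothing of substance.
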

\begin{proof}
Recall the total number of admissible pairs $W$ in (\ref{def_W}).

We claim that $W\le \chi_n$. In fact, for each admissible pair $(x,y)$, one has $y\in B(n+L)\setminus B(n)$ and $y\xleftrightarrow[]{} \bm{0}$, and thus $y$ is counted by $\chi_n$. Moreover, by Remark \ref{remark_8.9} for each $y$ counted by $\chi_n$, it can be contained in at most one admissible pair. As a result, we obtain $W\le \chi_n$.


Recall that for any $u>0$ and random variable $Z\ge 0$ with $0<u <\mathbb{E}Z<\infty$, one has $\mathbb{P}[Z> u]\ge (\mathbb{E}Z-u)^2/\mathbb{E}[Z^2]$. Arbitrarily take $c_{9}\in (0, \frac{1}{2}c_{8})$. Note that $c_{8}L^2M>c_{9}L^4$ for all $M\ge \frac{1}{2}L^2$. Applying the general inequality above with $u=c_{9}L^4$ and $Z$ being the random variable $W$ conditioning on $\{\psi_n^{\text{SR}}=M\}$, we have: for any integer $M\ge \frac{1}{2}L^2$,   
\begin{equation}\label{ineq_6.127}
	\begin{split}
		&\mathbb{P}\left(  \chi_n > c_{9}L^4 \big| \psi_n^{\text{SR}}=M \right) \\
		\ge &\mathbb{P}\left[ W> c_{9}L^4 \big| \psi_n^{\text{SR}}=M \right]\ \ \ \ \ \ \  \ \ \ \ \ \  (\text{by}\ W\le \chi_n)\\
		\ge &  \frac{\big\{\mathbb{E}\left[W\mid \psi_n^{\mathrm{SR}}=M \right]-c_{9}L^4\big\}^2}{\mathbb{E}\left[W^2\mid \psi_n^{\text{SR}}=M \right]}   \\
		\ge &\frac{\left[c_{8}L^2M-c_{9}L^4\right]^2 }{C_{7}L^4M^2}\ge c\in (0,1)\ \ \ \ \ \ (\text{by Lemmas}\ \ref{lemma_6.15}\ \text{and}\ \ref{lemma6.113}).
	\end{split}
\end{equation}
By (\ref{ineq_6.127}), we get the desired bound as follows:
\begin{equation*}
	\begin{split}
		\mathbb{P}\Big( \psi_n^{\text{SR}}\ge \tfrac{1}{2}L^2, \chi_n\le c_{9}L^4 \Big) =&  \sum_{M\ge \frac{1}{2}L^2} \mathbb{P}\left( \psi_n^{\text{SR}}=M, \chi_n\le c_{9}L^4 \right)    \\
		\le &(1-c)\sum_{M\ge \frac{1}{2}L^2}  \mathbb{P}\left( \psi_n^{\text{SR}}=M\right)\\
		\le & (1-c)\cdot \theta(N),
		\end{split} 
\end{equation*} 
where in the last line we used the fact that $\{\psi_n^{\text{SR}}>0\}\subset \{\bm{0}\xleftrightarrow[]{} \partial B(N)\}$. 
\end{proof}

Based on Corollary \ref{coro_61} and Lemma \ref{lemma_regular}, we are ready to prove Theorem \ref{theorem_regularity}.

\begin{proof}[Proof of Theorem \ref{theorem_regularity}]
By Corollary \ref{coro_61} and Lemma \ref{lemma_regular}, we have 
\begin{equation}\label{ineq_840}
	\begin{split}
		& \mathbb{P}\Big(  \psi_{n}^*\ge L^2, \chi_{n}\le c_{9}L^4  \Big) \\
		\le & \mathbb{P}\Big( \psi_{n}^*\ge L^2, \psi_{n}^{\mathrm{SR}}\le \tfrac{1}{2}\psi_{n}^* \Big) + \mathbb{P}\Big( \psi_n^{\mathrm{SR}}\ge \tfrac{1}{2}L^2, \chi_n\le c_{9}L^4 \Big) \\
		\le & \mathrm{s.p.}(N) +  (1-c_{10}) \theta(N). 
	\end{split}
\end{equation}
For all large enough $N$, by the polynomial lower bound of $\theta(N)$ in (\ref{ineq_onearm}), the RHS of (\ref{ineq_840}) is dominated by $\frac{1}{2}c_{10}\theta(N)+ (1-c_{10}) \theta(N)= (1-\frac{1}{2}c_{10})\theta(N)$. Then Theorem \ref{theorem_regularity} follows by setting $c_4 = c_{9}$ and $c_5 = \frac{1}{2}c_{10}$.
\end{proof}

\section{Decay rate of the cluster volume}\label{section_volumn}

In this section, we will prove Proposition \ref{prop_volumn}, which then completes the proof of Theorem \ref{theorem1}. This proof is inspired by \cite{barsky1991percolation}.


Recall that for any $x\in \mathbb{Z}^d$, $\mathbf{C}(x)$ is the cluster of $\cup \widetilde{\mathcal{L}}_{1/2}$ containing $x$. Also recall that for any $A\subset \widetilde{\mathbb{Z}}^d$, $|A|$ is the number of lattice points in $A$. For any $m\in \mathbb{N}$, we denote $P_m=\mathbb{P}(|\mathbf{C}(\bm{0})|=m)$. For any $h\ge 0$, let $\mathfrak{R}(h):= \sum_{m=1}^{\infty}P_m(1-e^{-mh})$. The key is the following upper bound on $\mathfrak{R}(h)$.
\begin{lemma}\label{lemma_9.1}
For $d>6$, there exists $C_{8}(d)>0$ such that for any $h> 0$, 
\begin{equation}
	\mathfrak{R}(h) \le C_{8}h^{\frac{1}{2}}. 
\end{equation}
\end{lemma}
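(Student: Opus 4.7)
The plan is to adapt the Barsky--Aizenman strategy for critical percolation to our loop-soup setting, leveraging the tree-expansion and BKR tools of Section~\ref{section_main_tool}. Enlarge the probability space by an independent \emph{ghost field}: each $x\in\mathbb{Z}^d$ is declared green with probability $1-e^{-h}$ independently of $\widetilde{\mathcal{L}}_{1/2}$, and set $G_h:=\{x:x\text{ green}\}$. Conditioning on the loops gives $\mathbb{P}(G_h\cap\mathbf{C}(\bm{0})=\emptyset\mid\widetilde{\mathcal{L}}_{1/2})=e^{-h|\mathbf{C}(\bm{0})|}$, hence the key reformulation
$$\mathfrak{R}(h)=1-\mathbb{E}\bigl[e^{-h|\mathbf{C}(\bm{0})|}\bigr]=\mathbb{P}\bigl(\bm{0}\xleftrightarrow[]{\cup\widetilde{\mathcal{L}}_{1/2}} G_h\bigr).$$
Its derivative $\chi(h):=\mathfrak{R}'(h)=\mathbb{E}\bigl[|\mathbf{C}(\bm{0})|e^{-h|\mathbf{C}(\bm{0})|}\bigr]=\sum_{x}\mathbb{P}(\bm{0}\xleftrightarrow[]{}x,\,\bm{0}\not\xleftrightarrow[]{} G_h)$ is the ghost-field susceptibility, which is finite for every $h>0$ despite $\mathbb{E}|\mathbf{C}(\bm{0})|$ being potentially infinite.

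The central step is a Barsky--Aizenman-type differential inequality for $\chi$. For each $x\neq\bm{0}$, apply the tree expansion (Lemma~\ref{lemma_tree_expansion}) to $\{\bm{0}\xleftrightarrow[]{}x\}\cap\{\bm{0}\xleftrightarrow[]{}G_h\}$ to obtain a glued loop $\gamma_{*}$ with $\{\gamma_{*}\xleftrightarrow[]{}\bm{0}\}\circ\{\gamma_{*}\xleftrightarrow[]{}x\}\circ\{\gamma_{*}\xleftrightarrow[]{}G_h\}$. Conditioning on $G_h$ before invoking BKR (Corollary~\ref{coro_BKR}) and then integrating $G_h$ out using its independence and translation invariance, so that $\mathbb{E}[\mathbb{P}(\gamma_{*}\xleftrightarrow[]{}G_h\mid G_h)]\le C\mathfrak{R}(h)$ uniformly in $\gamma_{*}$, the diagrammatic analysis used in the proof of Lemma~\ref{lemma_31}---decomposing $\gamma_{*}$ by type (fundamental/point/edge) and summing over $(x_3,x_4,x_5)$ with the two-point estimate (\ref{eq_two_points})---yields
$$\chi(h)\le C_{1}+C_{2}\,T\,\mathfrak{R}(h)\,\chi(h),$$
where $T=\sup_{x}\sum_{y,z}|x-y|^{2-d}|y-z|^{2-d}|z-x|^{2-d}<\infty$ for $d>6$ by the triangle-type bounds of Corollary~\ref{fina_coro_4.5}. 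Since $\mathfrak{R}$ is continuous with $\mathfrak{R}(0)=0$, there is $h_{0}(d)>0$ so that $C_{2}T\mathfrak{R}(h)\le 1/2$ on $(0,h_{0}]$, giving $\chi(h)\le 2C_{1}$; the standard Barsky--Aizenman manipulation (integrating the differential inequality against $\mathfrak{R}\,d\mathfrak{R}$ rather than $d\mathfrak{R}$) then upgrades the exponent to $1/2$, yielding $\mathfrak{R}(h)\le C_{8}\sqrt{h}$ on $(0,h_{0}]$; the bound for $h>h_{0}$ is immediate from $\mathfrak{R}\le 1\le h_{0}^{-1/2}h^{1/2}$.

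The main obstacle is grafting the ghost-field target, which is random rather than a deterministic subset of $\widetilde{\mathbb{Z}}^{d}$, into the BKR framework of Corollary~\ref{coro_BKR}. The resolution is to condition on $G_h$, apply BKR to the resulting deterministic connecting events, and then take the expectation; control of $\mathbb{P}(\gamma_{*}\xleftrightarrow[]{}G_h)$ follows from a union bound over the $O(1)$ lattice points of $\gamma_{*}$ in each of the three glued-loop type decompositions. A secondary subtlety is the separate treatment of point and edge glued loops near the ``base'' of $\gamma_{*}$, which must be handled as in the $\mathsf{A}^{\mathrm{p}}_{x_{1},x_{2}}$ and $\mathsf{A}^{\mathrm{e}}_{x_{1},x_{2}}$ parts of the proof of Lemma~\ref{lemma_31}, but these contribute only lower-order terms dominated by the fundamental-loop case.
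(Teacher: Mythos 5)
Your setup is right and matches the paper's: the ghost field $\mathcal{G}^h$, the identity $\mathfrak{R}(h)=\mathbb{P}(\bm{0}\leftrightarrow\mathcal{G}^h)$, and the formula $\mathfrak{R}'(h)=\sum_x\mathbb{P}(\bm{0}\leftrightarrow x,\ \bm{0}\nleftrightarrow\mathcal{G}^h)$ are exactly (\ref{eq_9.4}) and Lemma \ref{new_lemma_9.2}. But the differential inequality you then claim, $\chi\le C_1+C_2T\,\mathfrak{R}(h)\chi$, is both underivable by the route you sketch and leads to a false conclusion. On the derivation: $\chi(h)$ counts the event $\{\bm{0}\leftrightarrow x,\ \bm{0}\nleftrightarrow\mathcal{G}^h\}$, whereas you apply the tree expansion to $\{\bm{0}\leftrightarrow x\}\cap\{\bm{0}\leftrightarrow\mathcal{G}^h\}$; the three arms produced by the tree expansion carry no ``off $\mathcal{G}^h$'' restriction, so the arm ending at $x$ sums to $\sum_x\mathbb{P}(z\leftrightarrow x)=\infty$ at criticality rather than reproducing a factor of $\chi$. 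On the conclusion: your intermediate claim that $\chi(h)\le 2C_1$ for small $h$ is impossible, since by monotone convergence $\chi(h)\to\mathbb{E}|\mathbf{C}(\bm{0})|=\sum_x\mathbb{P}(\bm{0}\leftrightarrow x)\asymp\sum_x|x|^{2-d}=\infty$ as $h\downarrow0$. (A bounded $\chi$ would give $\mathfrak{R}(h)\le Ch$, i.e.\ a tail $\mathbb{P}(|\mathbf{C}(\bm{0})|\ge M)\le CM^{-1}$, which is not what one expects and certainly not what your argument can deliver.) The final appeal to ``the standard Barsky--Aizenman manipulation'' cannot rescue this, because the exponent $1/2$ comes from an inequality of the form $\mathfrak{R}\,\mathfrak{R}'\le C$, not from a bound on $\chi$ alone.

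What is missing is precisely the hard part of the paper's Section \ref{section_volumn}: a two-sided comparison of the quantity $\sum_x\mathbb{P}(\mathsf{E}_J^x)$, where $\mathsf{E}_J^x$ demands that $\bm{0}\leftrightarrow x$ with $\mathbf{C}(\bm{0})\cap\mathcal{G}^h=\emptyset$ while two further disjoint clusters at $x_J^{\pm}$ each reach $\mathcal{G}^h$. The lower bound $\sum_x\mathbb{P}(\mathsf{E}_J^x)\ge c\,\mathfrak{R}(h)\mathfrak{R}'(h)$ (Lemma \ref{lemma_9.8}) uses the triangle-type diagrams you mention, but crucially with the ``off $\widehat{\mathbf{\Psi}}$''-style conditioning (Lemmas \ref{lemma_9.5}--\ref{lemma9.7}) to keep the error terms of order $J^{6-d}$. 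The upper bound $\sum_x\mathbb{P}(\mathsf{E}_J^x)\le C\,\mathbb{P}(|\mathbf{C}(\bm{0})\cap\mathcal{G}^h|\ge 2)\le C\,\mathfrak{R}(h)$ rests on inserting a fundamental loop through $A_J^x$ to merge the three clusters (turning $\mathsf{E}_J^x$ into $\mathsf{F}_J^x$) and on the combinatorial fact that the events $\mathsf{F}_J^{y_1},\mathsf{F}_J^{y_2}$ are pairwise disjoint (Lemma \ref{lemma_93}). Combining the two gives $\frac{d}{dh}\mathfrak{R}^2(h)=2\mathfrak{R}\mathfrak{R}'\le C$, which integrates to the claim. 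None of this insertion/disjointness mechanism appears in your proposal, and without it there is no route from the triangle condition to the exponent $\tfrac12$.
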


\begin{proof}[Proof of Proposition \ref{prop_volumn} assuming Lemma \ref{lemma_9.1}]
	For any $M\ge 1$, applying Lemma \ref{lemma_9.1} with $h=M^{-1}$, we get that
	\begin{equation}
		C_{8}M^{-\frac{1}{2}} \ge \mathfrak{R}(M^{-1})\ge \sum_{m=M}^{\infty}P_m(1-e^{-M^{-1}m})\ge (1-e^{-1}) \mathbb{P}\left(|\mathbf{C}(\bm{0})|\ge M \right),  
	\end{equation}
	thereby completing the proof of Proposition \ref{prop_volumn}.
\end{proof}



To prove Lemma \ref{lemma_9.1}, we need to consider the so-called \textit{ghost field} $\{\mathscr{G}_{x}^h\}_{x\in \mathbb{Z}^d}$ with parameter $h\ge 0$. Precisely, $\{\mathscr{G}_{x}^h\}_{x\in \mathbb{Z}^d}$ is independent of $\widetilde{\mathcal{L}}_{1/2}$ and is a collection of i.i.d. $\{0,1\}$-valued Bernoulli variables which take value $0$ with probability $e^{-h}$. Let $\mathcal{G}^h:=\{x\in \mathbb{Z}^d:\mathscr{G}_{x}^h=1\}$. With the help of the ghost field, we can write $\mathfrak{R}(h)$ as the probability of a connecting event as follows: 
\begin{equation}\label{eq_9.4}
\mathfrak{R}(h)=\sum_{m=1}^{\infty} P_m(1-e^{-mh})= \mathbb{P}\left(\bm{0} \xleftrightarrow[]{} \mathcal{G}^h\right), 
\end{equation}
where ``$\xleftrightarrow[]{}$'' is ``$\xleftrightarrow[]{\cup\widetilde{\mathcal{L}}_{1/2}}$'', and the probability space of the RHS is the product space for $\widetilde{\mathcal{L}}_{1/2}$ and $\{\mathscr{G}_{x}^h\}_{x\in \mathbb{Z}^d}$. The next lemma provides a geometric interpretation for the derivative $\mathfrak{R}'(h)$, i.e., the derivative of $\mathfrak{R}(h)$ with respect to $h$.


For any $A_1,A_2\subset \widetilde{\mathbb{Z}}^d$, we use the notation $A_1 \nleftrightarrow A_2:= \{A_1 \xleftrightarrow[]{} A_2\}^c$.

\begin{lemma}\label{new_lemma_9.2}
 For any $h\ge 0$, we have 

\begin{equation}\label{eq_9.5}
	(e^h-1) \mathfrak{R}'(h) = \mathbb{P}\left(|\mathbf{C}(\bm{0})\cap \mathcal{G}^h|=1 \right),
\end{equation}
\begin{equation}\label{eq_97}
	\mathfrak{R}'(h) = \sum_{x\in \mathbb{Z}^d} \mathbb{P}\left( \bm{0}\xleftrightarrow[]{} x,\bm{0} \nleftrightarrow \mathcal{G}^h \right).
\end{equation}
\end{lemma}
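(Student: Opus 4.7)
My plan is to prove both identities by direct computation, leveraging the explicit formula $\mathfrak{R}(h) = \sum_{m=1}^\infty P_m(1-e^{-mh})$ together with the independence between the ghost field $\mathcal{G}^h$ and the loop soup $\widetilde{\mathcal{L}}_{1/2}$. The first preliminary step is to differentiate termwise: for any $h > 0$, one has $\sum_{m \ge 1} m P_m e^{-mh} \le \sum_{m \ge 1} m e^{-mh} < \infty$, which justifies
\begin{equation*}
\mathfrak{R}'(h) = \sum_{m=1}^\infty m P_m e^{-mh} = \mathbb{E}\bigl[|\mathbf{C}(\bm{0})|\, e^{-h|\mathbf{C}(\bm{0})|}\bigr].
\end{equation*}
Here it is important to keep in mind that $|\mathbf{C}(\bm{0})|$ counts lattice points in $\mathbf{C}(\bm{0})$ (and is $\infty$ on the percolation event, which has probability $0$).

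For the second identity (\ref{eq_97}), I would condition on $\mathbf{C}(\bm{0})$. Since $\mathcal{G}^h$ is independent of $\widetilde{\mathcal{L}}_{1/2}$ and each lattice point lies in $\mathcal{G}^h$ independently with probability $1-e^{-h}$, the event $\{\bm{0} \nleftrightarrow \mathcal{G}^h\}$ conditional on $\mathbf{C}(\bm{0})$ is exactly the event that every lattice point of $\mathbf{C}(\bm{0})$ satisfies $\mathscr{G}_x^h = 0$, so
\begin{equation*}
\mathbb{P}\bigl(\bm{0} \nleftrightarrow \mathcal{G}^h \,\big|\, \mathbf{C}(\bm{0})\bigr) = e^{-h|\mathbf{C}(\bm{0})|}.
\end{equation*}
Combined with the identity $\sum_{x \in \mathbb{Z}^d} \mathbbm{1}_{\bm{0} \leftrightarrow x} = |\mathbf{C}(\bm{0})|$ and Fubini,
\begin{equation*}
\sum_{x \in \mathbb{Z}^d} \mathbb{P}\bigl(\bm{0} \leftrightarrow x,\, \bm{0} \nleftrightarrow \mathcal{G}^h\bigr) = \mathbb{E}\bigl[|\mathbf{C}(\bm{0})|\, e^{-h|\mathbf{C}(\bm{0})|}\bigr] = \mathfrak{R}'(h),
\end{equation*}
giving (\ref{eq_97}).

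For the first identity (\ref{eq_9.5}), I again condition on $\mathbf{C}(\bm{0})$. Conditional on $|\mathbf{C}(\bm{0})| = m$, the count $|\mathbf{C}(\bm{0}) \cap \mathcal{G}^h|$ is Binomial with parameters $(m, 1-e^{-h})$, so the probability of the value $1$ equals $m(1-e^{-h})e^{-(m-1)h}$. Summing over $m$,
\begin{equation*}
\mathbb{P}\bigl(|\mathbf{C}(\bm{0}) \cap \mathcal{G}^h| = 1\bigr) = (1-e^{-h})\,e^{h} \sum_{m \ge 1} m P_m e^{-mh} = (e^h-1)\,\mathfrak{R}'(h),
\end{equation*}
which is exactly (\ref{eq_9.5}). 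No significant obstacle is anticipated; the argument is essentially bookkeeping, and the only subtle points are the justification of termwise differentiation and the careful interpretation of $|\mathbf{C}(\bm{0})|$ as a lattice-point count, both of which are routine.
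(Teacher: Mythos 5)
Your proof is correct and follows essentially the same route as the paper's: termwise differentiation of $\mathfrak{R}$, conditioning on $\mathbf{C}(\bm{0})$, and the binomial/independence computation for the ghost field. The only cosmetic difference is that you phrase the sums as expectations $\mathbb{E}\bigl[|\mathbf{C}(\bm{0})|e^{-h|\mathbf{C}(\bm{0})|}\bigr]$ while the paper keeps them as $\sum_m mP_m e^{-mh}$; the substance is identical.
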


\begin{proof}
	For the LHS of (\ref{eq_9.5}), by (\ref{eq_9.4}) one has 
	\begin{equation}\label{eq_9.7}
		\begin{split}
	 \mathfrak{R}'(h) = \sum_{m=1}^{\infty} P_m \cdot m e^{-mh}.
		\end{split}	
	\end{equation}
For the RHS of (\ref{eq_9.5}), we have 
\begin{equation}\label{9.10}
	\begin{split}
		&\mathbb{P}\left(|\mathbf{C}(0)\cap \mathcal{G}^h|=1 \right)\\
		=& \sum_{m=1}^{\infty} \mathbb{P}\left(|\mathbf{C}(\bm{0})|=m \right)\cdot \mathbb{P}\left(|\mathbf{C}(0)\cap \mathcal{G}^h|=1 \Big||\mathbf{C}(\bm{0})|=m \right) \\
		=& \sum_{m=1}^{\infty}  P_m\cdot e^{-(m-1)h}\left(1-e^{-h} \right)
		=(e^h-1)\sum_{m=1}^{\infty} P_m\cdot me^{-mh}. 
	\end{split}
\end{equation}
By (\ref{eq_9.7}) and (\ref{9.10}), we get (\ref{eq_9.5}).

We now prove (\ref{eq_97}). Similar to (\ref{9.10}), we also have 
\begin{equation}\label{eq_9.11}
\begin{split}
&\sum_{x\in \mathbb{Z}^d} \mathbb{P}\left( \bm{0}\xleftrightarrow[]{} x,\bm{0} \nleftrightarrow \mathcal{G}^h \right) \\
=&\sum_{m=1}^{\infty}\sum_{x\in \mathbb{Z}^d}\mathbb{P}\left(\bm{0}\xleftrightarrow[]{} x, |\mathbf{C}(\bm{0})|=m \right)   \mathbb{P}\left( \bm{0} \nleftrightarrow \mathcal{G}^h \big | \bm{0}\xleftrightarrow[]{} x,| \mathbf{C}(\bm{0})|=m \right)\\
=& \sum_{m=1}^{\infty}e^{-mh} \sum_{x\in \mathbb{Z}^d}\mathbb{P}\left(\bm{0}\xleftrightarrow[]{} x, |\mathbf{C}(\bm{0})|=m \right) 
= \sum_{m=1}^{\infty} P_m\cdot me^{-mh}. 
\end{split}
\end{equation}
By (\ref{eq_9.7}) and (\ref{eq_9.11}), we obtain (\ref{eq_97}).
\end{proof}	



We introduce some more notations below for further analysis:
\begin{itemize}

\item Recall that for any $A_1,A_2,A_3\subset \widetilde{\mathbb{Z}}^d$, $\{A_1\xleftrightarrow[]{} A_2\ \text{off}\ A_3\}$ is the event that there exists a collection $\mathfrak{L}$ of loops in $\widetilde{\mathcal{L}}_{1/2}$ disjoint from $A_3$ such that $A_1\xleftrightarrow[]{\cup \mathfrak{L}} A_2$. For any $x\in \mathbb{Z}^d$ and $A\subset \widetilde{\mathbb{Z}}^d$, we denote
\begin{equation}\label{9.13}
	\mathbf{C}_{A}(x):=\{v\in \widetilde{\mathbb{Z}}^d: v\xleftrightarrow[]{} x\ \text{off}\ A\}.
\end{equation}

\item For three different $x,y,z \in \mathbb{Z}^d$, the event $\mathsf{E}(x,y,z)$ is defined to be the intersection of the following three events:
\begin{enumerate}
	\item $\bm{0} \xleftrightarrow[]{} x$ and $\bm{0}  \nleftrightarrow \mathcal{G}^h$;
	
	\item $y\xleftrightarrow[]{}\mathcal{G}^h$ and $z\xleftrightarrow[]{}\mathcal{G}^h$; 
	
	\item The clusters $\mathbf{C}(x)$, $\mathbf{C}(y)$ and $\mathbf{C}(z)$ are disjoint to each other. 

\end{enumerate}

See Figure \ref{pic4} for an illustration of this event.

\item For any $x\in \mathbb{Z}^d$ and $i\in \mathbb{N}^+$, let $x_i^+=x+(i,0,...,0)\in \mathbb{Z}^d$ and $x_i^-=x+(-i,0,...,0)\in \mathbb{Z}^d$. We denote the subset 
$A_{J}^x:=\cup_{i=1}^{J} \{x_i^+,x_i^- \}\cup \{x\}$.


\item 	The event $\mathsf{F}_{J}^x$ is the intersection of the following two events:
\begin{enumerate}
	\item $\gamma^{\mathrm{f}}_{A_{J}^x}\neq \emptyset$ (recalling in Section \ref{section_BKR} that $\gamma^{\mathrm{f}}_{A}$ is the glued loop composed of fundamental loops in $\widetilde{\mathcal{L}}_{1/2}$ that visit every point in $A$ and do not visit any other lattice point);
	
	\item After deleting all loops $\widetilde{\ell}$ included in $\gamma^{\mathrm{f}}_{A_{J}^x}$ (i.e. $\widetilde{\ell}$ is one of the loops that construct $\gamma^{\mathrm{f}}_{A_{J}^x}$) from $\widetilde{\mathcal{L}}_{1/2}$, the event $\mathsf{E}_{J}^x:=\mathsf{E}(x,x_J^-,x_J^+)$ occurs.
	
\end{enumerate}
Note that $\mathsf{F}_{J}^x$ implies $|\mathbf{C}(\bm{0})\cap \mathcal{G}^h|\ge 2$, which is incompatible with the event $\mathsf{E}_{J}^y$ for each $y\in \mathbb{Z}^d$.

\begin{figure}[h]
	\centering
	\includegraphics[width=10cm]{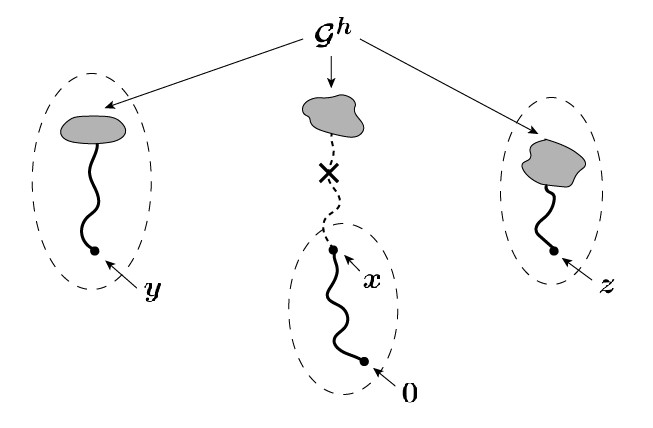}
	\caption{An illustration for the event $\mathsf{E}(x,y,z)$.  }
	\label{pic4}
\end{figure}


	%
	%
	%
	%
	%

\end{itemize}



\begin{lemma}\label{lemma_93}
For any lattice points $y_1\neq y_2$, $\mathsf{F}_{J}^{y_1}\cap \mathsf{F}_{J}^{y_2}=\emptyset$. 
\end{lemma}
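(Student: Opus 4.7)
The plan is to argue by contradiction, so suppose that both $\mathsf{F}_J^{y_1}$ and $\mathsf{F}_J^{y_2}$ occur on the same configuration. The first step is to observe that $y_1 \neq y_2$ forces $A_J^{y_1} \neq A_J^{y_2}$ (these are $(2J+1)$-point segments along the first coordinate direction with distinct midpoints), and since a loop contributing to $\gamma_{A_J^{y_i}}^{\mathrm{f}}$ is required to visit \emph{every} lattice point of $A_J^{y_i}$ and \emph{no other}, no loop can belong to both glued loops. Abbreviating $\gamma_i:=\gamma_{A_J^{y_i}}^{\mathrm{f}}$, this means that the loop collections comprising $\gamma_1$ and $\gamma_2$ are disjoint, so the configuration obtained from $\widetilde{\mathcal{L}}_{1/2}$ by deleting only $\gamma_2$ still contains all of $\gamma_1$, and vice versa.

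Next I would unpack $\mathsf{F}_J^{y_1}$: in the thinned soup $\widetilde{\mathcal{L}}_{1/2}-\gamma_1$, the clusters $C_0:=\mathbf{C}(y_1)$, $C_-:=\mathbf{C}((y_1)_J^-)$ and $C_+:=\mathbf{C}((y_1)_J^+)$ are pairwise disjoint, with $\bm{0}\in C_0$, $C_\pm\cap\mathcal{G}^h\neq\emptyset$ and $C_0\cap\mathcal{G}^h=\emptyset$. Since $\gamma_2$ is a connected subset of $\widetilde{\mathbb{Z}}^d$ present in $\widetilde{\mathcal{L}}_{1/2}-\gamma_1$, it sits in a single cluster of that configuration, hence is contained in at most one of $C_0,C_-,C_+$. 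If $\gamma_2\not\subset C_0$, then $C_0$ is unaffected upon further deleting $\gamma_2$, so $\bm{0}\leftrightarrow y_1$ persists in $\widetilde{\mathcal{L}}_{1/2}-\gamma_1-\gamma_2$; moreover at least one of $C_\pm$ is also unaffected, giving $(y_1)_J^\pm\leftrightarrow \mathcal{G}^h$ there. Restoring $\gamma_1$ to form $\widetilde{\mathcal{L}}_{1/2}-\gamma_2$ links $y_1$ to $(y_1)_J^\pm$, producing the chain $\bm{0}\leftrightarrow y_1 \leftrightarrow (y_1)_J^\pm \leftrightarrow \mathcal{G}^h$ in $\widetilde{\mathcal{L}}_{1/2}-\gamma_2$, which contradicts the part of $\mathsf{F}_J^{y_2}$ requiring $\bm{0}\nleftrightarrow \mathcal{G}^h$ there.

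The remaining case is $\gamma_2\subset C_0$; here both $C_\pm$ are disjoint from $\gamma_2$, so both $(y_1)_J^\pm\leftrightarrow \mathcal{G}^h$ survive the removal of $\gamma_2$, and therefore also survive in $\widetilde{\mathcal{L}}_{1/2}-\gamma_2$ after restoring $\gamma_1$. To close this case I would apply the symmetric analysis in $\widetilde{\mathcal{L}}_{1/2}-\gamma_2$: by $\mathsf{F}_J^{y_2}$ the three disjoint clusters $\mathbf{C}(y_2),\mathbf{C}((y_2)_J^\pm)$ contain $\gamma_1$ in at most one, and either $\gamma_1$ avoids $\mathbf{C}(y_2)$ — in which case a repetition of the previous argument with the roles of $(y_1,\gamma_1)$ and $(y_2,\gamma_2)$ swapped produces $\bm{0}\leftrightarrow \mathcal{G}^h$ in $\widetilde{\mathcal{L}}_{1/2}-\gamma_1$, contradicting $\mathsf{F}_J^{y_1}$ — or $\gamma_1\subset \mathbf{C}(y_2)=\mathbf{C}(\bm{0})$ in $\widetilde{\mathcal{L}}_{1/2}-\gamma_2$, which places $A_J^{y_1}\subset\mathbf{C}(\bm{0})$ there (since $\gamma_1$ visits every point of $A_J^{y_1}$) and, combined with $(y_1)_J^\pm\leftrightarrow \mathcal{G}^h$ already established in that configuration, yields $\bm{0}\leftrightarrow \mathcal{G}^h$ in $\widetilde{\mathcal{L}}_{1/2}-\gamma_2$, again contradicting $\mathsf{F}_J^{y_2}$. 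The main subtlety is exactly this last case: a single connected glued loop can be a pivotal bridge for $\bm{0}\leftrightarrow y_1$ even when the two $(y_1)_J^\pm\leftrightarrow \mathcal{G}^h$ connections are untouched, so one cannot hope to propagate $\mathsf{E}_J^{y_1}$ unchanged across the double deletion; the resolution is to exploit the dual three-cluster structure of $\mathsf{F}_J^{y_2}$ to confine $\gamma_1$ to a single cluster and thereby force $A_J^{y_1}$ into $\mathbf{C}(\bm{0})$.
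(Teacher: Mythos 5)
Your proof is correct and rests on the same ingredients as the paper's own argument: the loop collections building $\gamma^{\mathrm{f}}_{A_{J}^{y_1}}$ and $\gamma^{\mathrm{f}}_{A_{J}^{y_2}}$ are disjoint (so deleting one leaves the other intact), each glued loop is connected and therefore confined to a single cluster of the other's thinned configuration, and restoring a glued loop merges $y_i$ with $(y_i)_J^{\pm}$. The only difference is organizational: the paper's terminal contradiction is that the connected $\gamma^{\mathrm{f}}_{A_{J}^{y_2}}$ would have to meet both of the disjoint side clusters (its Observations (3)--(4) plus the last paragraph), whereas your case split always terminates in a $\bm{0}\xleftrightarrow[]{}\mathcal{G}^h$ connection violating one of $\mathsf{F}_J^{y_1}$, $\mathsf{F}_J^{y_2}$; both routes are valid.
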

\begin{proof}
	For any $w\in \mathbb{Z}^d$ and $i\in \{1,2\}$, let $\mathbf{C}_{w}$ (resp. $\mathbf{C}_{w}^{i}$) be the cluster containing $w$ and composed of loops in $\widetilde{\mathcal{L}}_{1/2}$ that are not included in $\gamma^{\mathrm{f}}_{A_{J}^{y_1}}$ or $\gamma^{\mathrm{f}}_{A_{J}^{y_2}}$ (resp. not included in $\gamma^{\mathrm{f}}_{A_{J}^{y_i}}$). We abbreviate $y_{i}^{\dagger}:=(y_i)_J^-$ and $y_{i}^{\diamond}:=(y_i)_J^+$.
	
%
%

	Assume the event $\mathsf{F}_{J}^{y_1}\cap \mathsf{F}_{J}^{y_2}$ occurs. Here are some useful observations. 
	\begin{enumerate}
	
		\item For $i\in \{1,2\}$, it follows from the definition of $\mathsf{F}_{J}^{y_i}$ that: (a) $\mathbf{C}_{y_i}^i$, $\mathbf{C}_{y_i^\dagger}^i$ and $\mathbf{C}_{y_i^\diamond}^i$ are disjoint from one another; (b) $\mathbf{C}_{y_i}^i$ contains $\bm{0}$ and is disjoint from $\mathcal{G}^h$; (c) $\mathbf{C}_{y_i^\dagger}^i$ and $\mathbf{C}_{y_i^\diamond}^i$ both intersect $\mathcal{G}^h$.

		\item For $i\in \{1,2\}$, either $\mathbf{C}_{y_i^\dagger}^i=\mathbf{C}_{y_i^\dagger}$ or $\mathbf{C}_{y_i^\diamond}^i=\mathbf{C}_{y_i^\diamond}$ since at most one of the clusters $\mathbf{C}_{y_i^\dagger}^i$ and $\mathbf{C}_{y_i^\diamond}^i$ can intersect $\gamma^{\mathrm{f}}_{A_{J}^{y_{3-i}}}$.

		\item For $i\in \{1,2\}$, $\mathbf{C}_{y_i}^i=\mathbf{C}_{y_i}$. We prove this observation by contradiction. Assume that $\mathbf{C}_{y_i}^i\neq\mathbf{C}_{y_i}$, then $\mathbf{C}_{y_i}$ must intersect $\gamma^{\mathrm{f}}_{A_{J}^{y_{3-i}}}$. Moreover, by Observation (2), without loss of generality we can also assume that $\mathbf{C}_{y_{3-i}^\dagger}^{3-i}=\mathbf{C}_{y_{3-i}^\dagger}$. Therefore, since $y_i$ can be connected to $\mathcal{G}^h$ by $\mathbf{C}_{y_i}^i\cup \gamma^{\mathrm{f}}_{A_{J}^{y_{3-i}}} \cup \mathbf{C}_{y_{3-i}^\dagger}^{3-i}$ (which equals to $\mathbf{C}_{y_i}\cup \gamma^{\mathrm{f}}_{A_{J}^{y_{3-i}}} \cup \mathbf{C}_{y_{3-i}^\dagger}$ and thus is contained in $\mathbf{C}_{y_i}^{i}$), we have that $\mathbf{C}_{y_i}^{i}$ intersects $\mathcal{G}^h$, which is contradictory with Observation (1b).

		\item For $i\in \{1,2\}$, $\mathbf{C}_{y_i^\dagger}^i$ and $\mathbf{C}_{y_i^\diamond}^i$ both intersect $\gamma^{\mathrm{f}}_{A_{J}^{y_{3-i}}}$. We prove this by contradiction. If $\mathbf{C}_{y_i^\dagger}^i\cap \gamma^{\mathrm{f}}_{A_{J}^{y_{3-i}}}=\emptyset$, then one has $\mathbf{C}_{y_i^\dagger}=\mathbf{C}_{y_i^\dagger}^i$. In addition, by Observations (1b) and (1c), $\bm{0}$ is connected to $\mathcal{G}^h$ by $\mathbf{C}_{y_i^\dagger}^{i} \cup \gamma^{\mathrm{f}}_{A_{J}^{y_{i}}} \cup \mathbf{C}_{y_i}^{i}$, which is contained in $\mathbf{C}_{\bm{0}}^{3-i}$ since $\mathbf{C}_{y_i^\dagger}^i=\mathbf{C}_{y_i^\dagger}$ and $\mathbf{C}_{y_i}^{i}=\mathbf{C}_{y_i}$ (by Observation (3)). However, this implies that  $\mathbf{C}_{\bm{0}}^{3-i}\cap \mathcal{G}^h\neq \emptyset$, and thus arrives at a contradiction  with Observation (1b).

	\end{enumerate}
We next prove the lemma. Since $\mathbf{C}_{y_1^{\dagger}}^1$ and $\mathbf{C}_{y_1^{\diamond}}^1$ both intersect $\gamma^{\mathrm{f}}_{A_{J}^{y_2}}$ (by Observation (4)), one has that $y_1^{\dagger}$ and $y_1^{\diamond}$ are connected by $\mathbf{C}_{y_1^{\dagger}}^1\cup \mathbf{C}_{y_1^{\diamond}}^1\cup \gamma^{\mathrm{f}}_{A_{J}^{y_2}}$, which is incompatible with Observation (1a). Thus, we complete the proof by contradiction.
\end{proof}

\begin{lemma}\label{lemma_9.4}
For any $d>6$ and $J\in \mathbb{N}^+$, there exists $C_{9}(d,J)>0$ such that for any $h\ge 0$, 
\begin{equation}\label{9.14}
	\sum_{x\in \mathbb{Z}^d} \mathbb{P}\left( \mathsf{E}_{J}^x\right)   \le 	C_{9}\left[\mathfrak{R}(h)-(e^h-1)\mathfrak{R}'(h)\right]. 
\end{equation}
\end{lemma}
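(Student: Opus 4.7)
The plan is to establish an exact proportionality $\mathbb{P}(\mathsf{F}_J^x) = c(d,J)\,\mathbb{P}(\mathsf{E}_J^x)$ via Poisson thinning, then sum over $x$ using the disjointness from Lemma \ref{lemma_93}, and finally identify the right-hand side via Lemma \ref{new_lemma_9.2}. To implement the first step, decompose $\widetilde{\mathcal{L}}_{1/2} = \widetilde{\mathcal{L}}^{(1)} \cup \widetilde{\mathcal{L}}^{(2)}$ into independent Poisson point processes, where $\widetilde{\mathcal{L}}^{(1)}$ collects the fundamental loops in $\widetilde{\mathcal{L}}_{1/2}$ whose ranges visit every point of $A_J^x$ and no other lattice point. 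Let $\mu_0 = \mu_0(J) := \widetilde{\mu}(\{\widetilde{\ell}:\widetilde{\ell}\text{ visits exactly }A_J^x\})$, which by translation invariance does not depend on $x$ and is strictly positive: via (\ref{fina_2.5}) and the correspondence in Section \ref{subsection_continuous}, one constructs loops traversing $A_J^x$ from $x_J^-$ through $x$ to $x_J^+$ and back with positive loop measure. Then $\mathbb{P}(\widetilde{\mathcal{L}}^{(1)}=\emptyset) = e^{-\mu_0}$.

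The key observation is that any loop in $\widetilde{\mathcal{L}}^{(1)}$ visits $x$, $x_J^-$ and $x_J^+$ simultaneously, so if $\widetilde{\mathcal{L}}^{(1)}\neq\emptyset$ then the three clusters $\mathbf{C}(x)$, $\mathbf{C}(x_J^-)$, $\mathbf{C}(x_J^+)$ coincide, violating Condition (3) of $\mathsf{E}(x,x_J^-,x_J^+)$. Hence, writing $\mathbb{P}_2$ for the law under which the loop soup is $\widetilde{\mathcal{L}}^{(2)}$,
\begin{equation*}
\mathbb{P}(\mathsf{E}_J^x) = e^{-\mu_0}\,\mathbb{P}_2(\mathsf{E}_J^x), \qquad \mathbb{P}(\mathsf{F}_J^x) = (1-e^{-\mu_0})\,\mathbb{P}_2(\mathsf{E}_J^x),
\end{equation*}
the first by independence and the second directly from the definition of $\mathsf{F}_J^x$ (which checks $\mathsf{E}_J^x$ precisely on the residual soup $\widetilde{\mathcal{L}}^{(2)}$). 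Dividing gives the exact identity $\mathbb{P}(\mathsf{E}_J^x) = (e^{\mu_0}-1)^{-1}\mathbb{P}(\mathsf{F}_J^x)$, so one may take $C_9 = (e^{\mu_0(J)}-1)^{-1}$.

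To close, observe that on $\mathsf{F}_J^x$ the event $\mathsf{E}_J^x$ holds in $\widetilde{\mathcal{L}}^{(2)}$, so there exist $g_1,g_2 \in \mathcal{G}^h$ with $x_J^- \xleftrightarrow[]{\widetilde{\mathcal{L}}^{(2)}} g_1$ and $x_J^+ \xleftrightarrow[]{\widetilde{\mathcal{L}}^{(2)}} g_2$; these ghost points must differ because the $\widetilde{\mathcal{L}}^{(2)}$-clusters of $x_J^-$ and $x_J^+$ are disjoint. Adding back the loops in $\gamma^{\mathrm{f}}_{A_J^x}$ merges $\mathbf{C}(\bm{0})$ with both ghost points, so $\mathsf{F}_J^x \subset \{|\mathbf{C}(\bm{0})\cap \mathcal{G}^h|\ge 2\}$. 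By Lemma \ref{lemma_93} the events $\{\mathsf{F}_J^x\}_{x\in\mathbb{Z}^d}$ are pairwise disjoint, hence $\sum_x \mathbb{P}(\mathsf{F}_J^x) \le \mathbb{P}(|\mathbf{C}(\bm{0})\cap \mathcal{G}^h|\ge 2)$, and by Lemma \ref{new_lemma_9.2} this equals $\mathfrak{R}(h) - (e^h-1)\mathfrak{R}'(h)$. The only nontrivial ingredient is the positivity of $\mu_0$; beyond that, the argument is an exact identity followed by a union bound, with the substantive combinatorics having already been absorbed into Lemma \ref{lemma_93}.
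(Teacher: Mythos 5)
Your proposal is correct and follows essentially the same route as the paper: the factorization $\mathbb{P}(\mathsf{E}_J^x)=e^{-\mu_0}\mathbb{P}_2(\mathsf{E}_J^x)$, $\mathbb{P}(\mathsf{F}_J^x)=(1-e^{-\mu_0})\mathbb{P}_2(\mathsf{E}_J^x)$ is exactly the paper's comparison $\mathbb{P}(\mathsf{E}_J^x)\le \frac{\mathbb{P}(\gamma^{\mathrm{f}}_{A_J^x}=\emptyset)}{\mathbb{P}(\gamma^{\mathrm{f}}_{A_J^x}\neq\emptyset)}\mathbb{P}(\mathsf{F}_J^x)$ made into an identity via Poisson thinning, after which both arguments invoke the disjointness of the $\mathsf{F}_J^x$ (Lemma \ref{lemma_93}), the inclusion $\mathsf{F}_J^x\subset\{|\mathbf{C}(\bm{0})\cap\mathcal{G}^h|\ge 2\}$, and Lemma \ref{new_lemma_9.2}.
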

\begin{proof}
For any $x\in \mathbb{Z}^d$, when $\mathsf{E}_{J}^x$ happens, one has $\gamma^{\mathrm{f}}_{A_{J}^x}= \emptyset$. Moreover, if we add a loop $\widetilde{\ell}$, which constructs $\gamma^{\mathrm{f}}_{A_{J}^x}$, into the configuration of $\widetilde{\mathcal{L}}_{1/2}$, then the event $\mathsf{F}_{J}^x$ occurs. Therefore, we have 
\begin{equation}\label{ineq913}
	\mathbb{P}\left( \mathsf{E}_{J}^x\right)\le \frac{\mathbb{P}(\gamma^{\mathrm{f}}_{A_{J}^x}= \emptyset)}{\mathbb{P}(\gamma^{\mathrm{f}}_{A_{J}^x}\neq \emptyset)}\cdot 	\mathbb{P}\left( \mathsf{F}_{J}^x\right)\le C(d,J)	\mathbb{P}\left( \mathsf{F}_{J}^x\right). 
\end{equation}
Recall that Lemma \ref{lemma_93} shows that the events $\mathsf{F}_{J}^x$ for $x\in \mathbb{Z}^d$ are disjoint from one another. Thus, since $\mathsf{F}_{J}^x\subset \{|\mathbf{C}(\bm{0})\cap \mathcal{G}^h|\ge 2\}$, we have 
\begin{equation}\label{ineq914}
	\sum_{x\in \mathbb{Z}^d}\mathbb{P}\left( \mathsf{F}_{J}^x\right)= \mathbb{P}\left( \cup_{x\in \mathbb{Z}^d}\mathsf{F}_{J}^x\right)\le \mathbb{P}\left(|\mathbf{C}(\bm{0})\cap \mathcal{G}^h|\ge 2 \right), 
\end{equation}
where the RHS is equal to $\mathfrak{R}(h)-(e^h-1)\mathfrak{R}'(h)$ by (\ref{eq_9.4}) and (\ref{eq_9.5}). Thus, combining (\ref{ineq913}) and (\ref{ineq914}), we conclude this lemma.
\end{proof}

Recall that ``$x\xleftrightarrow[]{} y\ \text{only by}\ A$'' means that $x\xleftrightarrow[]{} y$, and in every collection of loops in $\widetilde{\mathcal{L}}_{1/2}$ connecting $x$ and $y$ there must be some loop intersecting $A$. Next, we give three technical lemmas. 


\begin{lemma}\label{lemma_9.5}
For any $d>6$, there exists $C(d)>0$ such that for any $y\in \mathbb{Z}^d$ and $A\subset \widetilde{\mathbb{Z}}^d$,
\begin{equation}
	\mathbb{P}\left(y\xleftrightarrow[]{} \mathcal{G}^h\ \text{only by}\ A \right) \le C(d)\mathfrak{R}(h)\sum_{v\in A\cap \mathbb{Z}^d}  |y-v|^{2-d}.
\end{equation}
\end{lemma}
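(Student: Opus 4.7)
The plan is to reduce the event $\{y \xleftrightarrow[]{} \mathcal{G}^h \text{ only by } A\}$ to a disjoint-occurrence decomposition bridged by a pivotal glued loop meeting $A$. Namely, if this event occurs, then any finite chain $\gamma_1,\ldots,\gamma_m$ of glued loops in $\widetilde{\mathcal{L}}_{1/2}$ realizing the connection between $y$ and $\mathcal{G}^h$ must contain some $\gamma_i$ intersecting $A$; extracting one such $\gamma_*$ via the maximal-index argument already used in the proof of Lemma \ref{lemma_tree_expansion} (so that the loops of the chain on one side of $\gamma_*$ are disjoint from the loops on the other side), we obtain a glued loop $\gamma_*$ with $\mathrm{ran}(\gamma_*)\cap A \neq \emptyset$ for which $\{y \xleftrightarrow[]{} \gamma_*\}\circ\{\gamma_* \xleftrightarrow[]{} \mathcal{G}^h\}$ occurs.

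Next I would enumerate $\gamma_*$ by (i) its type -- fundamental, point, or edge -- (ii) a lattice point $v \in A \cap \mathbb{Z}^d$ witnessing $\mathrm{ran}(\gamma_*)\cap A$ (when $\gamma_*$ meets only the interior of some interval in $A$, one can still take $v$ to be one of the two endpoints of that interval, at the cost of a bounded factor), and (iii) two auxiliary lattice points $z_1, z_2 \in \mathrm{ran}(\gamma_*)\cap \mathbb{Z}^d$ through which the two halves of the disjoint decomposition attach to $\gamma_*$. After conditioning on the ghost field (which is independent of $\widetilde{\mathcal{L}}_{1/2}$), both $\{y \xleftrightarrow[]{} z_1\}$ and $\{z_2 \xleftrightarrow[]{} \mathcal{G}^h\}$ are connecting events in the loop soup, and the BKR inequality (Corollary \ref{coro_BKR}) applied to the three disjoint collections -- the loops constituting $\gamma_*$, and the two collections certifying the connections to $y$ and to $\mathcal{G}^h$ -- yields
\begin{equation*}
\mathbb{P}\bigl(y \xleftrightarrow[]{} \mathcal{G}^h \text{ only by } A\bigr) \le C\!\!\sum_{v \in A\cap \mathbb{Z}^d}\sum_{z_1,z_2\in \mathbb{Z}^d}\!\!\mathcal{M}(v,z_1,z_2)\, \mathbb{P}(y \xleftrightarrow[]{} z_1)\, \mathbb{P}(z_2 \xleftrightarrow[]{} \mathcal{G}^h),
\end{equation*}
where $\mathcal{M}(v,z_1,z_2)$ is the three-point loop-measure factor used in the derivation of (\ref{ineq_310}) -- fundamental loops visiting $\widetilde{B}_v(1), \widetilde{B}_{z_1}(1), \widetilde{B}_{z_2}(1)$ have measure at most $C|v-z_1|^{2-d}|z_1-z_2|^{2-d}|z_2-v|^{2-d}$ -- and the point/edge cases are handled by restricting $z_1, z_2 \in B_v(2)$ as in (\ref{ineq4.8}).

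The final step is to substitute three standard inputs: the two-point function estimate (\ref{eq_two_points}) gives $\mathbb{P}(y \xleftrightarrow[]{} z_1)\le C|y-z_1|^{2-d}$; translation invariance of both $\widetilde{\mathcal{L}}_{1/2}$ and the ghost field together with (\ref{eq_9.4}) give $\mathbb{P}(z_2 \xleftrightarrow[]{} \mathcal{G}^h) = \mathbb{P}(\bm{0} \xleftrightarrow[]{} \mathcal{G}^h) = \mathfrak{R}(h)$; and the convolution inequality (\ref{fin_4.6}) (applied with $x = v$ in place of the variable there) collapses $\sum_{z_1,z_2}|v-z_1|^{2-d}|z_1-z_2|^{2-d}|z_2-v|^{2-d}|y-z_1|^{2-d}$ to $C|v-y|^{2-d}$, producing exactly the desired bound. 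The main obstacle in this plan is the careful extraction of the pivotal $\gamma_*$ compatibly with BKR: one must choose $\gamma_*$ so that neither side of the disjoint decomposition reuses a loop appearing on the other side, and must interface this extraction with both the random ghost field and the three-way type classification of glued loops; once this bookkeeping is in place, the remaining calculations are direct consequences of material already developed in Sections 3 and 4.
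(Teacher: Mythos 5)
Your proposal is correct and follows essentially the same route as the paper: extract a pivotal glued loop $\gamma_*$ meeting $A$ so that $\{\gamma_*\xleftrightarrow[]{}y\}\circ\{\gamma_*\xleftrightarrow[]{}\mathcal{G}^h\}$ occurs, apply BKR with the three-point loop-measure factor (handling point/edge loops by localizing near $v$ as in (\ref{ineq4.8})), identify $\mathbb{P}(z_2\xleftrightarrow[]{}\mathcal{G}^h)=\mathfrak{R}(h)$ via (\ref{eq_9.4}), and collapse the convolution with (\ref{fin_4.6}). The paper's proof is exactly this computation, carried out by reference to (\ref{ineq_6.82}).
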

\begin{proof}
This proof is parallel to that of (\ref{ineq_685}). On the event $\{y\xleftrightarrow[]{} \mathcal{G}^h\ \text{only by}\ A\}$, there exists a glued loop $\gamma_*$ intersecting $A$ such that $\{\gamma_*\xleftrightarrow[]{}y\} \circ \{\gamma_*\xleftrightarrow[]{}\mathcal{G}^h\}$ happens. By the same arguments as in the proof of (\ref{ineq_6.82}) (replacing $\widehat{\mathbf{A}}$, $x'$ and $y$ in (\ref{ineq_6.82}) by $A$, $y$ and $\mathcal{G}^h$ respectively), we have 
\begin{equation*}\label{9.18}
	\begin{split}
		&	\mathbb{P}\left(y\xleftrightarrow[]{} \mathcal{G}^h\ \text{only by}\ A \right)\\
		\le &C\sum_{z_1\in A\cap\mathbb{Z}^d,z_2,z_3\in \mathbb{Z}^d} |z_1-z_2|^{2-d}|z_2-z_3|^{2-d}|z_3-z_1|^{2-d}|z_2-y|^{2-d}\mathbb{P}\left(z_3\xleftrightarrow[]{}\mathcal{G}^h \right)\\
		 =&C\mathfrak{R}(h)\sum_{z_1\in A\cap\mathbb{Z}^d,z_2,z_3\in \mathbb{Z}^d} |z_1-z_2|^{2-d}|z_2-z_3|^{2-d}|z_3-z_1|^{2-d}|z_2-y|^{2-d} (\text{by}\ (\ref{eq_9.4}))\\
		 \le &C\mathfrak{R}(h)\sum_{z_1\in A\cap\mathbb{Z}^d}|z_1-y|^{2-d}\ \ \  \ \ \ \ \ \ \ \ \ \ \ \ \ \ \ \ \  \ \ \ \ \ \ \ \ \ \ \ \ \ \ \  \ \  \ \ \  (\text{by}\ (\ref{fin_4.6})).   \qedhere
	\end{split}
\end{equation*}
\end{proof}


\begin{lemma}\label{lemma_9.6}
For any $d>6$, there exists $C(d)>0$ such that for any $J\in \mathbb{N}^+$, 
\begin{equation}\label{9.20}
	\sum_{x,y\in \mathbb{Z}^d}\mathbb{P}\left(\bm{0}\xleftrightarrow[]{}x,\bm{0}\xleftrightarrow[]{}y,\bm{0}\nleftrightarrow \mathcal{G}^h \right)|x_{J}^--y|^{2-d}\le CJ^{6-d} \mathfrak{R}'(h). 
\end{equation}
\end{lemma}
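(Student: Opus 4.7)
The plan is to combine the tree expansion (Lemma \ref{lemma_tree_expansion}) with the BKR inequality (Corollary \ref{coro_BKR}) in the spirit of the proof of Lemma \ref{lemma_31}, but keeping the ghost-field factor $\bm{0}\nleftrightarrow\mathcal{G}^h$ attached to the $\bm{0}$-branch so that at the very end it can be recombined into $\mathfrak{R}'(h)$ via (\ref{eq_97}).

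The key observation is that when $\{\bm{0}\xleftrightarrow{}x,\bm{0}\xleftrightarrow{}y,\bm{0}\nleftrightarrow\mathcal{G}^h\}$ occurs, all four of $\bm{0},x,y,\gamma_*$ (where $\gamma_*$ is the tree-expansion loop) lie in the same loop-soup cluster that is disjoint from $\mathcal{G}^h$; hence in addition to $\bm{0}\nleftrightarrow\mathcal{G}^h$ we automatically get $x\nleftrightarrow\mathcal{G}^h$ and $y\nleftrightarrow\mathcal{G}^h$. Combining this with the tree expansion, the event is contained in
$$\bigcup_{\gamma_*}\Big(\{\bm{0}\xleftrightarrow{}\gamma_*,\bm{0}\nleftrightarrow\mathcal{G}^h\}\circ \{x\xleftrightarrow{}\gamma_*,x\nleftrightarrow\mathcal{G}^h\}\circ \{y\xleftrightarrow{}\gamma_*,y\nleftrightarrow\mathcal{G}^h\}\Big).$$
Applying BKR, dropping the ghost condition from the $x$- and $y$-factors (bounding them by $C|z_2-x|^{2-d}$ and $C|z_3-y|^{2-d}$ via the two-point estimate (\ref{eq_two_points})), and parametrizing $\gamma_*$ by three lattice points $z_1,z_2,z_3$ in its range (so that, up to the same edge/point-loop correction handled in the proof of Lemma \ref{lemma_31}, the loop-measure factor is $C|z_1-z_2|^{2-d}|z_2-z_3|^{2-d}|z_3-z_1|^{2-d}$), I get the crucial bound
\begin{equation*}
\mathbb{P}(\bm{0}\xleftrightarrow{}x,\bm{0}\xleftrightarrow{}y,\bm{0}\nleftrightarrow\mathcal{G}^h)\le C\!\!\!\sum_{z_1,z_2,z_3\in\mathbb{Z}^d}\!\!\!\mathbb{P}(\bm{0}\xleftrightarrow{}z_1,\bm{0}\nleftrightarrow\mathcal{G}^h)|z_1{-}z_2|^{2-d}|z_2{-}z_3|^{2-d}|z_3{-}z_1|^{2-d}|z_2{-}x|^{2-d}|z_3{-}y|^{2-d}.
\end{equation*}

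Then I would do the sum on the LHS of (\ref{9.20}) one variable at a time. Writing $v_J:=(J,0,\ldots,0)$, summation over $y$ using Lemma \ref{lemma_calculation1} gives $\sum_y|z_3-y|^{2-d}|x_J^--y|^{2-d}\le C|z_3-x_J^-|^{4-d}$. Shifting $x\mapsto x_J^-$ and applying (\ref{cal_4.6}) gives $\sum_x|z_2-x|^{2-d}|z_3-x_J^-|^{4-d}\le C|z_2-v_J-z_3|^{6-d}$. Changing variables $u=z_3$, $w=z_2-z_3$ and summing over $u$ via Lemma \ref{lemma_calculation1} reduces the remaining geometric sum (with $z_1$ set to the origin by translation) to
$$\sum_{w\in\mathbb{Z}^d}|w|^{6-2d}|w-v_J|^{6-d}.$$
A short case split shows this is $O(J^{6-d})$: the dominant region $|w|\lesssim J$ gives $|w-v_J|\sim J$ and a convergent sum $\sum_w|w|^{6-2d}$ (using $d>6$ so that $2d-6>d$), yielding $CJ^{6-d}$, while the regions $|w-v_J|\lesssim J$ and $|w|\gtrsim J$ only contribute $O(J^{12-2d})\ll J^{6-d}$. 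Summing finally over $z_1$ and invoking (\ref{eq_97}) produces the desired $CJ^{6-d}\mathfrak{R}'(h)$.

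The main obstacle is the first step — justifying the BKR bound while the ghost-field condition $\bm{0}\nleftrightarrow\mathcal{G}^h$ is a non-local event that couples all three branches. The fix is precisely the observation above that avoidance of $\mathcal{G}^h$ by the whole cluster forces avoidance from each of $x$ and $y$ individually, so the trivariate event is dominated by a genuine disjoint occurrence of three $\{\cdot\xleftrightarrow{}\gamma_*,\cdot\nleftrightarrow\mathcal{G}^h\}$ events, to which the BKR framework of Section \ref{section_BKR} (which applies on the product space of $\widetilde{\mathcal{L}}_{1/2}$ and the independent ghost field) can be applied. The rest is power-counting analogous to Corollary \ref{fina_coro_4.5}, with the key quantitative gain coming from the translated two-point bound $|w-v_J|^{6-d}$ that concentrates the mass near $|w|\ll J$.
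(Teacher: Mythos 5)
Your target intermediate bound
\begin{equation*}
\mathbb{P}\left(\bm{0}\xleftrightarrow[]{}x,\bm{0}\xleftrightarrow[]{}y,\bm{0}\nleftrightarrow\mathcal{G}^h\right)\le C\sum_{z_1,z_2,z_3}\mathbb{P}\left(\bm{0}\xleftrightarrow[]{}z_1,\bm{0}\nleftrightarrow\mathcal{G}^h\right)|z_1-z_2|^{2-d}|z_2-z_3|^{2-d}|z_3-z_1|^{2-d}|z_2-x|^{2-d}|z_3-y|^{2-d}
\end{equation*}
is exactly the one the paper proves, and your subsequent power-counting (summing $y$, then $x$ after translating by $v_J$, then the change of variables $w=z_2-z_3$ reducing everything to $\sum_w|w|^{6-2d}|w-v_J|^{6-d}=O(J^{6-d})$, using $d>6$) is correct and in fact slightly more streamlined than the paper's three-region decomposition. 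The problem is the justification of the first step.

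Your proposed inclusion of the event into $\bigcup_{\gamma_*}\big(\{\bm{0}\xleftrightarrow[]{}\gamma_*,\bm{0}\nleftrightarrow\mathcal{G}^h\}\circ\{x\xleftrightarrow[]{}\gamma_*,x\nleftrightarrow\mathcal{G}^h\}\circ\{y\xleftrightarrow[]{}\gamma_*,y\nleftrightarrow\mathcal{G}^h\}\big)$ does not hold in the paper's disjoint-occurrence framework, and the events involved are outside the scope of Lemma \ref{lemma_BKR} and Corollary \ref{coro_BKR} anyway. A collection of glued loops ``certifies'' $\{\bm{0}\nleftrightarrow\mathcal{G}^h\}$ only if, given its realization, the event holds regardless of \emph{all other} glued loops; such a certificate must pin down the entire (sealed-off) cluster of $\bm{0}$ together with the ghost field on it. Since on the event in question $x$ and $y$ lie in that same cluster, any certificates for $\{x\nleftrightarrow\mathcal{G}^h\}$ and $\{y\nleftrightarrow\mathcal{G}^h\}$ necessarily use the same glued loops, so the three collections cannot be disjoint and the displayed disjoint occurrence is essentially never realized. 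Moreover, Corollary \ref{coro_BKR} is stated only for connecting events, and mixed positive/negative events of this kind are precisely the case where a naive BK-type product bound is delicate. The correct repair — and the paper's route — is to set $\mathcal{G}^h_*:=\{v:v\xleftrightarrow[]{}\mathcal{G}^h\}$ and observe the \emph{identity} $\{\bm{0}\xleftrightarrow[]{}x,\bm{0}\xleftrightarrow[]{}y,\bm{0}\nleftrightarrow\mathcal{G}^h\}=\{\bm{0}\xleftrightarrow[]{}x\ \text{off}\ \mathcal{G}^h_*\}\cap\{\bm{0}\xleftrightarrow[]{}y\ \text{off}\ \mathcal{G}^h_*\}$. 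Conditionally on $\mathcal{G}^h_*$, connections ``off $\mathcal{G}^h_*$'' are ordinary loop-soup connections using only loops avoiding a fixed set, which are independent of the conditioning; one can then apply the tree expansion and BKR for genuine connecting events, bound the conditional probabilities $\mathbb{P}(D_1\xleftrightarrow[]{}D_2\ \text{off}\ \mathcal{G}^h_*\mid\mathcal{G}^h_*)$ by unconditional two-point functions, and retain the factor $\mathbb{P}(\bm{0}\xleftrightarrow[]{}z_1\ \text{off}\ \mathcal{G}^h_*\mid\mathcal{G}^h_*)$, whose expectation is $\mathbb{P}(\bm{0}\xleftrightarrow[]{}z_1,\bm{0}\nleftrightarrow\mathcal{G}^h)$ and hence sums to $\mathfrak{R}'(h)$ via (\ref{eq_97}). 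With this substitution for your first step, the rest of your argument goes through.
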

\begin{proof}
	Let $\mathcal{G}^h_*:= \{v\in \widetilde{\mathbb{Z}}^d: v\xleftrightarrow[]{}\mathcal{G}^h \}$. We claim that 
	\begin{equation}\label{new_9.16}
		\big\{\bm{0}\xleftrightarrow[]{}x,\bm{0}\xleftrightarrow[]{}y,\bm{0}\nleftrightarrow \mathcal{G}^h\big\} = \big\{\bm{0}\xleftrightarrow[]{}x\ \text{off}\ \mathcal{G}^h_* \big\}\cap \big\{\bm{0}\xleftrightarrow[]{}y\ \text{off}\ \mathcal{G}^h_* \big\}. 
	\end{equation}
	On the one hand, when $\big\{\bm{0}\xleftrightarrow[]{}x,\bm{0}\xleftrightarrow[]{}y,\bm{0}\nleftrightarrow \mathcal{G}^h\big\}$ occurs, $\mathbf{C}(\bm{0})$ does not contain any loop intersecting $\mathcal{G}^h_*$ (otherwise, $\bm{0}\xleftrightarrow[]{}\mathcal{G}^h$). Therefore, since $x,y\in \mathbf{C}(\bm{0})$ (ensured by $\bm{0}\xleftrightarrow[]{}x$ and $\bm{0}\xleftrightarrow[]{}y$), we have $\bm{0}\xleftrightarrow[]{}x\ \text{off}\ \mathcal{G}^h_* $ and $\bm{0}\xleftrightarrow[]{}y\ \text{off}\ \mathcal{G}^h_*$. On the other hand, on the event $\big\{\bm{0}\xleftrightarrow[]{}x\ \text{off}\ \mathcal{G}^h_* \big\}\cap \big\{\bm{0}\xleftrightarrow[]{}y\ \text{off}\ \mathcal{G}^h_* \big\}$, we directly have $\bm{0}\xleftrightarrow[]{}x$ and $\bm{0}\xleftrightarrow[]{}y$. In addition, we also have $\bm{0}\nleftrightarrow \mathcal{G}^h$; otherwise, one has $\bm{0}\in \mathcal{G}^h_*$, which is incompatible with both $\big\{\bm{0}\xleftrightarrow[]{}x\ \text{off}\ \mathcal{G}^h_*\big\} $ and $\big\{\bm{0}\xleftrightarrow[]{}y\ \text{off}\ \mathcal{G}^h_* \big\}$. To sum up, the event on the LHS of (\ref{new_9.16}) is contained in and contains the RHS, therefore (\ref{new_9.16}) follows.


	On the event $\big\{\bm{0}\xleftrightarrow[]{}x\ \text{off}\ \mathcal{G}^h_* \big\}\cap \big\{\bm{0}\xleftrightarrow[]{}y\ \text{off}\ \mathcal{G}^h_* \big\}$, by the tree expansion, there exists a glued loop $\gamma_*$ disjoint from $\mathcal{G}^h_*$ such that $\big\{\gamma_*\xleftrightarrow[]{}\bm{0} \ \text{off}\ \mathcal{G}^h_*\big\}\circ \big\{\gamma_*\xleftrightarrow[]{}x\ \text{off}\ \mathcal{G}^h_* \big\}\circ \big\{ \gamma_*\xleftrightarrow[]{}y\ \text{off}\ \mathcal{G}^h_*\big\}$ happens. Moreover, arbitrarily given $\{\mathcal{G}^h_*=\mathcal{K}\}$, the connection off $\mathcal{G}^h_*$ only depends on the loops in $\widetilde{\mathcal{L}}_{1/2}$ disjoint from $\mathcal{K}$, which are independent from the event $\{\mathcal{G}^h_*=\mathcal{K}\}$. This implies that for any $D_1,D_2\subset \mathbb{Z}^d$,
	\begin{equation}\label{fina_9.16}
		\mathbb{P}\left(D_1\xleftrightarrow[]{}D_2\ \text{off}\ \mathcal{G}^h_*\mid \mathcal{G}^h_* \right)\le \mathbb{P}\left(D_1\xleftrightarrow[]{}D_2\right).
	\end{equation}
Thus, with the same argument as proving (\ref{ineq_new_4.10}), we have 
\begin{equation}\label{new_9.17}
	\begin{split}
	 &\mathbb{P}\Big(\bm{0}\xleftrightarrow[]{}x\ \text{off}\ \mathcal{G}^h_* ,\bm{0}\xleftrightarrow[]{}y\ \text{off}\ \mathcal{G}^h_*\big| \mathcal{G}^h_*   \Big)    \\
		\le &C\sum_{z_1,z_2,z_3\in \mathbb{Z}^d} |z_1-z_2|^{2-d}|z_2-z_3|^{2-d}|z_3-z_1|^{2-d}|z_2-x|^{2-d} |z_3-y|^{2-d}\\
		&\ \ \ \ \ \ \ \ \ \ \ \ \ \ \cdot \mathbb{E}\Big[\mathbb{P}\Big(\bm{0}\xleftrightarrow[]{}z_1\ \text{off}\ \mathcal{G}^h_*\big| \mathcal{G}^h_*   \Big)  \Big],  
	\end{split}
\end{equation}
where we bounded $\mathbb{P}\left(z_2\xleftrightarrow[]{}x \ \text{off}\ \mathcal{G}^h_* \mid \mathcal{G}^h_* \right) $ and $\mathbb{P}\left(z_3\xleftrightarrow[]{}y \ \text{off}\ \mathcal{G}^h_* \mid \mathcal{G}^h_* \right)$ from above by $C|z_2-x|^{2-d}$ and $C|z_3-y|^{2-d}$ respectively through applying (\ref{fina_9.16}).

For the same reason as proving (\ref{new_9.16}), one has $\{\bm{0}\xleftrightarrow[]{}z_1\ \text{off}\ \mathcal{G}^h_*\}= \{\bm{0}\xleftrightarrow[]{}z_1, \bm{0}\nleftrightarrow \mathcal{G}^h\}$. Therefore, by taking integral on both sides of (\ref{new_9.17}), the LHS of (\ref{9.20}) is bounded from above by 
\begin{equation*}
	\begin{split}
		\mathbb{I}:=	C\sum_{x,y\in \mathbb{Z}^d}\sum_{z_1,z_2,z_3\in \mathbb{Z}^d} 	&|z_1-z_2|^{2-d}|z_2-z_3|^{2-d}|z_3-z_1|^{2-d}|z_2-x|^{2-d} \\
		&\cdot \mathbb{P}\left(\bm{0}\xleftrightarrow[]{}z_1, \bm{0}\nleftrightarrow \mathcal{G}^h \right)|z_3-y|^{2-d}|x_{J}^--y|^{2-d}.
	\end{split}
\end{equation*}
Since $|z_3-y|=|(z_3)_J^+-y_J^+|$ and $|x_{J}^--y|=|x-y_J^+|$, we have 
\begin{equation*}\label{9.22}
	\begin{split}
		\mathbb{I}
		=C\sum_{x,y\in \mathbb{Z}^d}\sum_{z_1,z_2,z_3\in \mathbb{Z}^d}&|z_1-z_2|^{2-d}|z_2-z_3|^{2-d}|z_3-z_1|^{2-d}|z_2-x|^{2-d} \\
		&\cdot \mathbb{P}\left(\bm{0}\xleftrightarrow[]{}z_1, \bm{0}\nleftrightarrow \mathcal{G}^h \right)|(z_3)_J^+-y_J^+|^{2-d}|x-y_J^+|^{2-d}.
	\end{split}
\end{equation*}
By calculating the sum over $y$ and $x$ in turn, we get 
\begin{equation}\label{9.23}
	\begin{split}
		\mathbb{I}\le& 	C\sum_{x\in \mathbb{Z}^d}\sum_{z_1,z_2,z_3\in \mathbb{Z}^d}|z_1-z_2|^{2-d}|z_2-z_3|^{2-d}|z_3-z_1|^{2-d}|z_2-x|^{2-d} \\
		&\ \ \ \  \ \ \ \ \  \  \ \ \ \ \ \ \ \cdot \mathbb{P}\left(\bm{0}\xleftrightarrow[]{}z_1, \bm{0}\nleftrightarrow \mathcal{G}^h \right)|(z_3)_J^+-x|^{4-d}\  \ (\text{by}\ (\ref{ineq_4.3}))\\
		\le&\mathbb{I}'(\mathbb Z^d \times \mathbb Z^d \times \mathbb Z^d)\ \ \  \ \ \ \ \ \ \ \ \ \  \ \ \ \ \ \ \  \ \ \  \ \ \ \ \ \ \ \ \  \ \ \ \ \ \ \ \ \ \  \ \ \ (\text{by}\ (\ref{cal_4.6})),  
	\end{split}	
\end{equation}
where for any $A \subset \mathbb Z^d \times \mathbb Z^d \times \mathbb Z^d$, we define 
$$\mathbb I'(A)= \sum\limits_{(z_1, z_2, z_3) \in A}|z_1-z_2|^{2-d}|z_2-z_3|^{2-d}|z_3-z_1|^{2-d}|z_2-(z_3)_J^+|^{6-d}\mathbb{P}(\bm{0}\xleftrightarrow[]{}z_1, \bm{0}\nleftrightarrow \mathcal{G}^h ).$$ We decompose $\mathbb Z^d \times \mathbb Z^d \times \mathbb Z^d  = A_1 \cup A_2 \cup A_3$ where
$$A_1:=\{(z_1,z_2,z_3):|z_2-(z_3)_J^+|\ge 0.5J\},$$ $$A_2:=\{(z_1,z_2,z_3):|z_2-(z_3)_J^+|< 0.5J,|z_1-z_2|\ge 2J\},$$ $$A_3:=\{(z_1,z_2,z_3):|z_2-(z_3)_J^+|< 0.5J,|z_1-z_2|< 2J\}.$$

We next bound $\mathbb I'(A_1), \mathbb I'(A_2)$ and $\mathbb I'(A_3)$ one after another. For $(z_1, z_2, z_3)\in A_1$, since $|z_2-(z_3)_J^+|\ge 0.5J$, $\mathbb{I}'(A_1)$ is bounded from above by 
\begin{equation}\label{9.25}
	\begin{split}
		& CJ^{6-d}\sum_{z_1,z_2,z_3\in A_i} |z_1-z_2|^{2-d}|z_2-z_3|^{2-d}|z_3-z_1|^{2-d}\mathbb{P}\left(\bm{0}\xleftrightarrow[]{}z_1, \bm{0}\nleftrightarrow \mathcal{G}^h \right)\\
		\le &CJ^{6-d} \sum_{z_1,z_2\in \mathbb{Z}^d} |z_1-z_2|^{6-2d}\mathbb{P}\left(\bm{0}\xleftrightarrow[]{}z_1, \bm{0}\nleftrightarrow \mathcal{G}^h \right) \ \ \  (\text{by}\ (\ref{ineq_4.3}))\\
		\le & CJ^{6-d}\mathfrak{R}'(h) \ \ \ \ \ \ \ \ \ \ \ \ \ \ \ \ \ \ \ \ \ \ \ \ \ \ \ \ \ \ \ \ \ \ \ \ \ \ \ \ \ \ \ \ \ \ \ \ \   (\text{by}\ (\ref{fina4.3})\ \text{and}\ (\ref{eq_97})).
	\end{split}
\end{equation} 

When $(z_1,z_2,z_3)\notin A_1$, one has $|z_2-(z_3)_J^+|<0.5J$. Therefore, by the triangle inequality,
$$|z_2-z_3|\ge |(z_3)_J^+-z_3|- |z_2-(z_3)_J^+|\ge J-0.5J=  0.5J.$$ 
Thus, for $i\in \{2,3\}$, we have 
\begin{equation}\label{9.27}
	\begin{split}
		\mathbb{I}'(A_i) \le CJ^{2-d} \sum_{z_1,z_2,z_3\in A_i}& |z_1-z_2|^{2-d}|z_3-z_1|^{2-d}|z_2-(z_3)_J^+|^{6-d}\\
		&\cdot \mathbb{P}\left(\bm{0}\xleftrightarrow[]{}z_1, \bm{0}\nleftrightarrow \mathcal{G}^h \right). 
	\end{split}
\end{equation}

For $(z_1, z_2, z_3)\in A_2$, one has $z_2\in \mathbb{Z}^d\setminus B_{z_1}(2J)$, $z_3\in B_{z_2}(2J)$ and 
\begin{equation*}
	|z_3-z_1|\ge |z_2-z_1|-|z_2-(z_3)_J^+|-|z_3-(z_3)_J^+|\ge (1-0.25-0.5)|z_2-z_1|=0.25|z_2-z_1|.
\end{equation*} 
Therefore, by (\ref{9.27}), $\mathbb{I}'(A_2)$ is upper-bounded by 
\begin{equation}\label{9.28}
	\begin{split}
	&CJ^{2-d}\sum_{z_1\in \mathbb{Z}^d,z_2\in \mathbb{Z}^d\setminus B_{z_1}(2J),z_3\in B_{z_2}(2J)}|z_1-z_2|^{4-2d} |z_2-(z_3)_J^+|^{6-d}\\
		&\ \ \ \ \ \ \ \ \ \ \ \ \ \  \ \ \ \ \ \ \ \ \ \ \ \ \ \ \ \ \ \ \ \ \ \ \ \ \ \ \ \ \ \cdot \mathbb{P}\left(\bm{0}\xleftrightarrow[]{}z_1, \bm{0}\nleftrightarrow \mathcal{G}^h \right)\\
		=& CJ^{2-d}\mathfrak{R}'(h) \sum_{z\in \mathbb{Z}^d\setminus B(2J)} |z|^{4-2d}  \sum_{z\in B(2J)} |z|^{6-d}\ \ \ \ \   \ \ \ \ \ \  \ (\text{by}\ (\ref{eq_97}))\\
		\le & CJ^{12-2d}\mathfrak{R}'(h)\le CJ^{6-d}\mathfrak{R}'(h)   \ \ \ \ \ \ (\text{by}\ (\ref{fina4.3}),\ (\ref{fina_new_4.4})\ \text{and}\ d>6). 
	\end{split}	
\end{equation}


For $(z_1,z_2,z_3)\in A_3$, one has $z_2\in B_{z_1}(2J)$ and $z_3\in B_{z_2}(1.5J)\subset B_{z_1}(4J)$. Therefore, by (\ref{9.27}) and $|z_2-(z_3)_J^+|^{6-d}\le 1$ we have 
\begin{equation*}\label{9.32}
	\begin{split}
		\mathbb{I}'(A_3)\le& CJ^{2-d} \sum_{z_1\in \mathbb{Z}^d}\sum_{z_2\in B_{z_1}(2J),z_3\in B_{z_1}(4J)} |z_1-z_2|^{2-d}|z_3-z_1|^{2-d}\mathbb{P}\left(\bm{0}\xleftrightarrow[]{}z_1, \bm{0}\nleftrightarrow \mathcal{G}^h \right)\\
		=& CJ^{2-d}\mathfrak{R}'(h) \sum_{z\in B(2J)}|z|^{2-d} \sum_{z\in B(4J)}|z|^{2-d}  \ \ \ \ \  \ \ \ \ \ \ \ \ \ (\text{by}\ (\ref{eq_97}))\\
		\le &CJ^{6-d}\mathfrak{R}'(h)\ \ \ \ \ \ \ \ \ \ \  \ \ \ \ \ \ \ \ \ \ \ \ \ \  \ \ \ \  \ \ \ \ \ \ \ \ \  \ \ \ \ \ \ \ \ \ (\text{by}\ (\ref{fina4.3})).   
	\end{split}
\end{equation*} 
Combined with (\ref{9.25}) and (\ref{9.28}), it concludes this lemma. \end{proof}




\begin{lemma}\label{lemma9.7}
For any $d>6$, there exists $C(d)>0$ such that for any $J\in \mathbb{N}^+$, 
\begin{equation}\label{9.35}
	\sum_{w\in \mathbb{Z}^d} \mathbb{P}\left(\bm{0}\xleftrightarrow[]{} w, \bm{0}\xleftrightarrow[]{} \mathcal{G}^h \right) |w_{2J}^-|^{2-d} \le CJ^{6-d} \mathfrak{R}(h). 
\end{equation}
\end{lemma}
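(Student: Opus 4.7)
The plan is to follow the same tree-expansion-plus-BKR template used in Section 4 and in Lemmas \ref{lemma_9.4}--\ref{lemma_9.6}, applied now to the pair of events $\{\bm{0}\xleftrightarrow[]{}w\}$ and $\{\bm{0}\xleftrightarrow[]{}\mathcal{G}^h\}$, and then to reduce the resulting combinatorial sum directly to the two previously-established estimates \eqref{fin_4.7} and \eqref{cal_4.6}. Specifically, I would first condition on $\mathcal{G}^h$ and apply the tree expansion (Lemma \ref{lemma_tree_expansion}) with $A_1=\{w\}$ and $A_2=\mathcal{G}^h$ to conclude that on $\{\bm{0}\xleftrightarrow[]{}w,\bm{0}\xleftrightarrow[]{}\mathcal{G}^h\}$ there exists a glued loop $\gamma_*$ with $\{\gamma_*\xleftrightarrow[]{}\bm{0}\}\circ\{\gamma_*\xleftrightarrow[]{}w\}\circ\{\gamma_*\xleftrightarrow[]{}\mathcal{G}^h\}$. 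The BKR inequality (Corollary \ref{coro_BKR}) applied conditional on $\mathcal{G}^h$ factorizes the three connecting events, after which one averages over $\mathcal{G}^h$ and parametrizes $\gamma_*$ by three anchor lattice points $(z_1,z_2,z_3)$ exactly as in the derivation of \eqref{ineq_310}--\eqref{ineq4.8}, using the two-point estimate \eqref{eq_two_points} for the connections to $\bm{0}$ and $w$ and the translation invariance $\mathbb{P}(z_3\xleftrightarrow[]{}\mathcal{G}^h)=\mathbb{P}(\bm{0}\xleftrightarrow[]{}\mathcal{G}^h)=\mathfrak{R}(h)$ for the connection to the ghost field. This yields
$$\mathbb{P}(\bm{0}\xleftrightarrow[]{}w,\,\bm{0}\xleftrightarrow[]{}\mathcal{G}^h)\le C\mathfrak{R}(h)\sum_{z_1,z_2,z_3\in\mathbb{Z}^d}|z_1-z_2|^{2-d}|z_2-z_3|^{2-d}|z_3-z_1|^{2-d}|z_1|^{2-d}|w-z_2|^{2-d}.$$

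Next I would observe that the triple sum on the right is exactly the left-hand side of \eqref{fin_4.7} with $x=\bm{0}$ and $y=w$, and hence is bounded by $C|w|^{4-d}$. Writing $e_J:=(2J,0,\ldots,0)\in\mathbb{Z}^d$ so that $|w_{2J}^-|=|w-e_J|$, the left-hand side of \eqref{9.35} is therefore at most
$$C\mathfrak{R}(h)\sum_{w\in\mathbb{Z}^d}|w|^{4-d}|w-e_J|^{2-d}\le C'\mathfrak{R}(h)|e_J|^{6-d}\le C''J^{6-d}\mathfrak{R}(h),$$
where the middle inequality is the two-point convolution bound \eqref{cal_4.6} applied with $x=e_J$, $y=\bm{0}$. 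This delivers exactly the inequality claimed in the lemma.

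The only technical point worth flagging is the handling of the degenerate cases in which $\{\bm{0}\}$, $\{w\}$, and $\mathcal{G}^h$ fail to be pairwise disjoint (i.e.\ $\bm{0}=w$, $\bm{0}\in\mathcal{G}^h$, or $w\in\mathcal{G}^h$), since the tree expansion formally requires disjointness. In each such case one uses the independence of $\mathcal{G}^h$ from $\widetilde{\mathcal{L}}_{1/2}$ to factor out a $(1-e^{-h})$, and then controls the remaining connection by \eqref{eq_two_points} and \eqref{cal_4.6} to bound that contribution by $C(1-e^{-h})J^{4-d}$. Since $\mathfrak{R}(h)\ge 1-e^{-h}$ and $J\ge 1$ give $(1-e^{-h})J^{4-d}\le J^{6-d}\mathfrak{R}(h)$, these edge contributions are absorbed into $CJ^{6-d}\mathfrak{R}(h)$ and do not affect the conclusion. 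Aside from this routine bookkeeping, the proof is essentially a direct reduction of Lemma \ref{lemma9.7} to the already-proved inequalities \eqref{fin_4.7} and \eqref{cal_4.6}, and requires no further region-splitting computation.
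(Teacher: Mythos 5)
Your proposal is correct and follows essentially the same route as the paper: the paper likewise applies the tree-expansion/BKR argument of \eqref{ineq_new_4.10} (with $x_1,x_2$ replaced by $w$ and $\mathcal{G}^h$), bounds the resulting triangle sum by $C\mathfrak{R}(h)|w|^{4-d}$ via \eqref{fin_4.7}, and then sums over $w$ using \eqref{cal_4.6} to obtain $CJ^{6-d}\mathfrak{R}(h)$. Your extra discussion of the degenerate (non-disjoint) cases is harmless additional bookkeeping that the paper omits.
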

\begin{proof}
By the same arguments as in the proof of (\ref{ineq_new_4.10}) (replacing $x_1$ and $x_2$ in (\ref{ineq_new_4.10}) by $w$ and $\mathcal{G}^h$ respectively), we have 
\begin{equation*}
	\begin{split}
		&\mathbb{P}\left(\bm{0}\xleftrightarrow[]{} w, \bm{0}\xleftrightarrow[]{} \mathcal{G}^h \right)\\
		\le&C \sum_{z_1,z_2,z_3\in \mathbb{Z}^d} |z_1-z_2|^{2-d}|z_2-z_3|^{2-d}|z_3-z_1|^{2-d}|z_1-w|^{2-d}|z_2|^{2-d}\mathbb{P}\left(z_3\xleftrightarrow[]{} \mathcal{G}^h \right)\\
		=& C\mathfrak{R}(h) \sum_{z_1,z_2,z_3\in \mathbb{Z}^d} |z_1-z_2|^{2-d}|z_2-z_3|^{2-d}|z_3-z_1|^{2-d}|z_1-w|^{2-d}|z_2|^{2-d}\ \ (\text{by}\ (\ref{eq_9.4}))\\
		\le & C\mathfrak{R}(h) |w|^{4-d} \ \  \ \ \ \ \ (\text{by}\ (\ref{fin_4.7})).      
	\end{split}
\end{equation*}
%
%
Combined with $|w_{2J}^-|=|w-\bm{0}_{2J}^+|$, this yields the desired bound: 
\begin{equation*}\label{ineq_9.38}
	\begin{split}
		\sum_{w\in \mathbb{Z}^d} \mathbb{P}\left(\bm{0}\xleftrightarrow[]{} w, \bm{0}\xleftrightarrow[]{} \mathcal{G}^h \right) |w_{2J}^-|^{2-d} \le& C\mathfrak{R}(h) \sum_{w\in \mathbb{Z}^d} |w|^{4-d}|w-\bm{0}_{2J}^+|^{2-d}\\
		\le &CJ^{6-d}\mathfrak{R}(h) \ \ \ \ \ \ \ \ \ \ (\text{by}\ (\ref{cal_4.6})).   \qedhere
	\end{split}	   
\end{equation*}
\end{proof}


Using these three technical lemmas, we can prove the following lower bound for $\sum_{x\in \mathbb{Z}^d} \mathbb{P}\left( \mathsf{E}_{L}^x\right)$.

\begin{lemma}\label{lemma_9.8}
For any $d>6$, there exist $C_{10}(d),c_{11}(d)>0$ such that for all $J\ge C_{10}$ and $h\ge 0$,
\begin{equation}\label{9.39}
	\sum_{x\in \mathbb{Z}^d} \mathbb{P}\left( \mathsf{E}_{J}^x\right)  \ge c_{11}\mathfrak{R}(h)\mathfrak{R}'(h). 
\end{equation}
\end{lemma}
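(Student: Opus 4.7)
The plan is to derive the lower bound via a spatial-Markov decomposition of the loop soup along the cluster $\mathbf{C}(\bm{0})$, extracting a main contribution proportional to $\mathfrak{R}(h)\mathfrak{R}'(h)$ and bounding the error terms using the technical Lemmas \ref{lemma_9.5}, \ref{lemma_9.6}, and \ref{lemma9.7}.

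First I would observe that on the event $\mathsf{A}(x):=\{\bm{0}\xleftrightarrow{} x,\ \bm{0}\nleftrightarrow\mathcal{G}^h\}$, the cluster $\mathbf{C}(\bm{0})=\mathbf{C}(x)$ is automatically disjoint from $\mathcal{G}^h$, and if $x_J^\pm\xleftrightarrow{}\mathcal{G}^h$ then necessarily $\mathbf{C}(x_J^\pm)\ne\mathbf{C}(\bm{0})$. Consequently
$$\mathsf{E}_J^x \ \supset\ \mathsf{A}(x)\,\cap\,\{x_J^-\xleftrightarrow{}\mathcal{G}^h\text{ off }\mathbf{C}(\bm{0})\}\,\cap\,\{x_J^+\xleftrightarrow{}\mathcal{G}^h\text{ off }\mathbf{C}(\bm{0})\cup\mathbf{C}(x_J^-)\},$$
so the three-cluster disjointness in the definition of $\mathsf{E}_J^x$ is encoded entirely through the ``off'' decorations. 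The key ingredient is then a spatial Markov property for the loop soup (generalizing Lemma~\ref{lemma_8.2}): conditional on $\mathbf{C}(\bm{0})=\mathcal{C}$, the loops in $\widetilde{\mathcal{L}}_{1/2}$ disjoint from $\mathcal{C}$ form an independent PPP with the original intensity restricted to such loops, independent of $\mathcal{G}^h$ as well.

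Under this conditioning, Lemma~\ref{lemma_9.5} applied with $A=\mathcal{C}$ gives
$$\mathbb{P}(x_J^-\xleftrightarrow{}\mathcal{G}^h\text{ off }\mathcal{C})\ \ge\ \mathfrak{R}(h)\ -\ C\mathfrak{R}(h)\sum_{v\in\mathcal{C}\cap\mathbb{Z}^d}|x_J^- - v|^{2-d},$$
with an analogous bound for $x_J^+$ relative to $\mathcal{C}\cup\mathbf{C}(x_J^-)$. Combining these conditional bounds, taking the expectation against $\mathbbm{1}_{\mathsf{A}(x)}$, and summing over $x$ produces, via translation invariance $\mathbb{P}(x_J^\pm\xleftrightarrow{}\mathcal{G}^h)=\mathfrak{R}(h)$ and identity (\ref{eq_97}), a main term proportional to $\mathfrak{R}(h)\mathfrak{R}'(h)$. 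After interchanging summation, the Lemma~\ref{lemma_9.5} error becomes exactly the quantity bounded in Lemma~\ref{lemma_9.6} (identifying the summation variable $y$ with the lattice points $v$ in $\mathbf{C}(\bm{0})$), contributing at most $CJ^{6-d}\mathfrak{R}(h)\mathfrak{R}'(h)$. A further error arising from the event $\{\mathbf{C}(x_J^-)=\mathbf{C}(x_J^+)\}$ is handled via a tree-expansion / BKR step reducing to the two-point connection probability between $x_J^-$ and $x_J^+$, combined with Lemma~\ref{lemma9.7} after summing over $x$ (setting $w\leftrightarrow x_J^-$); this also yields a bound of order $CJ^{6-d}\mathfrak{R}(h)\mathfrak{R}'(h)$. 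Choosing $J\ge C_{10}(d)$ so that the aggregate error is at most half the main term finishes the proof.

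The main obstacle will be the spatial Markov decomposition for the cluster $\mathbf{C}(\bm{0})$: unlike Lemma~\ref{lemma_8.2}, which only distinguishes loops by whether they hit a given set, here one genuinely conditions on the cluster itself being a prescribed subset, and must describe the law of the remaining ``outside'' loops (as well as of the ghost field outside $\mathbf{C}(\bm{0})$). Equally delicate is the bookkeeping required to convert the per-$\mathcal{C}$ conditional errors into global sums that align exactly with the hypotheses of Lemmas~\ref{lemma_9.6} and~\ref{lemma9.7}; this alignment, together with the sharp two-point function decay $|z|^{2-d}$ for $d>6$, is what produces the $J^{6-d}$ smallness needed to absorb the errors into the main term.
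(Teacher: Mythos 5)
Your proposal follows essentially the same route as the paper's proof: you condition on the clusters $\mathbf{C}(\bm{0})$ and $\mathbf{C}(x_J^-)$ (encoded via the nested ``off'' events), use the Poissonian independence of the loops disjoint from these clusters to peel off a main term $\approx \mathfrak{R}^2(h)\,\mathfrak{R}'(h)\,$ via Lemma \ref{lemma_9.5}, and absorb the two correction terms using Lemmas \ref{lemma_9.6} and \ref{lemma9.7} exactly as the paper does with its $\widehat{\mathbb{J}}_1,\widehat{\mathbb{J}}_2,\widehat{\mathbb{J}}_3$ decomposition. The only cosmetic difference is that the paper's third error term is simply the $\mathbf{C}(x_J^-)$-portion of the Lemma \ref{lemma_9.5} bound (fed into Lemma \ref{lemma9.7} after translating by $x_J^-$) rather than an event $\{\mathbf{C}(x_J^-)=\mathbf{C}(x_J^+)\}$ treated by a separate BKR step, but this does not affect the argument.
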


\begin{proof}
For any $x\in \mathbb{Z}^d$, we define $\mathcal{A}_1,\mathcal{A}_2$ and $\mathcal{A}$ as follows:	
	\begin{itemize}
		\item $ \mathcal{A}_1= \mathcal{A}_1(x,\mathcal{G}^h):=\big\{\text{connected}\ \mathbf{C}_1\subset \widetilde{\mathbb{Z}}^d:\bm{0},x\in \mathbf{C}_1,\mathbf{C}_1\cap\mathcal{G}^h=\emptyset\big\}$;
		
		\item For any $\mathbf{C}_1\in\mathcal{A}_1$, $$\mathcal{A}_2= \mathcal{A}_2(x,\mathcal{G}^h,\mathbf{C}_1):=\big\{\text{connected}\ \mathbf{C}_2\subset \widetilde{\mathbb{Z}}^d:\mathbf{C}_2\cap \mathbf{C}_1=\emptyset,\mathbf{C}_2\cap \mathcal{G}^h\neq \emptyset\big\}.$$ 
		
		\item $\mathcal{A}=\mathcal{A}(x,\mathcal{G}^h):=\big\{(\mathbf{C}_1,\mathbf{C}_2):\mathbf{C}_1\in \mathcal{A}_1, \mathbf{C}_2\in \mathcal{A}_2\big\}$.
		
	\end{itemize}
	Recall the definition of $\mathsf{E}_{J}^x$ below (\ref{9.13}). Then it follows that $\mathsf{E}_{J}^x\cap \{ \mathbf{C}(\bm{0})=\mathbf{C}_1, \mathbf{C}(x_J^-)=\mathbf{C}_2\}\neq \emptyset$ if and only if $(\mathbf{C}_1, \mathbf{C}_2)\in \mathcal{A}$. Moreover, on the event $\{ \mathbf{C}(\bm{0})=\mathbf{C}_1, \mathbf{C}(x_J^-)=\mathbf{C}_2, (\mathbf{C}_1,\mathbf{C}_2)\in \mathcal{A} \}$, $\mathsf{E}_{J}^x$ happens if and only if $\{x_J^+\xleftrightarrow[]{}\mathcal{G}^h\ \text{off}\ \mathbf{C}_1\cup \mathbf{C}_2\}$. For any fixed $\mathbf{C}_1,\mathbf{C}_2\subset \widetilde{\mathbb{Z}}^d$, the event $\{\mathbf{C}(\bm{0})=\mathbf{C}_1,\mathbf{C}(x_J^-)=\mathbf{C}_2, (\mathbf{C}_1,\mathbf{C}_2)\in \mathcal{A}\}$ only depends on $\mathcal{G}^h\cap (\mathbf{C}_1\cup \mathbf{C}_2)$ and the loops in $\widetilde{\mathcal{L}}_{1/2}$ intersecting $\mathbf{C}_1\cup \mathbf{C}_2$, which are independent from the event $\{x_J^+\xleftrightarrow[]{}\mathcal{G}^h\ \text{off}\ \mathbf{C}_1\cup \mathbf{C}_2\}$. Therefore, we have 
\begin{equation}\label{9.36}
	\begin{split}
		&\mathbb{P}\left( \mathsf{E}_{x,J}\mid \mathbf{C}(\bm{0})=\mathbf{C}_1,\mathbf{C}(x_J^-)=\mathbf{C}_2,(\mathbf{C}_1,\mathbf{C}_2)\in \mathcal{A}  \right) \\
		=  & \mathbb{P}\left( x_J^+\xleftrightarrow[]{}\mathcal{G}^h\ \text{off}\ \mathbf{C}_1\cup \mathbf{C}_2 \mid \mathbf{C}(\bm{0})=\mathbf{C}_1,\mathbf{C}(x_J^-)=\mathbf{C}_2,(\mathbf{C}_1,\mathbf{C}_2)\in \mathcal{A}  \right)\\
		=& \mathbb{P}\left( x_J^+\xleftrightarrow[]{}\mathcal{G}^h\ \text{off}\ \mathbf{C}_1\cup \mathbf{C}_2 \right)\\
		=& \mathfrak{R}(h)-\mathbb{P}\left( x_J^+\xleftrightarrow[]{}\mathcal{G}^h\ \text{only by}\ \mathbf{C}_1\cup \mathbf{C}_2 \right) \ \ \ \ \ (\text{by}\ (\ref{eq_9.4})). 
	\end{split}
\end{equation}
By Lemma \ref{lemma_9.5}, one has 
\begin{equation}\label{9.38}
	\begin{split}
		\mathbb{P}\left( x_J^+\xleftrightarrow[]{}\mathcal{G}^h\ \text{only by}\ \mathbf{C}_1\cup \mathbf{C}_2 \right)
		\le &C\mathfrak{R}(h)\sum_{v\in(\mathbf{C}_1\cup \mathbf{C}_2)\cap \mathbb{Z}^d}  |x_J^+-v|^{2-d}\\
	\le &C\mathfrak{R}(h)\sum_{i=1}^{2}\sum_{v\in\mathbf{C}_i\cap \mathbb{Z}^d}  |x_J^+-v|^{2-d}.
	\end{split}
\end{equation}
By taking integral in (\ref{9.36}) over $\{\mathbf{C}(x_J^-)\in \mathcal{A}_2\}$ conditioning on $\{\mathbf{C}(\bm{0})=\mathbf{C}_1,\mathbf{C}_1\in \mathcal{A}_1\}$ and using (\ref{9.38}), we have 
\begin{equation*}\label{939}
	\begin{split}
		&\mathbb{P}\left( \mathsf{E}_{x,J}\mid \mathbf{C}(0)=\mathbf{C}_1,\mathbf{C}_1\in \mathcal{A}_1 \right)\\
		\ge  &\mathfrak{R}(h)\mathbb{P}\left( \mathbf{C}(x_J^-)\in \mathcal{A}_2 \mid \mathbf{C}(\bm{0})=\mathbf{C}_1,\mathbf{C}_1\in \mathcal{A}_1 \right) \\
		&- C\mathfrak{R}(h)\mathbb{P}\left( \mathbf{C}(x_J^-)\in \mathcal{A}_2 \mid \mathbf{C}(\bm{0})=\mathbf{C}_1,\mathbf{C}_1\in \mathcal{A}_1 \right)  \sum_{v\in   \mathbf{C}_1\cap \mathbb{Z}^d}  |x_J^+-v|^{2-d}     \\
		&-C\mathfrak{R}(h)\mathbb{E}\bigg[\mathbbm{1}_{\mathbf{C}(x_J^-)\in \mathcal{A}_2}\sum_{v\in   \mathbf{C}(x_J^-)\cap \mathbb{Z}^d}  |x_J^+-v|^{2-d}  \Big|  \mathbf{C}(\bm{0})=\mathbf{C}_1 ,\mathbf{C}_1\in \mathcal{A}_1\bigg]\\
		:=& \mathbb{J}_1(x,\mathbf{C}_1)-\mathbb{J}_2(x,\mathbf{C}_1)-\mathbb{J}_3(x,\mathbf{C}_1),
	\end{split}
\end{equation*}
which implies that 
\begin{equation}
	\sum_{x\in \mathbb{Z}^d}\mathbb{P}\left( \mathsf{E}_{x,J}\right)\ge \widehat{\mathbb{J}}_1-\widehat{\mathbb{J}}_2-\widehat{\mathbb{J}}_3,
\end{equation}
where $\widehat{\mathbb{J}}_i:= \sum_{x\in \mathbb{Z}^d}\mathbb{E}\left[ \mathbb{J}_i(x,\mathbf{C}(\bm{0}))\cdot \mathbbm{1}_{\mathbf{C}(\bm{0})\in \mathcal{A}_1}\right]$ for $i\in \{1,2,3\}$. In what follows, we estimate them separately.





For $\widehat{\mathbb{J}}_1$, with the same arguments as in (\ref{9.36}), we have 
\begin{equation}\label{9.41}
	\begin{split}
		\mathbb{J}_1(x,\mathbf{C}_1)=& \mathfrak{R}(h) \mathbb{P}\left( x_J^- \xleftrightarrow[]{} \mathcal{G}^{h}\ \text{off}\ \mathbf{C}_1 \mid \mathbf{C}(\bm{0})=\mathbf{C}_1,\mathbf{C}_1\in \mathcal{A}_1 \right)\\
		=& \mathfrak{R}^2(h)-\mathfrak{R}(h)\mathbb{P}\left( x_J^- \xleftrightarrow[]{} \mathcal{G}^{h}\ \text{only by}\ \mathbf{C}_1  \right). 
	\end{split}
\end{equation}
By the definition of $\mathcal{A}_1$, one has 
\begin{equation}\label{9.43}
	\begin{split}
		&\sum_{x\in \mathbb{Z}^d}\mathbb{E}\left[ \mathfrak{R}(h)\mathbb{P}\left( x_J^- \xleftrightarrow[]{} \mathcal{G}^{h}\ \text{only by}\ \mathbf{C}(\bm{0})  \right)\cdot \mathbbm{1}_{\mathbf{C}(\bm{0})\in \mathcal{A}_1} \right] \\
		\le &C\mathfrak{R}^2(h)\sum_{x\in \mathbb{Z}^d} \mathbb{E}\bigg[\sum_{v\in  \mathbf{C}(\bm{0})\cap \mathbb{Z}^d}  |x_J^--v|^{2-d}\cdot \mathbbm{1}_{\bm{0}\xleftrightarrow[]{}x,\bm{0}\nleftrightarrow \mathcal{G}^h} \bigg]\ \  (\text{by Lemma}\ \ref{lemma_9.5}) \\
		\le & C\mathfrak{R}^2(h)\sum_{x\in \mathbb{Z}^d}\sum_{v\in \mathbb{Z}^d} |x_J^--v|^{2-d} \mathbb{P}\left(\bm{0}\xleftrightarrow[]{}v,\bm{0}\xleftrightarrow[]{}x,\bm{0}\nleftrightarrow \mathcal{G}^h\right)\\
		\le & CJ^{6-d} \mathfrak{R}^2(h)\mathfrak{R}'(h)\ \ \ \ \ \ \ \ \ \ \ \ \ \ \ \ \ \ \ \ \ \ \ \ \ \ \ \ \ \ \ \ \ \ \ \ \ \ \ \  \ \ \ \ (\text{by Lemma}\ \ref{lemma_9.6}).
	\end{split}
\end{equation}
Combining (\ref{9.41}), (\ref{9.43}) and $\mathbb{P}(\mathbf{C}(\bm{0})\in \mathcal{A}_1)=\mathfrak{R}'(h)$ (by (\ref{eq_97})), we obtain 
\begin{equation}\label{9.49}
	\widehat{\mathbb{J}}_1 \ge \left(1-CJ^{6-d} \right) \mathfrak{R}^2(h)\mathfrak{R}'(h).
\end{equation}


For $\widehat{\mathbb{J}}_2$, since $\mathbb{P}\left( \mathbf{C}(x_J^-)\in \mathcal{A}_2 \mid \mathbf{C}(\bm{0})=\mathbf{C}_1,\mathbf{C}_1\in \mathcal{A}_1 \right)\le \mathbb{P}\left( x_J^-\xleftrightarrow[]{}\mathcal{G}^h \right)=\mathfrak{R}(h)$, we have 
\begin{equation*}
	\mathbb{J}_2(x,\mathbf{C}_1)\le C\mathfrak{R}^2(h)\sum_{v\in   \mathbf{C}_1\cap \mathbb{Z}^d}  |x_J^+-v|^{2-d}.
\end{equation*}	
By taking integral over the event $\{\mathbf{C}(\bm{0})\in \mathcal{A}_1\}$ (i.e. $\{\bm{0}\xleftrightarrow[]{}x,\bm{0}\nleftrightarrow \mathcal{G}^h\}$) and summing over $x\in \mathbb{Z}^d$, one has 
\begin{equation*}\label{9.47}
	\begin{split}
		\widehat{\mathbb{J}}_2 \le C\mathfrak{R}^2(h)\sum_{x\in \mathbb{Z}^d} \mathbb{E}\bigg[\sum_{v\in   \mathbf{C}(\bm{0})\cap \mathbb{Z}^d} |x_J^+-v|^{2-d}\cdot \mathbbm{1}_{\bm{0}\xleftrightarrow[]{}x,\bm{0}\nleftrightarrow \mathcal{G}^h} \bigg].
	\end{split}
\end{equation*}
For the same reason as in the third and fourth line of (\ref{9.43}), the RHS is also bounded form above by $CJ^{6-d} \mathfrak{R}^2(h)\mathfrak{R}'(h)$. To sum up, we have  
\begin{equation}\label{9.52}
	\widehat{\mathbb{J}}_2 \le CJ^{6-d} \mathfrak{R}^2(h)\mathfrak{R}'(h). 
\end{equation}


Now we consider $\widehat{\mathbb{J}}_3$. Recall in (\ref{9.13}) that for any $y\in \mathbb{Z}^d$, $\mathbf{C}_A(y):=\{v\in \widetilde{\mathbb{Z}}^d: v\xleftrightarrow[]{} y\ \text{off}\ A\}$. On the event $\{\mathbf{C}(\bm{0})=\mathbf{C}_1,\mathbf{C}_1\in \mathcal{A}_1,\mathbf{C}(x_J^-)\in \mathcal{A}_2\}$, every loop contained in $\mathbf{C}(x_J^-)$ does not intersect $\mathbf{C}_1$, and thus $\mathbf{C}(x_J^-)=\mathbf{C}_{\mathbf{C}_1}(x_J^-)$. In addition, since the event $\{\mathbf{C}_{\mathbf{C}_1}(x_J^-)\in \mathcal{A}_2\}$ only depends on $\mathcal{G}^h\setminus \mathbf{C}_1$ and the loops in $\widetilde{\mathcal{L}}_{1/2}$ disjoint from $\mathbf{C}_1$ (both of which are independent of $\{\mathbf{C}(\bm{0})=\mathbf{C}_1,\mathbf{C}_1\in \mathcal{A}_1\}$), we have 
\begin{equation}
	\begin{split}
		&\mathbb{E}\bigg[\mathbbm{1}_{\mathbf{C}(x_J^-)\in \mathcal{A}_2}\sum_{v\in \mathbf{C}(x_J^-)\cap \mathbb{Z}^d}  |x_J^+-v|^{2-d}  \Big|  \mathbf{C}(\bm{0})=\mathbf{C}_1,\mathbf{C}_1\in \mathcal{A}_1 \bigg]\\
		= & \mathbb{E}\bigg[\mathbbm{1}_{ \mathbf{C}_{\mathbf{C}_1}(x_J^-)\cap \mathcal{G}^h\neq \emptyset}\sum_{v\in \mathbf{C}_{\mathbf{C}_1}(x_J^-)\cap \mathbb{Z}^d}  |x_J^+-v|^{2-d}  \bigg],
	\end{split}
\end{equation}
which implies that
\begin{equation*}\label{9.54}
	\begin{split}
		\mathbb{J}_3(x,\mathbf{C}_1)\le C\mathfrak{R}(h)\mathbb{E}\bigg[\mathbbm{1}_{ \mathbf{C}_{\mathbf{C}_1}(x_J^-)\cap \mathcal{G}^h\neq \emptyset}\sum_{v\in \mathbf{C}_{\mathbf{C}_1}(x_J^-)\cap \mathbb{Z}^d}  |x_J^+-v|^{2-d}  \bigg].
	\end{split}
\end{equation*}
Therefore, since $\sum_{v\in \mathbf{C}_{\mathbf{C}_1}(x_J^-)\cap \mathbb{Z}^d}  |x_J^+-v|^{2-d} \le \sum_{v\in \mathbb{Z}^d}  \mathbbm{1}_{v\xleftrightarrow[]{}x_J^-}\cdot |x_J^+-v|^{2-d}$ and $\mathbbm{1}_{ \mathbf{C}_{\mathbf{C}_1}(x_J^-)\cap \mathcal{G}^h\neq \emptyset}\le \mathbbm{1}_{x_J^-\xleftrightarrow[]{} \mathcal{G}^h}$, we have 
\begin{equation}\label{9.58}
	\begin{split}
			\mathbb{J}_3(x,\mathbf{C}_1)\le&  C\mathfrak{R}(h)\sum_{v\in \mathbb{Z}^d} |v-x_J^+|^{2-d} \mathbb{P}\left(x_J^-\xleftrightarrow[]{}v,x_J^-\xleftrightarrow[]{} \mathcal{G}^h \right)\\
			=&C\mathfrak{R}(h)\sum_{v\in \mathbb{Z}^d} |(v-x_J^-)-\bm{0}_{2J}^+|^{2-d} \mathbb{P}\left(\bm{0}\xleftrightarrow[]{}v-x_J^-,\bm{0}\xleftrightarrow[]{} \mathcal{G}^h \right) \\
				\le &CJ^{d-6}\mathfrak{R}^2(h) \ \ \ \ \ \ \ \ \  \ \ \ \ \ (\text{by Lemma}\ \ref{lemma9.7}). 
	\end{split}
\end{equation}
Recalling that $\mathbb{P}(\mathbf{C}(\bm{0})\in \mathcal{A}_1)=\mathfrak{R}'(h)$, by (\ref{9.58}) we get
\begin{equation}\label{9.59}
	\begin{split}
		\widehat{\mathbb{J}}_3 
		\le CJ^{6-d} \mathfrak{R}^2(h)\mathfrak{R}'(h). 
	\end{split}
\end{equation}

Combining (\ref{9.49}), (\ref{9.52}) and (\ref{9.59}), and taking a large enough $J$, we finally complete the proof. 
\end{proof}

After getting Lemmas \ref{lemma_9.4} and \ref{lemma_9.8}, now we are ready to prove Lemma \ref{lemma_9.1}: 
\begin{proof}[Proof of Lemma \ref{lemma_9.1}]
By Lemmas \ref{lemma_9.4} and \ref{lemma_9.8}, we have: for any $h\ge 0$,  
\begin{equation*}
\frac{d \mathfrak{R}^2(h)}{d h}=2	\mathfrak{R}(h)\mathfrak{R}'(h)\le 2C_{9}c_{11}^{-1}\left[\mathfrak{R}(h)-(e^h-1)\mathfrak{R}'(h)\right],
\end{equation*}
where the RHS is upper-bounded by $2C_{9}c_{11}^{-1}$ since $\mathfrak{R}(h)$ is increasing and is at most $1$ (see (\ref{eq_9.4})). Take integral over $[0,h]$ and then we get this lemma.
\end{proof}

Recalling that Lemma \ref{lemma_9.1} is sufficient for Proposition \ref{prop_volumn}, we eventually conclude the main result Theorem \ref{theorem1}.

\section*{Acknowledgments}

J. Ding is partially supported by NSFC Key Program Project No. 12231002.

\bibliographystyle{plain}
\bibliography{ref}

\end{document}